\theoremstyle{plain}
\newtheorem{Thm}{Theorem}
\newtheorem{Lem}{Lemma}[section]
\newtheorem{Prop}[Lem]{Proposition}
\newtheorem{Cor}[Lem]{Corollary}
\theoremstyle{definition}
\theoremstyle{remark}
\newtheorem{Rmk}[Lem]{Remark}
\newtheorem{Ex}[Lem]{Example}
\numberwithin{equation}{section}
\DeclareMathOperator\sgn{sgn}
\title{Natural extensions and entropy of $\alpha$-continued fractions}
\author{Cor Kraaikamp}
\address{Technische Universiteit Delft and Thomas Stieltjes Institute of Mathematics\\ EWI\\ Mekelweg 4\\ 2628 CD Delft, the Netherlands}
\email{c.kraaikamp@tudelft.nl}
\author{Thomas A. Schmidt}
\address{Oregon State University\\Corvallis, OR 97331, USA}
\email{toms@math.orst.edu}
\author{Wolfgang Steiner}
\address{LIAFA, CNRS UMR 7089, Universit\'e Paris Diderot -- Paris 7,
Case 7014, 75205 Paris Cedex 13, FRANCE}
\email{steiner@liafa.univ-paris-diderot.fr}
\thanks{This work has been supported by the Bezoekersbeurs B~040.11.083 of the Nederlandse Organisatie voor Wetenschappelijk Onderzoek (NWO), and the Hausdorff Research Institute for Mathematics.}
\keywords{Continued fractions, natural extension, entropy}
\subjclass[2000]{Primary: 11K50; Secondary: 37A10, 37A35, 37E05}
\begin{document}
\begin{abstract}
We construct a natural extension for each of Nakada's $\alpha$-continued fraction transformations and show the continuity as a function of $\alpha$ of both the entropy and the measure of the natural extension domain with respect to the density function $(1+xy)^{-2}$.
For $0 < \alpha \le 1$, we show that the product of the entropy with the measure of the domain equals $\pi^2/6$.    We show that the interval $(3-\sqrt{5})/2 \le \alpha \le (1+\sqrt{5})/2$ is a maximal interval upon which the entropy is constant. 
As a key step for all this, we give the explicit relationship between the $\alpha$-expansion of $\alpha-1$ and of $\alpha$.
\end{abstract}

\maketitle

\section{Introduction}

Shortly after the introduction at the end of the 1950s of the idea of Kolmogorov--Sinai  entropy, hereafter simply \emph{entropy}, Rohlin \cite{Rohlin61} defined the notion of natural extension of a dynamical system and showed that a system and its natural extension have the same entropy.
In briefest terms, a natural extension is a minimal invertible dynamical system of which the original system is a factor under a surjective map; 
natural extensions are unique up to metric isomorphism.  

In 1977, Nakada, Ito and Tanaka \cite{NakadaItoTanaka77} gave an explicit planar map fibering over the regular continued fraction map of the unit interval. 
Their planar map is so straightforward that it has an obvious invariant measure, and from this they  gave a natural manner to derive the invariant measure for the continued fraction map.  
(See \cite{Keane:95} for discussion of the possible historical implications.)
In particular, they showed that their planar system is a natural extension of the regular continued fraction system with its Gauss measure. 

In 1981,  Nakada \cite{Nakada81} introduced his \emph{$\alpha$-continued fractions}, which form a one dimensional family of interval maps, $T_{\alpha}$  with  $\alpha \in [0,1]$.   (In fact,  $T_1$ is the Gauss continued fraction map,  and $T_{1/2}$ is the nearest-integer continued fraction map.)   Using planar natural extensions, he gave the entropy for those maps corresponding to $\alpha \in [1/2, 1]$.     In 1991,     Kraaikamp \cite{Kraaikamp:91} gave a more direct calculation of these entropy values by using his $S$-expansions,  based upon inducing past subsets of the planar natural extension of the regular continued fraction map given in \cite{NakadaItoTanaka77}.     

It was not until 1999 that further progress was made on the entropy of the $\alpha$-continued fractions.   Moussa, Cassa and Marmi \cite{MoussaCassaMarmi:99} gave the entropy for the maps with  $\alpha \in [\sqrt{2}-1, 1/2)$.   
Let $h(T_{\alpha})$ denote the entropy of~$T_{\alpha}$, and let $g = (\sqrt{5}-1)/2$ be the golden mean;  with their results, one knew  
\[ 
h(T_{\alpha}) =  \left\{\begin{array}{cl}\dfrac{\pi^2}{6 \ln(1+\alpha)} & \mbox{for}\ g \le \alpha \le 1\,;\\[2ex] \dfrac{\pi^2}{6 \ln(1+g)} & \mbox{for}\ \sqrt{2}-1 \le  \alpha \le g\,.\end{array}\right.      
\]

In 2008,   Luzzi and Marmi  \cite{LuzziMarmi08}  presented numeric data showing that the entropy function $\alpha \mapsto h(T_{\alpha})$ behaves in a rather complicated fashion as $\alpha$ varies.  They also claimed that  $\alpha \mapsto h(T_{\alpha})$ is a continuous function of $\alpha$ whose limit at $\alpha = 0$ is zero.    Unfortunately,  their proof of continuity was flawed;  however,  Tiozzo \cite{Tiozzo:09}  has since salvaged the result for $\alpha > 0.056\dots\,$ (and, in an updated version, after our work was completed, has shown H\"older continuity throughout the full interval). 
Luzzi and Marmi also conjectured that, for non-zero~$\alpha$, the product of the entropy and the area of the standard number theoretic planar extension for~$T_{\alpha}$ is constant.
 
Also in 2008, prompted by the numeric data of \cite{LuzziMarmi08},  Nakada and Natsui ~\cite{NakadaNatsui08} gave explicit intervals on which $\alpha \mapsto h(T_{\alpha})$ is respectively constant, increasing, decreasing.   
Indeed, they showed this by exhibiting intervals of $\alpha$ such that  $T^{k}_{\alpha}(\alpha) = T^{k'}_{\alpha}({\alpha-1})$ for pairs of positive integers $(k,k')$ and showed that the entropy is constant (resp.\ increasing, decreasing) on such an interval if $k=k'$ (resp.\ $k>k'$, $k<k'$).     
They   conjectured that there is an open dense set of $\alpha \in [0,1]$ for which the $T_{\alpha}$-orbits of ${\alpha-1}$ and $\alpha$ synchronize.    (Carminati and Tiozzo ~\cite{Carminati-Tiozzo:10}  confirm this conjecture  and also identify maximal intervals where $T_{\alpha}$-orbits synchronize.)  

We prove the continuity of the entropy function and 
confirm the conjectures of Luzzi--Marmi and of Nakada--Natsui (including reproving results of \cite{Carminati-Tiozzo:10}).  Our main results are stated more precisely in Section~\ref{s:justTheFacts}.

\subsection*{Our approach}      
Our results follow from giving an explicit description of a planar natural extension for each $\alpha \in (0,1]$, see Section~\ref{sec:struct-natur-extens}, and this by way of giving details of the relationship between the $\alpha$-expansions of ${\alpha-1}$ and~$\alpha$; see Theorem~\ref{t:endpoints}.   

 Experimental evidence, and experience with $S$-expansions \cite{Kraaikamp:91},  ``quilting'' \cite{Kraaikamp-Schmidt-Smeets:10} and with analogous natural extensions for $\beta$-expansions \cite{Kalle-Steiner:12},  leads one to expect that the planar natural extension for $T_{\alpha}$ has fibers over the interval that are constant between points in the union of the $T_{\alpha}$ orbits of $\alpha-1$ and $\alpha$;  see e.g.\ Figure~\ref{f:2342}.     Thus, one is quickly interested in finding ``synchronizing intervals'' for which all $\alpha$ have orbits that meet after the same number of respective steps, and share initial expansions of $\alpha$ and $\alpha-1$.     This is easily expressed in terms of matrix actions,  and one can gain some geometric intuition; see Figure~\ref{f:rich}  and Remark~\ref{r:RvLv}.   From this perspective,  the fundamental relationship is 
expressed by~\eqref{e:funMatrix}.   Furthermore,  it is easy to discover the ``folding operation'' on these synchronizing intervals,  see Remark~ \ref{r:fold}.    
 
However,   the matrix methods by themselves are awkward when it is necessary to characterize the values $\alpha$ for which there is synchronization.    We do this in Theorem~\ref{t:endpoints}, using our \emph{characteristic sequences}.   Furthermore,  and crucially,  a detailed description of the natural extensions in general is too fraught with details without the use of formal language notation and vocabulary.   Mainly,  this is because of the fractal nature of pieces of these planar regions; see Figures~\ref{f:g2},~\ref{f:4},~\ref{f:23} and~\ref{f:2342} for hints of this phenomenon.  (See also Theorem~\ref{t:shapeOmega} for a statement giving the shape of a natural extension with our vocabulary.)     Thus,  we express the $\alpha$-expansions as words over an appropriate alphabet, and build up notation to represent the basic operations relating the expansion of $\alpha$ and~$\alpha-1$.      Further details on our approach are given in the outline below.    

\subsection*{Outline} 
The sections of the paper are increasingly technical, with the exception of the final two sections. 
We establish notation that is needed for formulating the results in the following section, including some operations on words and the definition of our characteristic sequences. 
We state a collection of our main results in Section~\ref{s:justTheFacts}.  

Thereafter, we first relate the regular continued fraction and the general $\alpha$-expansion of a real number.  
This then allows a proof that a natural extension for $T_{\alpha}$ is given by our $\mathcal{T}_{\alpha}$ on the closure of the orbits of $(x,0)$.   It also allows us to show the constancy of $h(T_\alpha)\, \mu(\Omega_\alpha)$, thus proving the conjecture of Luzzi and Marmi.

In order to reach the deeper results, in Section~\ref{s:relation} we give the explicit relationship between the $\alpha$ expansions of $\alpha-1$ and of~$\alpha$, which is used to describe the (maximal in an appropriate sense) intervals for synchronizing orbits. 
This is then applied in Section ~\ref{sec:struct-natur-extens} to give a detailed description of the natural extension domain, as the union of fibers that are constant on intervals void of the $T_{\alpha}$-orbits of ${\alpha-1}$ and~$\alpha$.      
In Section ~\ref{sec:evol-omeg-along}, we describe how the natural extensions deform along a synchronizing interval,  and derive the behavior of the entropy function along such an interval.  
     
Relying on the previous two sections, in Section ~\ref{s:continuity} we prove the main result of continuity.    In the following section,  we show the more challenging result that the entropy (and hence the measure of the natural extensions) is constant on the interval $[g^2,g]$.  (We give results along the way that show that this is a maximal interval with this property.)

In Section~\ref{sec:limit-points},  we give further results on the set of synchronizing orbits,  in particular showing the transcendence of limits under a natural folding operation on the set of intervals of synchronizing orbits. 
We end this paper with a list of remaining open questions.

\section{Basic Notions and Notation} \label{sec:basic-noti-notat}
\subsection*{One dimensional maps, digit sequences}
For $\alpha \in [0,1]$, we let $\mathbb{I}_\alpha := [{\alpha-1}, \alpha]$ and define the map $T_{\alpha}:\, \mathbb{I}_\alpha \to [{\alpha-1}, \alpha)$ by
\[
T_{\alpha}(x) := \bigg| \frac{1}{x} \bigg| -  \bigg\lfloor\,  \bigg| \frac{1}{x} \bigg| + 1 -\alpha \bigg\rfloor \qquad (x \neq 0),
\]
$T_{\alpha}(0) :=0$.
For $x \in \mathbb{I}_\alpha$, put
\[
\varepsilon(x) := \left\{\begin{array}{cl}+1 & \mbox{if}\ x \ge 0\,, \\ -1 & \mbox{if}\ x<0\,,\end{array}\right. \quad \mbox{and} \quad d_{\alpha}(x) := \bigg\lfloor \bigg| \frac{1}{x} \bigg| + 1 - \alpha \bigg\rfloor,
\]
with $d_\alpha(0) = \infty$.   
Furthermore, let
\[
\varepsilon_n = \varepsilon_{\alpha,n}(x) := \varepsilon(T^{n-1}_\alpha (x)) \quad \mbox{and}\quad d_n = d_{\alpha,n}(x) := d_\alpha(T^{n-1}_\alpha (x)) \qquad (n \ge 1).
\]
This yields the \emph{$\alpha$-continued fraction} expansion of $x \in \mathbb{R}$\,:
\[
x = d_0+ \dfrac{\varepsilon_1}{d_1 + \dfrac{\varepsilon_2}{d_2+\cdots}}\,, 
\]
where $d_0 \in \mathbb{Z}$ is such that $x-d_0 \in [{\alpha-1}, \alpha)$.
(Standard convergence arguments justify equality of $x$ and its expansion.) 
These $\alpha$-continued fractions include the {\em regular continued fractions} {\em (RCF)}, given by $\alpha = 1$, and the nearest integer continued fractions, given by $\alpha = 1/2$.
We will often use the {\em by-excess continued fractions}, given by $\alpha = 0$.
The map $T_{0}$ gives infinite expansions for all $x \in [-1, 0)$; each expansion has all signs $\varepsilon_n = -1$, and digits $d_n \ge 2$.   A number in this range is rational if and only if it has an eventually periodic expansion of period $(\varepsilon:d) = (-1:2)$; in particular,  $-1$ has the purely periodic expansion with this period. 

The point $\alpha$ is included in the domain of $T_\alpha$ because its $T_\alpha$-orbit plays an important role, as does that of~${\alpha-1}$. 
We thus define
\[
\underline{b}{}^\alpha_n = (\varepsilon_{\alpha,n}({\alpha-1}):d_{\alpha,n}({\alpha-1})) \quad \text{and} \quad \overline{b}{}^\alpha_n  = (\varepsilon_{\alpha,n}(\alpha):d_{\alpha,n}(\alpha)) \qquad (n \ge 1),
\]
and informally refer to these sequences as the $\alpha$-expansions of ${\alpha-1}$ and~$\alpha$.
Setting
\[
\llbracket ({\varepsilon_1:d_1}) ({\varepsilon_2:d_2}) \cdots\,\rrbracket := \dfrac{\varepsilon_1}{d_1 + \dfrac{\varepsilon_2}{d_2+\cdots}}\,, 
\]
gives equalities such as $\llbracket\, \underline{b}{}^\alpha_1 \underline{b}{}^\alpha_2 \cdots \,\rrbracket = {\alpha-1}$ and $\llbracket\, \overline{b}{}^\alpha_1 \overline{b}{}^\alpha_2 \cdots \,\rrbracket = \alpha$. 
We also set
\[
\llbracket (\varepsilon_1:d_1) \cdots (\varepsilon_n:d_n),\, y\, \rrbracket := \dfrac{\varepsilon_1}{d_1 + \cdots + \dfrac{\varepsilon_n}{d_n + y}} \qquad (y \in \mathbb{R}).
\]

Since $d_{\alpha}(x)\ge 1$ for all $x \in \mathbb{I}_\alpha$, $\alpha \in [0,1]$, and $d_{\alpha}(x)\ge 2$ when $\varepsilon(x)=-1$, let
\[
\mathscr{A}_0 := \mathscr{A} \cup \{(+1:\infty)\} \ \mbox{where}\  \mathscr{A} := \mathscr{A}_- \cup \mathscr{A}_+\;,\]
with
\begin{equation}\label{e:defAminPlus}  \mathscr{A}_- := \{\,(-1:d) \mid d \in \mathbb{Z},\, d \ge 2\}\ \mbox{and}\ \mathscr{A}_+ := \{\,(+1:d) \mid d \in \mathbb{Z},\, d \ge 1\}\,.
\end{equation}

Every ``digit'' $(\varepsilon(x):d_\alpha(x))$ is thus in $\mathscr{A}_0$.
We define an order $\preceq$ on $\mathscr{A}_0$ by
\[
(\varepsilon:d) \preceq (\varepsilon':d') \quad \mbox{if and only if} \quad \varepsilon/d \le \varepsilon'/d'\,.
\]
For any $x, x' \in \mathbb{I}_\alpha$, $\alpha \in [0,1]$, $x \le x'$ implies $(\varepsilon(x):d_\alpha(x)) \preceq (\varepsilon(x'):d_\alpha(x'))$.

The interval $\mathbb{I}_\alpha\setminus \{0\}$ is partitioned by the rank-one \emph{cylinders} of~$T_{\alpha}$, which are defined by 
\[ 
\Delta_{\alpha}(a) := \{x \in \mathbb{I}_\alpha \mid  (\varepsilon(x):d_{\alpha}(x)) = a\} \qquad (a \in \mathscr{A}_0)\,.
\]
All cylinders $\Delta_{\alpha}(a)$ with $a \in \mathscr{A}$, $\underline{b}{}^\alpha_1 \prec a \prec \overline{b}{}^\alpha_1$, are \emph{full}, that is their image under $T_{\alpha}$ is the interval $[{\alpha-1}, \alpha)$, and
\[
T_\alpha\big(\Delta_{\alpha}(\underline{b}{}^\alpha_1)\big) = \big[T_{\alpha}({\alpha-1}), \alpha\big), \quad T_\alpha\big(\Delta_{\alpha}(\overline{b}{}^\alpha_1)\big) =  \big[T_{\alpha}(\alpha), \alpha\big), \quad T_\alpha\big(\Delta_\alpha(+1:\infty)\big) = \{0\}\,.
\]

\subsection*{Two-dimensional maps, matrix formulation, invariant measure}\label{sss:2dMaps}
The standard number theoretic planar map associated to continued fractions is defined by
\[
\mathcal{T}_\alpha(x,y) := \bigg(T_\alpha(x), \frac{1}{d_{\alpha}(x)+ \varepsilon(x)\,y}\bigg)\, \quad (x \in \mathbb{I}_\alpha,\ y \in [0,1])\,.
\]
For any $x \in \Delta_\alpha(\varepsilon:d)$, $(\varepsilon:d) \in \mathscr{A}$, we have
\begin{equation} \label{e:matsForT}
\mathcal{T}_\alpha(x, y) = \big(M_{(\varepsilon:d)} \cdot x, N_{(\varepsilon:d)} \cdot y\big),
\end{equation}
where
\[
M_{(\varepsilon:d)} := (-1) \begin{pmatrix}-d & \varepsilon \\ 1 & 0 \end{pmatrix} \quad \mbox{and} \quad N_{(\varepsilon:d)} := {}^t\hspace{-.1em}M_{(\varepsilon:d)}^{-1} = (-\varepsilon) \begin{pmatrix}0 & 1 \\ \varepsilon & d\end{pmatrix}\,.
\]
As usual, the $2 \times 2$ matrix {\small$\begin{pmatrix}a & b \\ c & d\end{pmatrix}$} acts on real numbers by {\small$\begin{pmatrix}a & b \\ c & d\end{pmatrix} \cdot x = \dfrac{a x + b}{c x + d}$}, and ${}^t\hspace{-.1em}M$ denotes the transpose of~$M$.
Note that $M \cdot x$ is a projective action, therefore the factors $(-1)$ and $(-\varepsilon)$ do not change the actions of $M_{(\varepsilon:d)}$ and~$N_{(\varepsilon:d)}$.
However, these factors will be useful in several matrix equations.

Let $\mu$ be the measure on $\mathbb{I}_\alpha \times [0,1]$ given by
\[
d\mu = \dfrac{dx\, dy}{(1 + xy)^2}\,.
\]
Then we have, for any rectangle $[x_1,x_2] \times [y_1, y_2] \subset \mathbb{I}_\alpha \times [0,1]$ and any invertible matrix~$M$, 
\begin{equation} \label{e:mu}
\mu\big([x_1,x_2] \times [y_1, y_2]\big) = \log \frac{(1+x_1y_1) (1+x_2y_2)}{(1+x_1y_2) (1+x_2y_1)} = \mu\big(M \cdot [x_1,x_2] \times {}^t\hspace{-.1em}M^{-1} \cdot [y_1, y_2]\big).
\end{equation}

\subsection*{Words, symbolic notation} \label{sec:symb-notat-digit}
For any set~$V$, the Kleene star $V^* = \bigcup_{n\ge0} V^n$ denotes the set of concatenations of a finite number of elements in~$V$, and $V^\omega$~denotes the set of (right) infinite concatenations of elements in~$V$. 
The length of a finite word $v$ is denoted by~$|v|$, that is $|v| = n$ if $v \in V^n$.
For the Kleene star of a single word (or letter)~$v$, we write $v^*$ instead of~$\{v\}^*$, and $v^\omega$ denotes the unique element of $\{v\}^\omega$. 
We will also use the abbreviations $v_{[m,n]} = v_m v_{m+1} \cdots v_n$, $v_{[m,n)} = v_m v_{m+1} \cdots v_{n-1}$, where $v_{[m,m-1]} = v_{[m,m)}$ is the empty word, and $v_{[m,\infty)} = v_m v_{m+1} \cdots$. 

In light of~\eqref{e:matsForT}, we set, for $v = v_1 \cdots v_n \in \mathscr{A}^*$, 
\[
M_v := M_{v_n} \cdots M_{v_1} \quad \mbox{and} \quad N_v := {}^t\hspace{-.1em}M_v^{-1} = N_{v_n} \cdots N_{v_1}\,.
\] 
Then we have, for example, $M_{\underline{b}{}^\alpha_{[1,n]}} \cdot ({\alpha-1}) = T_{\alpha}^{n}({\alpha-1})$ and $M_{\overline{b}{}^\alpha_{[1,n]}} \cdot \alpha = T_{\alpha}^{n}(\alpha)$.

\subsection*{Operations on words via matrices}\label{ss:EW}
The two matrices 
\[
W :=  \begin{pmatrix}1 & 0 \\ -1 & -1\end{pmatrix} \quad \mbox{and} \quad
E := \begin{pmatrix}1 & -1 \\ 0 & 1\end{pmatrix}
\]
arise naturally in our discussion.
Note that $W^2$ is the identity, and also that
\begin{equation} \label{e:W}
M_{(\varepsilon:d)} W = M_{(-\varepsilon:d+\varepsilon)}\,.
\end{equation}
The action of $E$ is $E \cdot x = x-1$, and
\begin{equation} \label{e:E}
E^{\pm1} M_{(\varepsilon:d)} = M_{(\varepsilon:d\pm1)}\,.
\end{equation}
Therefore, let the left superscript $(W)$ and right superscripts $(+1), (-1)$ denote operators, related to $W$ and $E^{\pm1}$ respectively, acting on letters in $\mathscr{A}_0$ by
\[
{}^{(W)} (\varepsilon:d) := \left\{\begin{array}{cl}(-\varepsilon:d+\varepsilon) & \mbox{if}\ d < \infty\,, \\ (+1:\infty) & \mbox{if}\ d = \infty\,,\end{array}\right. \quad
(\varepsilon:d)^{(\pm1)} := \left\{\begin{array}{cl}(\varepsilon:d\pm1) & \mbox{if}\ d < \infty\,, \\ (+1:\infty) & \mbox{if}\ d = \infty\,.\end{array}\right.\quad
\]
We extend this definition to words $v = v_{[1,n]} \in \mathscr{A}_0^*$, $n \ge 2$, by setting ${}^{(W)}v := {}^{(W)}v_1 v_{[2,n]}$ and $v{}^{(\pm 1)} := v_{[1,n)} v_n{}^{(\pm 1)}$.
Similarly, we set ${}^{(W)}v := {}^{(W)}v_1 v_{[2,\infty)}$ for $v = v_{[1,\infty)} \in \mathscr{A}_0^\omega$.

\subsection*{Characteristic sequences, alternating order, operation $v \mapsto \widehat{v}$}
To every finite or infinite word on the alphabet~$\mathscr{A}_-$, we associate a  (correspondingly finite or infinite) 
\emph{characteristic sequence} of positive integers (and~$\infty$) in the following way.
\begin{itemize}
\item
The characteristic sequence of $v \in \mathscr{A}_-^*$ is $a_1 a_2 \cdots a_{2\ell+1}$, where the integers $\ell \ge 0$ and $a_j \ge 1$, $1 \le j \le 2\ell+1$, are defined by
\[
v = (-1:2)^{a_1-1}\, (-1:2+a_2)\, (-1:2)^{a_3-1}\, \cdots\, (-1:2+a_{2\ell})\, (-1:2)^{a_{2\ell+1}-1}\,.
\]
\item
The characteristic sequence of $v \in \mathscr{A}_-^\omega$ that does not end with the infinite periodic word $(-1:2)^\omega$ is $a_1 a_2 \cdots$, where the $a_j \ge 1$, $j \ge 1$, are the unique positive integers such that
\[
v = (-1:2)^{a_1-1}\, (-1:2+a_2)\, (-1:2)^{a_3-2}\, (-1:2+a_4)\, \cdots\,.
\]
\item
The characteristic sequence of $v \in \mathscr{A}_-^* (-1:2)^\omega$ is $a_1 a_2 \cdots$ with $a_j = \infty$ for all $j > 2\ell$, where the integers $\ell \ge 0$ and $a_j \ge 1$, $1 \le j \le 2\ell$, are defined by
\[
v = (-1:2)^{a_1-1}\, (-1:2+a_2)\, (-1:2)^{c_2}\, \cdots\, (-1:2+a_{2\ell})\, (-1:2)^\omega\,.
\]
\end{itemize}

We compare characteristic sequences using the \emph{alternating (partial) order} on words of integers (and~$\infty$), i.e.,
\[
a_{[1,n)} <_{\mathrm{alt}} a'_{[1,n)} \quad \mbox{if and only if} \quad  a_{[1,j]} = a'_{[1,j]}, \, (-1)^j a_{j+1} < (-1)^j a'_{j+1} \ \mbox{for some}\ 0 \le j < n\,.
\]

Using the characteristic sequences, we introduce an operation on words in $\mathscr{A}_-^* \cup \mathscr{A}_-^\omega \setminus \mathscr{A}_-^* (-1:2)^\omega$ that will allow us to express the relationship between the $\alpha$-expansion of $\alpha-1$ and~$\alpha$.
\begin{itemize}
\item
For $v \in \mathscr{A}_-^*$ with characteristic sequence $a_1 a_2 \cdots a_{2\ell+1}$, we set
\[
\widehat{v} := (-1:2+a_1)\, (-1:2)^{a_2-1}\, (-1:2+a_3)\, \cdots\, (-1:2)^{a_{2 \ell}-1}\, (-1:2+a_{2\ell+1})\,.
\]
\item
For $v \in \mathscr{A}_-^\omega \setminus \mathscr{A}_-^* (-1:2)^\omega$ with characteristic sequence $a_1 a_2 \cdots$, we set
\[
\widehat{v} := (-1:2+a_1)\, (-1:2)^{a_2-1}\, (-1:2+a_3)\, (-1:2)^{a_4-1}\, \cdots\,.
\]
\end{itemize}

The characteristic sequence $a_{[1,\infty)}$ of a number $x \in [-1,0)$ is defined to be the characteristic sequence of its by-excess expansion $(\varepsilon_{0,1}(x):d_{0,1}(x))\, (\varepsilon_{0,2}(x):d_{0,2}(x)) \cdots \in \mathscr{A}_-^\omega$.

\section{Results}\label{s:justTheFacts}
For the ease of the reader, we gather the main results of the paper in this section.  

For any $\alpha \in (0,1]$, the standard \emph{natural extension domain} is
\[
\Omega_\alpha := \overline{\big\{\mathcal{T}_\alpha^n(x,0) \mid x \in [{\alpha-1},\alpha),\, n \ge 0\big\}}\,.
\]
We establish the positivity and finiteness of $\mu(\Omega_{\alpha})$ in Section~\ref{sec:natural-extensions}.
The map $\mathcal{T}_{\alpha}$ is invertible almost everywhere on~$\Omega_{\alpha}$, and it is straightforward to define appropriate dynamical systems such that the system of $T_{\alpha}$ is a factor of the system of~$\mathcal{T}_{\alpha}$, by way of the (obviously surjective) projection to the first coordinate.
These systems also verify the minimality criterion for natural extensions, which yields the following theorem. 
For details, see Section~\ref{sec:natural-extensions}.

\begin{Thm} \label{t:natext}
Let $\alpha \in (0,1]$, $\mu_\alpha$~be the probability measure given by normalizing $\mu$ on~$\Omega_\alpha$, $\nu_\alpha$~the marginal measure obtained by integrating $\mu_\alpha$ over the fibers~$\{x\} \times \{y \mid (x,y) \in \Omega_\alpha\}$, $\mathscr{B}_\alpha$~the Borel $\sigma$-algebra of~$\mathbb{I}_\alpha$, and $\mathscr{B}_\alpha'$ the Borel $\sigma$-algebra of~$\Omega_\alpha$.
Then $(\Omega_\alpha, \mathcal{T}_\alpha, \mathscr{B}_\alpha', \mu_\alpha)$ is a natural extension of $(\mathbb{I}_\alpha, T_\alpha, \mathscr{B}_\alpha, \nu_\alpha)$.
\end{Thm}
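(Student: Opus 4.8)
The plan is to verify the abstract characterization of a natural extension à la Rohlin: one must exhibit a measure-theoretic factor map from the invertible system $(\Omega_\alpha, \mathcal{T}_\alpha, \mathscr{B}'_\alpha, \mu_\alpha)$ onto $(\mathbb{I}_\alpha, T_\alpha, \mathscr{B}_\alpha, \nu_\alpha)$, check that $\mathcal{T}_\alpha$ is invertible ($\mu_\alpha$-a.e.) and measure-preserving, and then establish the minimality condition, namely that $\mathscr{B}'_\alpha$ is the smallest $\mathcal{T}_\alpha$-complete $\sigma$-algebra containing the pullback $\pi^{-1}(\mathscr{B}_\alpha)$, where $\pi(x,y) = x$ is projection to the first coordinate. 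Equivalently, $\bigvee_{n \ge 0} \mathcal{T}_\alpha^n\,\pi^{-1}(\mathscr{B}_\alpha)$ generates $\mathscr{B}'_\alpha$ up to $\mu_\alpha$-null sets.

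First I would record the preliminary facts. That $\mu(\Omega_\alpha) \in (0,\infty)$ is asserted to be proved in Section~\ref{sec:natural-extensions}, so $\mu_\alpha$ is a well-defined probability measure; $\nu_\alpha$ is then its first-coordinate marginal. That $\mathcal{T}_\alpha$ preserves $\mu$ follows on each cylinder from~\eqref{e:matsForT} together with the invariance identity in~\eqref{e:mu}: on $\Delta_\alpha(\varepsilon{:}d) \times [0,1]$ the map acts as $(x,y)\mapsto (M_{(\varepsilon:d)}\cdot x, N_{(\varepsilon:d)}\cdot y)$ with $N_{(\varepsilon:d)} = {}^t\!M_{(\varepsilon:d)}^{-1}$, which is exactly the shape of transformation under which $d\mu = dx\,dy/(1+xy)^2$ is invariant. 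Summing over the countable cylinder partition gives $\mathcal{T}_\alpha$-invariance of $\mu$ on $\Omega_\alpha$, hence of $\mu_\alpha$; then $\pi \circ \mathcal{T}_\alpha = T_\alpha \circ \pi$ (immediate from the definition of $\mathcal{T}_\alpha$) shows $\pi$ is a factor map and $\nu_\alpha$ is $T_\alpha$-invariant. Invertibility $\mu_\alpha$-a.e.: off the (null) set where $x=0$ or where a coordinate lands on a cylinder boundary, the first digit $(\varepsilon(x),d_\alpha(x))$ of a point and the second coordinate of its image determine the preimage, since $N_{(\varepsilon:d)}$ is invertible; the set of points with ambiguous history is contained in a countable union of lower-dimensional pieces, hence $\mu$-null. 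The one genuinely space-dependent input here is knowing $\Omega_\alpha$ is $\mathcal{T}_\alpha$-invariant (forward and, a.e., backward), which by construction it is: $\Omega_\alpha$ is the closure of a forward orbit-union of the whole base segment $[{\alpha-1},\alpha)\times\{0\}$, so $\mathcal{T}_\alpha(\Omega_\alpha)$ differs from $\Omega_\alpha$ only by the closure operation, i.e., by a null set.

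For the minimality / generation step I would argue as follows. The partition of $\mathbb{I}_\alpha$ into the cylinders $\Delta_\alpha(a)$, $a\in\mathscr{A}_0$, is generating for $T_\alpha$ in the sense that $\bigvee_{n\ge0} T_\alpha^{-n}\{\Delta_\alpha(a)\}$ separates points of $\mathbb{I}_\alpha$ up to a null set — this is the classical fact that the $\alpha$-continued fraction expansion determines $x$ (the convergence asserted in Section~\ref{sec:basic-noti-notat}). Pulling back through $\pi$ and pushing forward by $\mathcal{T}_\alpha$, the $\sigma$-algebra $\bigvee_{n\ge0}\mathcal{T}_\alpha^n\pi^{-1}(\mathscr{B}_\alpha)$ contains all sets of the form $\mathcal{T}_\alpha^n(\Delta_\alpha(v_1)\cap T_\alpha^{-1}\Delta_\alpha(v_2)\cap\cdots)\times[0,1]$, and on such a set the second coordinate after $n$ applications of $\mathcal{T}_\alpha$ is $N_{v_{[1,n]}}\cdot 0 = \llbracket v_n,\dots,v_1,\,0\rrbracket$-type expression, i.e., a finite continued fraction in the already-seen digits. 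As $n\to\infty$ these finite $y$-values are dense in every fiber of $\Omega_\alpha$ — indeed $\Omega_\alpha$ was defined as the closure of exactly these points — so the generated $\sigma$-algebra separates the $y$-coordinate as well, hence equals $\mathscr{B}'_\alpha$ modulo $\mu_\alpha$-null sets. This is the minimality criterion, and with invertibility and the factor map already in hand, Rohlin's characterization yields the theorem.

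The main obstacle is the generation/minimality step, and within it the delicate point that the second-coordinate values realized along orbits are genuinely dense in each fiber of $\Omega_\alpha$ rather than merely in some sub-fiber — this is where the precise definition of $\Omega_\alpha$ as a closure of $\mathcal{T}_\alpha$-orbits of $(x,0)$ is essential, and where one must be careful that no positive-measure portion of $\Omega_\alpha$ is "unreachable." A secondary technical annoyance is the bookkeeping of null sets: cylinder boundaries, the point $0$, the fiber over ${\alpha-1}$ versus over $\alpha$, and points with non-unique expansions all have to be collected into a single $\mathcal{T}_\alpha$-invariant null set on whose complement everything above is literally true. These are exactly the points the authors defer to Section~\ref{sec:natural-extensions}, where the positivity and finiteness of $\mu(\Omega_\alpha)$ — itself proved via the explicit fibered description of $\Omega_\alpha$ coming from Theorem~\ref{t:endpoints} — make the density-in-fibers claim concrete.
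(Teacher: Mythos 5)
Your proposal follows the same route as the paper's proof: Rohlin's criterion with the projection factor map $\pi$, $\mu$-invariance via \eqref{e:mu} on each cylinder, almost-everywhere bijectivity of $\mathcal{T}_\alpha$ on $\Omega_\alpha$ (the paper's Lemma~\ref{l:bijective}), and minimality via showing that $\bigvee_{n\ge0}\mathcal{T}_\alpha^n\pi^{-1}(\text{cylinders})$ generates $\mathscr{B}'_\alpha$. The only place your argument is genuinely imprecise is the last link. You write that on the image $\mathcal{T}_\alpha^n\big(\pi^{-1}\Delta_\alpha(v_{[1,n]})\big)$ the second coordinate ``is $N_{v_{[1,n]}}\cdot 0$'' and then infer separation of the $y$-coordinate from density of these points in the fibers. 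In fact the second coordinate on that image set ranges over the whole interval $N_{v_{[1,n]}}\cdot[0,1]$ (you are conflating the orbit of $(x,0)$ with the forward image of the full vertical slab), and what actually yields separation is not density of the points $N_w\cdot 0$ but the fact that those intervals shrink to zero length as $n\to\infty$ --- equivalently, $N_a$ contracts on $[0,1]$, i.e.\ $N_a^{-1}$ expands, which is exactly what the paper invokes. Density of a countable set of points in a fiber does not, by itself, make a $\sigma$-algebra separate the fiber; the nested shrinking of the $y$-shadows is the operative mechanism. Once you replace the density step with this contraction estimate, your plan coincides with the paper's. A small misattribution at the end: $\mu(\Omega_\alpha)\in(0,\infty)$ is Lemma~\ref{l:positivemeasure}, whose proof rests on the RCF-vs-$\alpha$-expansion comparison of Lemma~\ref{l:1alpha}, not on the explicit fibered description from Theorem~\ref{t:endpoints}.
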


In the same section, relying on Abramov's formula for the entropy of an induced system, we prove the following conjecture of Luzzi and Marmi \cite{LuzziMarmi08}.

\begin{Thm} \label{t:hmu}
For any $\alpha \in (0,1]$, we have $h(T_\alpha)\, \mu(\Omega_\alpha) = \pi^2/6$.
\end{Thm}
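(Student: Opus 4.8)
The plan is to exploit the fact that $\mathcal{T}_1$ on $\Omega_1$ is the classical natural extension of the Gauss map, for which one knows $\mu(\Omega_1) = \log 2$ and $h(T_1) = \pi^2/(6\log 2)$, so that $h(T_1)\,\mu(\Omega_1) = \pi^2/6$, and then to transport this identity to every $\alpha \in (0,1]$ by an inducing argument together with Abramov's formula. The link between $T_\alpha$ and $T_1$ is that the $\alpha$-expansion of a real number and its regular continued fraction expansion are related by a bounded ``singularization/insertion'' process; one should make this precise first. Concretely, I would establish (this is essentially the content promised by the sentence ``we first relate the regular continued fraction and the general $\alpha$-expansion of a real number'') that there is a common planar system $\Omega$ with an a.e.\ invertible map $\mathcal{T}$ into which both $(\Omega_1,\mathcal{T}_1)$ and $(\Omega_\alpha,\mathcal{T}_\alpha)$ embed as \emph{induced} systems on subsets of positive, finite $\mu$-measure. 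More carefully: one realizes $\mathcal{T}_\alpha$ on $\Omega_\alpha$ as the first-return map of a single master system (built from the regular-continued-fraction natural extension of \cite{NakadaItoTanaka77}) to a suitable region, and likewise for $\mathcal{T}_1$.

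With that in hand, the argument is short. Let $(\Omega,\mathcal{T},\mu)$ be the master system and let $A_\alpha, A_1 \subseteq \Omega$ be the subsets to which inducing $\mathcal{T}$ returns $\mathcal{T}_\alpha$ on $\Omega_\alpha$, resp.\ $\mathcal{T}_1$ on $\Omega_1$ (after normalizing measures). Abramov's formula gives
\[
h(\mathcal{T}_\alpha)\,\mu(\Omega_\alpha) = h(\mathcal{T}|_{A_\alpha})\,\mu(A_\alpha) = h(\mathcal{T})\,\mu(\Omega) = h(\mathcal{T}|_{A_1})\,\mu(A_1) = h(\mathcal{T}_1)\,\mu(\Omega_1),
\]
where I use that entropy is a metric invariant (so the entropy of $\mathcal{T}_\alpha$ as an abstract system equals the entropy of the induced copy), and that Rohlin's theorem identifies $h(\mathcal{T}_\alpha) = h(T_\alpha)$ and $h(\mathcal{T}_1) = h(T_1)$ since $\mathcal{T}_\alpha$, $\mathcal{T}_1$ are natural extensions of $T_\alpha$, $T_1$ (Theorem~\ref{t:natext}). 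Evaluating the right-hand side with the known RCF data yields $h(T_\alpha)\,\mu(\Omega_\alpha) = h(T_1)\,\mu(\Omega_1) = \tfrac{\pi^2}{6\log 2}\cdot\log 2 = \pi^2/6$. Note this automatically also gives $\mu(\Omega_\alpha) < \infty$ and $> 0$, consistent with the claim in Section~\ref{sec:natural-extensions}.

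The main obstacle is the first step: exhibiting $\mathcal{T}_\alpha$ on $\Omega_\alpha$ genuinely as an induced system of a \emph{fixed} system, uniformly in $\alpha$, with the inducing set of finite positive measure. This requires controlling the relationship between $\alpha$- and regular continued fractions — precisely the matter deferred to later sections — and in particular showing that the master system can be taken $\alpha$-independent (one does not simply induce $\mathcal{T}_\alpha$ from $\mathcal{T}_1$ directly, since for small $\alpha$ one must pass through the by-excess system $T_0$; the cleanest route is via a common refinement). A secondary technical point is to verify the hypotheses of Abramov's formula, i.e.\ that the relevant measures are finite and the induced maps are the genuine first-return maps, and to check that the ergodic-component bookkeeping causes no trouble (ergodicity of $\mathcal{T}_\alpha$ on $\Omega_\alpha$ follows from its being a natural extension of the ergodic $T_\alpha$). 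Once the inducing picture is set up, the chain of equalities above is immediate.
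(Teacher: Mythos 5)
Your high-level plan — invoke Rohlin to pass from $T_\alpha$ to $\mathcal{T}_\alpha$, use Abramov's formula for inducing, and transport the identity from the regular CF case — is the right family of ideas, and your final chain of equalities is formally correct once an inducing picture is in hand. But the inducing picture you propose runs in the \emph{opposite} direction from the one the paper uses, and the difference matters. You set out to build a master system $(\Omega,\mathcal{T})$ \emph{containing} both $\Omega_1$ and $\Omega_\alpha$ as subsets on which $\mathcal{T}_1$ and $\mathcal{T}_\alpha$ arise by first return. You correctly identify this as the ``main obstacle'' and leave it unresolved; worse, the route you sketch (passing through the by-excess system $T_0$) is blocked: $T_0$ has an infinite invariant measure, so any ambient system built on it would not have the finite total $\mu$-mass needed for Abramov. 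And a single master system working uniformly for all $\alpha$ cannot exist at all, since $\mu(\Omega_\alpha)\le\log(1+1/\alpha)$ is unbounded as $\alpha\to0$; at best one can hope for an $\alpha$-dependent roof over a common base.

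The paper goes the other way, and this is what makes the proof short. It induces both systems \emph{down} to the common subset $\Omega_1\cap\Omega_\alpha=\{(x,y)\in\Omega_\alpha : x\ge 0\}$ and shows (Lemma~\ref{l:equalreturn}, which rests on Lemmas~\ref{l:alpha1} and~\ref{l:1alpha} relating $\alpha$-expansions to RCF expansions) that the first-return map of $\mathcal{T}_1$ to this set coincides $\mu$-a.e.\ with the first-return map of $\mathcal{T}_\alpha$ to the same set. Abramov's formula is then applied twice, once for each return map, and the coincidence of the two induced systems forces $h(\mathcal{T}_1)\mu(\Omega_1)=h(\mathcal{T}_\alpha)\mu(\Omega_\alpha)$; Rohlin finishes. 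No ambient system is ever constructed. So the genuine gap in your proposal is not the algebra with Abramov (that is fine) but the realization that you do not need an enveloping system at all — it suffices, and is much easier, to observe that $\mathcal{T}_1$ and $\mathcal{T}_\alpha$ agree as first-return maps on the overlap of their domains. Without that observation, the construction you describe is not carried out, and the specific route you suggest for it does not work.
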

 
By Theorem~\ref{t:hmu}, all properties of the entropy $h(T_\alpha)$ can be directly derived from the properties of~$\mu(\Omega_\alpha)$. 
Therefore, we consider only $\mu(\Omega_\alpha)$ in the following. 
In particular, the following theorem implies the continuity of $\alpha \mapsto h(T_\alpha)$ on $(0,1]$, which was claimed to be proved in \cite{LuzziMarmi08} (see the introduction of this paper). 
The proof is given in Section~\ref{s:continuity}.
 
\begin{Thm} \label{t:continuous}
The function $\alpha \mapsto \mu(\Omega_\alpha)$ is continuous on $(0,1]$.
\end{Thm}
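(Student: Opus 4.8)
The plan is to split the parameter interval $(0,1]$ according to whether the $T_\alpha$-orbits of $\alpha-1$ and $\alpha$ synchronize. By Theorem~\ref{t:endpoints} and the discussion of Sections~\ref{s:relation} and~\ref{sec:struct-natur-extens}, the set $U$ of $\alpha$ lying in a synchronizing interval is open and dense, with complement $\mathcal{E} := (0,1]\setminus U$ closed and Lebesgue-null (this recovers a result of \cite{Carminati-Tiozzo:10}). On each component $I$ of $U$, the structure theorem of Section~\ref{sec:struct-natur-extens} presents $\Omega_\alpha$ as a finite union of rectangles whose corners are explicit algebraic functions of $\alpha$ — this is exactly what Section~\ref{sec:evol-omeg-along} makes precise in describing the deformation of $\Omega_\alpha$ along $I$ — so by~\eqref{e:mu} the function $\alpha\mapsto\mu(\Omega_\alpha)$ is real-analytic, in particular continuous, on $U$, and (applying the same description at the endpoints, which are themselves degenerate matching parameters) it is continuous on each closed component $\overline{I}$. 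It therefore remains only to prove continuity of $\alpha\mapsto\mu(\Omega_\alpha)$ at the points of $\mathcal{E}$.

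For this I would introduce a truncation. For $N\ge1$ let $\Omega_\alpha^{(N)}$ be the part of $\Omega_\alpha$ determined by the length-$N$ prefixes $\underline{b}{}^\alpha_{[1,N]}$ and $\overline{b}{}^\alpha_{[1,N]}$: the fibers over the (cofinitely many) full rank-$N$ cylinders are already final, and only the fibers inside the $O(N)$ non-full rank-$N$ cylinders $\Delta_\alpha(\underline{b}{}^\alpha_{[1,N]})$, $\Delta_\alpha(\overline{b}{}^\alpha_{[1,N]})$ and the pieces attached to them are still being refined. Two estimates are then needed. First, since $\mu(\mathbb{I}_\alpha\times[0,1]) = \log\frac{1+\alpha}{\alpha}$ is finite and standard distortion/contraction estimates for the $\alpha$-continued fraction dynamics bound the total $\mu$-measure of the non-full rank-$N$ cylinders by $C\theta^N$ with $C$ and $\theta<1$ uniform on compact subsets of $(0,1]$, one gets $0\le \mu(\Omega_\alpha)-\mu(\Omega_\alpha^{(N)})\le C\theta^N$ uniformly in $\alpha$. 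Second, for fixed $N$ the map $\alpha\mapsto\mu(\Omega_\alpha^{(N)})$ is continuous: away from a locally finite set $F_N$ of \emph{transition parameters} (those $\alpha$ for which $T_\alpha^n(\alpha-1)$ or $T_\alpha^n(\alpha)$ with $n<N$ equals $\alpha-1$, $\alpha$ or $0$) the prefixes and the positions of the orbit points move continuously, hence so do the finitely many rectangle corners; and at a transition parameter one checks that the two one-sided combinatorial descriptions of $\Omega_\alpha^{(N)}$ differ only by a full cylinder (and its attached pieces) whose $\mu$-measure tends to $0$, so the one-sided limits agree. Continuity of $\alpha\mapsto\mu(\Omega_\alpha)$ on $(0,1]$ then follows by the usual uniform-limit argument, the uniform tail bound letting a single large $N$ serve in a whole neighbourhood of any given $\alpha^*\in\mathcal{E}$ — which matters since such $\alpha^*$ need not be isolated among the transition parameters, so one cannot simply freeze the combinatorics.

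The crux is the transition-parameter step: reconciling the description of $\Omega_\alpha$ just below and just above a parameter at which a digit of $\underline{b}{}^\alpha$ or $\overline{b}{}^\alpha$ changes — including the limiting case where an orbit point runs into $0$ and a digit becomes $(+1:\infty)$, and the more delicate case where the matching pattern itself changes. This is precisely what the combinatorial apparatus of Section~\ref{s:relation} is designed for: across such a parameter the expansions change by the operations ${}^{(W)}(\cdot)$ and $(\cdot)^{(\pm1)}$ (cf.~\eqref{e:W} and~\eqref{e:E}) and, when the matching index jumps, by the passage $v\mapsto\widehat{v}$ through characteristic sequences, and in each instance Theorem~\ref{t:endpoints} pins down both sides sharply enough to identify the appearing and disappearing pieces and to see that they degenerate. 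I expect the geometry of this compensation — showing that the region, as opposed to its symbolic encoding, genuinely varies continuously through every such parameter — to be the main obstacle, with the uniform tail estimate a secondary but essential ingredient. (Incidentally, the same bookkeeping shows $\mu(\Omega_\alpha)\to\infty$ as $\alpha\to0^+$, consistent via Theorem~\ref{t:hmu} with $h(T_\alpha)\to0$.)
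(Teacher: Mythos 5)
Your overall architecture matches the paper's almost exactly: both proofs use (i) continuity on each closed synchronizing interval via the explicit formula of Theorem~\ref{t:muOmega}, (ii) an exponentially decaying tail estimate for a truncated description of $\Omega_\alpha$, and (iii) comparison of the truncations for nearby parameters, to obtain continuity at non-synchronizing $\alpha$. The tail estimate you invoke is Corollary~\ref{c:approximation} (via Lemma~\ref{l:exponentialbound}); note that the paper's bound is $\big(\tfrac{d_\alpha(\alpha)}{d_\alpha(\alpha)+\alpha}\big)^n\log(1+1/\alpha)$, which is only \emph{locally} uniform in $\alpha$ (indeed $\mu(\Omega_\alpha)\to\infty$ as $\alpha\to0^+$), and the mechanism behind it is not a generic distortion estimate but the specific first-return argument of Lemma~\ref{l:exponentialbound}; your phrasing here is a bit loose but the substance is right.

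Where you diverge from the paper is in the endgame, and this is where you have set yourself an unnecessarily hard task. You worry that $\alpha^*\in\mathcal E$ ``need not be isolated among the transition parameters, so one cannot simply freeze the combinatorics,'' and accordingly you propose to prove continuity of $\mu(\Omega_\alpha^{(N)})$ \emph{through} transition parameters by reconciling the two one-sided descriptions — the step you correctly flag as the crux. But Lemma~\ref{l:delta} shows this worry is unfounded: for any $\alpha^*\in(0,1]\setminus(\Gamma\cup Z)$ and any $n$, the truncated expansions $\underline{b}{}^{\alpha'}_{[1,n)}$ and $\overline{b}{}^{\alpha'}_{[1,n)}$ are locally \emph{constant} in a (one- or two-sided) neighbourhood of $\alpha^*$. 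The paper exploits this, together with the monotonicity of $\mu(\Omega_\alpha)$ on each $\Gamma_v$ from Theorem~\ref{t:muOmega}, to reduce to comparing $\mu(\Omega_\alpha)$ and $\mu(\Omega_{\alpha'})$ only for $\alpha,\alpha'\notin\Gamma$ with identical truncated languages $\{w\in\mathscr L'_\alpha:|w|<n\}$; the finitely many rectangle measures then vary continuously by Lemma~\ref{l:Jw}, and the tail bound finishes. Thus the ``appearing and disappearing pieces'' across a transition parameter never have to be tracked at all, and your main obstacle evaporates. If you instead insist on proving continuity of $\mu(\Omega_\alpha^{(N)})$ at every $\alpha$, including inside $\Gamma$ where the combinatorics does jump, you would be re-deriving in a harder form exactly what Theorem~\ref{t:muOmega} already gives for free on $[\zeta_v,\eta_v]$; so the plan can be made to work, but as written it leaves the hardest step open when the paper's Lemma~\ref{l:delta} lets you sidestep it entirely.
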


The following theorem, which is proved in Section~\ref{sec:constancy-entropy-g2}, extends results of \cite{Nakada81,MoussaCassaMarmi:99,Carminati-Marmi-Profeti-Tiozzo:10}.

\begin{Thm}\label{t:g2gConst}
For any $\alpha \in [g^2, g]$, we have $\mu(\Omega_\alpha) = \log(1+g)$.
\end{Thm}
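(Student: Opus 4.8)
The plan is to convert the statement into one about $\mu(\Omega_\alpha)$, obtain an explicit description of $\Omega_\alpha$ for $\alpha$ in this range, and compute. By Theorem~\ref{t:hmu} the claim is equivalent to $\mu(\Omega_\alpha)=\log(1+g)$ for $\alpha\in[g^2,g]$, and this equality is already known on $[\sqrt{2}-1,g]$ (from \cite{Nakada81,MoussaCassaMarmi:99}, rephrased through Theorem~\ref{t:hmu}; see also \cite{Carminati-Marmi-Profeti-Tiozzo:10}). So the real work is the interval $[g^2,\sqrt{2}-1]$, where the first digit $\overline{b}{}^\alpha_1$ of the $\alpha$-expansion of $\alpha$ equals $(+1:3)$ rather than $(+1:2)$. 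By continuity of $\alpha\mapsto\mu(\Omega_\alpha)$ on $(0,1]$ (Theorem~\ref{t:continuous}) it then suffices to pin the value down on the dense set of matching parameters and pass to the limit.

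The combinatorial heart is to describe $\underline{b}{}^\alpha$ and $\overline{b}{}^\alpha$ for $\alpha\in[g^2,g]$. Using Theorem~\ref{t:endpoints} and the characteristic-sequence calculus of Section~\ref{sec:basic-noti-notat}, I would show that for every such $\alpha$ the $T_\alpha$-orbits of $\alpha-1$ and of $\alpha$ stay confined to the cylinders $\Delta_\alpha(+1:2),\,\Delta_\alpha(+1:3)$ at the top and $\Delta_\alpha(-1:2),\,\Delta_\alpha(-1:3)$ below, and that they synchronize after the \emph{same} number of steps; equivalently, every synchronizing (``matching'') interval meeting $(g^2,g)$ has equal matching exponents $k=k'$. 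I expect the relation between the two expansions to be exactly the length-preserving one built from $v\mapsto\widehat{v}$ together with the superscript operations ${}^{(W)}$ and ${}^{(\pm1)}$, with the characteristic sequences of the relevant prefixes literally coinciding; that everything stays inside this regime is the arithmetic of $g^2=1-g$. Running the same analysis just past each endpoint should exhibit matching intervals with $k\ne k'$ immediately below $g^2$ and immediately above $g$, which is the maximality assertion in the abstract.

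Given the orbits, the structural description of Section~\ref{sec:struct-natur-extens} presents $\Omega_\alpha$ as a union of rectangles indexed by prefixes of the two orbits --- whose $\mu$-measures telescope by~\eqref{e:mu} --- together with self-similar fractal pieces accumulating at the pivot $-g^2$. The role of $k=k'$ is that, as $\alpha$ decreases within a matching interval, the part of $\Omega_\alpha$ lost near the top and the part gained near the bottom are images of one another under a single power $\mathcal{T}_\alpha^{k}$, hence carry equal $\mu$-measure by~\eqref{e:mu}; so $\mu(\Omega_\alpha)$ is constant on each matching interval, with value equal to $\mu(\Omega_\alpha)$ at its endpoints. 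As the matching intervals are dense and $\mu(\Omega_\alpha)$ is continuous, these constants all coincide; evaluating at $\alpha=g$, where $\mu(\Omega_g)=\log(1+g)$ by \cite{Nakada81}, fixes the common value, and Theorem~\ref{t:continuous} carries the identity over to the remaining, measure-zero set of non-synchronizing parameters.

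The \textbf{main obstacle} is the combinatorics of the second paragraph: tracking $\underline{b}{}^\alpha$ and $\overline{b}{}^\alpha$ \emph{uniformly} across $[g^2,g]$ --- in particular through the countably many parameters at which an orbit changes cylinder, and across the exceptional set --- and verifying that matching, whenever it occurs, is balanced. Once Theorem~\ref{t:endpoints} is available this is precisely what the characteristic sequences and the alternating order were designed for; the geometric and measure-theoretic steps then reduce to telescoping applications of~\eqref{e:mu}.
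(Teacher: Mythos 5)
Your reduction to Lemma~\ref{l:longConstIntvl} is correct: for every $v\in\mathscr{F}$ with $\Gamma_v\subset[g^2,g]$ one has $|v|=|\widehat{v}|$, so by Theorem~\ref{t:muOmega} the function $\alpha\mapsto\mu(\Omega_\alpha)$ is constant on each $[\zeta_v,\eta_v]\subset[g^2,g]$. But the concluding step --- ``as the matching intervals are dense and $\mu(\Omega_\alpha)$ is continuous, these constants all coincide'' --- is not valid. A continuous function that is locally constant on an open dense set need not be globally constant; the Cantor function is exactly such an example, being continuous on $[0,1]$ and constant on each component of the complement of the Cantor set, yet strictly increasing. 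The set $[g^2,g]\setminus\Gamma$ is uncountable (indeed, by~\cite{Carminati-Tiozzo:10} the set $(0,1]\setminus\Gamma$ has Hausdorff dimension~$1$), so nothing in density plus continuity rules out a devil's-staircase behaviour for $\mu(\Omega_\alpha)$ across $[g^2,g]$. The paper explicitly flags this: the opening sentence of Section~\ref{sec:constancy-entropy-g2} says the preceding results show constancy ``on intervals covering almost all points in $[g^2,g]$'' and that one must ``exclude that the function $\alpha\mapsto h(T_\alpha)$ forms a `devil's staircase'.''

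Closing this gap is the entire content of Section~\ref{sec:constancy-entropy-g2}. The paper proves a quantitative modulus of continuity on $[g^2,\sqrt{2}-1]\setminus\Gamma$: if $\underline{b}{}^{\alpha'}_{[1,n)}=\underline{b}{}^{\alpha}_{[1,n)}$ then $|\mu(\Omega_{\alpha'})-\mu(\Omega_\alpha)|\le C_4\,n\,5^{-n/2}$ (Lemma~\ref{l:mudifference}), which itself rests on the improved tail estimate $\mu(\Xi'_{\alpha,n})\le\mu(\Omega_\alpha)\,5^{-n/2}$ (Lemma~\ref{l:exponentialbound2}, sharpening Lemma~\ref{l:exponentialbound} via Proposition~\ref{p:dalpha} and Lemma~\ref{l:positivemeasure2}) and on Lemmas~\ref{l:Psidifference} and~\ref{l:Upsilon}. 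It then couples this with the counting bound $\#\{\underline{b}{}^\alpha_{[1,n]}:\alpha\in[g^2,g)\setminus\Gamma\}\le 2^n$ (Lemma~\ref{l:2n}) to telescope between any two nonsynchronizing $\alpha<\alpha'$ through at most $2^n$ synchronizing intervals, giving $|\mu(\Omega_{\alpha'})-\mu(\Omega_\alpha)|\le 2^n\,C_4\,n\,5^{-n/2}\to 0$ precisely because $\sqrt{5}>2$. Without such a quantitative argument --- exponential smallness of the change within a cylinder beating the exponential number of cylinders --- the passage from constancy on $\Gamma$ to constancy on $[g^2,g]$ does not go through.
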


Moreover, we show that $[g^2,g]$ is the maximal interval with this property, and we conjecture that $\mu(\Omega_\alpha) > \log(1+g)$ for all $\alpha \in (0,1] \setminus [g^2,g]$.

The proofs of Theorems~\ref{t:continuous} and \ref{t:g2gConst} heavily rely on understanding how the $\alpha$-expansion of $\alpha$ is related to that of ${\alpha-1}$ and how the evolution of the natural extension depends on this relation.

\smallskip
Theorems~\ref{t:endpoints} and~\ref{t:muOmega} strengthen and clarify results of \cite{NakadaNatsui08}.    The first, proved in Section~\ref{s:relation}, states that \emph{synchronization} of the $T_\alpha$-orbits of $\alpha$ and $\alpha-1$ occurs for $\alpha$ in
\[
\Gamma := \big\{\alpha \in (0,1] \mid T_\alpha^n({\alpha-1}) \ge 0\ \mbox{or}\ T_\alpha^n(\alpha) \ge 0\ \mbox{for some}\ n \ge 1\big\}\,,
\]
and the set of \emph{labels of finitely synchronizing orbits} is
\[
\mathscr{F} := \big\{v \in \mathscr{A}_-^* \mid  a_{[2j,2\ell+1]} <_{\mathrm{alt}} a_{[1,2\ell-2j+2]},\  a_{[2j+1,2\ell+1]} \le_{\mathrm{alt}} a_{[1,2\ell-2j+2]}\ \mbox{for all}\ 1 \le j \le \ell\big\}\,,
\]
where $a_{[1,2\ell+1]} = a_1 a_2 \cdots a_{2\ell+1}$ denotes the characteristic sequence of~$v$.
For $v \in \mathscr{F}$, set
\begin{gather*}
\zeta_v := \llbracket  (v\, \widehat{v}\,)^\omega\rrbracket + 1\,, \quad \chi_v := \left\{\begin{array}{cl}\llbracket  v, 0 {\rrbracket} + 1 & \mbox{if}\ |v| \ge 1, \\[1ex] 1 & \mbox{if}\ |v| = 0,\end{array}\right. \quad \eta_v := \left\{\begin{array}{cl}\llbracket  (v^{(+1)})^\omega \rrbracket + 1 & \mbox{if}\ |v| \ge 1, \\[1ex] 1 & \mbox{if}\ |v| = 0,\end{array}\right. \\[1ex]
\Gamma_v := \left\{\begin{array}{cl}(\zeta_v, \eta_v) & \mbox{if}\ |v| \ge 1\,, \\[1ex] (g,1] & \mbox{if}\ |v| = 0\,.\end{array}\right.
\end{gather*}

\begin{Thm} \label{t:endpoints}
The set $\Gamma$ is the disjoint union of the intervals $\Gamma_v$, $v \in \mathscr{F}$.

For any $\alpha \in \Gamma_v$, $v\in\mathscr{F}$, we have 
\[
\underline{b}{}^\alpha_{[1,|v|\,]} = v, \quad \overline{b}{}^\alpha_{[1,|\widehat{v}|\,]} = {}^{(W)}\widehat{v}{}^{(-1)}, \quad \overline{b}{}^\alpha_{|\widehat{v}|+1}  = {}^{(W)}\underline{b}{}^\alpha_{|v|+1}\,, \quad T_\alpha^{|v|+1}({\alpha-1}) = T_\alpha^{|\widehat{v}|+1}(\alpha)\,.
\]

We have $\alpha \in (0,1] \setminus \Gamma$ if and only if $\alpha \in (0,g]$ and the characteristic sequence $a_{[1,\infty)}$ of $\alpha-1$~satisfies $a_{[n,\infty)} \le_{\mathrm{alt}} a_{[1,\infty)}$ for all $n \ge 2$.

The set $(0,1] \setminus \Gamma$  is a set of zero Lebesgue measure. 
For any $\alpha$ in this set,   $\overline{b}{}^\alpha_{[1,\infty)} = {}^{(W)}\widehat{\underline{b}{}^\alpha_{[1,\infty)}}$.
\end{Thm}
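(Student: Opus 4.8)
The plan is to deduce both statements from the first three parts of the theorem.

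For the null-set claim, suppose $\alpha \in (0,1]\setminus\Gamma$. By the characterization in the third paragraph of the theorem, $\alpha \in (0,g]$ and the characteristic sequence $a_{[1,\infty)}$ of $\alpha-1$ satisfies $a_{[n,\infty)} \le_{\mathrm{alt}} a_{[1,\infty)}$ for all $n \ge 2$; comparing first entries (the case $j=0$ in the definition of $\le_{\mathrm{alt}}$) forces $a_n \le a_1$ for every $n$. Hence the by-excess expansion $(-1:2)^{a_1-1}(-1:2+a_2)(-1:2)^{a_3-2}(-1:2+a_4)\cdots$ of $\alpha-1$ has all digits at most $a_1+2$, so $\alpha-1 \in B_M := \{x \in [-1,0) \mid d_{0,n}(x) \le M \text{ for all } n\}$ with $M = a_1+2$. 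Thus $(0,1]\setminus\Gamma \subseteq 1 + \bigcup_{M\ge3} B_M$, and it remains to note that each $B_M$ is Lebesgue-null. This is the by-excess analogue of the classical fact that reals with bounded regular continued fraction quotients form a null set: $T_0$ is conservative and ergodic with respect to an invariant measure equivalent to Lebesgue measure on $[-1,0)$, so almost every $x$ has $d_{0,n}(x) = M+1$ for some $n$, whence $B_M$ is null (alternatively, induce $T_0$ past its parabolic fixed point $-1$ to a uniformly expanding full-branch map and use the Cantor-set estimate there). Consequently $(0,1]\setminus\Gamma$ is Lebesgue-null, and in particular $\Gamma$ is dense in $(0,1]$.

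For the identity, fix $\alpha \in (0,1]\setminus\Gamma$. The $T_\alpha$-orbit of $\alpha-1$ never meets $[0,\alpha)$ (else $\alpha\in\Gamma$), so all signs $\varepsilon_{\alpha,n}(\alpha-1)$ equal $-1$ and $\underline{b}{}^\alpha_{[1,\infty)} \in \mathscr{A}_-^\omega$; it does not end in $(-1:2)^\omega$, for that would make some $T_\alpha^m(\alpha-1)$ equal to the fixed point $-1$ of $z \mapsto -1/(2+z)$, impossible since $-1 \notin \mathbb{I}_\alpha$. Hence $\widehat{\underline{b}{}^\alpha_{[1,\infty)}}$ and ${}^{(W)}\widehat{\underline{b}{}^\alpha_{[1,\infty)}}$ are defined. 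The identity is then the limiting case $|v|\to\infty$ of the relation $\overline{b}{}^\alpha_{[1,|\widehat v|]} = {}^{(W)}\widehat v{}^{(-1)}$ from the second paragraph of the theorem: the inductive, matrix-based argument behind that relation shows, for every prefix $v = \underline{b}{}^\alpha_{[1,k]}$, that ${}^{(W)}\widehat v$ agrees with $\overline{b}{}^\alpha_{[1,\infty)}$ on its first $|\widehat v|-1$ letters (only the terminal operator ${}^{(-1)}$ ever touches the last letter of $\widehat v$). Since $\underline{b}{}^\alpha_{[1,\infty)}$ does not end in $(-1:2)^\omega$, each of its characteristic blocks stabilizes after finitely many letters, so $\widehat{\underline{b}{}^\alpha_{[1,k]}} \to \widehat{\underline{b}{}^\alpha_{[1,\infty)}}$ letter by letter while $|\widehat v| \to \infty$; passing to the limit gives $\overline{b}{}^\alpha_{[1,\infty)} = {}^{(W)}\widehat{\underline{b}{}^\alpha_{[1,\infty)}}$.

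Given the first three parts of the theorem and the formalism behind them, the step needing the most care is this last passage to the limit: one must know that the partial relation ``$\overline{b}{}^\alpha$ begins with ${}^{(W)}\widehat v$ up to its last letter'' holds for every prefix $v$ of $\underline{b}{}^\alpha_{[1,\infty)}$ — not only for full labels in $\mathscr{F}$ — which is exactly what the characteristic-sequence bookkeeping and the fundamental matrix identity are designed to yield, and that the ${}^{(-1)}$-adjustments genuinely wash out in the limit. A convenient alternative for any $\alpha$ that is not an endpoint of some $\Gamma_v$ is to approximate by synchronizing parameters $\alpha' \to \alpha$ whose expansions $\underline{b}{}^{\alpha'}$ agree with $\underline{b}{}^\alpha$ on arbitrarily long prefixes (available since $\Gamma$ is dense) and to take limits in $\overline{b}{}^{\alpha'}_{[1,|\widehat v|]} = {}^{(W)}\widehat v{}^{(-1)}$, using continuity of $\alpha' \mapsto \overline{b}{}^{\alpha'}$ on cylinders; the remaining countably many endpoint parameters are read off directly from the explicit formulas for $\zeta_v$ and $\eta_v$.
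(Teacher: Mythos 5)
Your proposal addresses only the final paragraph of the theorem (the zero-measure claim and the identity $\overline{b}{}^\alpha_{[1,\infty)} = {}^{(W)}\widehat{\underline{b}{}^\alpha_{[1,\infty)}}$), taking the first three paragraphs as given. The paper's proof is an assembly of Lemmas~\ref{l:balpha}, \ref{l:Gammav}, \ref{l:conversion2}, \ref{l:wMat}, \ref{l:notinGamma}, \ref{l:zeromeasure}, and the bulk of the work for the first three paragraphs (in particular the rather delicate alternating-order analysis inside Lemmas~\ref{l:Gammav} and \ref{l:balpha}) is not addressed by you at all.

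For the zero-measure claim your route is essentially equivalent to the paper's: the condition $a_{[n,\infty)}\le_{\mathrm{alt}} a_{[1,\infty)}$ bounds the entries, so $\alpha-1$ has bounded by-excess digits and you conclude by ergodicity of $T_0$. The paper instead applies Proposition~\ref{p:0toRCF} to rewrite the condition as $T_1^n(\alpha)\ge\alpha$ for all $n$, expresses $(0,1]\setminus\Gamma$ as a countable union of sets on which the $T_1$-orbit is bounded away from $0$, and uses ergodicity of the Gauss map. Both are correct; the paper's version avoids the slight delicacy of working with the infinite invariant measure of $T_0$ (you do flag this, and conservativity plus ergodicity indeed suffices).

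For the identity, however, there is a genuine gap. Your primary route proposes to ``pass to the limit'' in the relation $\overline{b}{}^\alpha_{[1,|\widehat v|]} = {}^{(W)}\widehat v{}^{(-1)}$ for prefixes $v=\underline{b}{}^\alpha_{[1,k]}$. But that relation (the theorem's second paragraph and Lemma~\ref{l:Gammav}) is established only for $\alpha\in\Gamma_v$ with $v\in\mathscr{F}$; for $\alpha\notin\Gamma$ the prefixes $\underline{b}{}^\alpha_{[1,k]}$ need not lie in $\mathscr{F}$ and $\alpha$ lies in no $\Gamma_v$, so you are applying the statement outside its hypotheses. You acknowledge this and say the ``characteristic-sequence bookkeeping and the fundamental matrix identity are designed to yield'' the extension to all prefixes, but this is a claim, not a proof: from Lemma~\ref{l:conversion2} applied to $\alpha-1 = \llbracket v, T_\alpha^k(\alpha-1)\rrbracket$ one gets $\alpha = \llbracket {}^{(W)}\widehat v{}^{(-1)}, W\cdot T_\alpha^k(\alpha-1)\rrbracket$, which is an equality of values, not yet of digit strings; one still needs an argument (Lemma~\ref{l:digit}) that the partial quotients land in $\mathbb{I}_\alpha$. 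Your alternative route (approximating by synchronizing $\alpha'\to\alpha$) also quietly assumes that the labels of the nearby $\Gamma_w$ are arbitrarily long prefixes of $\underline{b}{}^\alpha_{[1,\infty)}$, which needs justification and in effect rehearses the content of Lemma~\ref{l:balpha}. The paper's Lemma~\ref{l:notinGamma} handles this directly and with no limiting argument: $\alpha\notin\Gamma$ forces $\underline{b}{}^\alpha_{[1,\infty)}\in\mathscr{A}_-^\omega$ and $\overline{b}{}^\alpha_{[2,\infty)}\in\mathscr{A}_-^\omega$, the infinite-word case of Lemma~\ref{l:conversion2} gives $\alpha = \llbracket {}^{(W)}\widehat{\underline{b}{}^\alpha_{[1,\infty)}}\rrbracket$, and Lemma~\ref{l:digit} identifies this with $\overline{b}{}^\alpha_{[1,\infty)}$ digit by digit. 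You should replace your limiting argument with this direct one.
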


We remark that similar results can be found in \cite{Carminati-Tiozzo:10,Bonanno-Carminati-Isola-Tiozzo}.
There, the description of $\Gamma$ is based on the RCF expansion  of $\alpha$ instead of the characteristic sequence of $\alpha-1$, and the number $\chi_v$ is called pseudocenter of~$\Gamma_v$.
In Section~\ref{sec:relat-betw-alpha}, we show that the characteristic sequence of $\alpha-1$ is essentially the same as the RCF expansion of~$\alpha$. 
In particular, this implies for $\alpha \in (0,1]$ that $\alpha \not\in \Gamma$ is equivalent with $T_1^n(\alpha) \ge \alpha$ for all $n \ge 1$.

The evolution of $\Omega_\alpha$ on a synchronizing interval is described by the following theorem, which is proved in Section~\ref{sec:evol-omeg-along}.

\begin{Thm} \label{t:muOmega}
For any $\alpha \in [\zeta_v, \eta_v]$, $v \in \mathscr{F}$, we have
\[
\mu(\Omega_\alpha) =  \mu(\Omega_{\zeta_v})\, \Big(1 + \big(|\widehat{v}| - |v|\big)\, \nu_{\zeta_v}\big([\zeta_v-1, {\alpha-1}]\big)\Big)\,, 
\]
with $\nu_{\zeta_v}$ as in Theorem~\ref{t:natext}.

On $[\zeta_v, \eta_v]$, the function $\alpha \mapsto \mu(\Omega_\alpha)$ is: constant if $|v| = |\widehat{v}|$; increasing if $|\widehat{v}| > |v|$; decreasing if $|\widehat{v}| < |v|$.
Inverse relations hold for the function $\alpha \mapsto h(T_\alpha)$, {\em cf.} Theorem~\ref{t:hmu}.
\end{Thm}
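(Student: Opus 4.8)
The plan is to derive the displayed formula from the explicit description of $\Omega_\alpha$ obtained in Section~\ref{sec:struct-natur-extens} (Theorem~\ref{t:shapeOmega}) together with the invariance of the density $(1+xy)^{-2}$ recorded in~\eqref{e:mu}: for any invertible matrix~$M$ the paired action $\big(x,y\big) \mapsto \big(M\cdot x,\, {}^t\hspace{-.1em}M^{-1}\cdot y\big)$ preserves~$\mu$, in particular for $M = W$, $M = E^{\pm1}$, and $M = M_w$, $w \in \mathscr{A}^*$. I would first fix $v \in \mathscr{F}$ and prove the formula for $\alpha$ in the \emph{open} interval $\Gamma_v = (\zeta_v,\eta_v)$, and then extend it to the endpoints $\zeta_v,\eta_v$ using the continuity of $\alpha \mapsto \mu(\Omega_\alpha)$ (Theorem~\ref{t:continuous}) and the continuity of $\alpha \mapsto \nu_{\zeta_v}\big([\zeta_v-1,\alpha-1]\big)$. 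By Theorem~\ref{t:endpoints}, for every $\alpha \in \Gamma_v$ the initial segments $\underline{b}{}^\alpha_{[1,|v|\,]} = v$ and $\overline{b}{}^\alpha_{[1,|\widehat{v}|\,]} = {}^{(W)}\widehat{v}{}^{(-1)}$ are \emph{independent of}~$\alpha$, the two boundary orbits synchronize at $\gamma_\alpha := T_\alpha^{|v|+1}({\alpha-1}) = T_\alpha^{|\widehat{v}|+1}(\alpha)$, and $\overline{b}{}^\alpha_{|\widehat{v}|+1} = {}^{(W)}\underline{b}{}^\alpha_{|v|+1}$. These are exactly the combinatorial data entering the construction of $\Omega_\alpha$ in Section~\ref{sec:struct-natur-extens}.

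\textbf{Comparison of domains.} Using that description, I would compare $\Omega_\alpha$ with $\Omega_{\zeta_v}$ directly. Since the cylinder structure of $T_\alpha$ away from the two non-full cylinders $\Delta_\alpha(\underline{b}{}^\alpha_1)$, $\Delta_\alpha(\overline{b}{}^\alpha_1)$, and the whole ``post-synchronization'' part of the construction, are governed only by the fixed data above, the symmetric difference $\Omega_\alpha \bigtriangleup \Omega_{\zeta_v}$ is a \emph{finite} disjoint union of rectangles located over the initial pieces of the two boundary orbits: for $1 \le j \le |v|$ one gets, via~\eqref{e:matsForT}, a rectangle of the form $M_{v_{[1,j]}}\cdot I_j \times N_{v_{[1,j]}}\cdot J_j$ lying in $\Omega_{\zeta_v}\setminus\Omega_\alpha$, and for $1 \le j \le |\widehat{v}|$ a rectangle $M_{\overline{b}{}^\alpha_{[1,j]}}\cdot I'_j \times N_{\overline{b}{}^\alpha_{[1,j]}}\cdot J'_j$ lying in $\Omega_\alpha\setminus\Omega_{\zeta_v}$; here $I_0$ (resp.\ the ``$j=0$'' base-level piece on the right, after applying $W$) is exactly the interval $[\zeta_v-1,\alpha-1]$ (resp.\ the sliver of $\mathbb{I}_\alpha$ gained on the right), and the two $j=0$ contributions cancel because $\overline{b}{}^\alpha_1$ is linked to $\underline{b}{}^\alpha_1$ through a $\mu$-preserving matrix identification coming from $\overline{b}{}^\alpha_{|\widehat{v}|+1} = {}^{(W)}\underline{b}{}^\alpha_{|v|+1}$ and the synchronization relation. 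Every $I_j$ and $I'_j$ has, as its $\alpha$-moving endpoint, the image of $\zeta_v-1$ resp.\ $\alpha-1$, so each of the listed rectangles is the image of the single \emph{base sliver} $S := \Omega_{\zeta_v}\cap\big([\zeta_v-1,\alpha-1]\times\mathbb{R}\big)$ under a composition of an $(M_w,N_w)$-pair with a bounded number of $W$'s.

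\textbf{Measure computation and monotonicity.} Since all of these transformations preserve~$\mu$ by~\eqref{e:mu}, each of the $|v|$ rectangles in $\Omega_{\zeta_v}\setminus\Omega_\alpha$ has $\mu$-measure $\mu(S)$, and each of the $|\widehat{v}|$ rectangles in $\Omega_\alpha\setminus\Omega_{\zeta_v}$ has $\mu$-measure $\mu(S)$, whence
\[
\mu(\Omega_\alpha) = \mu(\Omega_{\zeta_v}) + \big(|\widehat{v}| - |v|\big)\,\mu(S).
\]
By the definitions in Theorem~\ref{t:natext}, $\mu(S) = \mu(\Omega_{\zeta_v})\,\mu_{\zeta_v}\big(\Omega_{\zeta_v}\cap([\zeta_v-1,\alpha-1]\times\mathbb{R})\big) = \mu(\Omega_{\zeta_v})\,\nu_{\zeta_v}\big([\zeta_v-1,\alpha-1]\big)$, which is precisely the claimed formula. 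The case $|v|=0$ (with $\zeta_\emptyset = g$, $\eta_\emptyset = 1$) is covered in the same way. For the monotonicity statements, note that $\alpha \mapsto \nu_{\zeta_v}\big([\zeta_v-1,\alpha-1]\big)$ is nonnegative, vanishes at $\alpha=\zeta_v$, and is \emph{strictly} increasing on $[\zeta_v,\eta_v]$ because the fibers of $\Omega_{\zeta_v}$ over this range of $x$ have positive length (part of Theorem~\ref{t:shapeOmega}); hence $\mu(\Omega_\alpha)$ is constant if $|\widehat{v}|=|v|$, strictly increasing if $|\widehat{v}|>|v|$, and strictly decreasing if $|\widehat{v}|<|v|$. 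The inverse behaviour of $\alpha\mapsto h(T_\alpha) = (\pi^2/6)/\mu(\Omega_\alpha)$ is then immediate from Theorem~\ref{t:hmu}.

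\textbf{Main obstacle.} The delicate step is the one in the second paragraph: verifying that $\Omega_\alpha$ and $\Omega_{\zeta_v}$ genuinely differ only by the finitely many rectangles described, with the post-synchronization ``core'' (and the self-similar staircase portions near ${\alpha-1}$ and $\alpha$, cf.\ Figures~\ref{f:g2},~\ref{f:4},~\ref{f:23},~\ref{f:2342}) identified in a $\mu$-preserving way, and that no piece is over- or under-counted at the two ends and at $j=0$. This is exactly what the formal-language bookkeeping of Section~\ref{sec:struct-natur-extens} is designed to make rigorous, so in practice the proof of Theorem~\ref{t:muOmega} consists of transcribing the shape of $\Omega_\alpha$ from that section, reading off the $|v|$ and $|\widehat{v}|$ boundary rectangles, invoking~\eqref{e:matsForT} and~\eqref{e:mu} to see they are $\mu$-preserving copies of $S$, and then carrying out the one-line computation above.
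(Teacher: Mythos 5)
Your strategy — read off the shape of $\Omega_\alpha$ from Section~\ref{sec:struct-natur-extens}, locate the pieces that change with $\alpha$, and use the invariance~\eqref{e:mu} to show each changed piece has the same $\mu$-measure — is exactly what the paper does, so this is the right route. However, there is a genuine gap at the step you yourself flag as delicate, and the specific justification you give for it is wrong. The changed pieces split into two families: those fibering over pieces of the $T_\alpha$-orbit of ${\alpha-1}$, of the form $M_{v_{[1,j]}}\cdot I \times N_{v_{[1,j]}}\cdot\Psi_v$; and those fibering over pieces of the orbit of $\alpha$, of the form $M_{v'_{[1,j]}}\cdot I' \times N_{v'_{[1,j]}}\cdot\Psi'_v$. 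Invariance under $(M_w, N_w)$ reduces the first family to $\mu(I\times\Psi_v)$ and the second to $\mu(I'\times\Psi'_v)$; but $I$ lies in $[\zeta_v-1,\eta_v-1]$ while $I'$ lies in $[\zeta_v,\eta_v]$, and $\Psi_v$ and $\Psi'_v$ are different sets, so it is \emph{not} automatic that all the rectangles are $\mu$-copies of a single base sliver~$S$. The crucial missing ingredient is $\Psi_v = {}^t\hspace{-.1em}E^{-1}\cdot\Psi'_v$ from Theorem~\ref{t:shapeOmega}, which combined with $E\cdot[\alpha,\eta_v]=[\alpha-1,\eta_v-1]$ and~\eqref{e:mu} (applied with $M=E$) gives $\mu\big([\alpha-1,\eta_v-1]\times\Psi_v\big) = \mu\big([\alpha,\eta_v]\times\Psi'_v\big)$ — this is precisely equation~\eqref{e:minus1} in the paper's proof. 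Your attribution of the ``$j=0$ cancellation'' to the synchronization relation $\overline{b}{}^\alpha_{|\widehat{v}|+1}={}^{(W)}\underline{b}{}^\alpha_{|v|+1}$, and your mention of applying~$W$ to the base sliver, are both misdirected: $W$ plays no role in identifying the two families of base rectangles, and the synchronization relation is what makes the orbits meet, not what makes the two kinds of rectangles have equal measure.

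There is also an unaddressed boundary subtlety that you acknowledge but do not resolve. The last rectangle in each family degenerates at one endpoint of $\Gamma_v$ (Remark~\ref{r:empty}: the $N_{v'}\cdot\Psi'_v$ piece is empty at $\zeta_v$ and the $N_v\cdot\Psi_v$ piece is empty at $\eta_v$). The paper handles this by comparing $\Omega_\alpha$ with $\Omega_{\eta_v}$ rather than $\Omega_{\zeta_v}$, subtracting the overlap piece $[\alpha,\eta_v]\times N_{v_{[1,|v|\,]}}\cdot\Psi_v$ from both $\Omega_\alpha\setminus\Omega_{\eta_v}$ and $\Omega_{\eta_v}\setminus\Omega_\alpha$, and only afterwards converting to the $\zeta_v$ form. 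Comparing directly to $\Omega_{\zeta_v}$ as you propose can be made to work, but the analogous correction term must be identified and cancelled; asserting that ``no piece is over- or under-counted'' is precisely what needs proof. To turn your sketch into a proof you should transcribe Lemma~\ref{l:simplerDescr} explicitly, record the correction piece, invoke~\eqref{e:minus1}, and only then count $|v|+1$ versus $|\widehat{v}|+1$ equal-measure rectangles (not $|v|$ versus $|\widehat{v}|$). Your monotonicity argument at the end and the appeal to Theorem~\ref{t:hmu} are correct as stated.
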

\bigskip

In order to describe the shape of $\Omega_\alpha$, $\alpha \in (0,1]$, we define
\[
\begin{aligned}
U_{\alpha,1} & := \big\{\underline{b}{}^\alpha_{[1,j]} \mid 0 \le j < k\big\}\,, & 
U_{\alpha,3} & := \big\{\underline{b}{}^\alpha_{[1,j)}\, a \mid 1 \le j < k,\, a \in \mathscr{A}_-,\, \underline{b}{}^\alpha_j \prec a \preceq {}^{(W)}\overline{b}{}^\alpha_1\big\}\,, \\
U_{\alpha,2} & := \big\{\overline{b}{}^\alpha_{[1,j]} \mid 1 \le j < k'\big\}\,, &  
U_{\alpha,4} & := \big\{\overline{b}{}^\alpha_{[1,j)}\, a \mid 2 \le j < k',\, a \in \mathscr{A}_-,\, \overline{b}{}^\alpha_j \prec a \preceq {}^{(W)}\overline{b}{}^\alpha_1\big\}\,,
\end{aligned}
\]
where $k = |v| + 1$, $k' = |\widehat{v}| + 1$ if $\alpha \in \Gamma_v$, $v \in \mathscr{F}$, $k = k' = \infty$ if $\alpha \in (0,1] \setminus \Gamma$.
Let
\begin{gather*}
\begin{aligned} 
\mathscr{L}_\alpha & := (U_{\alpha,3} \cup U_{\alpha,1}\, U_{\alpha,2}^*\, U_{\alpha,4})^*\,, & 
\mathscr{L}'_\alpha & := \mathscr{L}_\alpha\, U_{\alpha,1}\, U_{\alpha,2}^*\,, \\[1ex]
\Psi_\alpha & := \overline{\bigcup_{w\in\mathscr{L}_\alpha} N_w \cdot \big[0, \tfrac{1}{d_\alpha(\alpha)+1}\big]}\,, &
\Psi'_\alpha & := \overline{\bigcup_{w\in\mathscr{L}'_\alpha} N_w \cdot \big[0, \tfrac{1}{d_\alpha(\alpha)+1}\big]}\,,
\end{aligned} \\
\mathscr{C}_\alpha := \big\{\,\Psi_\alpha\,\big\}\, \cup\, \big\{N_{\underline{b}{}^\alpha_{[1,j]}} \cdot \Psi_\alpha \mid 1 \le j< k\big\}\, \cup\, \big\{N_{\overline{b}{}^\alpha_{[1,j]}} \cdot \Psi'_\alpha \mid 1 \le j< k'\big\}\,.
\end{gather*}

\begin{Thm} \label{t:shapeOmega}
Let $\alpha \in (0,1]$ and $k, k'$ as in the preceding paragraph.
Then we have
\begin{equation} \label{e:shapeOmega}
\Omega_\alpha =  \mathbb{I}_\alpha \times \Psi_\alpha\ \cup \overline{\bigcup_{1 \le j < k} \big[T_\alpha^j(\alpha-1), \alpha\big] \times N_{\underline{b}{}^\alpha_{[1,j]}} \cdot \Psi_\alpha}\ \cup \overline{\bigcup_{1 \le j < k'} \big[T_\alpha^j(\alpha), \alpha\big] \times N_{\overline{b}{}^\alpha_{[1,j)}} \cdot \Psi'_\alpha}\,.
\end{equation}
If $T_{\alpha}^j({\alpha-1}) \not\in (x,x')$ for all $0 \le j < k$ and $T_{\alpha}^j(\alpha) \not\in (x,x')$ for all $0 \le j < k'$, then the density of the invariant measure $\nu_\alpha$ defined in Theorem~\ref{t:natext} is continuous on $(x,x')$.

For any $Y \in \mathscr{C}_\alpha$, the Lebesgue measure of $Y \cap\, \overline{\bigcup_{Y' \in \mathscr{C}_\alpha \setminus \{Y\}} Y'}$ is zero, and  
\[
\overline{\bigcup_{Y \in \mathscr{C}_\alpha} Y} = \Psi'_\alpha = {}^t\hspace{-.1em}E \cdot \Psi_\alpha\,.
\]

For any $w \in \mathscr{L}'_\alpha$, we have $N_w \cdot \big(0, \tfrac{1}{d_\alpha(\alpha)+1}\big) \cap\, \overline{\bigcup_{w' \in\mathscr{L}'_\alpha \setminus \{w\}} N_{w'} \cdot \big[0, \tfrac{1}{d_\alpha(\alpha)+1}\big]} = \emptyset$.
\end{Thm}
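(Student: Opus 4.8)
\emph{Setup and reduction.} Write $c := 1/(d_\alpha(\alpha)+1)$ and, for $w\in\mathscr{L}'_\alpha$, set $J_w := N_w\cdot[0,c]$; each $J_w$ is a closed subinterval of $[0,1]$, because every $N_{(\varepsilon:d)}$ acts on $[0,1]$ as the monotone M\"obius map $y\mapsto 1/(d+\varepsilon y)$ and a composition of such maps is monotone. The assertion is that the open interval $N_w\cdot(0,c) = \operatorname{int} J_w$ is disjoint from $\overline{\bigcup_{w'\ne w}J_{w'}}$. The plan is to reduce this to the purely combinatorial statement that the intervals $J_w$, $w\in\mathscr{L}'_\alpha$, have pairwise disjoint interiors, and then to prove that by induction along the grammar defining $\mathscr{L}'_\alpha$. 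The reduction is immediate: if for every $w'\ne w$ the closed interval $J_{w'}$ misses the open interval $N_w\cdot(0,c)$, then $\bigcup_{w'\ne w}J_{w'}$ is contained in the closed set $[0,1]\setminus N_w\cdot(0,c)$, hence so is its closure, which is exactly what we want; and for intervals, "$J_{w'}$ misses $N_w\cdot(0,c)$ for all ordered pairs $w\ne w'$" is precisely "any two of the $J_w$ meet in at most a common endpoint", i.e.\ have disjoint interiors.

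\emph{Disjointness of interiors via the grammar.} Recall $\mathscr{L}'_\alpha = \mathscr{L}_\alpha\,U_{\alpha,1}\,U_{\alpha,2}^*$ and $\mathscr{L}_\alpha = (U_{\alpha,3}\cup U_{\alpha,1}\,U_{\alpha,2}^*\,U_{\alpha,4})^*$. Since $N_{v_1\cdots v_n} = N_{v_n}\cdots N_{v_1}$, the cell $J_w$ is built from $[0,c]$ by applying $N_{w_1},\dots,N_{w_n}$ from the inside out, so the letters of $w$ read starting from the last one produce a nested sequence of subintervals of $[0,1]$, and the grammar of $\mathscr{L}'_\alpha$ organizes the $J_w$ into a tree whose first level is indexed by the admissible outermost blocks. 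The monotonicity input needed is elementary: on $\mathscr{A}_-$ the order $\preceq$ matches, with reversed orientation, the spatial order of the intervals $N_a\cdot[0,1]$, these have pairwise disjoint interiors with union $(0,1]$, and $N_{\overline{b}{}^\alpha_1}\cdot 1 = N_{{}^{(W)}\overline{b}{}^\alpha_1}\cdot 0 = c$. Combined with the monotonicity of $T_\alpha$ on its cylinders, one checks at the first level that the pieces arising from $U_{\alpha,3}$ (blocks $\underline{b}{}^\alpha_{[1,j)}\,a$ with $\underline{b}{}^\alpha_j\prec a\preceq{}^{(W)}\overline{b}{}^\alpha_1$), from $U_{\alpha,1}\,U_{\alpha,2}^*\,U_{\alpha,4}$ (blocks ending in $\overline{b}{}^\alpha_{[1,j)}\,a$ with $\overline{b}{}^\alpha_j\prec a\preceq{}^{(W)}\overline{b}{}^\alpha_1$), and from the terminal tail in $U_{\alpha,1}\,U_{\alpha,2}^*$ carve out intervals with pairwise disjoint interiors, the digit bounds being exactly what forces consecutive pieces to abut at a shared endpoint; one then applies the induction hypothesis inside each first-level cell. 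If $\alpha\in\Gamma_v$ the grammar is finite ($k,k'<\infty$) and this terminates; if $\alpha\notin\Gamma$ then $k=k'=\infty$, and one additionally notes that the cell diameters tend to $0$, so that passing to closures does not merge distinct cells.

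\emph{Main obstacle, and an alternative.} The delicate part is the first-level bookkeeping: verifying that the interleaving of $U_{\alpha,1},\dots,U_{\alpha,4}$ by the Kleene stars yields intervals with disjoint interiors, with the bounds $\underline{b}{}^\alpha_j\prec a\preceq{}^{(W)}\overline{b}{}^\alpha_1$ and $\overline{b}{}^\alpha_j\prec a\preceq{}^{(W)}\overline{b}{}^\alpha_1$ exactly closing the gaps between successive expansion digits and handing off correctly both to the $\overline{b}{}^\alpha$-part and to the terminal tail, uniformly in $\alpha\in\Gamma$ and $\alpha\notin\Gamma$. A more conceptual route that would bypass most of this is to use the dynamical interpretation already exploited in proving \eqref{e:shapeOmega}: distinct $w\in\mathscr{L}'_\alpha$ label distinct backward $\mathcal{T}_\alpha$-itineraries along a fixed fiber of $\Omega_\alpha$, so the $\mu_\alpha$-almost-everywhere invertibility of $\mathcal{T}_\alpha$ on $\Omega_\alpha$ forces the $J_w$ to intersect pairwise in Lebesgue-null sets; being intervals, they then have disjoint interiors, and the reduction above concludes the proof.
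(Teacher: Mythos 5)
Your proposal only addresses the final disjointness claim of the theorem; the first equation, the continuity of the density of~$\nu_\alpha$, the measure-zero overlaps of elements of~$\mathscr{C}_\alpha$, and the identity $\Psi'_\alpha={}^t\hspace{-.1em}E\cdot\Psi_\alpha$ are not discussed, even though each requires at least a citation of the appropriate lemma (these follow from Proposition~\ref{p:Omega}, Lemmas~\ref{l:Psialpha}, \ref{l:Ualpha}, \ref{l:LLprime}, \ref{l:exponentialbound}, and the decomposition of~$\mathscr{L}'_\alpha$).

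For the disjointness claim itself, your reduction in the first paragraph is sound: pairwise disjoint interiors of the $J_w$ do imply the statement with the closure. The gap lies in establishing pairwise disjoint interiors. Your first route (induction along the grammar) is explicitly left incomplete at exactly the step where the content is. Your second, ``conceptual,'' route is the one the paper actually takes, but the one-sentence justification---``the $\mu_\alpha$-almost-everywhere invertibility of $\mathcal{T}_\alpha$ forces the $J_w$ to intersect pairwise in Lebesgue-null sets''---does not by itself yield the claim. A.e.\ invertibility directly handles two words of \emph{equal} length, since then $J^\alpha_w\times N_w\cdot[0,c]$ and $J^\alpha_{w'}\times N_{w'}\cdot[0,c]$ are $\mathcal{T}_\alpha^{|w|}$-images of cylinder strips $\Delta_\alpha(w)\times[0,c]$, $\Delta_\alpha(w')\times[0,c]$ over disjoint cylinders (Lemma~\ref{l:Jw}). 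For $|w|<|w'|$, however, the pull-back of the longer rectangle by $\mathcal{T}_\alpha^{|w|}$ lands over a cylinder that can coincide with $\Delta_\alpha(w)$, and invertibility gives you nothing; what saves the argument in the paper is the geometric separation in the $y$-direction: the pulled-back $y$-interval is $N_{w'_{[1,|w'|-|w|\,]}}\cdot[0,c]$, whose outermost applied letter lies in $\mathscr{A}_\alpha$ (Lemma~\ref{l:Aalpha}), hence $N_a\cdot[0,1]\subset\bigl[\tfrac{1}{d_\alpha(\alpha)+1},1\bigr]$, so it is already disjoint from $[0,c)$ before applying $\mathcal{T}_\alpha^{|w|}$. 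This containment is the pivotal fact and it does not appear anywhere in your proposal. You also leave implicit the step that turns $\mu$-disjointness of the rectangles into Lebesgue-disjointness of the $y$-intervals, namely that $(0,\alpha]\subseteq J^\alpha_w$ for every $w\in\mathscr{L}'_\alpha$, so the rectangles share a common base; and the preliminary reduction to $\alpha=\chi_v$ or $\alpha\notin\Gamma$, needed so that Lemma~\ref{l:Jw} applies, is missing. In short, the high-level strategy of your alternative matches the paper's, but the parts you skip---the length comparison via the inclusion $N_a\cdot[0,1]\subset\bigl[\tfrac{1}{d_\alpha(\alpha)+1},1\bigr]$, and the common $x$-interval---are precisely where the proof lives.
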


This theorem is proved in Section~\ref{sec:struct-natur-extens}.
We remark that omitting the closure in the definitions of $\Psi_\alpha$ and $\Psi'_\alpha$ and in~\eqref{e:shapeOmega} changes the sets under consideration only by sets of measure zero.
Moreover, Section~\ref{sec:struct-natur-extens} also provides the speed of convergence of approximations of $\Omega_\alpha$ by finitely many rectangles. 
Note that $\big[0, \tfrac{1}{d_\alpha(\alpha)+1}\big] \subseteq \Psi_\alpha$, thus $\mathbb{I}_\alpha \times \big[0, \tfrac{1}{d_\alpha(\alpha)+1}\big] \subset \Omega_\alpha$, and that $\big[0, \tfrac{1}{d_\alpha(\alpha)}\big] = {}^t\hspace{-.1em}E \cdot \big[0, \tfrac{1}{d_\alpha(\alpha)+1}\big] \subseteq \Psi'_\alpha$.
By Proposition~\ref{p:dalpha}, we have $\big[T_\alpha(\alpha), \alpha] \times \big[0, \tfrac{1}{d_\alpha(\alpha)}\big] \subseteq \Omega_\alpha$ for $\alpha \in (0,1] \setminus \Gamma$, and the same can also be shown for $\alpha \in \Gamma$.

Finally, we show in Section~\ref{s:continuity} that to the left of any interval~$\Gamma_v$, $v \in \mathscr{F}$, there exists an interval on which $\mu(\Omega_\alpha)$ is constant.
To this end, we define the ``folding'' operation
\[
\Theta(v) := v\,\widehat{v}^{(-1)} \qquad (v \in \mathscr{A}_-^*)\,.
\]
We will see that $\Theta$ maps $\mathscr{F}$ to itself, and that $(\zeta_{\Theta^n(v)})_{n\ge0}$ is a sequence of rapidly converging quadratic numbers; see also~\cite{Carminati-Marmi-Profeti-Tiozzo:10}.
Therefore, we define
\[
\tau_v := \lim_{n\to\infty} \zeta_{\Theta^n(v)}\,.
\]

\begin{Thm}\label{t:tauTransc}
For any $v \in \mathscr{F}$, we have $\Theta(v) \in \mathscr{F}$ and $\zeta_v = \eta_{\Theta(v)}$. 

For any $\alpha \in [\tau_v, \zeta_v]$, $v \in \mathscr{F}$, we have $\mu(\Omega_\alpha) = \mu(\Omega_{\zeta_v})$.

The limit point $\tau_v$ is a transcendental real number.
\end{Thm}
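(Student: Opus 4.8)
The plan is to establish the three assertions of Theorem~\ref{t:tauTransc} in the order stated, with the bulk of the effort going into the transcendence claim.

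\textbf{Step 1: $\Theta$ maps $\mathscr{F}$ into $\mathscr{F}$ and $\zeta_v = \eta_{\Theta(v)}$.}
The key is to understand how the characteristic sequence transforms under $v \mapsto \Theta(v) = v\,\widehat v^{(-1)}$. Writing the characteristic sequence of $v$ as $a_{[1,2\ell+1]}$, I would compute the characteristic sequence of $v\,\widehat v^{(-1)}$ directly from the definitions: concatenating $v$ with $\widehat v^{(-1)}$ (which lowers the last digit of $\widehat v$ by one, i.e. removes the final $(-1\!:\!2+a_{2\ell+1})$-type contribution and merges the tails of $(-1\!:\!2)$'s) should produce a characteristic sequence that is a concrete, explicitly computable function of $a_{[1,2\ell+1]}$ — essentially a palindromic-type doubling $a_1 a_2 \cdots a_{2\ell} a_{2\ell+1}' a_{2\ell} \cdots a_2 a_1$ or similar. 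Once this formula is in hand, the defining alternating-order inequalities for membership in $\mathscr{F}$ for the new sequence reduce to the inequalities already known to hold for $a_{[1,2\ell+1]}$, giving $\Theta(v) \in \mathscr{F}$. The identity $\zeta_v = \eta_{\Theta(v)}$ I would get by unwinding the definitions: $\eta_{\Theta(v)} = \llbracket (\Theta(v)^{(+1)})^\omega \rrbracket + 1 = \llbracket (v\,\widehat v)^\omega\rrbracket + 1 = \zeta_v$, since $\Theta(v)^{(+1)} = (v\,\widehat v^{(-1)})^{(+1)} = v\,\widehat v$. This is essentially a one-line matrix/word computation using that $(\cdot)^{(+1)}$ and $(\cdot)^{(-1)}$ are inverse on the relevant letters.

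\textbf{Step 2: $\mu(\Omega_\alpha)$ is constant on $[\tau_v,\zeta_v]$.}
By Step~1, $\zeta_v = \eta_{\Theta(v)}$, and $[\zeta_{\Theta(v)},\eta_{\Theta(v)}] = [\zeta_{\Theta(v)},\zeta_v]$ is a synchronizing interval on which, by Theorem~\ref{t:muOmega}, $\mu(\Omega_\alpha)$ is constant precisely when $|\Theta(v)| = |\widehat{\Theta(v)}|$. I would check that $\Theta$ always produces a ``balanced'' word, i.e. $|\widehat{\Theta(v)}| = |\Theta(v)|$, from the characteristic-sequence formula of Step~1 (the doubling makes the lengths of $v'$ and $\widehat{v'}$ coincide). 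Hence $\mu(\Omega_\alpha)$ is constant on $[\zeta_{\Theta(v)},\zeta_v]$, equal to $\mu(\Omega_{\zeta_v})$; by the continuity of Theorem~\ref{t:continuous} this extends to the closed interval, and in particular $\mu(\Omega_{\zeta_{\Theta(v)}}) = \mu(\Omega_{\zeta_v})$. Iterating, $\mu(\Omega_{\zeta_{\Theta^n(v)}}) = \mu(\Omega_{\zeta_v})$ for all $n$, and on each interval $[\zeta_{\Theta^{n+1}(v)}, \zeta_{\Theta^n(v)}]$ the function is constant with this value. Taking the union over $n$ and using that $\zeta_{\Theta^n(v)} \to \tau_v$ together with continuity at $\tau_v$ gives $\mu(\Omega_\alpha) = \mu(\Omega_{\zeta_v})$ on $[\tau_v,\zeta_v]$.

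\textbf{Step 3: Transcendence of $\tau_v$.} This is the main obstacle. The idea is that $\zeta_{\Theta^n(v)} - 1 = \llbracket (w_n\,\widehat{w_n})^\omega\rrbracket$ where $w_n = \Theta^n(v)$ has characteristic sequence of length growing like $2^n$, so that $\tau_v - 1$ has an $\alpha$-continued fraction (equivalently, a regular continued fraction, via Section~\ref{sec:relat-betw-alpha}) whose partial quotients exhibit an extremely regular self-similar pattern with rapidly increasing blocks of $1$'s (coming from the $(-1\!:\!2)^{a-1}$ factors) — in fact the word structure is produced by a substitution-like doubling. The convergents $\zeta_{\Theta^n(v)}$ are quadratic irrationals whose heights grow doubly-exponentially while they approximate $\tau_v$ to within roughly the square of the previous gap (the ``rapidly converging'' claim). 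Concretely, I would show $|\tau_v - \zeta_{\Theta^n(v)}|$ is comparable to $q_n^{-2}$ where $q_n$ is the denominator of the quadratic $\zeta_{\Theta^n(v)}$ and $q_{n+1} \asymp q_n^{2}$ (or faster), so that $\tau_v$ is approximated by algebraic numbers of bounded degree (degree $2$) with an approximation exponent that is too large. Then the Schmidt Subspace Theorem — or more simply the Thue--Siegel--Roth theorem applied to the quadratic approximants, or Liouville-type estimates for algebraic numbers — forces $\tau_v$ to be transcendental: were it algebraic, Roth's theorem would bound $|\tau_v - p/q| \gg q^{-2-\varepsilon}$, contradicting the super-Roth approximation by the convergents $p_n/q_n$ of the quadratics (one must be slightly careful since the $\zeta_{\Theta^n(v)}$ are themselves quadratic, not rational, so the cleanest route is the subspace theorem or a result on approximation by quadratic irrationals, e.g. the quadratic analogue due to Schmidt). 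The delicate point here is getting the quantitative control $q_{n+1} \asymp q_n^{2}$ and the matching lower bound $|\tau_v - \zeta_{\Theta^n(v)}| \gg q_{n+1}^{-2}$ from the explicit word-doubling structure; this requires tracking the matrix products $M_{w_n}$ and $M_{\widehat{w_n}}$ carefully, and is where I expect the real work to lie. Once these estimates are secured, transcendence is a formal consequence of a Roth/Schmidt-type theorem.
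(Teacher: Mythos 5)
Your Steps 1 and 2 match the paper's treatment (Lemma~\ref{l:folding}, Corollary~\ref{c:foldThenConst}, and Theorem~\ref{t:continuous}): you correctly identify $\Theta(v)^{(+1)} = v\,\widehat v$, the balance $|\Theta(v)| = |\widehat{\Theta(v)}|$ which forces constancy on $\Gamma_{\Theta(v)}$ via Theorem~\ref{t:muOmega}, and the telescoping plus continuity that yields constancy down to $\tau_v$.  Step 3, however, is a genuinely different route.  The paper never estimates approximation rates: it computes the characteristic sequence $a'_{[1,\infty)}$ of $\tau_v - 1$ (equivalently, via Proposition~\ref{p:0toRCF}, the regular continued fraction of $\tau_v$), observes that it has bounded partial quotients, begins with arbitrarily long ``almost squares'' $a^{(j)}_{[1,2\ell_j+1]} a^{(j)}_{[1,2\ell_j)}$, and is not eventually periodic, and then invokes the combinatorial transcendence criterion of Adamczewski and Bugeaud directly.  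Your proposed route through Schmidt's 1967 theorem on approximation by algebraic numbers of degree $\le 2$ can be made to work, but you should be aware of two points.  First, ``denominator of the quadratic $\zeta_{\Theta^n(v)}$'' should be replaced by its na\"ive height $H_n$, which is of the order of $q_{2\ell_n+1}$ (the continuant of one period), while the agreement length of the continued fractions of $\tau_v$ and $\zeta_{\Theta^n(v)}$ is roughly $4\ell_n$, i.e.\ about two periods; working this out gives $|\tau_v - \zeta_{\Theta^n(v)}| \ll H_n^{-4}$, not $H_n^{-2}$ — fortunately this beats Schmidt's threshold $H_n^{-3-\varepsilon}$, but the correct exponent matters.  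Second, Schmidt's theorem applies only to $\tau_v$ algebraic of degree $\ge 3$, so you must separately rule out $\tau_v$ rational (its RCF is infinite) and $\tau_v$ quadratic (a Liouville/resultant bound for a pair of distinct quadratics gives $|\tau_v - \zeta_{\Theta^n(v)}| \gg H_n^{-2}$, again contradicted by the $H_n^{-4}$ rate; the paper handles this case combinatorially by showing $a'_{[1,\infty)}$ is not eventually periodic).  In short, both approaches succeed; the paper's combinatorial criterion sidesteps all height and continuant estimates and handles the quadratic case in the same breath, while your Diophantine route buys nothing here and costs the quantitative bookkeeping you correctly anticipated as ``the real work''.
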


\section{Relation between $\alpha$-expansions and RCF expansions} \label{sec:relat-betw-alpha}

We start with proving a relation between the characteristic sequence of $\alpha-1$ and the RCF expansion of~$\alpha$.  

\begin{Prop}[{cf.\ \cite[Exercise~3 on p.~131]{Zagier:81}}] \label{p:0toRCF}
Let $\alpha \in (0,1)$, and $a_1 a_2 \cdots$ be the characteristic sequence of $\alpha-1$.
Then
\[
\alpha = \left\{\begin{array}{ll}[0; a_1, a_2, a_3, \ldots\,]  & \mbox{if}\ \alpha \not\in \mathbb{Q}\,, \\[1ex] [0; a_1, a_2, \ldots, a_{2\ell}] & \mbox{if}\ a_{2\ell+1} = \infty\,.\end{array}\right.
\]
\end{Prop}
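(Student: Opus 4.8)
The plan is to prove the following equivalent but more symmetric statement: for every sequence $\mathbf{a} = a_1 a_2 \cdots$ of positive integers which is either infinite or finite of even length, the word $\Phi(\mathbf a)\in\mathscr{A}_-^\omega$ obtained by reversing the construction of the characteristic sequence, i.e.\ $\Phi(\mathbf a) = (-1:2)^{a_1-1}(-1:2+a_2)(-1:2)^{a_3-1}(-1:2+a_4)\cdots$ (followed by $(-1:2)^\omega$ when $\mathbf a$ is finite), satisfies $\llbracket\Phi(\mathbf a)\rrbracket = [0;a_1,a_2,\ldots]-1$. Granting this, the Proposition follows at once: $\Phi$ is by construction the inverse of the characteristic-sequence map on $\mathscr{A}_-^\omega$, so if $\mathbf a$ is the characteristic sequence of $\alpha-1$ then $\Phi(\mathbf a)$ is the by-excess expansion of $\alpha-1$; moreover $\mathbf a$ is infinite exactly when $\alpha-1$ (equivalently $\alpha$) is irrational, by the criterion that a number in $[-1,0)$ is rational iff its by-excess expansion ends in $(-1:2)^\omega$; hence $\alpha-1 = \llbracket\Phi(\mathbf a)\rrbracket = [0;a_1,a_2,\ldots]-1$ by convergence of the by-excess continued fraction.

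First I would isolate the single matrix identity that drives everything. From $M_{(-1:2)} = \left(\begin{smallmatrix}2 & 1\\-1 & 0\end{smallmatrix}\right)$ an immediate induction gives $M_{(-1:2)}^{\,a-1} = \left(\begin{smallmatrix}a & a-1\\-(a-1) & -(a-2)\end{smallmatrix}\right)$, and then, for positive integers $a,b$ and $v := (-1:2)^{a-1}(-1:2+b)$, a two-line computation with $R_{a,b} := \left(\begin{smallmatrix}1 & b\\a & ab+1\end{smallmatrix}\right)$ (note $\det R_{a,b}=1$ and $R_{a,b}\cdot t = \cfrac{1}{a+\cfrac{1}{b+t}}$) shows $M_v = M_{(-1:2+b)}M_{(-1:2)}^{\,a-1} = E\,R_{a,b}^{-1}\,E^{-1}$. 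Since $E^{-1}\cdot x = x+1$ and $\llbracket vw\rrbracket = M_v^{-1}\cdot\llbracket w\rrbracket$ is a purely formal identity (just unfolding $\llbracket\,\cdot\,\rrbracket$ and $M_v = M_{v_{|v|}}\cdots M_{v_1}$), this yields, for every word $w$ over $\mathscr{A}_-$,
\[
\llbracket vw\rrbracket + 1 = R_{a,b}\cdot\big(\llbracket w\rrbracket + 1\big).
\]
Writing $\Phi(\mathbf a)$ as the concatenation of the blocks $v^{(i)} := (-1:2)^{a_{2i-1}-1}(-1:2+a_{2i})$ and iterating this identity — starting, in the finite case, from $\llbracket(-1:2)^\omega\rrbracket + 1 = 0$ — gives $\llbracket\Phi(\mathbf a)\rrbracket + 1 = R_{a_1,a_2}R_{a_3,a_4}\cdots\cdot t_0$, where $t_0=0$ when $\mathbf a$ is finite and $t_0\in[0,1)$ is a tail value when $\mathbf a$ is infinite. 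By the defining recursion of the RCF this product of matrices, applied to any point of $[0,1)$, equals $[0;a_1,a_2,a_3,\ldots]$; in the infinite case one passes to the limit using the standard contraction of $[0,1)$ under the products of RCF matrices. This completes the proof of the reformulated statement.

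I do not expect a genuine obstacle — this is in essence the classical relation between regular and ``backward'' continued fractions, as in the cited exercise of Zagier — but the argument calls for careful bookkeeping, and that is the part I would be most wary of. One must confirm that $\Phi(\mathbf a)$ falls into the correct clause of the three-clause definition of the characteristic sequence: for infinite $\mathbf a$ it contains infinitely many letters $(-1:d)$ with $d\ge 3$, hence lies in $\mathscr{A}_-^\omega\setminus\mathscr{A}_-^*(-1:2)^\omega$ and matches the irrational case, whereas for $\mathbf a$ finite of even length it lies in $\mathscr{A}_-^*(-1:2)^\omega$ and matches the rational case $\alpha=[0;a_1,\ldots,a_{2\ell}]$; and one should check that empty runs of $(-1:2)$'s, arising from $a_1=1$, $a_{2\ell}=1$, or an interior $a_{2i-1}=1$, cause no trouble in the block decomposition.

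As an alternative to the ``backward'' argument above, the whole proof can be run ``forward''. An elementary interval analysis shows that for $\alpha\in\bigl(1/(a_1+1),\,1/a_1\bigr)$ the first $a_1-1$ by-excess digits of $\alpha-1$ are all $(-1:2)$, and — using $M_{(-1:2)}^{\,a_1-1}$ together with the identity $a_1\alpha = 1 - \alpha\beta$ for $\beta = T_1(\alpha)$ — that $T_0^{\,a_1}(\alpha-1) = T_1^{\,2}(\alpha)-1$, with the $a_1$-th by-excess digit being $(-1:2+\lfloor 1/\beta\rfloor)$. Iterating this identifies the by-excess expansion of $\alpha-1$, block by block, with $\Phi$ applied to the RCF digit sequence of $\alpha$, which again gives the Proposition.
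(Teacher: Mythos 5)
Your argument is correct and is essentially the paper's proof in a lightly rearranged form: the block identity $M_{(-1:2+b)}M_{(-1:2)}^{a-1} = E\,R_{a,b}^{-1}\,E^{-1}$ is exactly \eqref{e:Mn2} composed with \eqref{e:Mn}, and iterating the $R_{a,b}$'s reproduces the same telescoping of $E$ and $W$ factors that the paper carries out directly, merely restated after conjugating once by $E$. The $\Phi$-reformulation and the shift in viewpoint from $\alpha-1$ to $\alpha$ are cosmetic, and the bookkeeping concerns you flag (empty $(-1:2)$-runs, which clause of the characteristic-sequence definition applies in the rational versus irrational case) are harmless for precisely the reasons you give.
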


\begin{proof}
Let $\alpha \in (0,1)$, and $a_{[1,\infty)}$ be the characteristic sequence of $\alpha-1$.
Assume first that $\alpha \in \mathbb{Q}$, i.e., there exists some $\ell \ge 1$ such that $a_{2\ell+1} = \infty$, $1 \le a_j < \infty$ for $1 \le j \le 2\ell$.
Then since $\alpha-1$ is obviously rational, its by-excess expansion is eventually periodic and this period is that of the purely periodic  $-1$, we have 
\begin{align*}
\alpha - 1 & = \llbracket (-1:2)^{a_1-1}\, (-1:2+a_2)\, (-1:2)^{a_3-1}\, \cdots\, (-1:2+a_{2\ell})\, (-1:2)^\omega\rrbracket \\
& = \llbracket (-1:2)^{a_1-1}\, (-1:2+a_2)\, (-1:2)^{a_3-1}\, \cdots\, (-1:2+a_{2\ell}), -1\rrbracket\,,
\end{align*}
thus
\[
M_{(-1:2+a_{2\ell})} \cdots M_{(-1:2)}^{a_3-1} M_{(-1:2+a_2)} M_{(-1:2)}^{a_1-1} \cdot (\alpha-1) = -1 = E \cdot 0\,.
\]
Since ~\eqref{e:W} and~\eqref{e:E} give
\begin{equation} \label{e:Mn2}
M_{(-1:2+n)} = E\, M_{(+1:n)}\, W\,,
\end{equation} 
induction gives 
\begin{equation} \label{e:Mn}
M_{(-1:2)}^{n-1} = \begin{pmatrix}n & n-1 \\ 1-n & 2-n\end{pmatrix} = W\, M_{(+1:n)}\, E^{-1}\,,
\end{equation}
and $\alpha - 1 = E \cdot \alpha$ clearly holds, we obtain that
\begin{align*}
0 & = E^{-1}\, M_{(-1:2+a_{2\ell})}\, M_{(-1:2)}^{a_{2\ell-1}-1} \cdots M_{(-1:2+a_2)}\, M_{(-1:2)}^{a_1-1} \cdot (\alpha-1) \\
& = M_{(+1:a_{2\ell})}\, M_{(+1:a_{2\ell-1})}\, \cdots M_{(+1:a_2)}\, M_{(+1:a_1)} \cdot \alpha\,,
\end{align*}
thus $\alpha = \llbracket (+1:a_1) \cdots (+1:a_{2\ell}), 0\rrbracket = [0; a_1, \ldots, a_{2\ell}]$.

For $\alpha \not\in \mathbb{Q}$, we have 
\[
\alpha - 1= \lim_{\ell\to\infty}\ \llbracket (-1:2)^{a_1-1}\, (-1:2+a_2)\, (-1:2)^{a_3-1}\, \cdots\, (-1:2+a_{2\ell})\, (-1:2)^\omega\rrbracket\,,
\]
thus   $\alpha = \lim_{\ell\to\infty} [0; a_1, \ldots, a_{2\ell}] = [0; a_1, a_2, \ldots\,]$. 
\end{proof}

Proposition~\ref{p:0toRCF} and the ordering of the RCF expansions gives the following corollary.

\begin{Cor} \label{c:altorder}
Let $x, x' \in [-1,0)$ with characteristic sequences $a_{[1,\infty)}, a'_{[1,\infty)}$. 
Then 
\[
x \le x' \quad \mbox{if and only if} \quad a_{[1,\infty)} \ge_{\mathrm{alt}} a'_{[1,\infty)}\,.
\]
\end{Cor}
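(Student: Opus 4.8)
\textbf{Plan for the proof of Corollary~\ref{c:altorder}.} The statement is purely a translation between the ordering on $[-1,0)$ and the alternating order on characteristic sequences, so the plan is to route everything through Proposition~\ref{p:0toRCF}. First I would recall the two classical facts that make this work: (i) the by-excess expansion (the $\alpha=0$ expansion) is strictly order-preserving on $[-1,0)$, since for $\alpha=0$ the map $T_0$ has all cylinders oriented the same way and one checks directly that $x \mapsto (\varepsilon_{0,n}(x):d_{0,n}(x))$ respects $\preceq$ lexicographically --- indeed this was already noted in Section~\ref{sec:basic-noti-notat}, where $x \le x'$ implies $(\varepsilon(x):d_\alpha(x)) \preceq (\varepsilon(x'):d_\alpha(x'))$; and (ii) the RCF expansion $[0;a_1,a_2,\ldots]$ is increasing in $a_1$, decreasing in $a_2$, increasing in $a_3$, and so on, which is exactly the content of the alternating order: $[0;a_1,a_2,\ldots] \le [0;a_1',a_2',\ldots]$ iff $a_{[1,\infty)} \le_{\mathrm{alt}} a'_{[1,\infty)}$.

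Granting (i), the claim reduces to showing that the map sending a word $v \in \mathscr{A}_-^\omega$ to its characteristic sequence reverses the relevant orders: namely that for $v, v' \in \mathscr{A}_-^\omega$, the word $v$ precedes $v'$ in the lexicographic order induced by $\preceq$ on $\mathscr{A}_-$ if and only if the characteristic sequence of $v$ is $\ge_{\mathrm{alt}}$ that of $v'$. This is a finite bookkeeping argument on the block structure $v = (-1:2)^{a_1-1}(-1:2+a_2)(-1:2)^{a_3-2}(-1:2+a_4)\cdots$. The key local observation is that within $\mathscr{A}_-$ the order $\preceq$ has $(-1:d) \prec (-1:d')$ iff $d > d'$ (since $-1/d \le -1/d'$ iff $d \ge d'$), so a \emph{longer} run of $(-1:2)$'s at the front, i.e.\ a larger $a_1$, pushes the word \emph{down} in lexicographic order hence the real number down --- wait, one must be careful: a longer initial run of $(-1:2)$ followed eventually by a larger digit; I would simply track, position by position, how increasing $a_1$ (run length), then $a_2$ (size of the first large digit, which corresponds to $(-1:2+a_2)$ and larger $a_2$ means the digit $2+a_2$ is larger hence that letter is smaller in $\preceq$), then $a_3$, etc., moves the word, and observe the sign alternates with the index $j$ exactly as in the definition of $<_{\mathrm{alt}}$. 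Combined with Proposition~\ref{p:0toRCF}, which identifies the characteristic sequence of $x$ with the RCF digits of $x+1 \in (0,1)$, and fact (ii), this yields the equivalence; the rational case $a_{2\ell+1}=\infty$ is handled by the finite clause of Proposition~\ref{p:0toRCF} together with the convention on $\infty$ in the alternating order.

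The main obstacle, such as it is, is purely notational: one must correctly reconcile the three conventions in play --- the sign reversal between ``$\preceq$ on $\mathscr{A}_-$'' and ``size of $d$'', the sign reversal between ``by-excess digit order'' and ``characteristic sequence'' (the $a_j$'s encode alternately run-lengths of the fixed letter $(-1:2)$ and the excess of a single deviating letter), and finally the alternation built into $<_{\mathrm{alt}}$ itself. The cleanest route, and the one I would take, is not to do this twice but to invoke Proposition~\ref{p:0toRCF} directly: $x \le x'$ iff $x+1 \le x'+1$ iff $[0;a_1,a_2,\ldots] \le [0;a_1',a_2',\ldots]$ (both numbers lie in $(0,1)$, or are equal only when the sequences agree), and then cite the standard fact (ii) about the ordering of continued fractions, which is precisely the definition of $\le_{\mathrm{alt}}$ with the inequality reversed. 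This makes the corollary a two-line consequence once (ii) is stated, and avoids re-deriving the block combinatorics at all; the boundary/rational cases need only a remark that $\alpha$ and $\alpha-1$ are simultaneously rational and that finite RCF expansions are compared by padding with $\infty$, consistently with the definition of characteristic sequences ending in $(-1:2)^\omega$.
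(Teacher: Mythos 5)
Your final route is exactly the paper's: the corollary is stated as an immediate consequence of Proposition~\ref{p:0toRCF} together with the standard ordering of RCF expansions, which is precisely what you land on at the end. One small caveat worth fixing in your write-up: your displayed version of fact (ii) has the wrong sign --- since $[0;a_1,a_2,\ldots]$ is \emph{decreasing} in $a_1$, increasing in $a_2$, etc., the correct statement is $[0;a_1,a_2,\ldots]\le[0;a_1',a_2',\ldots]$ iff $a_{[1,\infty)}\ge_{\mathrm{alt}}a'_{[1,\infty)}$ (you do correct for this informally at the end by saying ``with the inequality reversed,'' but as written, fact (ii) contradicts the conclusion you want).
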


Now we show how the RCF expansion of $x \in (0,\alpha]$ can be constructed from the $\alpha$-expansion of~$x$.
This is a key argument in the following section.

\begin{Lem} \label{l:rewrite}
Let $\alpha \in (0,1)$.
For any $x \in (0,\alpha]$, the $1$-expansion of~$x$  is obtained from the $\alpha$-expansion of~$x$ by successively replacing all digits in $\mathscr{A}_-$ using the following rules:
\[
\hspace{-.2em}
\begin{array}{lll}
(+1:d)\, (-1:2)^{n-1}\, (-1:d') & \hspace{-.5em} \mapsto (+1:d-1)\, (+1:n)\, (+1:d'-1)\,,  & \hspace{-.25em} d \ge 2,\, n \ge 1,\, d' \ge 3, \\[1ex]
(+1:d)\, (-1:2)^n\, (+1:d') & \hspace{-.5em} \mapsto (+1:d-1)\, (+1:n)\, (+1:1)\, (+1:d')\,, \hspace{-3em} & \\
& & \hspace{-2.6em} d \ge 2,\, n \ge 1,\, 1 \le d' < \infty, \\[1ex]
(+1:d)\, (-1:2)^{n-1}\, (+1:\infty) & \hspace{-.5em} \mapsto (+1:d-1)\, (+1:n)\, (+1:\infty)\,, & \hspace{-.25em} d \ge 2,\, n \ge 2.
\end{array}
\hspace{-.2em}
\]
\end{Lem}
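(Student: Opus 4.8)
The strategy is to translate the three rewriting rules into matrix identities and verify that each rule preserves the value of the continued fraction while producing digits that are legitimate $1$-expansion (i.e.\ RCF) digits. The central tool is equation~\eqref{e:Mn}, namely $M_{(-1:2)}^{n-1} = W\, M_{(+1:n)}\, E^{-1}$, together with $M_{(-1:2+n)} = E\, M_{(+1:n)}\, W$ from~\eqref{e:Mn2}. First I would record the matrix reformulation: for a digit $(+1:d)$ with $d\ge2$ one has $M_{(+1:d)} = M_{(+1:d-1)}\, E$ by~\eqref{e:E} (reading that relation as $M_{(\varepsilon:d)} = E^{\mp1} M_{(\varepsilon:d\pm1)}$), and $W^2 = I$. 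Then the first rule corresponds to the computation
\[
M_{(-1:d')}\, M_{(-1:2)}^{n-1}\, M_{(+1:d)} = M_{(-1:d')}\, W\, M_{(+1:n)}\, E^{-1}\, M_{(+1:d)}\,,
\]
and here $E^{-1} M_{(+1:d)} = M_{(+1:d+1)}$ should instead be arranged via $M_{(+1:d)} = M_{(+1:d-1)} E$, so that $E^{-1}M_{(+1:d)} = E^{-1}M_{(+1:d-1)}E$; meanwhile $M_{(-1:d')}W = M_{(+1:d'-1)}$ by~\eqref{e:W} (using $d'\ge3$ so the result lies in $\mathscr{A}_+$). Pushing the remaining $W$ and $E^{\pm1}$ factors through and checking they cancel is the routine part; the upshot is that the left side equals $M_{(+1:d'-1)}\, M_{(+1:n)}\, M_{(+1:d-1)}$ up to the harmless scalar factors $(-1),(-\varepsilon)$ built into the definitions of $M_{(\varepsilon:d)}$. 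Since $M_v = M_{v_n}\cdots M_{v_1}$ composes digits in reverse order, this matrix identity is exactly the statement that the subword $(+1:d)(-1:2)^{n-1}(-1:d')$ may be replaced by $(+1:d-1)(+1:n)(+1:d'-1)$ without changing the real number being represented.

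The second and third rules are handled the same way, with two modifications. For the second rule, the trailing digit is $(+1:d')$ with $1\le d'<\infty$ rather than a digit in $\mathscr{A}_-$, so instead of absorbing a $W$ on the right one produces an extra $(+1:1)$ digit: concretely $M_{(+1:d')}$ stays as is and the leftover $W$ becomes $M_{(+1:1)}W\cdot W = M_{(+1:1)}$-type factor after inserting $W^2=I$; I would check that $W = M_{(+1:1)}\cdot(\text{something trivial})$, or more precisely that the accumulated discrepancy is precisely $M_{(+1:1)}$, which accounts for the new digit in the rule's right-hand side. For the third rule, the trailing digit is $(+1:\infty)$, i.e.\ the expansion terminates; here one only needs the identity on the finite prefix and the constraint $n\ge2$ (rather than $n\ge1$) because $M_{(-1:2)}^{n-1}$ with $n=1$ is the identity and there is nothing to rewrite — the $d\ge2$ hypothesis guarantees the digit $(+1:d-1)$ is a valid RCF digit $\ge1$.

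Next I would argue that these replacements, applied successively, do terminate and do produce a genuine $1$-expansion. Each application strictly decreases the number of $\mathscr{A}_-$-digits (it removes a block of negative digits and the negative digit that follows it, or in the second rule removes the block and leaves the following positive digit intact), so after finitely many steps no $\mathscr{A}_-$-digit remains for $x\in\mathbb{Q}$; for irrational $x$ one passes to the limit, using that the first $N$ digits of the $1$-expansion are determined by a sufficiently long prefix of the $\alpha$-expansion and applying the finite case to truncations, as in the proof of Proposition~\ref{p:0toRCF}. One must also check that the rewriting is well-defined, i.e.\ that after a replacement the newly-created positive digits never collide with a following negative block in a way not covered by the three cases — but the rules are designed so that the output digits $(+1:d-1),(+1:n),(+1:d'-1)$ (resp.\ with the extra $(+1:1)$) are all in $\mathscr{A}_+$, and the only negative digits remaining are those strictly to the right of the rewritten block, so one may simply process the expansion left to right. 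Finally, since the resulting word lies in $\mathscr{A}_+^*$ (or $\mathscr{A}_+^*(+1:\infty)$, or $\mathscr{A}_+^\omega$) and represents the same real number $x\in(0,\alpha]\subseteq(0,1)$, and since the RCF expansion is the \emph{unique} such representation with all digits $\ge1$ (with the usual caveat about the last digit being $\ge2$, which one arranges by choosing the rewriting of the final block appropriately, or notes is automatic from $d'\ge3$ in rule one), it must be the $1$-expansion of~$x$.

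The main obstacle I anticipate is purely bookkeeping: tracking the stray $W$ and $E^{\pm1}$ matrices (and the $\pm1$ scalar prefactors) through the compositions so that they cancel exactly, especially distinguishing the three cases by what sits to the right of the negative block — a terminating digit, a positive digit, or another negative digit — since this is what dictates whether a spare $W$ gets absorbed into $M_{(-1:d')}W = M_{(+1:d'-1)}$, gets promoted to a new $(+1:1)$ digit, or simply disappears at the end of the word. There is no conceptual difficulty beyond the algebra already exemplified in the proof of Proposition~\ref{p:0toRCF}; the care needed is in handling the boundary digit-values ($d\ge2$, $d'\ge3$, $n\ge1$ vs.\ $n\ge2$) so that every digit appearing on the right-hand side is a legal element of $\mathscr{A}_+$ or $\mathscr{A}_0$.
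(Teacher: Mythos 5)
Your overall approach matches the paper's: use the matrix identities~\eqref{e:W},~\eqref{e:E},~\eqref{e:Mn} to verify that each rewriting rule preserves the value, then argue that successive replacements eliminate all digits in~$\mathscr{A}_-$.
The matrix computation is essentially correct in spirit (though note the misstatement ``$E^{-1}M_{(+1:d)} = M_{(+1:d+1)}$''; by~\eqref{e:E} one has $E^{-1}M_{(+1:d)} = M_{(+1:d-1)}$, which is what actually makes the identity $M_{(-1:d')}\,M_{(-1:2)}^{n-1}\,M_{(+1:d)} = M_{(+1:d'-1)}\,M_{(+1:n)}\,M_{(+1:d-1)}$ come out cleanly; also $M_{(+1:d)} = E\,M_{(+1:d-1)}$ with $E$ on the \emph{left}, not the right).

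However, there is a genuine gap in the ``well-definedness'' part of your argument.  All three rules require the leading digit to be $(+1:d)$ with $d\ge 2$.  You assert that ``one may simply process the expansion left to right,'' but this only works if every maximal $\mathscr{A}_-$-block in the $\alpha$-expansion of~$x$ is preceded by a positive digit $(+1:d)$ with $d\ge 2$; if some $\mathscr{A}_-$-block were preceded by $(+1:1)$, none of your rules would apply and the rewriting would get stuck.  The paper proves exactly this: since $x\in(0,\alpha]$ the first digit is in~$\mathscr{A}_+$, and whenever a digit equals $(+1:1)$ the next iterate is $T_\alpha(y) = 1/y - 1 > 0$ (using $y\le\alpha < 1$), so the following digit is again in~$\mathscr{A}_+$; hence the pattern $(+1:1)\,(-1:d)$ never occurs, and every $\mathscr{A}_-$-digit is preceded by a word in $(+1:d)\,\mathscr{A}_-^*$ with $d\ge 2$.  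One must also observe that the rules preserve this invariant (they do: the trailing output digit is either $(+1:d'-1)$ with $d'\ge 3$, or an unchanged $(+1:d')$, or $(+1:\infty)$).  Without this observation, the claim that the procedure successively eliminates all $\mathscr{A}_-$-digits is unjustified.  Your remaining points --- new digits lie in~$\mathscr{A}_+$, the resulting word does not end in $(+1:1)\,(+1:\infty)^\omega$, and RCF uniqueness --- are sound, and the passage to irrational~$x$ by truncation is fine.
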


\begin{proof}
Let $v_{[1,\infty)}$ be the $\alpha$-expansion of $x \in (0,\alpha]$, i.e., $v_j = (\varepsilon_{\alpha,j}(x):d_{\alpha,j}(x))$ for all $j \ge 1$.
By~\eqref{e:Mn}, we have
\[
M_{(-1:d')}\, M_{(-1:2)}^{n-1}\, M_{(+1:d)} = M_{(-1:d')}\, W\, M_{(+1:n)}\, E^{-1} M_{(+1:d)} = M_{(+1:d'-1)}\, M_{(+1:n)}\, M_{(+1:d-1)}\,.
\]
Therefore, any sequence $v'_{[1,\infty)}$ which is obtained from $v_{[1,\infty)}$ by replacements of the form $(+1:d)\, (-1:2)^{n-1}\, (-1:d') \mapsto (+1:d-1)\, (+1:n)\, (+1:d'-1)$ satisfies $\llbracket v'_{[1,\infty)} \rrbracket = \llbracket v_{[1,\infty)} \rrbracket = x$.
This includes $(+1:d)\, (-1:2)^n \mapsto (+1:d-1)\, (+1:n)\, (+1:1)$.
We have of course $\llbracket (+1:n-1)\, (+1:1), 0\rrbracket = \llbracket (+1:n), 0\rrbracket$, hence replacing $(+1:d)\, (-1:2)^{n-1}\, (+1:\infty)$ by $(+1:d-1)\, (+1:n)\, (+1:\infty)$ also does not change the value of the sequence.

Since $v_{[1,\infty)}$ does not end with $(+1:1)\, (+1:\infty)^\omega$, the same holds for any new sequence~$v'_{[1,\infty)}$.
Therefore, it only remains to show that all digits in $\mathscr{A}_-$ can be replaced by digits in $\mathscr{A}_+$ using the given rules.
Since $x \in (0, \alpha]$, we have $v_1 \in \mathscr{A}_+$.
If $v_1 = (+1:1)$, then $T_\alpha(x) = 1/x - 1 > 0$ implies that $v_2 \in \mathscr{A}_+$. 
More generally, the pattern $(+1:1)\, (-1:d)$ does not occur in $v_{[1,\infty)}$.
Thus any digit $v_j \in \mathscr{A}_-$ is preceded by a word in $(+1:d)\, \mathscr{A}_-^*$ with $d \ge 2$, and replacements do not change this fact.
This implies that we can successively eliminate all digits in~$\mathscr{A}_-\,$.
\end{proof}

\begin{Rmk}
The above can be compared with the conversions from $\alpha$-expansions to RCF given in \cite{NakadaNatsui02,NakadaNatsui08}. 
\end{Rmk} 

\begin{Lem} \label{l:alpha1}
Let $\alpha \in (0,1)$, $x \in (0,\alpha]$, and suppose that $T_\alpha^m(x) > 0$ for some $m \ge 1$.
Then there is some $n \ge 1$ such that $T_\alpha^m(x) = T_1^n(x)$ and $\mathcal{T}_\alpha^m(x,y) = \mathcal{T}_1^n(x,y)$ for all $y \in [0,1]$.
\end{Lem}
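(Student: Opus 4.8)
The plan is to use Lemma~\ref{l:rewrite} as the bridge between the $\alpha$-expansion and the RCF expansion of~$x$, and then to track where the initial segment of the $\alpha$-expansion corresponding to the first $m$ steps of $T_\alpha$ lands after rewriting. First I would write $v_{[1,\infty)}$ for the $\alpha$-expansion of~$x$ and observe that, because $T_\alpha^m(x) > 0$, the digit $v_{m+1}$ lies in $\mathscr{A}_+$; moreover $T_\alpha^m(x) \in (0,\alpha]$ is itself a point to which the whole setup applies. The key structural fact I want is that the rewriting rules of Lemma~\ref{l:rewrite} act \emph{locally}: each rule replaces a block $(+1:d)\,(-1:2)^{*}\,(\ast)$ by a block of digits in~$\mathscr{A}_+$ of the same total ``length'' in the sense of the underlying matrix product, and — crucially — a block that begins at a position $v_j \in \mathscr{A}_+$ and ends just before the next digit in $\mathscr{A}_+$ is rewritten into a block that again ends exactly at that next $\mathscr{A}_+$-position. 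Hence the rewriting never mixes the prefix $v_{[1,m]}$ with the suffix $v_{[m+1,\infty)}$ across the ``cut'' at position $m+1$, provided $v_{m+1} \in \mathscr{A}_+$: the rewritten prefix is a finite word $w \in \mathscr{A}_+^{*}$ of some length~$n$, and the rewritten suffix is precisely the $1$-expansion of $T_\alpha^m(x)$.

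From this the arithmetic follows: since Lemma~\ref{l:rewrite} gives $\llbracket w\, (\text{rewritten suffix})\rrbracket = x$ and the rewritten suffix is the $1$-expansion of $T_\alpha^m(x)$, and since $w$ is a prefix of the $1$-expansion of~$x$ of length~$n$, we get $T_1^n(x) = \llbracket (\text{rewritten suffix})\rrbracket = T_\alpha^m(x)$. For the two-dimensional statement, recall from Section~\ref{sss:2dMaps} that $\mathcal{T}_\alpha^m(x,y) = (M_{v_{[1,m]}}\cdot x,\ N_{v_{[1,m]}}\cdot y)$ and $\mathcal{T}_1^n(x,y) = (M_{w}\cdot x,\ N_{w}\cdot y)$, where the $M$- and $N$-matrices are genuine matrix products (the sign factors in $M_{(\varepsilon:d)}$, $N_{(\varepsilon:d)}$ are exactly there so that the identities \eqref{e:W}, \eqref{e:E}, \eqref{e:Mn2}, \eqref{e:Mn} hold as matrix identities, not merely projectively). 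The rewriting rules, read through \eqref{e:Mn2} and \eqref{e:Mn}, are literally the statement that $M_{v_{[1,m]}} = M_{w}$ as matrices; transposing and inverting gives $N_{v_{[1,m]}} = N_{w}$, and hence $\mathcal{T}_\alpha^m(x,y) = \mathcal{T}_1^n(x,y)$ for all $y \in [0,1]$.

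The main obstacle I anticipate is making rigorous the claim that the rewriting is compatible with truncation at step~$m$ — i.e.\ that the block structure of the rewriting never straddles the boundary between $v_{[1,m]}$ and $v_{[m+1,\infty)}$. This needs the observation, already extracted in the proof of Lemma~\ref{l:rewrite}, that the pattern $(+1:1)\,(-1:d)$ never occurs, so that every maximal run of $\mathscr{A}_-$-digits is preceded by some $(+1:d)$ with $d \ge 2$; together with $v_{m+1}\in\mathscr{A}_+$ this guarantees that position $m+1$ is a ``clean break'' not in the interior of any rewrite block, so one can apply Lemma~\ref{l:rewrite} separately to the (finite) prefix word $v_{[1,m]}$ and to the infinite suffix $v_{[m+1,\infty)}$, the latter being the $\alpha$-expansion of $T_\alpha^m(x)$. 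I would phrase this as: the prefix $v_{[1,m]}$, possibly after appending the harmless observation about $(+1:1)$-endings, is itself a finite word to which the rewriting applies, producing $w\in\mathscr{A}_+^{*}$ with $M_w = M_{v_{[1,m]}}$, and then the concatenation identity for the rewriting of the whole sequence does the rest. A minor point to check along the way is that $n \ge 1$, which is immediate since $w$ is nonempty (a run of rewrites strictly decreases the number of $\mathscr{A}_-$-digits while keeping the matrix product fixed, and $v_1 \in \mathscr{A}_+$ already).
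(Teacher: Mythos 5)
Your proposal is correct and follows essentially the same route as the paper's own proof: apply the rewriting of Lemma~\ref{l:rewrite} to the prefix $v_{[1,m]}$, use the fact that $v_{m+1}\in\mathscr{A}_+$ makes the cut at position $m+1$ compatible with the block structure of the rewriting, obtain a finite word $w\in\mathscr{A}_+^*$ with $M_w = M_{v_{[1,m]}}$ (as an honest matrix identity), and deduce that $w$ is the length-$n$ prefix of the RCF expansion of~$x$. The paper phrases the last step slightly differently, inferring $v'_j=(+1:d_{1,j}(x))$ directly from $M_{v'_{[1,n]}}\cdot x = T_\alpha^m(x)\in(0,\alpha)$, but this is the same uniqueness-of-RCF fact you are invoking via Lemma~\ref{l:rewrite}.
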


\begin{proof}   
Let $v_{[1,\infty)}$ be the $\alpha$-expansion of $x \in (0,\alpha]$, and $T_\alpha^m(x) > 0$ for some $m \ge 1$. 
The procedure described in Lemma~\ref{l:rewrite} provides a sequence $v'_{[1,n]} \in \mathscr{A}_+^*$ with $M_{v'_{[1,n]}} = M_{v_{[1,m]}}$.
Since $M_{v'_{[1,n]}} \cdot x = M_{v_{[1,m]}} \cdot x = T_\alpha^m(x) \in (0,\alpha)$, we have $v'_j = (+1:d_{1,j}(x))$ for all $1 \le j \le n$, i.e., $T_1^n(x) = M_{v'_{[1,n]}} \cdot x = T_\alpha^m(x)$ and $\mathcal{T}_1^n(x,y) = \mathcal{T}_\alpha^m(x,y)$ for all $y \in [0,1]$.
\end{proof}

\begin{Lem} \label{l:2bounded}
Let $\alpha \in (0,1]$ and $x \in \mathbb{I}_\alpha$.
The $\alpha$-expansion of $x$ contains no sequence of $d_\alpha(\alpha)$ consecutive digits $(-1:2)$.
\end{Lem}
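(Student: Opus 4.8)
The plan is to show that if too many consecutive digits $(-1:2)$ occurred, then $T_\alpha$ would map some point of $\mathbb{I}_\alpha$ outside $\mathbb{I}_\alpha$, a contradiction. First I would recall that a digit $(-1:2)$ at step $j$ means $\varepsilon_{\alpha,j}(x) = -1$ and $d_{\alpha,j}(x) = 2$, which by the definition of $d_\alpha$ forces $\lfloor |1/T_\alpha^{j-1}(x)| + 1 - \alpha\rfloor = 2$, i.e. $T_\alpha^{j-1}(x) \in (\alpha-1, -\tfrac{1}{3-\alpha}]$ (the closed endpoint coming from the extreme case $|1/y| + 1 - \alpha = 3$). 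Equivalently, in matrix terms, reading off a block of $n$ consecutive digits $(-1:2)$ starting at position $j$ means $M_{(-1:2)}^n \cdot T_\alpha^{j-1}(x) = T_\alpha^{j+n-1}(x) \in [\alpha-1,\alpha)$, and I want to bound how large $n$ can be.

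The key computational step uses~\eqref{e:Mn}: we have $M_{(-1:2)}^{n} \cdot z = \dfrac{(n{+}1)z + n}{-nz + (1{-}n)}$, which for $z \in [\alpha-1, 0)$ is an increasing function of $z$ with a fixed point at $z = -1$. Starting from $z_0 = T_\alpha^{j-1}(x) \ge \alpha - 1 > -1$, the orbit $z_0, M_{(-1:2)} z_0, M_{(-1:2)}^2 z_0, \dots$ increases monotonically toward $0$; writing $z_0 = -1 + \delta$ with $\delta = z_0 + 1 \ge \alpha$, one computes $M_{(-1:2)}^n z_0 + 1 = \delta/(1 - n\delta)$ as long as the denominator stays positive, so the orbit stays in $\mathbb{I}_\alpha$ only while $n\delta < 1 + $ (something of size $\alpha$). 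Thus the number of consecutive $(-1:2)$ digits is at most roughly $1/\delta \le 1/\alpha$, and since $\alpha \ge d_\alpha(\alpha)^{-1}$ is false but rather $d_\alpha(\alpha) = \lfloor 1/\alpha + 1 - \alpha\rfloor$, one gets exactly that a block of length $d_\alpha(\alpha)$ is impossible. Concretely: if there were $d_\alpha(\alpha)$ consecutive digits $(-1:2)$, say $T_\alpha^{j-1}(x) = z_0$ with all of $z_0, \dots, M_{(-1:2)}^{d_\alpha(\alpha)-1} z_0$ in $\Delta_\alpha(-1:2)$, then iterating the bound forces $M_{(-1:2)}^{d_\alpha(\alpha)-1} z_0 < \alpha - 1$, contradicting $M_{(-1:2)}^{d_\alpha(\alpha)-1} z_0 \in \mathbb{I}_\alpha$.

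The main obstacle will be getting the constant exactly right: one needs to track the endpoint conventions carefully (the cylinder $\Delta_\alpha(-1:2)$ has the form $(\alpha-1, c]$ or similar, and $\alpha$ itself versus $\alpha - 1$ appears asymmetrically), and to verify that the worst case — the longest possible run — is realized precisely when the orbit starts as close as possible to $\alpha - 1$, giving a run of length exactly $d_\alpha(\alpha) - 1$ and never $d_\alpha(\alpha)$. I would handle this by computing $M_{(-1:2)}^{n}\cdot(\alpha-1)$ in closed form from~\eqref{e:Mn}, setting this equal to $\alpha - 1$, and checking that the resulting inequality on $n$ reads $n(1-\alpha)\cdot(\text{stuff}) < 1$, which combined with $d_\alpha(\alpha) = \lfloor 1/\alpha + 1 - \alpha \rfloor \ge 1/\alpha + \dots$ yields $n < d_\alpha(\alpha)$. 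A small separate check handles $\alpha = 1$, where $d_1(1) = 1$ and the claim is that no digit $(-1:2)$ occurs at all, which is immediate since $\varepsilon = -1$ is impossible for $x \in \mathbb{I}_1 = [0,1]$.
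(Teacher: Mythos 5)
Your setup through the formula $M_{(-1:2)}^n z_0 + 1 = \delta/(1-n\delta)$ is correct, and you correctly observe that the orbit is strictly increasing on $(-1,0)$ (indeed $M_{(-1:2)}\cdot y - y = -(y+1)^2/y > 0$ there). But precisely because of that monotonicity, your proposed contradiction in the last paragraph — ``iterating the bound forces $M_{(-1:2)}^{d_\alpha(\alpha)-1} z_0 < \alpha - 1$'' — is impossible: the orbit starts at $z_0 \ge \alpha-1$ and only moves upward, so it can never drop below $\alpha-1$. The danger is at the \emph{other} end of the interval, where an iterate becomes too large.

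The deeper problem is that the information you actually extract is too weak. Having a block of $d := d_\alpha(\alpha)$ consecutive digits $(-1:2)$ requires $M_{(-1:2)}^j x < 0$ for $0 \le j < d$, which your formula turns into $\delta < 1/d$; combined with $\delta \ge \alpha$ this gives only $d\alpha < 1$. That alone contradicts nothing: e.g.\ for $\alpha = 0.24$ one has $d_\alpha(\alpha) = 4$ and $4\cdot 0.24 = 0.96 < 1$, so no contradiction arises from $d\alpha < 1$ and the floor formula for $d_\alpha(\alpha)$. The paper's proof needs two further ingredients that your sketch omits: (i) the iterate $M_{(-1:2)}^d\cdot x = T_\alpha^d(x)$ lies in $[\alpha-1,\alpha)$, in particular $M_{(-1:2)}^d\cdot(\alpha-1) \le M_{(-1:2)}^d\cdot x < \alpha$, which after the same algebra gives the strict inequality $d_\alpha(\alpha)\,\alpha^2 + d_\alpha(\alpha)\,\alpha - 1 < 0$; and (ii) the inequality $T_\alpha(\alpha) = 1/\alpha - d_\alpha(\alpha) < \alpha$, which gives $\alpha^2 + d_\alpha(\alpha)\,\alpha - 1 > 0$. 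Subtracting these two forces $d_\alpha(\alpha)\,\alpha^2 < \alpha^2$, i.e.\ $d_\alpha(\alpha) < 1$, which is the actual contradiction. Without (i) and especially (ii), the argument does not close, so as written your proof has a genuine gap.
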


\begin{proof}
The $\alpha$-expansion of $x$ contains a sequence of $d_\alpha(\alpha)$ consecutive digits $(-1:2)$ if and only if the $\alpha$-expansion of $T_\alpha^m(x)$ starts with $(-1:2)^{d_\alpha(\alpha)}$ for some $m \ge 0$.
Therefore, it suffices to show that $(-1:2)^{d_\alpha(\alpha)}$ cannot be a prefix of an $\alpha$-expansion.

Suppose on the contrary that the $\alpha$-expansion of $x$ begins with $(-1:2)^{d_\alpha(\alpha)}$.
In particular, this means that $T_\alpha^n(x) = M_{(-1:2)}^n \cdot x < 0$ for all $0 \le n < d_\alpha(\alpha)$.
By~\eqref{e:Mn}, we have $M_{(-1:2)}^n \cdot z \ge 0$ for all $z \in \big[\frac{1}{n+1} - 1, \frac{1}{n} - 1\big)$.
It follows that $x \in \big[{\alpha-1}, \frac{1}{d_\alpha(\alpha)} - 1\big)$.
Since
\[
\alpha > T_\alpha^{d_\alpha(\alpha)}(x) = M_{(-1:2)}^{d_\alpha(\alpha)} \cdot x \ge M_{(-1:2)}^{d_\alpha(\alpha)} \cdot ({\alpha-1}) = \frac{d_\alpha(\alpha)\alpha+{\alpha-1}}{1-d_\alpha(\alpha)\alpha}\,,
\]
where we have used that the action of  $M_{(-1:2)}$ is order preserving on the negative numbers, and $\alpha \le x+1 < \frac{1}{d_\alpha(\alpha)}$, we obtain that $d_\alpha(\alpha) \alpha^2 + d_\alpha(\alpha) \alpha - 1 < 0$.
We must also have $\alpha > T_\alpha(\alpha) = \frac{1}{\alpha} - d_\alpha(\alpha)$, thus $\alpha^2 + d_\alpha(\alpha) \alpha - 1 > 0$.
Since $d_\alpha(\alpha) \ge 1$, this is impossible.
\end{proof}

\begin{Lem} \label{l:negativeToRCF}
Let $\alpha \in (0,1)$, $x \in (0,\alpha]$ and suppose that $T_\alpha^m(x) < 0$ for all $m \ge 1$.
Then, for any $n \ge 1$, we cannot have both $d_{1,n}(x) > d_\alpha(\alpha)$ and $d_{1,n+1}(x) > d_\alpha(\alpha)$.
\end{Lem}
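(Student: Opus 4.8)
The plan is to translate the RCF-digit condition back into the $\alpha$-expansion via Lemma~\ref{l:rewrite}, and then derive a contradiction from Lemma~\ref{l:2bounded}. First I would observe that since $T_\alpha^m(x) < 0$ for all $m \ge 1$, the hypothesis $x \in (0,\alpha]$ forces $v_1 \in \mathscr{A}_+$ but $v_j \in \mathscr{A}_-$ for all $j \ge 2$; moreover $v_1 \ne (+1:1)$ (as noted in the proof of Lemma~\ref{l:rewrite}, the pattern $(+1:1)(-1:d)$ cannot occur), so $v_1 = (+1:d)$ with $d \ge 2$. Thus the $\alpha$-expansion of $x$ is of the form
\[
(+1:d)\, (-1:d'_1)\, (-1:d'_2)\, (-1:d'_3)\, \cdots\,,
\]
with $d \ge 2$ and each $d'_i \ge 2$.

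Next I would apply the rewriting rules of Lemma~\ref{l:rewrite} to read off the RCF expansion. Writing the tail $(-1:d'_1)(-1:d'_2)\cdots$ in the form of blocks of $(-1:2)$'s separated by digits $(-1:2+a_i)$ with $a_i \ge 1$ — that is, using the characteristic sequence $a_1 a_2 \cdots$ of the word $v_2 v_3 \cdots \in \mathscr{A}_-^\omega$ — the first rewriting rule (applied repeatedly, since all digits after $v_1$ lie in $\mathscr{A}_-$ and none is $(+1:\infty)$) yields
\[
[0;\, d-1,\ a_1,\ a_2,\ a_3,\ \ldots\,]
\]
as the RCF expansion of~$x$ (the indices shift by one because of the initial $(+1:d)$). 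Concretely, $d_{1,1}(x) = d-1$ and $d_{1,n+1}(x) = a_n$ for all $n \ge 1$. So the claim reduces to: one cannot have $a_n > d_\alpha(\alpha)$ and $a_{n+1} > d_\alpha(\alpha)$ for consecutive indices, together with the boundary case $n=1$ requiring $d - 1 > d_\alpha(\alpha)$ and $a_1 > d_\alpha(\alpha)$ ruled out as well.

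Finally I would extract the contradiction. By Lemma~\ref{l:2bounded}, the $\alpha$-expansion of $x$ contains no run of $d_\alpha(\alpha)$ consecutive digits $(-1:2)$; by definition of the characteristic sequence, a value $a_i \ge 1$ corresponds to a run of exactly $a_i - 1$ digits $(-1:2)$ (between two digits $(-1:2+a_{i-1})$ and $(-1:2+a_{i+1})$, or at the start). If two consecutive characteristic entries $a_n, a_{n+1}$ both exceed $d_\alpha(\alpha)$, the block of $(-1:2)$'s indexed between them has length $a_{n+1} - 1 \ge d_\alpha(\alpha)$ — wait, one must be careful here: a single large $a_i > d_\alpha(\alpha)$ already produces a run of $a_i - 1 \ge d_\alpha(\alpha)$ consecutive $(-1:2)$'s, which is itself forbidden. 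So in fact Lemma~\ref{l:2bounded} already forbids \emph{any} $a_i > d_\alpha(\alpha)$ among the characteristic entries that sit strictly between two non-$(-1:2)$ digits. The subtlety, and the main obstacle, is the first entry $a_1$ and the digit $d-1 = d_{1,1}(x)$: the initial run of $(-1:2)$'s has length only $a_1 - 1$ but it is preceded by $(+1:d)$, not by a digit of $\mathscr{A}_-$, so Lemma~\ref{l:2bounded} gives $a_1 - 1 < d_\alpha(\alpha)$, i.e. $a_1 \le d_\alpha(\alpha)$; this handles $d_{1,2}(x) = a_1 \le d_\alpha(\alpha)$ and more than the claimed statement needs. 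I would then double-check the edge effect at $n=1$ where the claim concerns $d_{1,1}(x)$ and $d_{1,2}(x)$: here $d_{1,1}(x) = d-1$ can in principle be large (the hypothesis $x \le \alpha$ only gives $1/x \ge 1/\alpha$, so $d = d_\alpha(x)$ may exceed $d_\alpha(\alpha)+1$), but then $a_1 \le d_\alpha(\alpha)$ from the run-length bound, so $d_{1,2}(x) = a_1 \not> d_\alpha(\alpha)$ and the two cannot both be large. For $n \ge 2$, $d_{1,n}(x) = a_{n-1}$ and $d_{1,n+1}(x) = a_n$ with $a_{n-1}, a_n$ both interior characteristic entries, each individually bounded by $d_\alpha(\alpha)$. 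So the statement holds, in fact with "both" strengthened to "neither" except possibly for $d_{1,1}(x)$. The only real work is verifying the index bookkeeping in the rewriting and confirming that no $(+1:\infty)$ digit ever appears (guaranteed since $x$ has an infinite $\alpha$-expansion, as $T_\alpha^m(x) \ne 0$ for all $m$).
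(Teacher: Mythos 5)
Your setup is correct: using Lemma~\ref{l:rewrite} (or equivalently Proposition~\ref{p:0toRCF}) to convert the $\alpha$-expansion $(+1:d)\,v_2 v_3\cdots$ with $v_{[2,\infty)}\in\mathscr{A}_-^\omega$ into the RCF expansion $[0;d-1,a_1,a_2,\ldots]$, where $a_{[1,\infty)}$ is the characteristic sequence of $v_{[2,\infty)}$, is exactly the route the paper takes, and the bookkeeping $d_{1,1}(x)=d-1$, $d_{1,n+1}(x)=a_n$ is right.

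The gap is in the final step, where you claim that Lemma~\ref{l:2bounded} bounds \emph{every} characteristic entry $a_i$ by $d_\alpha(\alpha)$. That is not what the characteristic sequence encodes. In the decomposition
\[
v_{[2,\infty)} = (-1:2)^{a_1-1}\,(-1:2+a_2)\,(-1:2)^{a_3-1}\,(-1:2+a_4)\cdots
\]
only the \emph{odd-indexed} $a_{2j+1}$ measure run lengths of $(-1:2)$, so Lemma~\ref{l:2bounded} gives $a_{2j+1}\le d_\alpha(\alpha)$ and nothing else. The \emph{even-indexed} $a_{2j}$ record the size of the isolated large digit $(-1:2+a_{2j})$; a large $a_{2j}$ produces a single digit with a big denominator, not a long run of $(-1:2)$'s, and so is not constrained by the run-length lemma at all. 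Your sentence ``a single large $a_i > d_\alpha(\alpha)$ already produces a run of $a_i-1$ consecutive $(-1:2)$'s'' and the subsequent conclusion that all interior $a_i$ are bounded are therefore false; the red flag that you noticed yourself — that your argument would prove the much stronger ``no RCF digit past the first exceeds $d_\alpha(\alpha)$'' — is real, and that stronger statement does not hold.

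What the paper actually uses is the parity structure you almost wrote down: $d_{1,n+1}(x)=a_n$ puts the bounded entries $a_1,a_3,a_5,\ldots$ precisely at the even RCF positions $d_{1,2},d_{1,4},\ldots$, while $d_{1,1}=d-1$ and the odd RCF positions $d_{1,3},d_{1,5},\ldots$ may be large. Of any two consecutive RCF positions $n,n+1$, exactly one is even, so exactly one of $d_{1,n},d_{1,n+1}$ equals some $a_{2j+1}\le d_\alpha(\alpha)$. That is the whole proof, and it is genuinely weaker than what you tried to show.
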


\begin{proof}
If $x \in (0,\alpha]$, $T_\alpha^m(x) < 0$ for all $m \ge 1$, then we can write the $\alpha$-expansion of~$x$ as
\[
(+1:d)\, (-1:2)^{a_1-1}\, (-1:2+a_2)\, (-1:2)^{a_3-1}\, (-1:2+a_4) \cdots
\]
with $d \ge 2$, $a_j \ge 1$ for all $j \ge 1$.
By Proposition~\ref{p:0toRCF}, the $1$-expansion of~$x$ is 
\[
(+1:d-1)\, (+1:a_1)\, (+1:a_2)\, (+1:a_3)\, (+1:a_4)\, \cdots\,.
\]
By Lemma~\ref{l:2bounded}, we have $a_{2j+1} \le d_\alpha(\alpha)$ for all $j \ge 0$, which proves the lemma.
\end{proof}

\begin{Lem} \label{l:1alpha}
Let $\alpha \in (0,1)$, $x \in (0,\alpha]$, and $T_1^{n-1}(x) \in \big(0, \frac{1}{d_\alpha(\alpha)+1}\big]$, $T_1^n(x) \in \big(0, \frac{1}{d_\alpha(\alpha)+1}\big]$ for some $n \ge 1$.
Then there is some $m \ge 1$ such that $T_1^n(x) = T_\alpha^m(x)$ and $\mathcal{T}_1^n(x,y) = \mathcal{T}_\alpha^m(x,y)$ for all $y \in [0,1]$.
\end{Lem}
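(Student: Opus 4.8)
The plan is to run the conversion procedure of Lemma~\ref{l:rewrite} as a two-way dictionary between $\alpha$-expansions and RCF expansions, reading it now from the RCF side to reconstruct the $\alpha$-expansion. Concretely, write the RCF expansion of $x$ as $v'_{[1,\infty)} \in \mathscr{A}_+^\omega$ (with $v'_j = (+1:d_{1,j}(x))$), and observe that the rewriting rules of Lemma~\ref{l:rewrite}, run \emph{backwards}, replace patterns $(+1:d-1)(+1:n)(+1:d'-1)$ (and the two variants) by $(+1:d)(-1:2)^{n-1}(-1:d')$ etc., always leaving $\llbracket\cdot\rrbracket$ invariant by the identity $M_{(-1:d')}M_{(-1:2)}^{n-1}M_{(+1:d)} = M_{(+1:d'-1)}M_{(+1:n)}M_{(+1:d-1)}$ from the proof of Lemma~\ref{l:rewrite}. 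The key hypothesis $T_1^{n-1}(x), T_1^n(x) \in \big(0,\frac{1}{d_\alpha(\alpha)+1}\big]$ means precisely that $d_{1,n}(x) \ge d_\alpha(\alpha)+1$ and $d_{1,n+1}(x) \ge d_\alpha(\alpha)+1$, i.e.\ two consecutive RCF digits are large; by Lemma~\ref{l:negativeToRCF} (and its proof, which identifies the large odd-indexed characteristic digits as exactly the ``$d'$-slots''), such a pair cannot both survive on the $\alpha$-side when all $T_\alpha$-iterates stay negative — so at the position corresponding to $T_1^{n-1}(x)$, the backwards-rewriting forces a genuine transition from a block of $(-1:2)$'s back into a positive digit in the $\alpha$-expansion.

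The steps, in order, would be: (1)~By Lemma~\ref{l:alpha1}, if some $T_\alpha^j(x)>0$ with $0<j\le$ (the $\alpha$-index matching $T_1^{n-1}$), we are immediately done by transitivity of ``$\mathcal{T}_1^{\cdot}=\mathcal{T}_\alpha^{\cdot}$'', so assume $T_\alpha^j(x)<0$ up to that point. (2)~Using the explicit shape of the $\alpha$-expansion of such an $x$ from the proof of Lemma~\ref{l:negativeToRCF}, namely $(+1:d)(-1:2)^{a_1-1}(-1:2+a_2)(-1:2)^{a_3-1}(-1:2+a_4)\cdots$ with $1$-expansion $(+1:d-1)(+1:a_1)(+1:a_2)(+1:a_3)\cdots$, translate the index $n$ on the RCF side into a concrete position in this word. (3)~Invoke Lemma~\ref{l:2bounded}: $a_{2j+1}\le d_\alpha(\alpha)$ for all $j\ge 0$, so an RCF digit $a_i \ge d_\alpha(\alpha)+1$ must be an even-indexed one, $a_i = 2+(\text{something})$ occupying a ``$(-1:2+a_i)$'' slot; two \emph{consecutive} such large digits $a_i, a_{i+1}$ would force $i$ and $i+1$ both even — impossible — which is the contradiction already packaged in Lemma~\ref{l:negativeToRCF}, UNLESS the relevant stretch of the $\alpha$-expansion has actually already returned to $\mathscr{A}_+$, i.e.\ $x$ does not in fact have all negative $T_\alpha$-iterates up to that point. (4)~Conclude that the position of $T_1^{n-1}(x)$ in the RCF word sits at a block boundary of the form $\cdots(+1:d-1)\mid(+1:a_i)(+1:a_{i+1})\cdots$ where $a_i$ large forces, via the backwards rewrite, that the matching $\alpha$-iterate $T_\alpha^m(x)$ equals $T_1^n(x)$ exactly (the ``$(-1:2)$'' block between two positive $\alpha$-digits converts, by the first rule of Lemma~\ref{l:rewrite}, to precisely the run $(+1:a_i)$ flanked by positive digits, and the state after consuming it on the $1$-side coincides with the state after consuming one $\alpha$-digit). (5)~Since $M_{v'_{[1,n]}}$ and $M_{(\alpha\text{-prefix of length }m)}$ are then equal matrices and both send $x$ into $(0,\alpha)$, equality $T_1^n(x)=T_\alpha^m(x)$ and $\mathcal{T}_1^n(x,y)=\mathcal{T}_\alpha^m(x,y)$ for all $y$ follows exactly as in Lemma~\ref{l:alpha1}.

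The main obstacle is step~(4): matching indices precisely across the rewriting — i.e.\ verifying that the RCF position $n$ with $d_{1,n}(x), d_{1,n+1}(x) \ge d_\alpha(\alpha)+1$ lands exactly at the end of a converted block rather than in its interior, so that there genuinely \emph{is} an $\alpha$-step $m$ whose partial matrix product equals the length-$n$ RCF partial product. This is a bookkeeping argument about where the boundaries of the blocks $(+1:d)(-1:2)^{a-1}(-1:d')$ fall after conversion, and it is where Lemma~\ref{l:negativeToRCF} does the real work: having \emph{two} consecutive large RCF digits is exactly what rules out the bad case where $n$ is strictly inside a block (which would require the preceding converted block to contain a large ``$a_{2j+1}$'', contradicting Lemma~\ref{l:2bounded}). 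Everything else — the matrix identity, the projective-action argument, the reduction via Lemma~\ref{l:alpha1} — is routine once the indexing is pinned down.
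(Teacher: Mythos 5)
Your proposal uses exactly the paper's ingredients — Lemmas~\ref{l:rewrite} and~\ref{l:2bounded} and the $\alpha$-expansion block structure from the proof of Lemma~\ref{l:negativeToRCF} — together with the paper's key observation that two consecutive RCF digits $>d_\alpha(\alpha)$ cannot both lie in the interior of a converted $\mathscr{A}_-$ run (since the odd-indexed characteristic entries are $\le d_\alpha(\alpha)$), which forces $n$ onto a block boundary where $M_{v'_{[1,n]}}=M_{v_{[1,m]}}$. This is the paper's own argument; the index bookkeeping you single out as the ``main obstacle'' is precisely what the paper also delegates by writing ``similarly to the proof of Lemma~\ref{l:negativeToRCF}.''
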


\begin{proof}
Let $v_{[1,\infty)}$ be the $\alpha$-expansion of $x \in (0,\alpha]$, and $v'_{[1,\infty)}$ its $1$-expansion. 
If $T_1^{n-1}(x) \in \big(0, \frac{1}{d_\alpha(\alpha)+1}\big]$, $T_1^n(x) \in \big(0, \frac{1}{d_\alpha(\alpha)+1}\big]$, then $v'_n = (+1:d)$, $v'_{n+1} = (+1:d')$ with $d, d' > d_\alpha(\alpha)$.
Similarly to the proof of Lemma~\ref{l:negativeToRCF}, Lemmas~\ref{l:rewrite} and~\ref{l:2bounded} imply that $v'_{n+1} = v_{m+1}$, $M_{v'_{[1,n]}} = M_{v_{[1,m]}}$ for some $m \ge 1$.
Therefore, we have $T_1^n(x) = T_\alpha^m(x)$ and $\mathcal{T}_1^n(x,y) = \mathcal{T}_\alpha^m(x,y)$.
\end{proof}

\section{Natural extensions and entropy} \label{sec:natural-extensions}

The advantage for number theoretic usage of the natural extension map in the form~$\mathcal{T}_\alpha$ is that  the Diophantine approximation to an $x \in [{\alpha-1}, \alpha)$ by the finite steps in its $\alpha$-expansion is directly related to the $\mathcal{T}_\alpha$-orbit of $(x,0)$; see \cite{Kraaikamp:91}.  
We define our natural extension domain in terms of  these orbits.
We show moreover that the entropy of~$T_\alpha$ is directly related to the measure of the natural extension domain;  that is, this section ends with the proof of Theorem~\ref{t:hmu}.

We will see that the structure of $\Omega_\alpha$ can be quite complicated.
Even for ``nice'' numbers such as $\alpha = g^2$ and $\alpha = 1/4$ it has a fractal structure;  see \cite{LuzziMarmi08} and Figures~\ref{f:g2} and~\ref{f:4}.

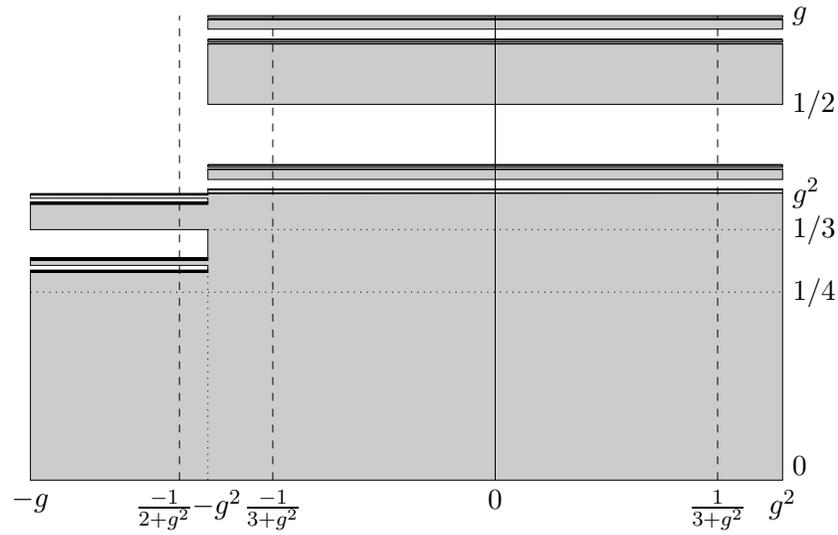
\begin{figure}[ht]
\begin{tikzpicture}[scale=10,fill=black!20,ultra thin]
\small
\filldraw(-.618,.0)--(-.618,.2763)--(-.382,.2763)--(-.382,.2766)--(-.618,.2766)--(-.618,.2767)--(-.382,.2767)--(-.382,.2778)--(-.618,.2778)--(-.618,.2787)--(-.382,.2787)--(-.382,.2791)--(-.618,.2791)--(-.618,.2792)--(-.382,.2792)--(-.382,.2857)--(-.618,.2857)--(-.618,.2923)--(-.382,.2923)--(-.382,.2927)--(-.618,.2927)--(-.618,.2929)--(-.382,.2929)--(-.382,.2941)--(-.618,.2941)--(-.618,.2951)--(-.382,.2951)--(-.382,.2955)--(-.618,.2955)--(-.618,.2956)--(-.382,.2956)--(-.382,.3333)--(-.618,.3333)--(-.618,.3671)--(-.382,.3671)--(-.382,.3673)--(-.618,.3673)--(-.618,.3675)--(-.382,.3675)--(-.382,.3684)--(-.618,.3684)--(-.618,.3692)--(-.382,.3692)--(-.382,.3696)--(-.618,.3696)--(-.618,.3697)--(-.382,.3697)--(-.382,.375)--(-.618,.375)--(-.618,.3797)--(-.382,.3797)--(-.382,.38)--(-.618,.38)--(-.618,.3801)--(-.382,.3801)--(-.382,.381)--(-.618,.381)--(-.618,.381)--(.382,.3819)--(.382,.0)--cycle;
\filldraw(-.382,.382)--(-.382,.3866)--(.382,.3866)--(.382,.382)--cycle;
\filldraw(-.382,.3871)--(-.382,.3874)--(.382,.3874)--(.382,.3871)--cycle;
\filldraw(-.382,.4)--(-.382,.4132)--(.382,.4132)--(.382,.4)--cycle;
\filldraw(-.382,.4138)--(-.382,.4141)--(.382,.4141)--(.382,.4138)--cycle;
\filldraw(-.382,.4167)--(-.382,.4188)--(.382,.4188)--(.382,.4167)--cycle;
\filldraw(-.382,.4194)--(-.382,.4196)--(.382,.4196)--(.382,.4194)--cycle;
\filldraw(-.382,.5)--(-.382,.5802)--(.382,.5802)--(.382,.5)--cycle;
\filldraw(-.382,.5806)--(-.382,.581)--(.382,.581)--(.382,.5806)--cycle;
\filldraw(-.382,.5833)--(-.382,.5856)--(.382,.5856)--(.382,.5833)--cycle;
\filldraw(-.382,.5862)--(-.382,.5865)--(.382,.5865)--(.382,.5862)--cycle;
\filldraw(-.382,.6)--(-.382,.6124)--(.382,.6124)--(.382,.6)--cycle;
\filldraw(-.382,.6129)--(-.382,.6132)--(.382,.6132)--(.382,.6129)--cycle;
\filldraw(-.382,.6154)--(-.382,.618)--(.382,.618)--(.382,.6154)--cycle;
\node[below] at (-.618,.0) {$-g$};
\draw[thin,dotted](-.382,.0)--(-.382,.2763);
\node[below] at (-.37,.0) {$-g^2$};
\draw(0,0)--(0,.618);
\node[below] at (0,0) {$0$};
\node[below] at (.382,.0) {$g^2$};
\draw[dashed](-.4198,.0)--(-.4198,.618);
\node[below] at (-.44,0) {$\frac{-1}{2+g^2}$};
\draw[dashed](-.2957,.0)--(-.2957,.618);
\node[below] at (-.2957,0) {$\frac{-1}{3+g^2}$};
\draw[dashed](.2957,.0)--(.2957,.618);
\node[below] at (.2957,0) {$\frac{1}{3+g^2}$};
\node[right] at (.382,.618) {$g$};
\node[right] at (.382,.5) {$1/2$};
\node[right] at (.382,.382) {$g^2$};
\draw[thin,dotted](-.382,.3333)--(.382,.3333)node[right]{$1/3$};
\draw[thin,dotted](-.618,.25)--(.382,.25)node[right]{$1/4$};
\node[right] at (.382,0.02) {$0$};
\end{tikzpicture}
\caption{The natural extension domain $\Omega_{g^2}$.}
\label{f:g2}
\end{figure}

\begin{figure}[ht]
\begin{tikzpicture}[scale=12,fill=black!20,ultra thin]
\small
\filldraw(-.75,.0)--(-.75,.2083)--(.0,.2083)--(.0,.2105)--(-.75,.2105)--(-.75,.2111)--(.0,.2111)--(.0,.2143)--(-.75,.2143)--(-.75,.2154)--(.0,.2154)--(.0,.2222)--(-.75,.2222)--(-.75,.225)--(.0,.225)--(.0,.2308)--(-.75,.2308)--(-.75,.2321)--(.0,.2321)--(.0,.25)--(-.75,.25)--(-.75,.2637)--(.25,.2637)--(.25,.0)--cycle;
\filldraw(-.75,.2667)--(-.75,.2676)--(.25,.2676)--(.25,.2667)--cycle;
\filldraw(-.75,.2727)--(-.75,.2745)--(.25,.2745)--(.25,.2727)--cycle;
\filldraw(-.75,.2778)--(-.75,.2785)--(.25,.2785)--(.25,.2778)--cycle;
\filldraw(-.75,.2857)--(-.75,.2903)--(.25,.2903)--(.25,.2857)--cycle;
\filldraw(-.75,.2941)--(-.75,.2949)--(.25,.2949)--(.25,.2941)--cycle;
\filldraw(-.75,.3)--(-.75,.3023)--(.25,.3023)--(.25,.3)--cycle;
\filldraw(-.75,.3333)--(-.75,.3582)--(.25,.3582)--(.25,.3333)--cycle;
\filldraw(-.75,.3636)--(-.75,.3654)--(.25,.3654)--(.25,.3636)--cycle;
\filldraw(-.75,.3684)--(-.75,.369)--(.25,.369)--(.25,.3684)--cycle;
\filldraw(-.75,.375)--(-.75,.3784)--(.25,.3784)--(.25,.375)--cycle;
\filldraw(-.75,.381)--(-.75,.3814)--(.25,.3814)--(.25,.381)--cycle;
\filldraw(-.75,.3846)--(-.75,.386)--(.25,.386)--(.25,.3846)--cycle;
\filldraw(-.75,.3889)--(-.75,.3896)--(.25,.3896)--(.25,.3889)--cycle;
\filldraw(-.75,.4)--(-.75,.4091)--(.25,.4091)--(.25,.4)--cycle;
\filldraw(-.75,.4118)--(-.75,.4125)--(.25,.4125)--(.25,.4118)--cycle;
\filldraw(-.75,.4167)--(-.75,.4182)--(.25,.4182)--(.25,.4167)--cycle;
\filldraw(-.75,.4211)--(-.75,.4217)--(.25,.4217)--(.25,.4211)--cycle;
\filldraw(-.75,.4286)--(-.75,.4333)--(.25,.4333)--(.25,.4286)--cycle;
\filldraw(-.75,.4375)--(-.75,.4384)--(.25,.4384)--(.25,.4375)--cycle;
\filldraw(-.6667,.5)--(-.6667,.5581)--(.25,.5581)--(.25,.5)--cycle;
\filldraw(-.6667,.5625)--(-.6667,.5634)--(.25,.5634)--(.25,.5625)--cycle;
\filldraw(-.6667,.5652)--(-.6667,.5657)--(.25,.5657)--(.25,.5652)--cycle;
\filldraw(-.6667,.5714)--(-.6667,.5758)--(.25,.5758)--(.25,.5714)--cycle;
\filldraw(-.6667,.5789)--(-.6667,.5795)--(.25,.5795)--(.25,.5789)--cycle;
\filldraw(-.6667,.5833)--(-.6667,.5849)--(.25,.5849)--(.25,.5833)--cycle;
\filldraw(-.6667,.5882)--(-.6667,.589)--(.25,.589)--(.25,.5882)--cycle;
\filldraw(-.6667,.6)--(-.6667,.6087)--(.25,.6087)--(.25,.6)--cycle;
\filldraw(-.6667,.6111)--(-.6667,.6118)--(.25,.6118)--(.25,.6111)--cycle;
\filldraw(-.6667,.6154)--(-.6667,.6167)--(.25,.6167)--(.25,.6154)--cycle;
\filldraw(-.6667,.619)--(-.6667,.6196)--(.25,.6196)--(.25,.619)--cycle;
\filldraw(-.6667,.625)--(-.6667,.6286)--(.25,.6286)--(.25,.625)--cycle;
\filldraw(-.6667,.6316)--(-.6667,.6322)--(.25,.6322)--(.25,.6316)--cycle;
\filldraw(-.6667,.6364)--(-.6667,.6383)--(.25,.6383)--(.25,.6364)--cycle;
\filldraw(-.5,.6667)--(-.5,.6935)--(.25,.6935)--(.25,.6667)--cycle;
\filldraw(-.5,.7)--(-.5,.7021)--(.25,.7021)--(.25,.7)--cycle;
\filldraw(-.5,.7059)--(-.5,.7067)--(.25,.7067)--(.25,.7059)--cycle;
\filldraw(-.5,.7143)--(-.5,.7188)--(.25,.7188)--(.25,.7143)--cycle;
\filldraw(-.5,.7222)--(-.5,.7229)--(.25,.7229)--(.25,.7222)--cycle;
\filldraw(-.5,.7273)--(-.5,.7292)--(.25,.7292)--(.25,.7273)--cycle;
\filldraw(-.5,.7333)--(-.5,.7344)--(.25,.7344)--(.25,.7333)--cycle;
\filldraw(.0,.75)--(.0,.7654)--(.25,.7654)--(.25,.75)--cycle;
\filldraw(.0,.7692)--(.0,.7705)--(.25,.7705)--(.25,.7692)--cycle;
\filldraw(.0,.7727)--(.0,.7732)--(.25,.7732)--(.25,.7727)--cycle;
\filldraw(.0,.7778)--(.0,.7805)--(.25,.7805)--(.25,.7778)--cycle;
\filldraw(.0,.7857)--(.0,.7869)--(.25,.7869)--(.25,.7857)--cycle;
\filldraw(.0,.7895)--(.0,.7901)--(.25,.7901)--(.25,.7895)--cycle;
\draw(0,0)--(.0,.7901);
\draw[thin,dotted](-.6667,.0)--(-.6667,.5);
\draw[thin,dotted](-.5,.0)--(-.5,.6667);
\node[below] at (-.75,.0) {$-3/4$};
\node[below] at (-.6667,.0) {$-2/3$};
\node[below] at (-.52,.0) {$-1/2$};
\node[below] at (0,0) {$\vphantom{/}0$};
\node[below] at (.27,.0) {$1/4$};
\draw[dashed](-.4444,.0)--(-.4444,.7901);
\node[below] at (-.4444,0) {$-4/9$};
\draw[dashed](.2353,.0)--(.2353,.7901);
\node[below] at (.21,0) {$4/17$};
\node[right] at (.25,.005) {$0$};
\node[right] at (.25,.75) {$3/4$};
\node[right] at (.25,.6667) {$2/3$};
\node[right] at (.25,.5) {$1/2$};
\node[right] at (.25,.3333) {$1/3$};
\draw[thin,dotted](.0,.25)--(.25,.25)node[right]{$1/4$};
\draw[thin,dotted](-.75,.2)--(.25,.2)node[right]{$1/5$};
\end{tikzpicture}
\caption{The natural extension domain $\Omega_{1/4}$.}
\label{f:4}
\end{figure}
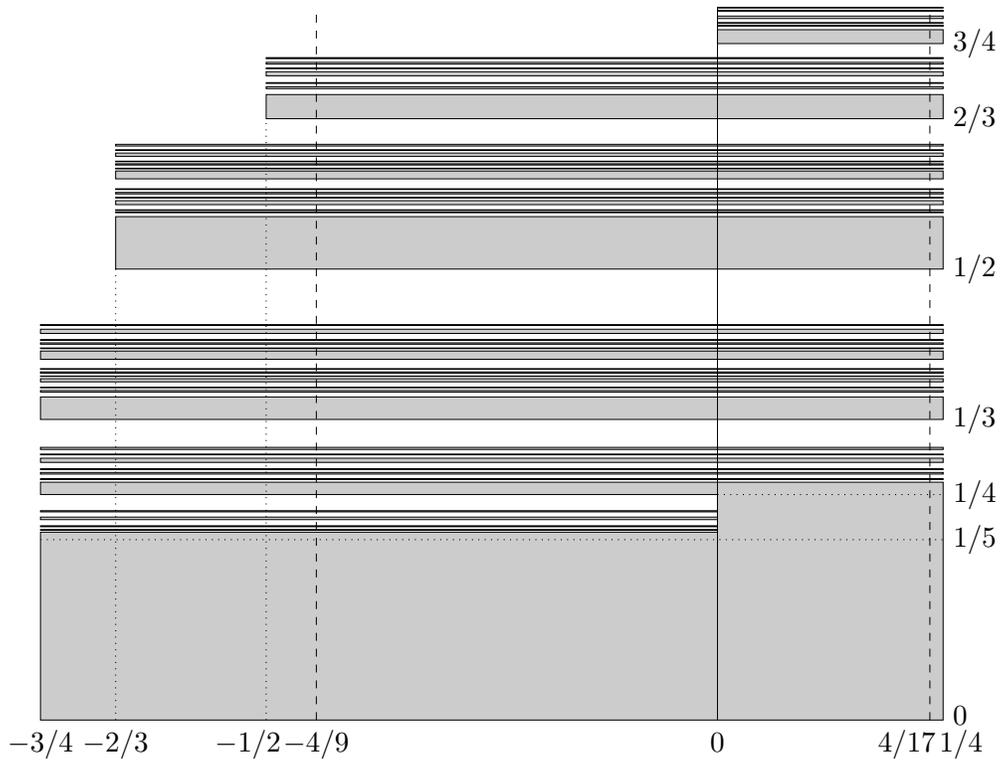

In the following, we show that $\mathcal{T}_\alpha$ and $\Omega_\alpha$ give indeed a natural extension of~$T_\alpha$.

\begin{Lem} \label{l:positivemeasure}
Let $\alpha \in (0,1]$.
We have 
\[
\big[0, \tfrac{1}{d_\alpha(\alpha)+1}\big]^2 \subset \Omega_\alpha \subseteq \mathbb{I}_\alpha \times [0,1],
\]
thus $0 < \mu(\Omega_\alpha) < \infty$.
\end{Lem}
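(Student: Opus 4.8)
The containment $\Omega_\alpha \subseteq \mathbb{I}_\alpha \times [0,1]$ is essentially by definition: every point of $\Omega_\alpha$ is a limit of points $\mathcal{T}_\alpha^n(x,0)$ with $x \in [{\alpha-1},\alpha)$, the first coordinate of such an iterate lies in $\mathbb{I}_\alpha$, and the second coordinate is $\llbracket (\varepsilon_n:d_n)\cdots(\varepsilon_1:d_1),\,0\rrbracket$ for the digits of~$x$ (so it lies in $[0,1]$, being a value of the map $N_w\cdot 0$ for a finite word $w$, or $0$ if $n=0$). A short induction on~$n$, using $N_{(\varepsilon:d)}\cdot y = 1/(d+\varepsilon y)$ and the bounds $d \ge 1$, together with $d \ge 2$ whenever $\varepsilon = -1$, keeps the $y$-coordinate in $[0,1]$; taking closures preserves the bound.

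For the lower containment $\big[0, \tfrac{1}{d_\alpha(\alpha)+1}\big]^2 \subset \Omega_\alpha$, the plan is to produce enough orbit points $\mathcal{T}_\alpha^n(x,0)$ to fill (a dense subset of) this square, then close up. First I would note that the square $\big[0,\tfrac1{d_\alpha(\alpha)+1}\big]\times\{0\}$ sits inside $[{\alpha-1},\alpha)\times\{0\}$ and is thus in the orbit set; more importantly, for any full cylinder $\Delta_\alpha(a)$ with $a=(+1:d)$, $d > d_\alpha(\alpha)$, one has $\Delta_\alpha(a)\subseteq\big(0,\tfrac1{d_\alpha(\alpha)+1}\big]$ and $T_\alpha$ maps $\Delta_\alpha(a)$ onto $[{\alpha-1},\alpha)$, while the second coordinate gets multiplied into $N_{(+1:d)}\cdot[0,1] = \big[\tfrac1{d+1},\tfrac1d\big]\subseteq\big(0,\tfrac1{d_\alpha(\alpha)+1}\big]$. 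Iterating such "high-digit" full cylinders, the images $N_{w}\cdot[0,1]$ over words $w$ in the letters $(+1:d)$, $d>d_\alpha(\alpha)$, have lengths shrinking to zero and, since consecutive such intervals abut (the right endpoint $1/d$ of the block for $d$ is approached by the left endpoints $1/(d'+1)$ of blocks for large $d'$), their union is dense in $\big[0,\tfrac1{d_\alpha(\alpha)+1}\big]$. Crossing this with the fact that each such cylinder has full first-coordinate image $[{\alpha-1},\alpha)\supseteq\big[0,\tfrac1{d_\alpha(\alpha)+1}\big)$ gives a dense subset of the square $\big[0,\tfrac1{d_\alpha(\alpha)+1}\big]^2$ inside the orbit set, hence the square lies in the closure~$\Omega_\alpha$.

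The one genuine thing to check — and the main obstacle — is that the relevant cylinders $\Delta_\alpha(+1:d)$ with $d>d_\alpha(\alpha)$ are actually \emph{full} (so that $T_\alpha$ really does map them onto all of $[{\alpha-1},\alpha)$, allowing the first coordinate to be steered freely while the second is refined), and that there is at least one such cylinder, i.e.\ that $\tfrac1{d_\alpha(\alpha)+1} > {\alpha-1}$ does not obstruct things; this is where $d_\alpha(\alpha)\ge 1$ and the relation $\overline{b}{}^\alpha_1 = (+1:d_\alpha(\alpha))$ with $\Delta_\alpha(\overline{b}{}^\alpha_1)$ being the rightmost cylinder enter. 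Concretely: a cylinder $\Delta_\alpha(a)$ with $\underline{b}{}^\alpha_1 \prec a \prec \overline{b}{}^\alpha_1$ is full by the statement recalled in Section~\ref{sec:basic-noti-notat}, and any $(+1:d)$ with $d > d_\alpha(\alpha)$ satisfies $(+1:d)\prec(+1:d_\alpha(\alpha)) = \overline{b}{}^\alpha_1$ and $(+1:d)\succ \underline{b}{}^\alpha_1$ (as $\underline{b}{}^\alpha_1\in\mathscr{A}_-\cup\{(+1:1)\}$ has nonpositive or smaller slope), so it is indeed full. Once fullness is in hand the density argument is routine, and the conclusion $0<\mu(\Omega_\alpha)<\infty$ follows since $\mu$ restricted to $\big[0,\tfrac1{d_\alpha(\alpha)+1}\big]^2$ is positive by~\eqref{e:mu} and $\mu\big(\mathbb{I}_\alpha\times[0,1]\big)$ is finite (again by~\eqref{e:mu}, or by direct integration of $(1+xy)^{-2}$).
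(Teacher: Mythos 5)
The gap is in the density claim for the lower containment, and it is a real one. You restrict to words $w$ whose letters are all of the form $(+1:d)$ with $d>d_\alpha(\alpha)$. The orbit points produced in this way have second coordinate $N_w\cdot 0=[0;d_{|w|},\dots,d_1]$, and the closure of the set of these values is the Cantor set of numbers whose RCF expansion uses \emph{only} partial quotients $>d_\alpha(\alpha)$. That set is a proper, nowhere dense subset of $\big[0,\tfrac1{d_\alpha(\alpha)+1}\big]$ — there are gaps already at level two. Concretely, suppose $d_\alpha(\alpha)=1$ and take $y_0=3/11$. Any word $w$ ending in $(+1:d)$ satisfies $N_w\cdot 0=\frac{1}{d+N_{w'}\cdot 0}$ with $N_{w'}\cdot 0\in\big[0,\tfrac12\big]$; to hit a value near $3/11\in[1/4,1/3]$ you would need $d=3$ and $N_{w'}\cdot 0\approx 11/3-3=2/3$, which is outside $[0,1/2]$. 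Hence $3/11$ is at positive distance from the set of attainable second coordinates, and your construction does not cover it. The phrase ``consecutive such intervals abut'' only holds at the first level of the construction; at deeper levels the intervals $N_w\cdot[0,1]$ leave gaps, so the union of the \emph{points} $N_w\cdot 0$ is not dense. (Separately, the inclusion $\Delta_\alpha(+1:d)\subseteq\big(0,\tfrac1{d_\alpha(\alpha)+1}\big]$ you state is false for $\alpha<1$ — e.g.\ for $\alpha=g$ one has $\Delta_g(+1:3)=\big(\tfrac1{3+g},\tfrac1{2+g}\big]\not\subseteq(0,1/3]$ — but this is a minor slip compared to the density issue.)

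What you actually need is to reach second coordinates $[0;b_1,b_2,\dots,b_n]$ where only the \emph{first} partial quotient $b_1$ of the target $y_0$ is forced to be $>d_\alpha(\alpha)$ (which is automatic since $y_0\le\tfrac1{d_\alpha(\alpha)+1}$), while the intermediate $b_2,\dots,b_n$ are unrestricted RCF digits. But that corresponds to an orbit that temporarily leaves the cylinders available to $T_\alpha$. The paper resolves exactly this by working first with the full RCF map $\mathcal T_1$ (where every digit cylinder is full) to build approximating points $\mathcal T_1^n(x_n,0)$ hitting any prescribed $(x_0,y_0)$ in the square, and then invoking Lemma~\ref{l:1alpha}: if both $T_1^{n-1}(x_n)$ and $T_1^n(x_n)$ lie in $\big(0,\tfrac1{d_\alpha(\alpha)+1}\big]$, then $\mathcal T_1^n(x_n,0)$ coincides with $\mathcal T_\alpha^{m_n}(x_n,0)$ for some $m_n$, so these points lie in the $\mathcal T_\alpha$-orbit closure. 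Your argument, lacking this reconciliation between the $T_1$- and $T_\alpha$-orbits, cannot escape the high-digit Cantor set and therefore does not prove the inclusion.
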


\begin{proof}
The inclusion $\Omega_\alpha \subseteq \mathbb{I}_\alpha \times [0,1]$ follows from the inclusion $N_a \cdot [0,1] \subset [0,1]$, which holds for every $a \in \mathscr{A}$.
Therefore, $\Omega_\alpha$~is bounded away from $y=-1/x$, and its compactness yields that $\mu(\Omega_\alpha) < \infty$.  

It remains to show that $\Omega_\alpha$ contains the square $\big[0, \frac{1}{d_\alpha(\alpha)+1}\big]^2$, which implies $\mu(\Omega_\alpha) > 0$.
Every point in $\big[0, \frac{1}{d_\alpha(\alpha)+1}\big]^2$ can be approximated by points $\mathcal{T}_1^n(x_n,0)$, $n \ge 1$, with $x_n \in (0,\alpha]$, $T_1^{n-1}(x_n) \le \frac{1}{d_\alpha(\alpha)+1}$, $T_1^n(x_n) \le \frac{1}{d_\alpha(\alpha)+1}$.
By Lemma~\ref{l:1alpha}, there exist numbers $m_n \ge 1$ such that $\mathcal{T}_1^n(x_n,0) = \mathcal{T}_\alpha^{m_n}(x_n,0)$, from which we conclude that $\big[0, \frac{1}{d_\alpha(\alpha)+1}\big]^2 \subset \Omega_\alpha$.
\end{proof}

\begin{Lem} \label{l:bijective}
Let $\alpha \in (0,1]$.
Up to a set of $\mu$-measure zero, $\mathcal{T}_\alpha$~is a bijective map from $\Omega_\alpha$ to~$\Omega_\alpha$.
\end{Lem}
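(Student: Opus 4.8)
**The plan is to establish bijectivity by checking surjectivity and (almost everywhere) injectivity separately, using the explicit matrix form of $\mathcal{T}_\alpha$.** For surjectivity, I would argue that $\Omega_\alpha$ is, by its very definition, forward-invariant under $\mathcal{T}_\alpha$ (it is the closure of a union of forward orbits), so $\mathcal{T}_\alpha(\Omega_\alpha) \subseteq \Omega_\alpha$; the reverse inclusion up to measure zero is the content I need. Given any point $(x,y) \in \Omega_\alpha$ with $y \neq 0$ and $x$ not in the (countable, hence null) set of cylinder endpoints, one can read off the last digit $a \in \mathscr{A}$ that must have been applied: from~\eqref{e:matsForT}, $\mathcal{T}_\alpha$ acts on the second coordinate by $y \mapsto N_a \cdot y = 1/(d + \varepsilon y)$, and the images $N_a \cdot [0,1]$ for distinct $a \in \mathscr{A}$ are essentially disjoint subintervals of $[0,1]$, so $a$ is determined by $y$. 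Setting $(x',y') = (M_a^{-1} \cdot x,\, N_a^{-1} \cdot y)$ gives the candidate preimage; one must check $(x',y') \in \Omega_\alpha$, which follows because $\Omega_\alpha$ is a closed union of fibered sets whose structure over the digit-$a$ cylinder is exactly $N_a$ applied to the relevant fiber over $\Delta_\alpha(a)$'s image — this is where the ``fibers constant between orbit points'' heuristic, later made precise in Theorem~\ref{t:shapeOmega}, is morally being invoked, but at this stage one can get away with the weaker self-consistency statement that $\mathcal{T}_\alpha$ restricted to the cylinder $\Delta_\alpha(a) \times (N_a \cdot [0,1])$ has image covering the corresponding slice of $\Omega_\alpha$.

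**For injectivity, the key observation is that on each rank-one cylinder $\Delta_\alpha(a) \times [0,1]$ with $a \in \mathscr{A}$, the map $\mathcal{T}_\alpha$ is given by a single pair of Möbius transformations $(M_a, N_a)$, hence is a diffeomorphism onto its image.** Two points with the same $\mathcal{T}_\alpha$-image must therefore lie in different cylinders, say $\Delta_\alpha(a) \times \{*\}$ and $\Delta_\alpha(a') \times \{*\}$ with $a \neq a'$; but then their second coordinates map into $N_a \cdot [0,1]$ and $N_{a'} \cdot [0,1]$ respectively, which intersect only at a single common endpoint (a null set in $\Omega_\alpha$, being contained in $\mathbb{I}_\alpha \times \{\text{finitely many } y\text{-values}\}$, which has $\mu$-measure zero). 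The remaining exceptional locus — the cylinder $\Delta_\alpha(+1:\infty) = \{0\}$ in the $x$-direction, the fiber endpoints, and the images of the special (non-full) cylinders $\Delta_\alpha(\underline{b}{}^\alpha_1)$, $\Delta_\alpha(\overline{b}{}^\alpha_1)$ where the image interval is a proper subinterval of $[\alpha-1,\alpha)$ — is a finite union of horizontal and vertical segments, hence $\mu$-null.

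**I would then combine the two parts: $\mathcal{T}_\alpha$ is injective off a null set and maps $\Omega_\alpha$ into $\Omega_\alpha$ with image of full measure, and since $\mu(\Omega_\alpha) < \infty$ by Lemma~\ref{l:positivemeasure} and $\mu$ is $\mathcal{T}_\alpha$-invariant by~\eqref{e:mu}, the image has the same measure as $\Omega_\alpha$, so $\mathcal{T}_\alpha$ is a measure-preserving bijection up to $\mu$-null sets.** I expect the main obstacle to be the surjectivity step: showing that every $(x,y) \in \Omega_\alpha$ genuinely has a preimage inside $\Omega_\alpha$ rather than merely in $\mathbb{I}_\alpha \times [0,1]$. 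The clean way around this is to avoid describing $\Omega_\alpha$ explicitly and instead exploit that $\Omega_\alpha = \overline{\bigcup_{n\ge0} \mathcal{T}_\alpha^n(\mathbb{I}_\alpha \times \{0\})}$ together with continuity of $\mathcal{T}_\alpha$ on the complement of $\{x=0\}$: a point of $\Omega_\alpha$ is a limit of points $\mathcal{T}_\alpha^{n}(x_n,0)$ with $n \ge 1$, and each such point is $\mathcal{T}_\alpha$ of $\mathcal{T}_\alpha^{n-1}(x_n,0) \in \Omega_\alpha$, so $\Omega_\alpha \subseteq \overline{\mathcal{T}_\alpha(\Omega_\alpha)}$; one then upgrades this to equality up to measure zero using that $\mathcal{T}_\alpha$ is an open map off the bad set and $\Omega_\alpha$ has finite measure. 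The invariance of $\mu$ under each branch, recorded in~\eqref{e:mu}, does the rest.
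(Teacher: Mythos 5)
Your injectivity argument rests on a false claim: the intervals $N_a \cdot [0,1]$ for distinct $a \in \mathscr{A}$ are \emph{not} essentially disjoint. Indeed $N_{(+1:d)} \cdot y = 1/(y+d)$ maps $[0,1]$ onto $\big[\tfrac{1}{d+1}, \tfrac{1}{d}\big]$, while $N_{(-1:d+1)} \cdot y = 1/\big((d+1)-y\big)$ maps $[0,1]$ onto the \emph{same} interval $\big[\tfrac{1}{d+1}, \tfrac{1}{d}\big]$. So for $\alpha < 1$, where both signs occur, knowing $y$ does not determine the last digit: already $(+1:1)$ and $(-1:2)$ have identical $y$-images $[1/2,1]$. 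This breaks both the ``read off the last digit from $y$'' step in your surjectivity paragraph and the ``second coordinates land in disjoint intervals'' step in your injectivity paragraph. The actual images $\mathcal{T}_\alpha\big(D_\alpha(+1:d)\big)$ and $\mathcal{T}_\alpha\big(D_\alpha(-1:d+1)\big)$ do turn out to be essentially disjoint, but only because the fiber $\Phi_\alpha(x)$ over a negative~$x$ is genuinely smaller than $[0,1]$ --- a structural fact about $\Omega_\alpha$ that is not available at this stage of the paper.

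The paper sidesteps this entirely with a measure-counting argument, for which you already have all the ingredients but deploy them in the wrong order. Set $D_\alpha(a) = \{(x,y) \in \Omega_\alpha \mid x \in \Delta_\alpha(a)\}$. On each $D_\alpha(a)$ the map $\mathcal{T}_\alpha$ is injective, continuous, and $\mu$-preserving. Continuity off countably many vertical $\mu$-null lines gives~\eqref{e:TOmega}, hence $\mathcal{T}_\alpha(\Omega_\alpha) = \Omega_\alpha$ up to measure zero (your surjectivity argument is essentially this, and is fine). But now, instead of trying to show the $\mathcal{T}_\alpha(D_\alpha(a))$ are disjoint directly, observe that
\[
\sum_{a \in \mathscr{A}} \mu\big(\mathcal{T}_\alpha(D_\alpha(a))\big) = \sum_{a \in \mathscr{A}} \mu\big(D_\alpha(a)\big) = \mu(\Omega_\alpha) = \mu\big(\mathcal{T}_\alpha(\Omega_\alpha)\big) = \mu\bigg(\bigcup_{a \in \mathscr{A}} \mathcal{T}_\alpha(D_\alpha(a))\bigg)\,.
\]
The sum of measures can only equal the measure of the union when the pieces overlap in a null set; this \emph{forces} $\mu\big(\mathcal{T}_\alpha(D_\alpha(a)) \cap \mathcal{T}_\alpha(D_\alpha(a'))\big) = 0$ for $a \ne a'$, and combined with branchwise injectivity yields global injectivity up to measure zero. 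The disjointness of images is thus a \emph{conclusion}, not something you can verify a priori from the projective action on the second coordinate alone.
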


\begin{proof}   
For $a \in \mathscr{A}$, let $D_\alpha(a) := \{(x,y) \in \Omega_\alpha \mid x \in \Delta_\alpha(a)\}$.   
The map $\mathcal{T}_\alpha$ is one-to-one, continuous and $\mu$-preserving on each $D_\alpha(a)$. 
Now,  as the $\Delta_\alpha(a)$ partition $\mathbb{I}_\alpha \setminus \{0\}$,  $\mathcal{T}_\alpha$~is continuous on $\Omega_\alpha$ except for its intersection with a countable number of vertical lines.
Since $\Omega_{\alpha}$ is compact and bounded away from $y = -1/x$,  these lines are of $\mu$-measure zero.  
Thus, we find that
\begin{equation} \label{e:TOmega}
\mathcal{T}_\alpha(\Omega_\alpha) = \overline{\big\{\mathcal{T}_\alpha^{n+1}(x,0) \mid x \in [{\alpha-1},\alpha),\, n \ge 0\big\}}\,,
\end{equation}
up to a $\mu$-measure zero set, hence $\mathcal{T}_\alpha(\Omega_\alpha) = \Omega_\alpha$.
This implies that
\[
\sum_{a \in \mathscr{A}} \mu\big(\mathcal{T}_\alpha(D_\alpha(a))\big) = \sum_{a \in \mathscr{A}} \mu\big(D_\alpha(a)\big) = \mu\big(\Omega_\alpha\big) = \mu\big(\mathcal{T}_\alpha(\Omega_\alpha)\big) = \mu\bigg(\bigcup_{a \in \mathscr{A}} \mathcal{T}_\alpha(D_\alpha(a))\Big)\,,
\]
and thus
\[
\mu\big(\mathcal{T}_\alpha(D_\alpha(a)) \cap \mathcal{T}_\alpha(D_\alpha(a'))\big) = 0 \quad \mbox{for all}\ a, a' \in \mathscr{A}\ \mbox{with}\ a \ne a'\,.
\]
From its injectivity on the~$D_\alpha(a)$,  we conclude that $\mathcal{T}_\alpha$ is bijective on $\Omega_\alpha$ up to a set of measure zero.
\end{proof}    

Our candidate  $(\Omega_\alpha, \mathcal{T}_\alpha, \mathscr{B}_\alpha', \mu_\alpha)$ for a natural extension of $(\mathbb{I}_\alpha, T_\alpha, \mathscr{B}_\alpha, \nu_\alpha)$ is such that the factor map is projection onto the first coordinate,  call this map $\pi$.     The first three criteria of the definition of a natural extension  are clearly satisfied here:   (1)~$\pi$ is a surjective and measurable map that pulls-back $\mu_\alpha$ to $\nu_\alpha$; (2)~$\pi \circ \mathcal{T}_\alpha = T_\alpha \circ \pi$; and,  (3)~$\mathcal{T}_\alpha$ is an invertible transformation. 
It remains to show the \emph{minimality} of the extended system:   (4)~any invertible system that admits $(\mathbb{I}_\alpha, T_\alpha, \mathscr{B}_\alpha, \nu_\alpha)$ as a factor must itself be a factor of $(\Omega_\alpha, \mathcal{T}_\alpha, \mathscr{B}_\alpha', \mu_\alpha)$.    We employ the standard method to verify this last criterion, in that we verify that $\mathscr{B}_\alpha' = \bigvee_{n\ge 0}  \mathcal{T}_{\alpha}^{n} \pi^{-1} \mathscr{B}_\alpha$.   
  
\begin{proof}[\textbf{Proof of Theorem~\ref{t:natext}}]
Since $\mathcal{T}_\alpha$ is invertible, with $\mu_\alpha$ as an invariant probability measure, we must only show that $\mathscr{B}_\alpha' = \bigvee_{n\ge 0}  \mathcal{T}_{\alpha}^{n} \pi^{-1} \mathscr{B}_\alpha$,  where $\pi$ is the projection map to the first coordinate. 
As usual, we define rank $n$ cylinders as $\Delta_{\alpha}(v_{[1,n]})  = \bigcap_{j=1}^{n} \, T_{\alpha}^{-j+1}(\Delta_\alpha(v_j))$.   
Since $T_{\alpha}$ is expanding,  for any $v_{[1,\infty)} \in \mathscr{A}_0^\omega$ the Lebesgue measure of $\Delta_{\alpha}(v_{[1,n]})$ tends to zero as $n$ goes to infinity.
Thus $P_{\alpha}$, the collection of all of these cylinders, generates~$\mathscr{B}_\alpha$.  
Let $\mathcal{P}_{\alpha} = \pi^{-1} P_{\alpha}$; it suffices to show that $\bigvee_{n\in \mathbb{Z}}  \mathcal{T}_{\alpha}^{n} \mathcal{P}_{\alpha}$ separates points of~$\Omega_{\alpha}$.  
We know that $\bigvee_{n\ge 0}  T_{\alpha}^{n} P_{\alpha}$ separates points of~$\mathbb{I}_\alpha$,  thus $\bigvee_{n\ge 0}  \mathcal{T}_{\alpha}^{n} \mathcal{P}_{\alpha}$ separates points of the form $(x,y), (x', y')$ with $x \neq x'$.     It now suffices to show that powers of $\mathcal{T}_{\alpha}^{-1}$ on $\mathcal{P}_{\alpha}$ can separate points sharing the same $x$-value.
Now,  on some neighborhood of  $\mu_{\alpha}$-almost any point of $\Omega_{\alpha}$, there is $a \in \mathscr{A}$ such that $\mathcal{T}_{\alpha}^{-1}$ is given locally by  $(x,y) \mapsto (M_a^{-1} \cdot x, N_a^{-1} \cdot y)$. 
But, $N_a^{-1} \cdot y$ is an expanding map. 
Since $\mathcal{T}_{\alpha}^{-1}$ takes horizontal strips to vertical strips,  one can separate points. 
\end{proof}

With the help of the following lemma and Abramov's formula, we show that the product of the entropy and the measure of the natural extension domain is constant.

\begin{Lem} \label{l:equalreturn}
Let $\alpha \in (0,1]$, $\mathcal{T}_{1,\alpha}$~be the first return map of $\mathcal{T}_1$ on $\Omega_1 \cap \Omega_\alpha$, and $\mathcal{T}_{\alpha,1}$ be the first return map of $\mathcal{T}_\alpha$ on $\Omega_1 \cap \Omega_\alpha$.
For $\mu$-almost all $(x,y) \in \Omega_1 \cap \Omega_\alpha$, these two maps are defined and $\mathcal{T}_{1,\alpha}(x,y) = \mathcal{T}_{\alpha,1}(x,y)$.
\end{Lem}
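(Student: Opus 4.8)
The plan is to show that, for $\mu$-almost every $(x,y)$ in $G := \Omega_1 \cap \Omega_\alpha$, the forward $\mathcal{T}_1$-orbit and the forward $\mathcal{T}_\alpha$-orbit of $(x,y)$ meet $G$ in exactly the same ordered sequence of points; the first returns to $G$ are then the same point, which is the assertion.

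First I would dispose of $\alpha = 1$, for which $G = \Omega_1$ and $\mathcal{T}_{1,\alpha} = \mathcal{T}_{\alpha,1} = \mathcal{T}_1$. For $\alpha \in (0,1)$, since $\Omega_1 = [0,1] \times [0,1]$ (the classical Nakada--Ito--Tanaka domain \cite{NakadaItoTanaka77}) and $\Omega_\alpha \subseteq \mathbb{I}_\alpha \times [0,1]$ with $\alpha < 1$, we have $G = \{(x,y) \in \Omega_\alpha : x \ge 0\}$, and $\mu(G) > 0$ because $G$ contains $\big[0, \min\big(\tfrac12, \tfrac1{d_\alpha(\alpha)+1}\big)\big]^2$ by Lemma~\ref{l:positivemeasure}. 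Since $\mathcal{T}_1$ and $\mathcal{T}_\alpha$ preserve $\mu$ on the finite-measure spaces $\Omega_1$ and $\Omega_\alpha$ (Lemma~\ref{l:bijective}), Poincar\'e recurrence shows that $\mu$-a.e.\ point of $G$ returns to $G$ under each map, so both first-return maps are defined a.e. I then restrict to the full-measure set of $(x,y) \in G$ for which $x \in (0,\alpha)$ is irrational and a continuity point of the $\alpha$-expansion, $x \ne y$, no forward $\mathcal{T}_1$- or $\mathcal{T}_\alpha$-iterate of $(x,y)$ lies on $\partial\Omega_\alpha$ or on a discontinuity line of $\mathcal{T}_1$ or $\mathcal{T}_\alpha$, and the two orbits are aperiodic; each of these conditions fails only on a null set.

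Each forward iterate of $(x,y)$ under $\mathcal{T}_1$ (resp.\ $\mathcal{T}_\alpha$) has the form $(M_w \cdot x, N_w \cdot y)$ for a prefix $w$ of the $1$-expansion (resp.\ $\alpha$-expansion) of $x$. By Lemma~\ref{l:alpha1}, every $\mathcal{T}_\alpha$-iterate with non-negative first coordinate --- i.e.\ every $\mathcal{T}_\alpha$-iterate lying in $G$ --- is also a $\mathcal{T}_1$-iterate of $(x,y)$; and any point lying on both orbits has first coordinate $T_1^n(x) \in [0,\alpha]$ and lies in $\Omega_\alpha$, hence in $G$. Since $x$ is not a quadratic irrational, no iterate of $(x,y)$ is fixed by a non-trivial product of the matrices $M_{(\varepsilon:d)}$, so the points common to the two orbits are visited in the same order along each. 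Hence it suffices to prove: if $r_\alpha \ge 1$ is the first return time of $\mathcal{T}_\alpha$ to $G$ and $P := \mathcal{T}_\alpha^{r_\alpha}(x,y) = \mathcal{T}_1^{n}(x,y)$ (with $n \ge 1$ from Lemma~\ref{l:alpha1}), then $\mathcal{T}_1^{i}(x,y) \notin \Omega_\alpha$ for $1 \le i < n$. If $r_\alpha = 1$ then $n=1$ and there is nothing to show, so assume $r_\alpha \ge 2$: then $v_1 = (+1{:}d)$ with $d \ge 2$ and $T_\alpha(x) < 0$, and by Lemma~\ref{l:rewrite} the RCF prefix $v'_{[1,n]}$ is obtained by rewriting the block $v_{[1,r_\alpha]} = (+1{:}d)\,a$, $a \in \mathscr{A}_-^*$, into RCF digits $(+1{:}d{-}1)\,(+1{:}a_1)\cdots(+1{:}a_{n-1})$, whose internal positions $1,\dots,n-1$ are precisely the non-synchronized ones.

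The main obstacle is to rule these internal iterates out of $\Omega_\alpha$. For $i = 1$ this is easy: $T_\alpha(x) < 0$ forces $1/x \in [d{-}1+\alpha,\, d)$, so the first coordinate $1/x - (d{-}1)$ of $\mathcal{T}_1(x,y)$ lies in $[\alpha,1)$, hence (a.e.) outside $\mathbb{I}_\alpha$, so $\mathcal{T}_1(x,y) \notin \Omega_\alpha$. For $2 \le i < n$ the first coordinate may lie in $\mathbb{I}_\alpha$, and one must use the second coordinate: $\mathcal{T}_1^i(x,y)$ has second coordinate $N_{(+1:a_{i-1})} \cdots N_{(+1:a_1)} N_{(+1:d-1)} \cdot y$, which records a ``past'' of the point ending in the digits $(+1{:}d{-}1)(+1{:}a_1)\cdots(+1{:}a_{i-1})$; since $(x,y) \in \Omega_\alpha$ this past is obtained by appending those digits to a legitimate past of $x$, and the digit $(+1{:}d{-}1)$ immediately following that past is inconsistent with the $\alpha$-expansion of $x$ beginning with $(+1{:}d)$, so $\mathcal{T}_1^i(x,y)$ cannot lie in $\Omega_\alpha$. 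Making this precise requires the characterisation of which bi-infinite words are legitimate $\alpha$-expansions --- in particular the ambiguity $(\varepsilon{:}d) \leftrightarrow (-\varepsilon{:}d+\varepsilon)$ reflected in \eqref{e:W}, and the bound $a_i \le d_\alpha(\alpha)$ for odd $i$ supplied by Lemma~\ref{l:2bounded} --- and is a finitary instance of the fibre analysis carried out for general $\alpha$ in Section~\ref{sec:struct-natur-extens}; this is the only step that is not soft. Combining the two inclusions with the order-preservation, the two forward orbits meet $G$ in the same ordered sequence of points, whence $\mathcal{T}_{1,\alpha}(x,y) = P = \mathcal{T}_{\alpha,1}(x,y)$ for $\mu$-a.e.\ $(x,y) \in G$.
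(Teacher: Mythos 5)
Your overall strategy is sound, and the first-return bookkeeping (Poincar\'e recurrence for definedness, Lemma~\ref{l:alpha1} to place every $\mathcal{T}_\alpha$-return to $G$ on the $\mathcal{T}_1$-orbit, the order-preservation via aperiodicity, and the $i=1$ computation showing $T_1(x)\in[\alpha,1)$) is all correct. But the crux of your proof is the claim that $\mathcal{T}_1^i(x,y)\notin\Omega_\alpha$ for $2\le i<n$, and you leave that unproved. This is not a small finishing touch: it amounts to showing that the second coordinate $N_{v'_{[1,i]}}\cdot y$ lies outside the fiber $\Phi_\alpha\bigl(T_1^i(x)\bigr)$, which requires the explicit description of the fibers $\Phi_\alpha$ (equivalently of $\Psi'_\alpha$ and the language $\mathscr{L}'_\alpha$) worked out only in Section~\ref{sec:struct-natur-extens}. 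The informal reasoning you offer --- that the appended digit $(+1{:}d{-}1)$ ``is inconsistent with the $\alpha$-expansion of $x$ beginning with $(+1{:}d)$'' --- is the right intuition, but turning it into a proof means verifying that no word $w_kv'_{[1,i]}$ can approximate a member of $\mathscr{L}^{\prime\times}_\alpha$ whose image interval contains $T_1^i(x)$; that argument is neither in your write-up nor implied by the lemmas of Section~\ref{sec:relat-betw-alpha} you cite. As written, the proof has a genuine gap at its only hard step.

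The paper takes a much softer route that avoids the fiber analysis entirely, and it is worth understanding why. Having reduced, via ergodicity of $T_\alpha$ and Lemma~\ref{l:alpha1}, to $\mathcal{T}_{\alpha,1}(x,y)=\mathcal{T}_1^n(x,y)$ and to the first $\mathcal{T}_1$-return being $\mathcal{T}_1^{n'}(x,y)$ with $n'\le n$, the paper invokes ergodicity of $T_1$ to find some $n''$ at which \emph{two consecutive} $T_1$-iterates of $x$ are $\le\frac{1}{d_\alpha(\alpha)+1}$. Lemma~\ref{l:1alpha} (whose proof rests on Lemma~\ref{l:2bounded}) then forces synchronization both from $(x,y)$ and, applied a second time, from $\mathcal{T}_1^{n'}(x,y)$: one gets $\mathcal{T}_1^{n''}(x,y)=\mathcal{T}_\alpha^{m'}(x,y)=\mathcal{T}_\alpha^{m''}\mathcal{T}_1^{n'}(x,y)$. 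Using a.e.\ bijectivity of $\mathcal{T}_\alpha$ (Lemma~\ref{l:bijective}) to cancel, one deduces that $\mathcal{T}_1^{n'}(x,y)$ itself is a forward $\mathcal{T}_\alpha$-iterate of $(x,y)$ lying in $G$, from which the two first returns must coincide. This sidesteps your hard step completely: rather than proving that no intermediate $\mathcal{T}_1$-iterate falls into $\Omega_\alpha$, it shows directly that the \emph{first} one that does must already be a $\mathcal{T}_\alpha$-iterate. If you want to keep your more direct framing, you would need to actually carry out the fiber computation (or cite a version of Proposition~\ref{p:Omega}/Theorem~\ref{t:shapeOmega} and check carefully that no circularity with Theorems~\ref{t:natext} and~\ref{t:hmu} is introduced); otherwise, the $n''$ device plus Lemma~\ref{l:1alpha} is the intended and substantially lighter mechanism.
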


\begin{proof}
Note first that $\Omega_1 \cap \Omega_\alpha = \{(x,y) \in \Omega_\alpha \mid x \ge 0\}$ since $\Omega_1 = [0,1]^2$.
The ergodicity of~$T_\alpha$ (see \cite{LuzziMarmi08}) yields that, for $\nu_{\alpha}$-almost every $x \in [0,\alpha]$, there exists some $m \ge 0$ such that $T_\alpha^m(x) \ge 0$, and thus there exists some $n \ge 0$ such that $\mathcal{T}_{\alpha,1}(x,y) = \mathcal{T}_1^n(x,y)$ by Lemma~\ref{l:alpha1}.   
Then we have $\mathcal{T}_{1,\alpha}(x,y) = \mathcal{T}_1^{n'}(x,y)$ with $1 \le n' \le n$, thus $\mathcal{T}_{1,\alpha}$ and $\mathcal{T}_{\alpha,1}$ are defined for $\nu_\alpha$-almost all~$x \in [0,\alpha]$, hence for $\mu$-almost all $(x,y) \in \Omega_1 \cap \Omega_\alpha$.

The ergodicity of~$T_1$ yields that, for $\nu_1$-almost every $x \in [0,\alpha]$, there exists some $n'' \ge 1$ such that $T_1^{n''-1}(x) \le \frac{1}{d_\alpha(\alpha)+1}$ and $T_1^{n''}(x) \le \frac{1}{d_\alpha(\alpha)+1}$.
By Lemma~\ref{l:1alpha}, we obtain that $\mathcal{T}_1^{n''}(x,y) = \mathcal{T}_\alpha^{m'}(x,y)$ for some $m' \ge 1$;   it follows that $n'' \ge n'$.
Applying  Lemma~\ref{l:1alpha} a second time, we find that $\mathcal{T}_1^{n''}(x,y) = \mathcal{T}_1^{n''-n'} \mathcal{T}_1^{n'}(x,y) = \mathcal{T}_\alpha^{m''} \mathcal{T}_1^{n'}(x,y)$ for some $m'' \ge 1$, with $m'' \le m'$.
Since $\mathcal{T}_\alpha$ is bijective $\mu$-almost everywhere, we obtain that $\mathcal{T}_1^{n'}(x,y) = \mathcal{T}_\alpha^{-m''} \mathcal{T}_1^{n''}(x,y) = \mathcal{T}_\alpha^{m'-m''}(x,y)$ for $\mu$-almost all $(x,y) \in \Omega_1 \cap \Omega_\alpha$.    Since  for these $(x,y)$ there is a power of $\mathcal{T}_\alpha$ that agrees with the first return of $\mathcal{T}_1$, it follows that $\mathcal{T}_{1,\alpha}(x,y) = \mathcal{T}_{\alpha,1}(x,y)$ holds here.    
\end{proof}

\begin{proof}[\textbf{Proof of Theorem~\ref{t:hmu}}]
It is well known that $h(T_1) = \pi^2/(6 \log 2)$ and that $\mu(\Omega_1) = \mu([0,1]^2) = \log 2$, thus $h(T_1)\, \mu(\Omega_1) = \pi^2/6$.
With the definitions of Lemma~\ref{l:equalreturn}, Abramov's formula~\cite{Abramov59} yields that
\[
h(\mathcal{T}_{1,\alpha}) = \frac{\mu(\Omega_1)}{\mu(\Omega_1\cap\Omega_\alpha)}\, h(\mathcal{T}_1) \quad \mbox{and} \quad h(\mathcal{T}_{\alpha,1}) = \frac{\mu(\Omega_\alpha)}{\mu(\Omega_1\cap\Omega_\alpha)}\, h(\mathcal{T}_\alpha)\,.
\]
Since $\mathcal{T}_{1,\alpha}$ and $\mathcal{T}_{\alpha,1}$ are equal (up to a set of measure zero), and a system and its natural extension have the same entropy \cite{Rohlin61}, we obtain that $h(T_\alpha)\, \mu(\Omega_\alpha) = h(T_1)\, \mu(\Omega_1)$.
\end{proof}

\section{Intervals of synchronizing orbits} \label{s:relation}

Luzzi and Marmi indicate in \cite[Remark~3]{LuzziMarmi08} that the natural extension domain can be described in an explicit way when one has an explicit relation between the $\alpha$-expansions of ${\alpha-1}$ and of~$\alpha$.
Such a relation is easily found for $\alpha \ge \sqrt{2}-1$.
Nakada and Natsui \cite{NakadaNatsui08} find the relation on some subintervals of $(0,\sqrt{2}-1)$, showing that it can be rather complicated.
The aim of this section is to provide a relation for every $\alpha \in (0,1]$, i.e., to prove Theorem~\ref{t:endpoints}.

\begin{Lem} \label{l:conversion}
For each $v \in \mathscr{A}_-^*$, we have $M_{\widehat{v}} = E\, W M_v\, E\, W$.
\end{Lem}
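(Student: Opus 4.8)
The statement to prove is $M_{\widehat{v}} = E\,W\,M_v\,E\,W$ for every $v \in \mathscr{A}_-^*$. The plan is to proceed by induction on the length of the characteristic sequence of~$v$, using the explicit formula~\eqref{e:Mn2} $M_{(-1:2+n)} = E\,M_{(+1:n)}\,W$ and~\eqref{e:Mn} $M_{(-1:2)}^{n-1} = W\,M_{(+1:n)}\,E^{-1}$ as the basic building blocks, together with the fact that $W^2$ is the identity and the relation $E^{-1}M_{(+1:n)}E = M_{(+1:n)}$ is \emph{false} in general — so one must instead track how $E^{\pm1}$ conjugates products. First I would settle the base case $|v| = 0$: the empty word has characteristic sequence $a_1 = 1$, so $\widehat{v} = (-1:3)$, and one checks directly that $M_{(-1:3)} = E\,M_{(+1:1)}\,W = E\,W\,E\,W$ using $M_{(+1:1)} = W\,E^{-1}\cdot W = \ldots$; more simply, $v$ empty gives $M_v = \mathrm{Id}$, so the right-hand side is $E\,W\,E\,W$, and~\eqref{e:Mn2} with $n=1$ gives $M_{(-1:3)} = E\,M_{(+1:1)}\,W$, so it remains to verify $M_{(+1:1)} = W\,E\,W$, a $2\times2$ matrix identity.

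The inductive step is the heart of the argument. Write $v = (-1:2)^{a_1-1}\,(-1:2+a_2)\,v'$ where $v'$ is a shorter word in $\mathscr{A}_-^*$ (the tail after the first ``block''), whose characteristic sequence is $a_3 a_4 \cdots a_{2\ell+1}$; correspondingly $\widehat{v} = (-1:2+a_1)\,(-1:2)^{a_2-1}\,\widehat{v'}$. Recalling the convention $M_w = M_{w_n}\cdots M_{w_1}$ (letters multiplied in reverse order), I would compute
\[
M_v = M_{v'}\; M_{(-1:2+a_2)}\; M_{(-1:2)}^{a_1-1} = M_{v'}\,\big(E\,M_{(+1:a_2)}\,W\big)\,\big(W\,M_{(+1:a_1)}\,E^{-1}\big) = M_{v'}\,E\,M_{(+1:a_2)}\,M_{(+1:a_1)}\,E^{-1},
\]
using $W^2 = \mathrm{Id}$. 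Similarly
\[
M_{\widehat{v}} = M_{\widehat{v'}}\; M_{(-1:2)}^{a_2-1}\; M_{(-1:2+a_1)} = M_{\widehat{v'}}\,\big(W\,M_{(+1:a_2)}\,E^{-1}\big)\,\big(E\,M_{(+1:a_1)}\,W\big) = M_{\widehat{v'}}\,W\,M_{(+1:a_2)}\,M_{(+1:a_1)}\,W.
\]
Now the induction hypothesis applies to $v'$, giving $M_{\widehat{v'}} = E\,W\,M_{v'}\,E\,W$. Substituting and using $W^2 = \mathrm{Id}$, the target identity $M_{\widehat{v}} = E\,W\,M_v\,E\,W$ reduces, after cancelling $M_{v'}$-free prefixes and suffixes, to the assertion that $W\,M_{(+1:a_2)}\,M_{(+1:a_1)}\,W$ equals $W\,E^{-1}\cdot\big(E\,M_{(+1:a_2)}\,M_{(+1:a_1)}\,E^{-1}\big)\cdot E\,W$ — i.e., a tautology — once the conjugating factors $E^{\pm1}$ and $W$ are correctly lined up.

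The step I expect to be the main obstacle is precisely this bookkeeping of the conjugating factors: one must be careful that the $E$ appearing at the junction between consecutive blocks of $v$ (coming from the $M_{(-1:2+a_{2k})}$ digit) telescopes correctly against the $E^{-1}$ coming from the adjacent power $M_{(-1:2)}^{a_{2k+1}-1}$, and that the same telescoping, with $E$ and $W$ swapped in role, happens for $\widehat{v}$. A clean way to organize this — which I would adopt if the direct induction gets unwieldy — is to prove the auxiliary identity $W\,M_v\,W = E\,M_{\widehat{v}}\,E$ (equivalently $M_{\widehat v} = E^{-1} W M_v W E^{-1}$, which is the same as the claim since $E W E W$ has order dividing... no — rather, one checks $E W M_v E W$ and $E^{-1} W M_v W E^{-1}$ agree because conjugation by $EW$ versus $W E^{-1}$ differ by $(W E)^2$, and a direct $2\times2$ computation shows $(EW)^{-1}(WE^{-1})$ centralizes all $M_{(+1:n)}$). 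In practice the first inductive computation above already closes the proof; the auxiliary reformulation is only a fallback for keeping the $E$'s and $W$'s straight.
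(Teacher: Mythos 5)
Your inductive step is algebraically correct: writing $v = (-1:2)^{a_1-1}\,(-1:2+a_2)\,v'$, the two computations you display do give $M_v = M_{v'}\,E\,M_{(+1:a_2)}\,M_{(+1:a_1)}\,E^{-1}$ and $M_{\widehat{v}} = M_{\widehat{v'}}\,W\,M_{(+1:a_2)}\,M_{(+1:a_1)}\,W$, and substituting $M_{\widehat{v'}} = E\,W\,M_{v'}\,E\,W$ makes both sides of the target identity equal to $E\,W\,M_{v'}\,E\,M_{(+1:a_2)}\,M_{(+1:a_1)}\,W$. However, your base case is genuinely incomplete. You settle only $|v|=0$, but the decomposition $v = (-1:2)^{a_1-1}\,(-1:2+a_2)\,v'$ that drives the induction requires the characteristic sequence to have length at least three (there must be an $a_2$). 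The words $v = (-1:2)^{m}$ with $m \ge 1$, i.e.\ $\ell = 0$ and $a_1 = m+1 \ge 2$, are covered neither by your base case nor by your inductive step. You need to check $E\,W\,M_{(-1:2)}^{a_1-1}\,E\,W = M_{(-1:2+a_1)}$ for \emph{all} $a_1 \ge 1$. (This follows from~\eqref{e:Mn} and~\eqref{e:Mn2}: $E\,W\cdot W\,M_{(+1:a_1)}\,E^{-1}\cdot E\,W = E\,M_{(+1:a_1)}\,W = M_{(-1:2+a_1)}$, and it is precisely the single identity that the paper's own proof rests on.)

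There are also two incorrect side claims that you should prune. First, $M_{(+1:1)} = W\,E\,W$ is false: $W\,E\,W = \left(\begin{smallmatrix}2&1\\-1&0\end{smallmatrix}\right)$ whereas $M_{(+1:1)} = \left(\begin{smallmatrix}1&-1\\-1&0\end{smallmatrix}\right)$; the identity you actually want (from~\eqref{e:Mn} with $n=1$) is $M_{(+1:1)} = W\,E$, which does give $M_{(-1:3)} = E\,M_{(+1:1)}\,W = E\,W\,E\,W$. Second, the ``clean'' fallback reformulation $W\,M_v\,W = E\,M_{\widehat{v}}\,E$ is false even for $v$ empty (LHS is the identity, RHS is $E\,M_{(-1:3)}\,E \ne \mathrm{Id}$), and $(EW)^{-1}(WE^{-1}) = (WE^{-1})^2$ does not centralize the matrices $M_{(+1:n)}$. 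You already flag this paragraph as a fallback, but it would mislead a reader.

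On comparison with the paper: the paper's proof is a single telescoping computation. It states the one identity $E\,W\,M_{(-1:2)}^{n-1}\,E\,W = M_{(-1:2+n)}$ and then conjugates $M_v$, written out via the characteristic sequence, passing the factors $E\,W$ and $W\,E^{-1}$ through block by block to land on $M_{\widehat{v}}$. Your induction on the length of the characteristic sequence is a legitimate alternative way to organize exactly that telescoping; once you enlarge the base case to all $v = (-1:2)^m$, the two arguments are essentially equivalent in content, with the paper's version being shorter and your version making the recursion explicit.
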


\begin{proof}
Let $a_{[1,2\ell+1]}$ be the characteristic sequence of $v \in \mathscr{A}_-^*$.
By~\eqref{e:Mn2} and~\eqref{e:Mn}, we have
\[
E\, W M_{(-1:2)}^{n-1}\, E\, W = M_{(-1:2+n)}\,, 
\]
thus
\begin{align*}
E\, W M_v\,E\, W & = E\, W M_{(-1:2)}^{a_{2\ell+1}-1}\, M_{(-1:2+a_{2\ell})} \cdots M_{(-1:2)}^{a_3-1}\, M_{(-1:2+a_2)}\, M_{(-1:2)}^{a_1-1}\, E\, W \\
& = M_{(-1:2+a_{2\ell+1})}\, M_{(-1:2)}^{a_{2\ell}-1} \cdots M_{(-1:2+a_3)}\, M_{(-1:2)}^{a_2-1}\, M_{(-1:2+a_1)} = M_{\widehat{v}}\,. \qedhere
\end{align*}
\end{proof}

\begin{Lem} \label{l:conversion2}
If $\alpha-1 = \llbracket v, x \rrbracket$ with $v \in \mathscr{A}_-^*$, $x \in \mathbb{R}$, then $\alpha = \llbracket{}^{(W)}\widehat{v}{}^{(-1)}, W \cdot x\rrbracket$.

If $\alpha-1 = \llbracket v\rrbracket$, with $v \in \mathscr{A}_-^\omega$, then $\alpha = \llbracket {}^{(W)}\widehat{v}\,\rrbracket$.
\end{Lem}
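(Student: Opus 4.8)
The plan is to reduce both statements to Lemma~\ref{l:conversion} and the basic matrix identities~\eqref{e:W}, \eqref{e:E}, together with the defining relation $\alpha-1 = E\cdot\alpha$. The key observation is that all the word operations $v\mapsto{}^{(W)}v$, $v\mapsto v^{(\pm1)}$, $v\mapsto\widehat v$ have been set up precisely so that they correspond, at the level of matrices, to multiplication by $W$ on the left and $E^{\pm1}$ on the right (see~\eqref{e:W}, \eqref{e:E}, and the definitions of the superscript operators). So the strategy is to translate the hypothesis into a matrix equation, manipulate it using Lemma~\ref{l:conversion}, and translate back.

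For the finite case, suppose $\alpha-1 = \llbracket v, x\rrbracket$ with $v = v_{[1,n]} \in \mathscr{A}_-^*$. By definition of $\llbracket\,\cdot\,\rrbracket$ and the matrix formulation this means $M_v \cdot (\alpha-1) = x$, i.e., $\alpha-1 = M_v^{-1} \cdot x$. Since $\alpha = E^{-1}\cdot(\alpha-1) = E^{-1} M_v^{-1} \cdot x$, I want to rewrite $E^{-1} M_v^{-1}$ as $M_w^{-1} \cdot (\text{something on }x)$ for the appropriate word $w = {}^{(W)}\widehat v{}^{(-1)}$. From Lemma~\ref{l:conversion} we have $M_{\widehat v} = E\,W M_v\,E\,W$, hence $M_v = W E^{-1} M_{\widehat v} W E^{-1}$ (using $W^2 = I$ and $W^{-1}=W$), so $E^{-1} M_v^{-1} = E^{-1} (WE^{-1}M_{\widehat v} WE^{-1})^{-1} = E^{-1} E W M_{\widehat v}^{-1} E W = W M_{\widehat v}^{-1} E W$. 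Now I need to absorb the leading $W$ and the trailing $E$ into the word $\widehat v$: the leading $W$ turns $M_{\widehat v}^{-1}$ into $M_{{}^{(W)}\widehat v}^{-1}$ via~\eqref{e:W} (applied to the first letter of $\widehat v$, which lies in $\mathscr{A}_-$ so its $d$ is finite and ${}^{(W)}$ is the non-trivial case), and by~\eqref{e:E} the trailing $E$ applied on the appropriate side turns the last letter into its $(-1)$-version, giving $M_{{}^{(W)}\widehat v{}^{(-1)}}^{-1}$; the remaining $W$ acts on $x$. More carefully, one checks $W M_{\widehat v}^{-1} = (M_{\widehat v} W)^{-1}$-type manipulations only rephrase things, so I will instead track the matrix $M_{w} = M_{{}^{(W)}\widehat v{}^{(-1)}}$ directly: using the conventions $M_w = M_{w_m}\cdots M_{w_1}$ and the letter-level identities, one gets $M_{{}^{(W)}\widehat v{}^{(-1)}} = M_{\widehat v_m}{}^{(-1)} M_{\widehat v_{m-1}}\cdots M_{\widehat v_2}\, {}^{(W)}M_{\widehat v_1} = E^{-1}M_{\widehat v} W$ by~\eqref{e:E}, \eqref{e:W} (the $(-1)$ on the last letter corresponds to a left-$E^{-1}$ on $M_{\widehat v_m}$, hence left-$E^{-1}$ on $M_{\widehat v}$ in the product convention; the $(W)$ on the first letter corresponds to a right-$W$ on $M_{\widehat v_1}$, hence right-$W$ on $M_{\widehat v}$). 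Combining, $M_w^{-1} = W^{-1} M_{\widehat v}^{-1} E = W M_{\widehat v}^{-1} E$, so $\alpha = W M_{\widehat v}^{-1} E W \cdot x = M_w^{-1} W \cdot x = M_w^{-1} \cdot (W\cdot x)$, which is exactly $\alpha = \llbracket {}^{(W)}\widehat v{}^{(-1)}, W\cdot x\rrbracket$.

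For the infinite case $\alpha-1 = \llbracket v\rrbracket$ with $v \in \mathscr{A}_-^\omega$, I would pass to the limit: write $v = \lim v_{[1,n]}$, apply the finite case to the truncations $\alpha - 1 = \llbracket v_{[1,n]}, T_0^n(\alpha-1)\rrbracket$ (where $T_0^n(\alpha-1) = M_{v_{[1,n]}}\cdot(\alpha-1) \in [-1,0)$), obtaining $\alpha = \llbracket {}^{(W)}\widehat{v_{[1,n]}}{}^{(-1)}, W\cdot T_0^n(\alpha-1)\rrbracket$, and then let $n\to\infty$. The point is that $W\cdot T_0^n(\alpha-1)$ stays bounded (it lies in a fixed compact set, since $W\cdot[-1,0) = (0,\infty)$ is a priori unbounded, so here I need to be slightly careful and instead observe that the tail contributes a vanishing amount to the continued fraction because the partial denominators grow), and the words ${}^{(W)}\widehat{v_{[1,n]}}{}^{(-1)}$ converge to ${}^{(W)}\widehat v$ since $\widehat{\,\cdot\,}$ is continuous for the prefix topology on $\mathscr{A}_-^\omega\setminus\mathscr{A}_-^*(-1:2)^\omega$ — and $v$ lies in this set because $\alpha-1 \in [-1,0)$ is, for $\alpha \in (0,1)$, not equal to a rational with the forbidden tail unless... actually $\alpha-1$ could be rational, but then $v$ ends in $(-1:2)^\omega$; this case is excluded from the hypothesis implicitly since the statement writes $\llbracket v\rrbracket$ with $v\in\mathscr{A}_-^\omega$ and invokes $\widehat v$, which is only defined off $\mathscr{A}_-^*(-1:2)^\omega$. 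The main obstacle I anticipate is precisely this bookkeeping in the limit: making the convergence argument clean requires knowing that the $(-1)$-correction on the last letter of $\widehat{v_{[1,n]}}$ disappears in the limit (it does, since it only affects a digit going to infinity in position) and that the $\widehat{\,\cdot\,}$ operation on longer and longer prefixes stabilizes on any fixed initial segment — both follow from the explicit description of $\widehat v$ via the characteristic sequence, but the indexing (the $a_3 - 1$ versus $a_3 - 2$ discrepancy between the finite and infinite definitions of the characteristic sequence) needs to be handled with care. The finite-case matrix computation itself I expect to be entirely routine once the correspondence between word operations and left/right matrix multiplication is pinned down.
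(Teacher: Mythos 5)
Your finite case is in substance exactly the paper's argument: starting from Lemma~\ref{l:conversion}, $M_{\widehat{v}} = E\,W M_v\,E\,W$, one rewrites $M_v E = W E^{-1} M_{\widehat{v}} W$, applies it to $\alpha$ via $\alpha-1 = E\cdot\alpha$, and identifies $E^{-1}M_{\widehat{v}}W = M_{{}^{(W)}\widehat{v}{}^{(-1)}}$ via the letter-level relations $M_{{}^{(W)}w}=M_w W$ and $M_{w^{(-1)}}=E^{-1}M_w$; the paper compresses all of this into the one displayed equation~\eqref{e:funMatrix}. Your derivation of $M_{{}^{(W)}\widehat{v}{}^{(-1)}}=E^{-1}M_{\widehat{v}}W$ is correct, including when $|\widehat{v}|=1$ and both operators hit the same letter.

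For the infinite case the paper likewise just says ``taking limits,'' so your more detailed discussion is welcome. One remark: the justification that ``the tail contributes a vanishing amount because the partial denominators grow'' is not quite right — for a $(-1:2)^\omega$-type tail the denominators stay bounded, and in general nothing forces them to grow. What actually makes the limit work is that a single application of $M_{w^{(n)}_{p_n}}^{-1}$ (with $w^{(n)}={}^{(W)}\widehat{v_{[1,n]}}{}^{(-1)}$ and $p_n=|w^{(n)}|$) sends $W\cdot T_0^n(\alpha-1)\in(0,\infty]$ into a fixed compact subinterval of $(-1,0]$ (since the last letter of $w^{(n)}$ is $(-1:d)$ with $d\ge 2$ for all large $n$, as $v\notin\mathscr{A}_-^*(-1:2)^\omega$); after that the shrinking of the cylinders associated to the growing common prefix $w^{(n)}_{[1,p_n)}={}^{(W)}\widehat{v}_{[1,p_n)}$ gives convergence. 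Finally, the ``$a_3-1$ versus $a_3-2$'' discrepancy you flag is a genuine typo in the paper's definition of the characteristic sequence for $v\in\mathscr{A}_-^\omega$: the exponent should read $a_3-1$, consistently with the finite case and with how it is actually used in the proof of Proposition~\ref{p:0toRCF}; with that correction there is no indexing mismatch to worry about.
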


\begin{proof}
For any $v \in \mathscr{A}_-^*$, Lemma~\ref{l:conversion} implies that 
\begin{equation} \label{e:funMatrix}
M_v \cdot (\alpha-1) = M_v\, E \cdot \alpha = W E^{-1} M_{\widehat{v}}\, W \cdot \alpha\,,
\end{equation}
which proves the first statement.
Taking limits, the second statement follows.
\end{proof}

\begin{Lem} \label{l:digit}
Let $x \in \mathbb{I}_\alpha \setminus \{0\}$, and $a \in \mathscr{A}$.
If $|T_\alpha(x) -  M_a \cdot x| < 1$, then $T_\alpha(x) =  M_a \cdot x$ and $(\varepsilon(x):d_\alpha(x)) = a$.
\end{Lem}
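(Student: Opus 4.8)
The plan is to compare the given letter $a = (\varepsilon_a:d_a)$ with the \emph{true} first digit $b := (\varepsilon(x):d_\alpha(x))$ of~$x$, which lies in $\mathscr{A}$ because $x \neq 0$. By~\eqref{e:matsForT} we have $T_\alpha(x) = M_b \cdot x$, so it suffices to prove $a = b$: then both asserted conclusions follow immediately. From the definition of $M_{(\varepsilon:d)}$ one has the elementary formula $M_{(\varepsilon:d)} \cdot x = \varepsilon/x - d$, and I would base the whole argument on this, splitting into the two cases $\varepsilon_a = \varepsilon(x)$ and $\varepsilon_a = -\varepsilon(x)$ (exhaustive since signs are $\pm1$).

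If $\varepsilon_a = \varepsilon(x)$, then $M_a \cdot x - T_\alpha(x) = M_a\cdot x - M_b\cdot x = d_\alpha(x) - d_a$, an integer; so the hypothesis $|T_\alpha(x) - M_a\cdot x| < 1$ forces $d_a = d_\alpha(x)$, hence $a = b$, and we are done.

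The case $\varepsilon_a = -\varepsilon(x)$ is the only place where care is needed, and it is the real obstacle: I would show it is impossible. Here $\varepsilon(x)/x = |1/x| = T_\alpha(x) + d_\alpha(x)$, while $\varepsilon_a/x = -\varepsilon(x)/x$, so $M_a \cdot x = -\varepsilon(x)/x - d_a = -T_\alpha(x) - d_\alpha(x) - d_a$, whence
\[
\big|T_\alpha(x) - M_a\cdot x\big| = \big|2T_\alpha(x) + d_\alpha(x) + d_a\big|.
\]
Now $T_\alpha(x) \ge \alpha - 1 > -1$ gives $2T_\alpha(x) > -2$; and since $a$ and $b$ have opposite signs, exactly one of them lies in $\mathscr{A}_-$ (digit $\ge 2$) and the other in $\mathscr{A}_+$ (digit $\ge 1$), so $d_\alpha(x) + d_a \ge 3$. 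Therefore $2T_\alpha(x) + d_\alpha(x) + d_a > 1$, contradicting the hypothesis. Hence $\varepsilon_a = \varepsilon(x)$, and the lemma reduces to the first case.
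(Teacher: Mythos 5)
Your proof is correct and takes essentially the same approach as the paper: split on whether $\varepsilon_a$ agrees with $\varepsilon(x)$, use integrality of $d_\alpha(x)-d_a$ when the signs match, and derive a contradiction from the magnitude of $M_a\cdot x$ (equivalently, of $T_\alpha(x)-M_a\cdot x$) when they don't. The only micro-caveat is that $\alpha-1>-1$ requires $\alpha>0$; for $\alpha=0$ one only gets $T_\alpha(x)\ge-1$, but the resulting bound $\ge 1$ still contradicts $|T_\alpha(x)-M_a\cdot x|<1$, so the argument goes through unchanged.
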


\begin{proof}
Let $a = (\varepsilon':d') \in \mathscr{A}$, then we have $M_a \cdot x = \varepsilon'/x - d'$ and $T_\alpha(x) = \varepsilon(x)/x - d_\alpha(x)$.
We cannot have $\varepsilon' \ne \varepsilon(x)$ since this would imply $M_a \cdot x \le -2$, contradicting $|T_\alpha(x) -  M_a \cdot x| < 1$.
Since $d'$, $d_\alpha(x)$ are integers and $\varepsilon' = \varepsilon(x)$, $|T_\alpha(x) -  M_a \cdot x| < 1$ yields that $d' = d_\alpha(x)$.
\end{proof}

We will use Lemma~\ref{l:digit} mainly to deduce that $T_\alpha(x) =  M_a \cdot x$ from $M_a \cdot x \in [{\alpha-1}, \alpha)$ or from $T_\alpha(x) < 0$, $M_a \cdot x \in [-1,0)$.

\begin{Lem}\label{l:wMat} 
Let $x \in \mathbb{I}_\alpha$. 
If $W \cdot x \in \mathbb{I}_\alpha$, then $T_\alpha(W \cdot x) = T_\alpha(x)$ and $(\varepsilon(W \cdot x):d_\alpha(W \cdot x)) = {}^{(W)} (\varepsilon(x):d_\alpha(x))$.
\end{Lem}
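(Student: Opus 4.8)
The plan is to compute both sides directly, using the matrix identity~\eqref{e:W} together with Lemma~\ref{l:digit} to identify digits from approximate values. First I would observe that the hypothesis $W \cdot x \in \mathbb{I}_\alpha$ together with $x \in \mathbb{I}_\alpha$ already forces $x \ne 0$ (since $W \cdot 0 = 0$ is fine, but then the statement is trivial, so assume $x \ne 0$); more importantly, writing $(\varepsilon:d) = (\varepsilon(x):d_\alpha(x))$, the action $W \cdot x = \frac{x}{-x-1} = \frac{1}{-1-1/x}$ has the property that $\varepsilon(W\cdot x) = -\varepsilon(x)$ whenever $W \cdot x \ne 0$, because $x$ and $W \cdot x = -x/(x+1)$ have opposite signs for $x \in (-1,1) \setminus \{0\}$. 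This is consistent with the claimed identity ${}^{(W)}(\varepsilon:d) = (-\varepsilon:d+\varepsilon)$.

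The core computation is this: by~\eqref{e:W} we have $M_{(\varepsilon:d)} W = M_{(-\varepsilon:d+\varepsilon)}$, hence
\[
M_{{}^{(W)}(\varepsilon:d)} \cdot (W \cdot x) = M_{(-\varepsilon:d+\varepsilon)} \cdot (W \cdot x) = M_{(\varepsilon:d)} W \cdot (W \cdot x) = M_{(\varepsilon:d)} \cdot x = T_\alpha(x)\,,
\]
where the last equality uses that $(\varepsilon:d)$ is the digit of $x$, so $M_{(\varepsilon:d)} \cdot x = T_\alpha(x) \in [\alpha-1,\alpha)$. Thus the letter $a := {}^{(W)}(\varepsilon:d) \in \mathscr{A}$ satisfies $M_a \cdot (W\cdot x) = T_\alpha(x) \in [\alpha-1,\alpha)$. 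Now I would invoke Lemma~\ref{l:digit} applied to the point $W \cdot x \in \mathbb{I}_\alpha \setminus \{0\}$: since $M_a \cdot (W \cdot x) = T_\alpha(x)$ lies in $[\alpha-1,\alpha)$ and $T_\alpha(W\cdot x)$ also lies in $[\alpha-1,\alpha)$, their difference has absolute value strictly less than $|\mathbb{I}_\alpha| = 1$, so Lemma~\ref{l:digit} gives $T_\alpha(W\cdot x) = M_a \cdot (W \cdot x) = T_\alpha(x)$ and $(\varepsilon(W\cdot x):d_\alpha(W\cdot x)) = a = {}^{(W)}(\varepsilon(x):d_\alpha(x))$, as desired. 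One should also treat the degenerate cases $x = 0$ and $W \cdot x = 0$ separately, but these are immediate: $W \cdot 0 = 0$, and $x = 0$ is not in the domain of the interesting part of the statement anyway; if $W \cdot x = 0$ then $x = 0$ as well.

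The main obstacle, such as it is, is purely bookkeeping: one must make sure the hypothesis $W \cdot x \in \mathbb{I}_\alpha$ is genuinely used (it is: without it, $W \cdot x$ need not have a well-defined $T_\alpha$-image, and Lemma~\ref{l:digit} does not apply), and one must check that the edge case $d = \infty$ — i.e. $x = 0$ — is handled, which is why the definition of ${}^{(W)}$ sets ${}^{(W)}(+1:\infty) = (+1:\infty)$. Since $d_\alpha(x) = \infty$ only for $x = 0$ and then $W \cdot x = 0$ too, consistency holds trivially. No genuine analytic difficulty arises; the content is entirely in identity~\eqref{e:W} and the rigidity provided by Lemma~\ref{l:digit}.
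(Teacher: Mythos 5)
Your proof is correct and follows essentially the same route as the paper: you dispose of $x=0$ trivially, compute $M_{{}^{(W)}(\varepsilon:d)} \cdot (W \cdot x) = T_\alpha(x)$ via $W^2 = I$ and identity~\eqref{e:W}, and then invoke Lemma~\ref{l:digit} to identify the digit of $W\cdot x$. The extra bookkeeping you include (sign flip under $W$, the bound $|T_\alpha(W\cdot x) - T_\alpha(x)| < 1$ from both lying in $[\alpha-1,\alpha)$) just makes explicit the appeal to Lemma~\ref{l:digit} that the paper leaves implicit.
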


\begin{proof}
For $x = 0$, this is clear since $W \cdot 0 = 0$.
For $x \ne 0$, we have 
\[
M_{{}^{(W)} (\varepsilon(x):d_\alpha(x))} \cdot (W \cdot x) = M_{(\varepsilon(x):d_\alpha(x))} W\, W \cdot x = M_{(\varepsilon(x):d_\alpha(x))} \cdot x = T_\alpha(x)\,,
\]
thus $T_\alpha(W \cdot x) = T_\alpha(x)$ and $(\varepsilon(W \cdot x):d_\alpha(W \cdot x)) = {}^{(W)} (\varepsilon(x):d_\alpha(x))$ by Lemma~\ref{l:digit}.
\end{proof}

\begin{Lem} \label{l:Gammav}
Let $\alpha \in (0,1]$, $v \in \mathscr{F}$.
Then 
\begin{equation} \label{e:vvhat}
\underline{b}{}^\alpha_{[1,|v|\,]} = v \quad \mbox{and} \quad \overline{b}{}^\alpha_{[1,|\widehat{v}|\,]} = {}^{(W)}\widehat{v}{}^{(-1)}
\end{equation}
hold if and only if $\alpha \in \Gamma_v$.

If $\alpha \in (\zeta_v, \eta_v)$, then we have
\begin{equation} \label{e:gtetav}
T_\alpha^n(\alpha-1) > \eta_v-1 \ \mbox{for all}\ 1 \le n \le |v|\,, \quad T_\alpha^n(\alpha) > \eta_v-1 \ \mbox{for all}\ 1 \le n \le |\widehat{v}|\,.
\end{equation}
Moreover, we have $M_v \cdot (\eta_v-1) = \eta_v$ (if $|v| \ge 1$) and $M_{v'} \cdot \zeta_v = \zeta_v$, with $v' = {}^{(W)}\widehat{v}{}^{(-1)}$.
\end{Lem}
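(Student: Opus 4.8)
The plan is to establish the three assertions in order: first the two fixed‑point identities (needed throughout), then the implication that $\alpha\in\Gamma_v$ gives \eqref{e:vvhat} and \eqref{e:gtetav}, and finally the converse. I begin with the translation of the word operations into matrices: by \eqref{e:W} and \eqref{e:E} one has $M_{v^{(+1)}}=E\,M_v$, $M_{v^{(-1)}}=E^{-1}M_v$ and $M_{{}^{(W)}v}=M_v\,W$, so that, writing $v':={}^{(W)}\widehat{v}{}^{(-1)}$, we get $M_{v'}=E^{-1}M_{\widehat{v}}\,W$. The identity $M_v\cdot(\eta_v-1)=\eta_v$ is then immediate: $\eta_v-1=\llbracket(v^{(+1)})^\omega\rrbracket$ is by definition the fixed point of $M_{v^{(+1)}}=E\,M_v$, whence $M_v\cdot(\eta_v-1)=E^{-1}\cdot(\eta_v-1)=\eta_v$. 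For $\zeta_v$: from $M_{\widehat{v}}M_v\cdot(\zeta_v-1)=\zeta_v-1$ (the definition $\zeta_v-1=\llbracket(v\widehat{v})^\omega\rrbracket$), substitute $M_v\cdot(\zeta_v-1)=W\,M_{v'}\cdot\zeta_v$ from \eqref{e:funMatrix} and use $\zeta_v-1=E\cdot\zeta_v$ to obtain $M_{\widehat{v}}\,W\,M_{v'}\cdot\zeta_v=E\cdot\zeta_v$, i.e.\ $M_{v'}^2\cdot\zeta_v=\zeta_v$. Since $\widehat{v}\in\mathscr{A}_-^*$ while $v'$ contains exactly one letter of $\mathscr{A}_+$, $\det M_{v'}=-1$, so $M_{v'}$ is orientation reversing with exactly two real fixed points; a direct check that $\operatorname{tr}M_{v'}\ne0$ (so $M_{v'}^2\ne\mathrm{id}$) identifies these with the fixed points of $M_{v'}^2$, and hence $M_{v'}\cdot\zeta_v=\zeta_v$.

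Now assume $\alpha\in\Gamma_v$. The case $|v|=0$ is the elementary equivalence $d_\alpha(\alpha)=1\iff\alpha\in(g,1]$; so take $|v|\ge1$ and $\alpha\in(\zeta_v,\eta_v)$. The crux is to prove the inequalities on the $M$‑products
\[
\eta_v-1<M_{v_{[1,j]}}\cdot(\alpha-1)<\alpha \quad (1\le j\le|v|), \qquad \alpha-1\le M_{v'_{[1,j]}}\cdot\alpha<\alpha \quad (1\le j\le|\widehat{v}|),
\]
with both products negative whenever the index is $<|v|$, resp.\ $<|\widehat{v}|$ (which is what forces the corresponding digit into $\mathscr{A}_-$). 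Granting the first family, an induction on $j$ using Lemma~\ref{l:digit} --- comparing $M_{v_{[1,j]}}\cdot(\alpha-1)$ with $T_\alpha^j(\alpha-1)$, which lie in a common interval of length~$1$ --- shows $T_\alpha^j(\alpha-1)=M_{v_{[1,j]}}\cdot(\alpha-1)$ and $\underline{b}{}^\alpha_j=v_j$ for $1\le j\le|v|$, which is the first part of \eqref{e:vvhat} and \eqref{e:gtetav} for $\alpha-1$. For $\alpha$ one argues from the second family together with \eqref{e:funMatrix}, Lemma~\ref{l:wMat}, and \eqref{e:Mn2}--\eqref{e:Mn} (the latter to unfold the $(-1:2)$‑runs in $\widehat{v}$), propagating $M_{v'}\cdot\alpha=W\,M_v\cdot(\alpha-1)$ step by step; this gives $\overline{b}{}^\alpha_{[1,|\widehat{v}|]}=v'$ and \eqref{e:gtetav} for $\alpha$.

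Proving the displayed inequalities is the technical heart. Regard $M_{v_{[1,j]}}\cdot(\alpha-1)$ and $M_{v'_{[1,j]}}\cdot\alpha$ as M\"obius functions of the parameter $\alpha$; the former is increasing ($\det M_{v_{[1,j]}}=1$) and the latter decreasing ($\det M_{v'_{[1,j]}}=-1$). Their behaviour at the endpoints is fixed by the identities above (and, via \eqref{e:funMatrix}, by the partial orbits there): at $\alpha=\eta_v$ the $T_{\eta_v}$‑orbit of $\eta_v-1$ begins with $v$ and reaches $\eta_v$ exactly at step $|v|$ --- so the earlier $M$‑products lie in $(\eta_v-1,0)$, and $\alpha\mapsto M_v\cdot(\alpha-1)-\alpha$ vanishes at $\eta_v$ with positive derivative because $\eta_v-1$ is the repelling fixed point of $E\,M_v$ --- while at $\alpha=\zeta_v$ one has $M_{v'}\cdot\zeta_v=\zeta_v$. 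The inequalities then follow from monotonicity, provided no pole occurs in $[\zeta_v,\eta_v]$ and no intermediate $M$‑product overshoots; and this is precisely what $v\in\mathscr{F}$ secures. Converting the letters of $v$ and $\widehat{v}$ into RCF data by Proposition~\ref{p:0toRCF} and comparing characteristic sequences via Corollary~\ref{c:altorder}, the defining conditions $a_{[2j,2\ell+1]}<_{\mathrm{alt}}a_{[1,2\ell-2j+2]}$ and $a_{[2j+1,2\ell+1]}\le_{\mathrm{alt}}a_{[1,2\ell-2j+2]}$ of $\mathscr{F}$ say exactly that no shift of the characteristic sequence of $\alpha-1$ can drive an iterate out of $[\alpha-1,\alpha)$ before step $|v|$, resp.\ $|\widehat{v}|$. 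Carrying out this translation --- essentially a bookkeeping of which partial product of $M$'s is the first to move an endpoint out of $\mathbb{I}_\alpha$ --- is the main obstacle and the bulk of the work.

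For the converse, assume \eqref{e:vvhat}; the case $|v|=0$ is again the direct computation, so let $|v|\ge1$. Then \eqref{e:vvhat} forces $M_{v_{[1,j]}}\cdot(\alpha-1)\in[\alpha-1,\alpha)$ for $1\le j\le|v|$ and $M_{v'_{[1,j]}}\cdot\alpha\in[\alpha-1,\alpha)$ for $1\le j\le|\widehat{v}|$. Running the monotonicity analysis in reverse: if $\alpha\ge\eta_v$, then since $\alpha\mapsto M_v\cdot(\alpha-1)$ is increasing with $M_v\cdot(\eta_v-1)=\eta_v$, the $T_\alpha$‑orbit of $\alpha-1$ leaves $[\alpha-1,\alpha)$ at or before step $|v|$, contradicting \eqref{e:vvhat}; symmetrically, if $\alpha\le\zeta_v$, the decreasing map $\alpha\mapsto M_{v'}\cdot\alpha$ with $M_{v'}\cdot\zeta_v=\zeta_v$ forces the orbit of $\alpha$ out of $[\alpha-1,\alpha)$. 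Hence $\zeta_v<\alpha<\eta_v$; as $\llbracket(v^{(+1)})^\omega\rrbracket\in(-1,0)$ gives $\eta_v<1$, this means $\alpha\in(\zeta_v,\eta_v)=\Gamma_v$. (The disjointness of the intervals $\Gamma_v$ and the descriptions of $\Gamma$ and its complement are not part of this lemma; they are settled in the proof of Theorem~\ref{t:endpoints}.)
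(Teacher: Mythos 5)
Your structure tracks the paper's: establish the fixed-point identities for $\eta_v$ and $\zeta_v$, show the intermediate $M$-products stay in the correct negative sub-interval when $\alpha$ is between the endpoints, and push the (repelling) fixed point past the boundary for the converse. The matrix derivation of $M_{v'}\cdot\zeta_v=\zeta_v$ via $M_{v'}^2\cdot\zeta_v=\zeta_v$ is a pleasant alternative to the paper's direct characteristic-sequence computation, though you lean on ``a direct check that $\operatorname{tr}M_{v'}\ne0$'' without doing it --- this is not a triviality (one needs an inductive sign pattern on the entries of $M_v$, or to invoke hyperbolicity of the periodic $T_0$-orbit coding $\zeta_v-1$); note also that if $\operatorname{tr}M_{v'}=0$ then $M_{v'}^2$ is projectively the identity and your argument collapses, so this check is not optional.

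The decisive gap is in the paragraph you yourself label ``the technical heart'': proving $\eta_v-1<M_{v_{[1,j]}}\cdot(\alpha-1)<\alpha$ for $1\le j\le|v|$ and $M_{v'_{[1,j]}}\cdot\alpha\in[\alpha-1,\alpha)$ for $1\le j\le|\widehat v|$, with the intermediate products negative. You assert that the $\mathscr{F}$-conditions ``say exactly that no shift of the characteristic sequence of $\alpha-1$ can drive an iterate out of $[\alpha-1,\alpha)$,'' but the objects that must actually be compared are not pure shifts: the characteristic sequence of $M_{v_{[1,n]}}\cdot(\zeta_v-1)$ has a truncated leading block $m\,a_{[2j+2,2\ell+1]}$ with $1\le m\le a_{2j+1}$, which must be compared in alternating order against the characteristic sequence of $\eta_v-1$, itself a one-letter perturbation of $(a_{[1,2\ell+1]})^\omega$. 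Doing this requires a case split on $j=0$ versus $1\le j\le\ell$, and on $a_{2\ell+1}\ge2$ versus $a_{2\ell+1}=1$, at each stage invoking the precise inequalities $a_{[2j,2\ell+1]}<_{\mathrm{alt}}a_{[1,2\ell-2j+2]}$ and $a_{[2j+1,2\ell+1]}\le_{\mathrm{alt}}a_{[1,2\ell-2j+2]}$. This is the content of the paper's inequalities \eqref{e:mc}, \eqref{e:cineq} and \eqref{e:c2}, and it is genuinely the bulk of the argument. By deferring it --- and acknowledging you have done so --- you have produced an outline, not a proof of the lemma.
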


\begin{proof}
Let $v \in \mathscr{F}$ with characteristic sequence $a_{[1,2\ell+1]}$, $\alpha \in (0,1]$.
If $v$ is the empty word, then $\widehat{v} = (-1:3)$, and ${}^{(W)}\widehat{v}{}^{(-1)} = (+1:1) = \overline{b}{}^\alpha_1$ if and only if $\alpha \in (g,1]$. 
If $\alpha \in (g,1)$, then $T_\alpha(\alpha) = \frac{1}{\alpha} - 1 > 0$, thus \eqref{e:gtetav} holds in this case.
We also have $M_{(+1:1)} \cdot g = g$.

Assume from now on that $|v| \ge 1$;  in particular, by Proposition~\ref{p:0toRCF}, $a_1 \ge 2$.
The characteristic sequence of $\zeta_v - 1 = \llbracket  (v\, \widehat{v}\,)^\omega\rrbracket$ is $(a_{[1,2\ell+1]})^\omega$, and that of $\eta_v - 1 = \llbracket  (v^{(+1)})^\omega \rrbracket$ is
\[
a'_{[1,\infty)} = \left\{\begin{array}{cl}\big(a_{[1,2\ell]}\, (a_{2\ell+1}-1)\, 1\big)^\omega & \mbox{if}\ a_{2\ell+1} \ge 2, \\[1ex] \big(a_{[1,2\ell)}\, (a_{2\ell}+1)\big)^\omega & \mbox{if}\ a_{2\ell+1} = 1.\end{array}\right.
\]

Write $v = v_{[1,|v|\,]}$.   We next show that $M_{v_{[1,n]}} \cdot (\zeta_v-1) \in (\eta_v - 1, 0)$ for $1 \le n \le |v|$.   For $1 \le n \le |v|$, the characteristic sequence of $M_{v_{[1,n]}} \cdot (\zeta_v-1)$ is $m\, a_{[2j+2,2\ell+1]}\, (a_{[1,2\ell+1]})^\omega$ for some $0 \le j \le \ell$, $1 \le m \le a_{2j+1}$, where $m = a_1$ is excluded when $j = 0$.
In these cases, we show that 
\begin{equation} \label{e:mc}
m\, a_{[2j+2,2\ell+1]}\, (a_{[1,2\ell+1]})^\omega <_{\mathrm{alt}} a'_{[1,\infty)}\,.
\end{equation}
Of course, it suffices to consider $m = a_{2j+1}$ when $j \ge 1$, and $m = a_1-1$ when $j = 0$. 
The case $j = 0$ is settled by $a_1-1 < a_1 = a'_1$ in case $\ell \ge 1$, and by $(a_1-1)\, a_1 <_{\mathrm{alt}} (a_1-1)\, 1 = a'_1\, a'_2$ in case $\ell = 0$.
For $1 \le j \le \ell$, we have $a_{[2j+1,2\ell+1]} \le_{\mathrm{alt}} a_{[1,2\ell-2j+1]}$, thus it only remains to consider the case $a_{[2j+1,2\ell+1]} = a_{[1,2\ell-2j+1]}$.
Since $a_1 \ge 2$, it is not possible that $j = \ell$ and $a_{2\ell+1} = 1$ in this case.
From $a_{[2\ell-2j+2,2\ell+1]} <_{\mathrm{alt}} a_{[1,2j]}$, $1 \le j \le \ell$, we infer that 
\begin{equation} \label{e:cineq}
\begin{array}{cl}a_{[1,2j+1]} \ge_{\mathrm{alt}} a_{[1,2j]}\, 1 \ge_{\mathrm{alt}} a_{[2\ell-2j+2,2\ell]}\, (a_{2\ell+1}-1)\, 1 = a'_{[2\ell-2j+2,2\ell+2]} & \mbox{if}\ a_{2\ell+1} \ge 2, \\[1ex]  a_{[1,2j)} \ge_{\mathrm{alt}} a_{[2\ell-2j+2,2\ell)}\, (a_{2\ell}+1) = a'_{[2\ell-2j+2,2\ell]} & \mbox{if}\ a_{2\ell+1} = 1,\end{array}
\end{equation}
and strict inequality implies that $a_{[2j+1,2\ell+1]} \, (a_{[1,2\ell+1]})^\omega <_{\mathrm{alt}} a'_{[1,\infty)}$.
In particular, this settles the case $j = \ell$.
In case $a_{2\ell+1} \ge 2$, $1 \le j < \ell$, we have $a_{[2j+2,2\ell+1]} <_{\mathrm{alt}} a_{[1,2\ell-2j]} = a'_{[1,2\ell-2j]}$, thus $a_{[2j+1,2\ell+1]}\, a_{[1,2\ell+1]} <_{\mathrm{alt}} a'_{[1,4\ell-2j+2]}$.
In case $a_{2\ell+1} = 1$, $1 \le j < \ell$, we have $a_{[2j,2\ell+1]} <_{\mathrm{alt}} a_{[1,2\ell-2j+2]} = a'_{[1,2\ell-2j+2]}$, thus~\eqref{e:mc} holds in all our cases.
Together with Corollary~\ref{c:altorder}, this yields that, indeed, $M_{v_{[1,n]}} \cdot (\zeta_v-1) \in (\eta_v - 1, 0)$ for $1 \le n \le |v|$.\\

We clearly have $M_{v_{[1,n]}} \cdot (\eta_v-1) < 0$ for $1 \le n < |v|$, and $M_{v^{(+1)}} \cdot (\eta_v-1) = \eta_v - 1$, thus $M_v \cdot (\eta_v-1) = \eta_v$.
Note that $x \mapsto M_a \cdot x$ is order preserving and expanding on $(-1,0)$ for any $a \in \mathscr{A}_-$.
For any $\alpha \in \Gamma_v$, we have therefore $M_{v_{[1,n]}} \cdot (\alpha-1) \in (\eta_v-1, 0)$, $1 \le n < |v|$, and $M_v \cdot (\alpha-1) \in (\eta_v-1, \alpha)$, thus $\underline{b}{}^\alpha_{[1,|v|\,]} = v$, $T_\alpha^n(\alpha-1) > \eta_v-1$ for $1 \le n \le |v|$.
Moreover, we have $\underline{b}{}^\alpha_{[1,|v|\,]} \ne v$ for all $\alpha \ge \eta_v$.

Since the characteristic sequence of $(v^{(+1)})^\omega$ is $a'_{[1,\infty)}$, that of $\widehat{(v^{(+1)})^\omega}$ is $1\, a'_{[1,\infty)}$, and we obtain that $\widehat{(v^{(+1)})^\omega} = (\widehat{v}{}^{(-1)})^\omega$.
By Lemma~\ref{l:conversion2}, we get that
\[
\eta_v = \big\llbracket {}^{(W)}\widehat{(v^{(+1)})^\omega} \big\rrbracket = \big\llbracket {}^{(W)} (\widehat{v}{}^{(-1)})^\omega \big\rrbracket = \big\llbracket v'\, (\widehat{v}{}^{(-1)})^\omega \big\rrbracket\,,
\]
with $v' = v'_{[1,|\widehat{v}|\,]} = {}^{(W)}\widehat{v}{}^{(-1)}$.
The characteristic sequence of $M_{v'_{[1,n]}} \cdot \eta_v$, $1 \le n \le |\widehat{v}|$, is therefore of the form $m\, a'_{[2j+1,\infty)}$ for some $1 \le m \le a'_{2j}$ with $1 \le j \le \ell$ if $a_{2\ell+1} = 1$, $1 \le j \le \ell+1$ if $a_{2\ell+1} \ge 2$.
We show that 
\begin{equation} \label{e:c2}
a'_{[2j,\infty)} \le_{\mathrm{alt}} a'_{[1,\infty)}\,.
\end{equation}
For $1 \le j \le \ell$, \eqref{e:cineq} and $a'_{[1,2\ell)} = a_{[1,2\ell)}$ imply that
\[
\begin{array}{cl}a'_{[2j,2\ell+2]} \le_{\mathrm{alt}} a_{[1,2\ell-2j+3]} = a'_{[1,2\ell-2j+3]},\ a'_{[1,2j)} = a_{[1,2j)} \ge_{\mathrm{alt}} a'_{[2\ell-2j+4,2\ell+2]} & \mbox{if}\ a_{2\ell+1} \ge 2, \\[1ex]  
a'_{[2j,2\ell]} \le_{\mathrm{alt}} a_{[1,2\ell-2j+1]} = a'_{[1,2\ell-2j+1]},\ a'_{[1,2j)} = a_{[1,2j)} \ge_{\mathrm{alt}} a'_{[2\ell-2j+2,2\ell]} & \mbox{if}\ a_{2\ell+1} = 1.\end{array}
\]
In this case, \eqref{e:c2} follows from $a'_{[1,\infty)} = (a'_{[1,2\ell+2]})^\omega$ and $a'_{[1,\infty)} = (a'_{[1,2\ell]})^\omega$ respectively. 
For $j = \ell+1$ (and $a_{2\ell+1} \ge 2$), \eqref{e:c2} is a consequence of $a'_{2\ell+2} = 1 < a'_1$ when $\ell \ge 1$ or $a_1 \ge 3$, and of $a'_{[1,\infty)} = 1^\omega$ when $\ell = 0$ and $a_1 = 2$.
Now, Corollary~\ref{c:altorder} yields that $M_{v'_{[1,n]}} \cdot \eta_v \in [\eta_v - 1, 0)$ for $1 \le n \le |\widehat{v}|$.

The equation
\[
\zeta_v = \big\llbracket {}^{(W)}\,\widehat{\!(v\, \widehat{v})^\omega\!}\,\big\rrbracket = \big\llbracket {}^{(W)} (\widehat{v}\, v)^\omega \big\rrbracket
\]
shows that $M_{v'_{[1,n]}} \cdot \zeta_v < 0$ for $1 \le n < |\widehat{v}|$, and $M_{v'} \cdot \zeta_v = \zeta_v$.
As $x \mapsto M_{v'_1} \cdot x$ is order reversing on $(0,1)$ and $x \mapsto M_{v'_n} \cdot x$ is order preserving on $(-1,0)$ for $2 \le n \le |\widehat{v}|$, we obtain for any $\alpha \in \Gamma_v$ that $M_{v'_{[1,n]}} \cdot \alpha \in (\eta_v-1, 0)$ for $1 \le n < |\widehat{v}|$, $M_{v'} \cdot \alpha \in (\eta_v-1, \alpha)$, thus $\overline{b}{}^\alpha_{[1,|\widehat{v}|\,]} = v'$, and $T_\alpha^n(\alpha) > \eta_v-1$ for $1 \le n \le |\widehat{v}|$.
We also obtain that $\overline{b}{}^\alpha_{[1,|\widehat{v}|\,]} \ne v'$ for all $\alpha \le \zeta_v$, which concludes the proof of the lemma.
\end{proof}

\begin{Lem} \label{l:balpha}
For any $\alpha \in \Gamma$, there exists a unique $v \in \mathscr{F}$ such that $\alpha \in \Gamma_v$.
\end{Lem}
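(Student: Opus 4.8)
The plan is to derive the lemma from Lemma~\ref{l:Gammav}, which for each $v \in \mathscr{F}$ already identifies the set of $\alpha$ satisfying the prefix identities~\eqref{e:vvhat} with the interval~$\Gamma_v$. So one must establish two facts: \emph{existence}, that every $\alpha \in \Gamma$ satisfies~\eqref{e:vvhat} for at least one $v \in \mathscr{F}$; and \emph{uniqueness}, that it cannot do so for two distinct words of~$\mathscr{F}$. The empty word lies in $\mathscr{F}$, with $\Gamma_v = (g,1]$ in that case; and for $\alpha \in (g,1]$ one has $d_\alpha(\alpha) = 1$, hence $T_\alpha(\alpha) = 1/\alpha - 1 \ge 0$, so $(g,1] \subseteq \Gamma$. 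Thus we may restrict throughout to $\alpha \in (0,g]$, where $\alpha - 1 \in (-1,0)$ and the expansion $\underline{b}{}^\alpha$ begins with a run of letters in $\mathscr{A}_-$ ending exactly when the $T_\alpha$-orbit of $\alpha - 1$ first meets $[0,\alpha)$.

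For existence, fix $\alpha \in \Gamma \cap (0,g]$. The idea is to read off $v$ as the appropriate finite prefix of $\underline{b}{}^\alpha$. Applying Lemma~\ref{l:conversion2} to the partial expansion $\alpha - 1 = \llbracket v,\, T_\alpha^{|v|}(\alpha-1)\rrbracket$ for a prefix $v \in \mathscr{A}_-^*$ gives $\alpha = \llbracket {}^{(W)}\widehat{v}{}^{(-1)},\, W\cdot T_\alpha^{|v|}(\alpha-1)\rrbracket$; one checks, using Lemma~\ref{l:digit} to identify digits one at a time and Lemma~\ref{l:wMat} at the final step, that for the correct length this yields $\overline{b}{}^\alpha_{[1,|\widehat{v}|]} = {}^{(W)}\widehat{v}{}^{(-1)}$ together with $T_\alpha^{|v|+1}(\alpha-1) = T_\alpha^{|\widehat{v}|+1}(\alpha)$ --- and that such a finite $v$ exists precisely because $\alpha \in \Gamma$, since a short argument with the fundamental relation~\eqref{e:funMatrix} shows that the first step at which either orbit reaches $[0,\alpha)$ is a synchronization step. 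It then remains to verify $v \in \mathscr{F}$. For this I would reuse the computation from the proof of Lemma~\ref{l:Gammav}: writing $a_{[1,2\ell+1]}$ for the characteristic sequence of~$v$, the iterates $T_\alpha^n(\alpha-1)$, $1 \le n \le |v|$, and $T_\alpha^n(\alpha)$, $1 \le n \le |\widehat{v}|$, have characteristic sequences of the shifted shapes $a_{[2j,2\ell+1]}\cdots$ and $a_{[2j+1,2\ell+1]}\cdots$, and the fact that all of these iterates lie in $(-1,0)$ --- which holds because the synchronization step is the \emph{first} nonnegative event, together with a short $W$-conjugation argument on the $\alpha$-side --- translates, through Corollary~\ref{c:altorder}, precisely into the inequalities $a_{[2j,2\ell+1]} <_{\mathrm{alt}} a_{[1,2\ell-2j+2]}$ and $a_{[2j+1,2\ell+1]} \le_{\mathrm{alt}} a_{[1,2\ell-2j+2]}$ defining~$\mathscr{F}$, the strict versus non-strict distinction reflecting whether the relevant iterate is strictly negative or merely $< \alpha$.

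For uniqueness, suppose~\eqref{e:vvhat} holds for two words $v \ne v'$ of~$\mathscr{F}$. Both are prefixes of $\underline{b}{}^\alpha_{[1,\infty)}$, hence comparable; by symmetry say $|v| < |v'|$, so $v' = v u$ with $u$ a nonempty word in $\mathscr{A}_-^*$. Likewise ${}^{(W)}\widehat{v}{}^{(-1)}$ and ${}^{(W)}\widehat{v'}{}^{(-1)}$ are both prefixes of $\overline{b}{}^\alpha_{[1,\infty)}$, hence comparable. Since the operators ${}^{(W)}$ and ${}^{(\pm1)}$ alter only the first and last letters of a word, and since $\widehat{v'}$ always ends in a letter of the form $(-1:d)$ with $d \ge 3$ rather than in a run of $(-1:2)$'s, this second comparison --- combined with the explicit description of $v \mapsto \widehat{v}$ via characteristic sequences and the matrix identity $M_{\widehat{v}} = E\, W M_v\, E\, W$ of Lemma~\ref{l:conversion} --- forces $u$ to be empty, a contradiction. (One can also phrase this as a direct comparison, via Corollary~\ref{c:altorder}, of the endpoints of $\Gamma_v$ and $\Gamma_{v'}$, showing that the two intervals cannot overlap; the boundary facts $\underline{b}{}^\alpha_{[1,|v|]} \ne v$ for $\alpha \ge \eta_v$ and $\overline{b}{}^\alpha_{[1,|\widehat{v}|]} \ne {}^{(W)}\widehat{v}{}^{(-1)}$ for $\alpha \le \zeta_v$ from the proof of Lemma~\ref{l:Gammav} are useful here.)

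I expect the main obstacle to be the verification $v \in \mathscr{F}$, and its mirror in the uniqueness step: both amount to transcribing the full sign pattern of a finite segment of a $T_\alpha$-orbit into the system of alternating-order inequalities on characteristic sequences, with careful handling of the edge cases ($\ell = 0$, $a_{2\ell+1} = 1$, and the boundaries between consecutive runs of $(-1:2)$) --- in effect, the bookkeeping behind the proof of Lemma~\ref{l:Gammav}, run in the reverse direction.
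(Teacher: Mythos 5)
Your overall strategy matches the paper's: construct $v$ from $\alpha$, establish the prefix relations~\eqref{e:vvhat}, show $v\in\mathscr{F}$, and deduce uniqueness from Lemma~\ref{l:Gammav} together with the observation that prefix-comparable words have prefix-incomparable images under $v\mapsto{}^{(W)}\widehat{v}^{(-1)}$. Your uniqueness argument in particular is essentially the paper's: the key fact, which you gesture at via the effect of ${}^{(-1)}$ on the last letter, is that $\widehat{v}^{(-1)}$ ends in $(-1:1+a_{2\ell+1})$ whereas for any strict extension $w$ of $v$ the letter in position $|\widehat{v}|$ of $\widehat{w}^{(-1)}$ is at least $(-1:2+a_{2\ell+1})$, so the two images can never agree at that position, contradicting that both are prefixes of $\overline{b}{}^\alpha_{[1,\infty)}$.

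The existence direction, however, has a real gap. You propose to ``read off $v$ as the appropriate finite prefix of $\underline{b}{}^\alpha$'' whose length is determined by ``the first step at which either orbit reaches $[0,\alpha)$,'' but this prescription does not actually pin down $v$. The paper must distinguish two cases according to which orbit first becomes nonnegative at the smaller \emph{index in the characteristic sequence} (the paper's $m$ versus $m'$), and the formula for the characteristic sequence of $v$ differs in the two cases. In particular, when the $\alpha$-orbit governs, $|v|$ is \emph{not} the smallest $n$ with $T_\alpha^n(\alpha-1)\ge 0$: for $\alpha = 58/195$ one has $|v|=4$, yet $T_\alpha^4(\alpha-1)<0$ and $T_\alpha^5(\alpha-1)=0$. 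Your claim that ``the first step at which either orbit reaches $[0,\alpha)$ is a synchronization step'' is also slightly off: by~\eqref{e:funMatrix}, $T_\alpha^{|v|}(\alpha-1) = W\cdot T_\alpha^{|\widehat{v}|}(\alpha)$, so at that stage exactly one of the two values is nonnegative, and the synchronization $T_\alpha^{|v|+1}(\alpha-1)=T_\alpha^{|\widehat{v}|+1}(\alpha)$ occurs only one step later. Finally, the verification $v\in\mathscr{F}$, which you correctly frame as translating orbit sign patterns into alternating-order inequalities via Corollary~\ref{c:altorder}, is the heart of the proof; the proposal states it as a task but does not carry it out, and it cannot be discharged by pointing back at Lemma~\ref{l:Gammav}, whose proof runs in the opposite logical direction (it \emph{assumes} $v\in\mathscr{F}$ rather than proving it).
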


\begin{proof}
Let $\alpha \in \Gamma$, and $a_{[1,\infty)}$ be the characteristic sequence of $\alpha-1$. 
If $T_\alpha(\alpha) \ge 0$, then using \eqref{e:Mn}
\[
0 \ge W \cdot T_\alpha(\alpha) = W M_{(+1:d_\alpha(\alpha))} \cdot \alpha = M_{(-1:2)}^{d_\alpha(\alpha)-1} \cdot (\alpha-1) = T_\alpha^{d_\alpha(\alpha)-1}(\alpha-1)\,,
\] 
thus $\alpha \in \Gamma_{(-1:2)^{d_\alpha(\alpha)-1}}$ by Lemma~\ref{l:Gammav}.
Assume from now on that $T_\alpha(\alpha) < 0$.
Then the characteristic sequence of $T_\alpha(\alpha)$ is $a_{[2,\infty)}$ by Lemma~\ref{l:conversion2}.

If $T_\alpha^n(\alpha-1) \ge 0$ for some $n \ge 1$, and $n$ is minimal with this property, then the by-excess expansion of $\alpha-1$ starts with $\underline{b}{}^\alpha_{[1,n]}{}^{(+1)}$.
Since this word does not end with $(-1:2)$, there exists some $m \ge 1$ such that the characteristic sequence of $\underline{b}{}^\alpha_{[1,n]}{}^{(+1)}$ is $a_{[1,2m]}\, 1$.
Therefore, the characteristic sequence of $\underline{b}{}^\alpha_{[1,n]}$ is $a_{[1,2m)}\, (a_{2m}-1)\, 1$ (if $a_{2m} \ge 2$) or $a_{[1,2m-2]}\, (a_{2m-1}+1)$ (if $a_{2m} = 1$).
Set $m = \infty$ if $T_\alpha^n(\alpha-1) < 0$ for all $n \ge 1$.

Similarly, if $T_\alpha^n(\alpha) \ge 0$ for some $n \ge 2$, and $n$ is minimal with this property, then the $0$-expansion of $T_\alpha(\alpha)$ starts with $\overline{b}{}^\alpha_{[2,n]}{}^{(+1)}$.
Therefore, the characteristic sequence of $\overline{b}{}^\alpha_{[2,n]}{}^{(+1)}$ is $a_{[2,2m'+1]}\, 1$ for some $m' \ge 1$, and that of $\overline{b}{}^\alpha_{[2,n]}$ is $a_{[2,2m']}\, ({a_{2m'+1}-1})\, 1$ (if $a_{2m'+1} \ge 2$) or $a_{[2,2m')}\, (a_{2m'}+1)$ (if $a_{2m'+1} = 1$).
Set $m' = \infty$ if $T_\alpha^n(\alpha) < 0$ for all $n \ge 1$.

Let $v \in \mathscr{A}_-^*$ be the word with characteristic sequence
\[
a'_{[1,2\ell+1]} = \left\{\begin{array}{ll}a_{[1,2m)}\, (a_{2m}-1)\, 1 & \mbox{if}\ m \le m',\ a_{2m} \ge 2, \\[1ex] a_{[1,2m-2]}\, (a_{2m-1}+1) & \mbox{if}\ m \le m',\ a_{2m} = 1, \\[1ex] a_{[1,2m'+1]} & \mbox{if}\ m > m'.\end{array}\right.
\]
We show that~\eqref{e:vvhat} holds. 
Suppose first $m \le m'$.
Then $\underline{b}{}^\alpha_{[1,|v|\,]} = v$ by the definition of~$v$, and the characteristic sequence of $\widehat{v}^{(-1)}$ is~$1\, a_{[1,2m]}$.
Removing the first letter of $\widehat{v}^{(-1)}$ yields a word with characteristic sequence~$a_{[2,2m]}$, and $m \le m'$ implies that $\overline{b}{}^\alpha_{[2,\infty)}$ starts with this word.
By Lemma~\ref{l:conversion2} and since $T_\alpha(\alpha) < 0$, Lemma~\ref{l:digit} shows that $\overline{b}{}^\alpha_1$ is equal to the first letter of~${}^{(W)}\widehat{v}^{(-1)}$.
Therefore, we also have $\overline{b}{}^\alpha_{[1,|\widehat{v}|\,]} = {}^{(W)}\widehat{v}{}^{(-1)}$. 
Suppose now $m > m'$. 
Then $\underline{b}{}^\alpha_{[1,\infty)}$ starts with~$v$.
As for $m \le m'$, the first letter of~${}^{(W)}\widehat{v}^{(-1)}$ is equal to~$\overline{b}{}^\alpha_1$.
Since the characteristic sequence of $\widehat{v}^{(-1)}$ is $1\, a_{[1,2m']}\, ({a_{2m'+1}-1})\, 1$ (if $a_{2m'+1} \ge 2$) or $1\, a_{[1,2m')}\, (a_{2m'}+1)$ (if $a_{2m'+1} = 1$), we obtain that $\overline{b}{}^\alpha_{[1,|\widehat{v}|\,]} = {}^{(W)}\widehat{v}{}^{(-1)}$.
Therefore, $\alpha$ and $v$ satisfy~\eqref{e:vvhat}. 

Next we show that $v \in \mathscr{F}$, i.e., $a'_{[2j,2\ell+1]} <_{\mathrm{alt}} a'_{[1,2\ell-2j+2]}$ and $a'_{[2j+1,2\ell+1]} \le_{\mathrm{alt}} a'_{[1,2\ell-2j+1]}$ for all $1 \le j \le \ell$.
Since the characteristic sequence of $T_\alpha^{1+a_2+a_4+\cdots+a_{2j-2}}(\alpha)$ is $a_{[2j,\infty)}$ for all $1 \le j \le \ell$, we have $a_{[2j,\infty)} \le_{\mathrm{alt}} a_{[1,\infty)}$ by Corollary~\ref{c:altorder}.
If $m \le m'$, this yields that $a'_{[2j,2\ell+1]} <_{\mathrm{alt}} a_{[2j,2\ell+1]} \le_{\mathrm{alt}} a_{[1,2\ell-2j+2]} = a'_{[1,2\ell-2j+2]}$.
If $m > m'$, then we have $a_{[2\ell+2,\infty)} >_{\mathrm{alt}} a_{[1,\infty)}$ because $a_{[2\ell+2,\infty)}$ is the characteristic sequence of $T_\alpha^{|\widehat{v}|}(\alpha) - 1$, thus $a_{[2j,\infty)} \le_{\mathrm{alt}} a_{[1,\infty)}$ implies that $a_{[2j,2\ell+1]} <_{\mathrm{alt}} a_{[1,2\ell-2j+2]}$.
In this case, we obtain that $a'_{[2j,2\ell+1]} = a_{[2j,2\ell+1]} <_{\mathrm{alt}} a_{[1,2\ell-2j+2]} = a'_{[1,2\ell-2j+2]}$.
Consider now $a'_{[2j+1,2\ell+1]}$, $1 \le j \le \ell$. 
If $j = \ell$, $m \le m'$, and $a_{2m} \ge 2$, then $a'_{2\ell+1} = 1 \le a_1'$. 
In all other cases we have $a_{[2j+1,\infty)} \le_{\mathrm{alt}} a_{[1,\infty)}$ because $a_{[2j+1,\infty)}$ is the characteristic sequence of $T_\alpha^{a_1+a_3+\cdots+a_{2j-1}}(\alpha-1)$.
If $m > m'$, then this implies that $a'_{[2j+1,2\ell+1]} = a_{[2j+1,2\ell+1]} \le_{\mathrm{alt}} a_{[1,2\ell-2j+1]} = a'_{[1,2\ell-2j+1]}$.
If $m \le m'$, then the fact that $a_{[2m+1,\infty)}$ is the characteristic sequence of $T_\alpha^{|v|}(\alpha-1) - 1$ implies that $a_{[2j+1,2m]} <_{\mathrm{alt}} a_{[1,2m-2j]}$, thus $a'_{[2j+1,2\ell+1]} \le_{\mathrm{alt}} a_{[1,2\ell-2j+1]} = a'_{[1,2\ell-2j+1]}$.
This proves that $v \in \mathscr{F}$.

By Lemma~\ref{l:Gammav}, we have shown that $\alpha \in \Gamma$ implies that $\alpha \in \Gamma_v$ for some $v \in \mathscr{F}$.
Suppose that $\alpha \in \Gamma_v$ and $\alpha \in \Gamma_w$ for two different $v, w \in \mathscr{F}$. 
Since $\underline{b}{}^\alpha_{[1,|v|\,]} = v$ and $\underline{b}{}^\alpha_{[1,|w|\,]} = w$ by Lemma~\ref{l:Gammav}, $v$ is a prefix of~$w$ or $w$ is a prefix of~$v$. 
Then ${}^{(W)}\widehat{v}^{(-1)}$ is not a prefix of~${}^{(W)}\widehat{w}^{(-1)}$, and ${}^{(W)}\widehat{w}^{(-1)}$ is not a prefix of~${}^{(W)}\widehat{v}^{(-1)}$, thus $\overline{b}{}^\alpha_{[1,|\widehat{v}|\,]} \ne {}^{(W)}\widehat{v}{}^{(-1)}$ or $\overline{b}{}^\alpha_{[1,|\widehat{w}|\,]} \ne {}^{(W)}\widehat{w}{}^{(-1)}$.
Again by Lemma~\ref{l:Gammav}, this implies that $\alpha \not\in \Gamma_v$ or $\alpha \not\in \Gamma_w$ .
Thus $\alpha$ lies in a unique $\alpha \in \Gamma_v$.
\end{proof}

\begin{Lem} \label{l:notinGamma}
We have $\alpha \in (0,1] \setminus \Gamma$ if and only if $\alpha \in (0,g]$ and the characteristic sequence $a_{[1,\infty)}$ of $\alpha-1$~satisfies $a_{[n,\infty)} \le_{\mathrm{alt}} a_{[1,\infty)}$ for all $n \ge 2$.
If $\alpha \in (0,1] \setminus \Gamma$, then $\overline{b}{}^\alpha_{[1,\infty)} = {}^{(W)}\widehat{\underline{b}{}^\alpha_{[1,\infty)}}$.
\end{Lem}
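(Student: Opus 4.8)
The plan is to characterise $\alpha\notin\Gamma$ by the condition that \emph{every} forward $T_\alpha$-iterate of both $\alpha-1$ and $\alpha$ stays negative, and to translate this into alternating-order inequalities between characteristic sequences by passing through the by-excess ($\alpha=0$) expansion. First I would make the easy reductions: if $\alpha>g$ then $d_\alpha(\alpha)=1$ and $T_\alpha(\alpha)=1/\alpha-1\ge0$, so $\alpha\in\Gamma$; and if $\alpha$ is rational then the by-excess expansion of $\alpha-1$ ends in $(-1:2)^\omega$, whence $T_0^n(\alpha-1)=-1<\alpha-1$ eventually and (by the equivalence below) $\alpha\in\Gamma$, while the right-hand condition also fails for rational $\alpha$ since the first infinite entry of $a_{[1,\infty)}$ sits at an odd index and $a_1<\infty$. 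So on both sides I may assume $\alpha\in(0,g)$ irrational, so that $a_{[1,\infty)}$ is genuinely infinite and the by-excess expansion of $\alpha-1$ is $(-1:2)^{a_1-1}(-1:2+a_2)(-1:2)^{a_3-1}\cdots$. The elementary fact I would use repeatedly is that for $x\in[-1,0)$ one has $T_\alpha(x)=T_0(x)$, with the identical digit $(-1:d_0(x))\in\mathscr{A}_-$, whenever $T_\alpha(x)<0$, whereas $T_\alpha(x)\ge0$ precisely when $T_0(x)<\alpha-1$; iterating, for $y\in[-1,0)$ the statement ``$T_\alpha^n(y)<0$ for all $n\ge1$'' is equivalent to ``$T_0^n(y)\ge\alpha-1$ for all $n\ge1$'', and in that case $T_\alpha^n(y)=T_0^n(y)$ for all $n$, so the $\alpha$-expansion and the by-excess expansion of $y$ coincide.

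Next I would prove the bookkeeping statement that, if $y\in[-1,0)$ has characteristic sequence $b_{[1,\infty)}$, then reading off the blocks of its by-excess expansion shows the characteristic sequences of $\{T_0^n(y):n\ge1\}$ to be exactly $\{b_{[2i-1,\infty)}:i\ge2\}\cup\{m\,b_{[2i,\infty)}:i\ge1,\ 1\le m\le b_{2i-1}-1\}$, and each ``partial'' sequence $m\,b_{[2i,\infty)}$ there has first entry $m\le b_{2i-1}-1$. I would apply this with $y=\alpha-1$ (so $b=a$), and separately with $y=T_\alpha(\alpha)$, whose characteristic sequence is the shift $a_{[2,\infty)}$ by Lemmas~\ref{l:conversion2} and~\ref{l:digit} (exactly as in the proof of Lemma~\ref{l:balpha}), provided $T_\alpha(\alpha)<0$. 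To move between orderings I use Corollary~\ref{c:altorder}, which turns $T_0^n(\cdot)\ge\alpha-1$ into a $\le_{\mathrm{alt}}$-inequality of characteristic sequences, and Proposition~\ref{p:0toRCF} together with the usual ordering of RCF expansions, which rewrites $a_{[2,\infty)}\le_{\mathrm{alt}}a_{[1,\infty)}$ as $\{1/\alpha\}=[0;a_2,a_3,\dots]\ge\alpha$, i.e.\ as $T_\alpha(\alpha)<0$.

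For ``$\alpha\notin\Gamma\Rightarrow$ condition'': from $T_\alpha^n(\alpha-1)<0$ for all $n$ the bookkeeping and Corollary~\ref{c:altorder} give $a_{[2i-1,\infty)}\le_{\mathrm{alt}}a_{[1,\infty)}$ for $i\ge2$; from $T_\alpha(\alpha)<0$ and $T_\alpha(\alpha)\in[\alpha-1,0)$ one gets $a_{[2,\infty)}\le_{\mathrm{alt}}a_{[1,\infty)}$; and from $T_\alpha^n(T_\alpha(\alpha))<0$ for all $n$ one gets $a_{[2i,\infty)}\le_{\mathrm{alt}}a_{[1,\infty)}$ for $i\ge2$ --- together, $a_{[n,\infty)}\le_{\mathrm{alt}}a_{[1,\infty)}$ for all $n\ge2$. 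For the converse, the hypothesis makes every full tail $\le_{\mathrm{alt}}a_{[1,\infty)}$, and any full tail $a_{[k,\infty)}$, $k\ge2$, forces $a_k\le a_1$, so every partial sequence $m\,a_{[2i,\infty)}$ ($m\le a_{2i-1}-1$) or $m\,a_{[2i+1,\infty)}$ ($m\le a_{2i}-1$) appearing above has first entry $<a_1$ and is hence $<_{\mathrm{alt}}a_{[1,\infty)}$; by Corollary~\ref{c:altorder} all the relevant $T_0^n(\alpha-1)$ and $T_0^n(T_\alpha(\alpha))$ lie $\ge\alpha-1$, so the elementary fact yields $T_\alpha^n(\alpha-1)<0$ and $T_\alpha^n(\alpha)<0$ for all $n\ge1$, i.e.\ $\alpha\notin\Gamma$. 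Finally, for $\alpha\notin\Gamma$ the $\alpha$-expansion of $\alpha-1$ equals its by-excess expansion, so $\underline{b}{}^\alpha\in\mathscr{A}_-^\omega$ and $\alpha=\llbracket{}^{(W)}\widehat{\underline{b}{}^\alpha}\rrbracket$ by Lemma~\ref{l:conversion2}; inspecting the definitions of $v\mapsto\widehat{v}$ and of the left superscript ${}^{(W)}$ shows ${}^{(W)}\widehat{\underline{b}{}^\alpha}=(+1:a_1+1)\,(-1:2)^{a_2-1}\,(-1:2+a_3)\cdots$, whose first letter is $(+1:d_\alpha(\alpha))=\overline{b}{}^\alpha_1$ and whose tail is the by-excess $=\alpha$-expansion of $T_\alpha(\alpha)$, giving $\overline{b}{}^\alpha={}^{(W)}\widehat{\underline{b}{}^\alpha}$.

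I expect the crux to be the bookkeeping of the second paragraph together with the verification that the two orbits \emph{jointly} cover all tails $a_{[n,\infty)}$, $n\ge2$: the orbit of $\alpha-1$ yields only the odd tails of index $\ge3$ (plus partial even sequences), so one genuinely needs the orbit of $\alpha$ --- more precisely of $T_\alpha(\alpha)$, with its shifted characteristic sequence $a_{[2,\infty)}$ --- to recover the even tails of index $\ge4$, and the lone tail $a_{[2,\infty)}$ has to be extracted from the bare membership $T_\alpha(\alpha)\in[\alpha-1,0)$. Some care is also needed at the boundary $\alpha=g$ (where $a_{[1,\infty)}=1^\omega$, $T_g$ fixes $g-1=-g^2$ with digit $(-1:3)$, and indeed $g\notin\Gamma$) and in the rationality reduction.
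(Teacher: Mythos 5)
Your proof is correct and takes essentially the same route as the paper's: both characterize $\alpha\notin\Gamma$ by negativity of the full $T_\alpha$-orbits of $\alpha-1$ and $\alpha$, match these to by-excess ($T_0$) orbits, and convert to alternating-order inequalities on tails of the characteristic sequence via Corollary~\ref{c:altorder}, with the even tails of index $\ge4$ coming from the orbit of $T_\alpha(\alpha)$ and the lone tail $a_{[2,\infty)}$ from $T_\alpha(\alpha)\ge\alpha-1$. Your bookkeeping lemma and the observation that partial iterates have first characteristic-sequence entry $<a_1$ is a slight streamlining of what the paper accomplishes via the monotonicity of $M_{(-1:2)}$ on $(-1,0)$ in its recursive digit-by-digit verification, but the argument is the same.
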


\begin{proof}
We have $\alpha \in (0,1] \setminus \Gamma$ if and only if $\underline{b}{}^\alpha_{[1,\infty)} \in \mathscr{A}_-^\omega$ and $\overline{b}{}^\alpha_{[2,\infty)} \in \mathscr{A}_-^\omega$, which in turn is equivalent to $\underline{b}{}^\alpha_{[1,\infty)} \in \mathscr{A}_-^\omega$ and $\overline{b}{}^\alpha_{[1,\infty)}  = {}^{(W)}\widehat{\underline{b}{}^\alpha_{[1,\infty)}}$ by Lemmas~\ref{l:conversion2} and~\ref{l:digit}.
Since $\Gamma_v = (g,1]$ for the empty word~$v$, we have $(0,1] \setminus \Gamma \subset (0,g]$.

Let $\alpha \in (0,g]$, and $a_{[1,\infty)}$ be the characteristic sequence of~$\alpha-1$.
If $\underline{b}{}^\alpha_{[1,\infty)} \in \mathscr{A}_-^\omega$, then $a_{[2j+1,\infty)}$ is the characteristic sequence of $T_\alpha^{a_1+a_3+\cdots+a_{2j-1}}(\alpha-1)$ for all $j \ge 0$, thus $a_{[2j+1,\infty)} \le_{\mathrm{alt}} a_{[1,\infty)}$ by Corollary~\ref{c:altorder}.
If moreover $\overline{b}{}^\alpha_{[1,\infty)}  = {}^{(W)}\widehat{\underline{b}{}^\alpha_{[1,\infty)}}$, then $a_{[2j,\infty)}$ is the characteristic sequence of $T_\alpha^{1+a_2+a_4+\cdots+a_{2j-2}}(\alpha-1)$ for all $j \ge 1$, thus $a_{[2j,\infty)} \le_{\mathrm{alt}} a_{[1,\infty)}$.

On the other hand, if $a_{[2j+1,\infty)} \le_{\mathrm{alt}} a_{[1,\infty)}$ for all $j \ge 1$, then we obtain recursively from Corollary~\ref{c:altorder}, Lemma~\ref{l:digit} and the fact that $M_{(-1:2)}$ is increasing on $(-1,0)$ that $\underline{b}{}^\alpha_{[a_1+a_3+\cdots+a_{2j-3}+1, a_1+a_3+\cdots+a_{2j-1}]} = (-1:2)^{a_{2j-1}-1} (-1:2+a_{2j})$, thus $\underline{b}{}^\alpha_{[1,\infty)} \in \mathscr{A}_-^\omega$.
If moreover $a_{[2j,\infty)} \le_{\mathrm{alt}} a_{[1,\infty)}$ holds for all $j \ge 1$, then we obtain in the same way that $\underline{b}{}^\alpha_{[a_2+a_4+\cdots+a_{2j-2}+2, a_2+a_4+\cdots+a_{2j}+1]} = (-1:2)^{a_{2j}-1} (-1:2+a_{2j+1})$ for all $j \ge 1$, thus $\overline{b}{}^\alpha_{[2,\infty)} \in \mathscr{A}_-^\omega$.
This implies $\alpha \in (0,1] \setminus \Gamma$. 
\end{proof}

By the following lemma, the orbits of ${\alpha-1}$ and $\alpha$ synchronize for almost all $\alpha \in (0,1]$.

\begin{Lem} \label{l:zeromeasure}
The set $(0,1] \setminus \Gamma$ has zero Lebesgue measure.
\end{Lem}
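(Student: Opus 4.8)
The set in question is $(0,1] \setminus \Gamma$, which by Lemma~\ref{l:notinGamma} equals the set of $\alpha \in (0,g]$ whose $\alpha-1$ has a characteristic sequence $a_{[1,\infty)}$ satisfying the self-domination condition $a_{[n,\infty)} \le_{\mathrm{alt}} a_{[1,\infty)}$ for all $n \ge 2$. The plan is to transfer this to a statement about regular continued fractions via Proposition~\ref{p:0toRCF}: the characteristic sequence $a_{[1,\infty)}$ of $\alpha-1$ is precisely the RCF expansion $[0;a_1,a_2,\ldots]$ of $\alpha$. Combined with Corollary~\ref{c:altorder} (which says the alternating order on characteristic sequences is the reverse of the numerical order), the condition ``$a_{[n,\infty)} \le_{\mathrm{alt}} a_{[1,\infty)}$ for all $n \ge 2$'' translates into ``$T_1^{n-1}(\alpha) \ge \alpha$ for all $n \ge 1$'', i.e.\ $\alpha$ is a point whose entire forward RCF orbit stays at or above $\alpha$. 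Indeed this equivalence is already remarked in the paragraph following Theorem~\ref{t:endpoints}. So the task reduces to showing that $\{\alpha \in (0,1] : T_1^n(\alpha) \ge \alpha \text{ for all } n \ge 0\}$ has Lebesgue measure zero.

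**Key step.** For this I would invoke ergodicity of the Gauss map $T_1$ with respect to the Gauss measure, which is equivalent to Lebesgue measure. Fix any $\beta \in (0,1)$ with $\beta > 0$; by the Birkhoff ergodic theorem, for Lebesgue-almost every $\alpha$ the orbit $\{T_1^n(\alpha)\}_{n \ge 0}$ is dense in $[0,1]$ (or more simply, visits $[0,\beta)$ with positive frequency). Hence for almost every $\alpha$ there is some $n$ with $T_1^n(\alpha) < \alpha$ — one cannot have the whole orbit bounded below by $\alpha$ unless $\alpha = 0$. More carefully: let $E_\beta = \{\alpha : T_1^n(\alpha) \notin [0,\beta) \text{ for all } n \ge 0\}$ for rational $\beta$; each $E_\beta$ is contained in $\{\alpha : \text{orbit of }\alpha\text{ misses an open set}\}$, which has measure zero by ergodicity. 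The set $(0,1]\setminus\Gamma$ is contained in $\bigcup_{\beta} (E_\beta \cap [0,\beta])$... actually cleaner: if $\alpha \in (0,1]\setminus\Gamma$ then $T_1^n(\alpha) \ge \alpha > 0$ for all $n$, so picking rational $\beta$ with $0 < \beta < \alpha$ shows $\alpha \in E_\beta$. Thus $(0,1]\setminus\Gamma \subseteq \bigcup_{\beta \in \mathbb{Q} \cap (0,1)} E_\beta$, a countable union of null sets, hence null.

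**Main obstacle.** The genuine content is entirely in Proposition~\ref{p:0toRCF} and Corollary~\ref{c:altorder} (already proved) plus the known ergodicity of the Gauss map (standard, and cited elsewhere in the paper, e.g.\ for $T_\alpha$ via \cite{LuzziMarmi08}); the measure-zero conclusion is then a soft consequence. The only point requiring a little care is making sure the translation of the $<_{\mathrm{alt}}$ condition into the orbit-domination condition $T_1^n(\alpha) \ge \alpha$ handles the case of rational $\alpha$ (finite characteristic sequence ending in $\infty$) correctly — but rationals form a null set anyway, so one may as well restrict to irrational $\alpha$ from the outset and apply Proposition~\ref{p:0toRCF} in its first case. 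I expect the authors' actual proof is even shorter, perhaps directly estimating the measure of the set of $\alpha$ whose RCF digits grow in a way forcing $T_1^n(\alpha) \ge \alpha$, or citing a known bound on badly-approximable-type sets; but the ergodicity argument above is the most transparent route and needs no new machinery.
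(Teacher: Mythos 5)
Your proof is correct and takes essentially the same route as the paper's: translate the condition to ``$T_1^n(\alpha)\ge\alpha$ for all $n\ge 1$'' via Lemma~\ref{l:notinGamma} and Proposition~\ref{p:0toRCF}, then cover $(0,1]\setminus\Gamma$ by countably many sets (the paper uses $\{\alpha\in[1/d,1]\mid T_1^n(\alpha)\ge 1/d\ \forall n\}$ for $d\ge 1$; you use your $E_\beta$ over rational $\beta$), each of which is null by ergodicity of the Gauss map. The only cosmetic differences are your choice of rational thresholds $\beta$ instead of reciprocals $1/d$ and your explicit aside about rational $\alpha$, neither of which changes the substance.
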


\begin{proof}
By Lemma~\ref{l:notinGamma} and Proposition~\ref{p:0toRCF}, we have 
\begin{align*}
(0,1] \setminus \Gamma & = \big\{\alpha \in (0,1] \mid T_1^n(\alpha) \ge \alpha\ \mbox{for all}\ n \ge 1\big\} \\
& \subset \bigcup_{d\ge1} \big\{\alpha \in [1/d, 1] \mid T_1^n(\alpha) \ge 1/d\ \mbox{for all}\ n \ge 1\big\}\,.
\end{align*}
Since $T_1$ is ergodic, this set is the countable union of null sets.
\end{proof}

We remark that Lemma~\ref{l:zeromeasure} was also proved in~\cite{Carminati-Tiozzo:10}.
Furthermore, they showed that the Hausdorff measure of $(0,1] \setminus \Gamma$ is~$1$.

Putting everything together, we obtain the main result of this section.

\begin{proof}[\textbf{Proof of Theorem~\ref{t:endpoints}}]
Lemma~\ref{l:balpha} shows that $\Gamma$ is the disjoint union of the intervals~$\Gamma_v$, $v \in \mathscr{F}$.
For any $\alpha \in \Gamma_v$, we have $\underline{b}{}^\alpha_{[1,|v|\,]} = v$ and $\overline{b}{}^\alpha_{[1,|\widehat{v}|\,]} = {}^{(W)}\widehat{v}{}^{(-1)}$ by Lemma~\ref{l:Gammav}, thus $T_\alpha^{|v|}({\alpha-1}) = W \cdot T_\alpha^{|\widehat{v}|}(\alpha)$ by Lemma~\ref{l:conversion2}. 
Then Lemma~\ref{l:wMat} gives $T_\alpha^{|v|+1}({\alpha-1}) = T_\alpha^{|\widehat{v}|+1}(\alpha)$ and $\overline{b}{}^\alpha_{|\widehat{v}|+1}  = {}^{(W)}\underline{b}{}^\alpha_{|v|+1}$.
The statements on $(0,1] \setminus \Gamma$ are shown in Lemmas~\ref{l:notinGamma} and~\ref{l:zeromeasure}.
\end{proof}

\begin{Rmk} \label{r:RvLv}
Let $L_v := M_v E$ and $R_v := E^{-1} M_{\widehat{v}} W$.   Then Lemma~\ref{l:conversion}  implies that for each $v \in \mathscr{F}$,  $L_v = W R_v$ and Theorem~\ref{t:endpoints} implies that the graphs of $L_v \cdot x$ and $R_v \cdot x$ cross above $(\zeta_v, \eta_v)$, with common zero at~$\chi_v$.   See Figure~\ref{f:rich}.
\end{Rmk}

\begin{figure}[ht]
\centering
\includegraphics{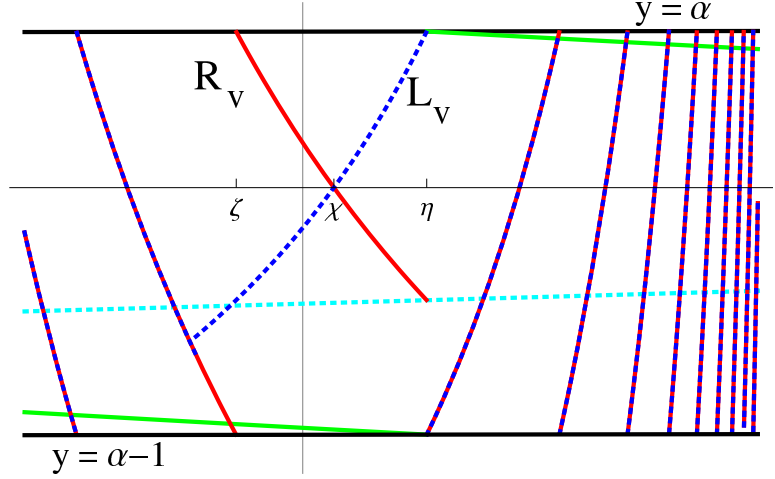}
\caption{Graphs of $\alpha \mapsto T_{\alpha}^4(\alpha)$ in solid red, $\alpha \mapsto T_{\alpha}^4({\alpha-1})$ in dotted blue, $\alpha \mapsto T_{\alpha}^2(\alpha)$ in solid green, and $\alpha \mapsto T_{\alpha}^2({\alpha-1})$ in dotted cyan,    near $\Gamma_v$ for $v = (-1:2)(-1:3)(-1:4)(-1:2) = \Theta\big(({-1:2})({-1:3})\big)$. 
On $\Gamma_v = (0.3867\dots, 0.3874\dots)$,  $R_v \cdot \alpha$ and $L_v \cdot \alpha$  agree with $T_{\alpha}^4(\alpha)$ and $T_{\alpha}^4({\alpha-1})$, respectively; have  a common zero at $\chi = \chi_v$;  and,  meet the graph of the identity function at $\zeta= \zeta_v$ and $\eta = \eta_v$, respectively.     To aid comparison with Figure~\ref{f:2342}, gridline is at $\alpha = 113/292$. 
For $\alpha \in \Gamma_{(-1:2)(-1:3)} = (0.3874\dots, 0.4142\dots)$, one has $T_{\alpha}^4(\alpha) = T_{\alpha}^4({\alpha-1})$  --- compare with Figure~\ref{f:23}, whereas  to the left of $\zeta = \zeta_v$, one sees that there is a gap before once again these agree. 
The transcendental $\tau_v$ lies in this gap.}
\label{f:rich}
\end{figure}

We give examples realizing some of the various cases that arise in Lemma~\ref{l:balpha}.

\begin{Ex}
If $\alpha = 1/r$ for some positive integer~$r$, then $\underline{b}{}^\alpha_{[1,r)} = (-1:2)^{r-1}$ and $\overline{b}{}^\alpha_1 = (+1:r)$ imply that $\alpha \in \Gamma_{(-1:2)^{r-1}}$.
We have $T_\alpha^{r-1}(\alpha) = 0 = T_\alpha(\alpha)$.
\end{Ex}

\begin{Ex}
If $\alpha = 37/97$, then $\underline{b}{}^\alpha_{[1,4]} = (-1:2) (-1:3) (-1:3) (-1:2)$ and $T_\alpha^4({\alpha-1}) = 1/4$.
The characteristic sequence of $v = \underline{b}{}^\alpha_{[1,4]}$ is $21112$, thus $\widehat{v} = (-1:4) (-1:3) (-1:4)$.
Since $\overline{b}{}^\alpha_{[1,3]} = (+1:3) (-1:3) (-1:3) = {}^{(W)}\widehat{v}^{(-1)}$, we have $\alpha \in \Gamma_v$.
Note that $T_\alpha^3(\alpha) = -1/5 = W \cdot T_\alpha^4({\alpha-1})$, $T_\alpha^5(\alpha-1) = 0 = T_\alpha^4(\alpha)$, and that $|v| = 4 > 3 = |\widehat{v}|$.
\end{Ex}

\begin{Ex}
If $\alpha = 58/195$, then $\underline{b}{}^\alpha_{[1,5]} = (-1:2) (-1:2) (-1:4) (-1:4) (-1:5)$ and $T_\alpha^5({\alpha-1}) = 0$, $\overline{b}{}^\alpha_{[1,5]} = (+1:4) (-1:2) (-1:3) (-1:2) (-1:2)$ and $T_\alpha^5(\alpha) = 1/4$.
The characteristic sequence of $\underline{b}{}^\alpha_{[1,5]}{}^{(+1)}$ is $3212141$, and that of $\overline{b}{}^\alpha_{[2,5]}{}^{(+1)}$ is $21211$. 
This yields that $m = 3$, $m' = 2$ in Lemma~\ref{l:balpha}, hence $\alpha \in \Gamma_v$, where $v = (-1:2) ({-1:2}) (-1:4) (-1:4)$ has the characteristic sequence $32121$.
Since $\widehat{v} = (-1:5) (-1:2) (-1:3) (-1:2) (-1:3)$, we have $|v| = 4 < 5 = |\widehat{v}|$.
\end{Ex}

Theorem~\ref{t:tauTransc} shows that $v \in \mathscr{F}$ for which $|v| = |\widehat{v}|$ abound.
In the following examples, we exhibit families of words showing that strict inequality (in each direction) also arises infinitely often.  
Note that \cite{NakadaNatsui08} also give infinite families realizing each of the three types of behavior.    

\begin{Ex} \label{ex:infFamInF}
Let $v = (-1:2)^m\, (-1:3)^{\ell}\, (-1:2)$ for some positive integers $m$ and~$\ell$.
Then the characteristic sequence of~$v$ is $a_{[1,2\ell+1]} = (m+1)\, 1^{2\ell-1}\, 2$, thus $v \in \mathscr{F}$.
Since $\widehat{v} = (-1:3+m)\, (-1:3)^{\ell-1}\, (-1:4)$, we have $|v| - |\widehat{v}| = m$.
\end{Ex}

\begin{Ex}\label{ex:twoTwoFourFour}
Let $v = (-1:2)^{m+1}\, (-1:4)^{\ell}$ for some positive integers $m$ and~$\ell$.
Then the characteristic sequence is $a_{[1,2\ell+1]} = (m+2)\, (2\,1)^{\ell}$, thus $\widehat{v} = (-1:4+m)\, \big((-1:2)\,(-1:3)\big)^\ell$ and $|\widehat{v}| - |v| = \ell - m$.
Again, membership of $v$ in $\mathscr{F}$ follows trivially.
\end{Ex}

In the central range $[g^2, g]$, however, we always have equality $|v| = |\widehat{v}|$.

\begin{Lem} \label{l:longConstIntvl}
For any $v \in \mathscr{F}$ with $\Gamma_v \subset [g^2, g]$, we have $|v| = |\widehat{v}|$.
\end{Lem}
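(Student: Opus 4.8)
The plan is to push everything down to the characteristic sequence of $v$. Write $a_{[1,2\ell+1]}$ for the characteristic sequence of $v$. Directly from the definitions of $v$ and $\widehat v$ in terms of this sequence one has $|v| = \big(\sum_{j\text{ odd}}a_j\big)-1$ and $|\widehat v| = \big(\sum_{j\text{ even}}a_j\big)+1$, so that
\[
|v| = |\widehat v| \iff \sum_{j=1}^{2\ell+1}(-1)^{j+1}a_j = 2 .
\]
This is the identity I want to establish.

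First I would translate the hypothesis. Since $\Gamma_v \subseteq [g^2,g]\subset(0,g]$ we have $|v|\ge1$, hence $a_1\ge2$ (Proposition~\ref{p:0toRCF}, as in the proof of Lemma~\ref{l:Gammav}). By Proposition~\ref{p:0toRCF} the characteristic sequence of $g^2-1=-g$ is $2\,1\,1\,1\cdots$ and that of $g-1=-g^2$ is $1\,1\,1\cdots$; by the computations in the proof of Lemma~\ref{l:Gammav} the characteristic sequence of $\zeta_v-1$ is $(a_{[1,2\ell+1]})^\omega$ and that of $\eta_v-1$ is purely periodic with first entry $a_1$. Applying Corollary~\ref{c:altorder}, the inequality $\zeta_v\ge g^2$ reads $(a_{[1,2\ell+1]})^\omega\le_{\mathrm{alt}}2\,1\,1\cdots$; together with $a_1\ge2$ this forces $a_1=2$ and, in addition, that the first index $j\ge2$ with $a_j\neq1$ (if one exists) is even. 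The remaining inequality $\eta_v\le g$ is then automatic (for $\ell\ge1$ the characteristic sequence of $\eta_v-1$ begins with $2$, so it is $>_{\mathrm{alt}}1\,1\,1\cdots$, and for $\ell=0$ it equals $1\,1\,1\cdots$), so it contributes nothing further.

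It remains to prove, for $v\in\mathscr{F}$ with $a_1=2$ and first non-$1$ entry at an even position, that $\sum_{j\ge2}(-1)^{j+1}a_j=0$. I would argue this by induction on $\ell$, peeling the last two entries $a_{2\ell}a_{2\ell+1}$ off the characteristic sequence. The defining inequalities $(\mathrm{i})_\ell\colon\ a_{[2\ell,2\ell+1]}<_{\mathrm{alt}}a_{[1,2]}$ and $(\mathrm{ii})_\ell\colon\ a_{[2\ell+1,2\ell+1]}\le_{\mathrm{alt}}a_{[1,2]}$ of $\mathscr{F}$ immediately give $a_{2\ell+1}\in\{1,2\}$ and $a_{2\ell}\le2$; one then checks, using also $(\mathrm{i})_j$ for smaller $j$ and the even-position condition, that in fact $a_{2\ell}=a_{2\ell+1}$, so that the last two entries cancel in the alternating sum, and that the truncated sequence $a_{[1,2\ell-1]}$ again satisfies all the hypotheses of the statement. (Two facts from the same circle of ideas help organize the induction: from $v\in\mathscr{F}$ and $a_1=2$ alone one gets $a_2=1$, since assuming $a_2=2$ and playing $(\mathrm{i})_1$ against $(\mathrm{ii})_1$ forces the whole sequence to be constant equal to $2$, contradicting $(\mathrm{i})_1$; the even-position condition then forces $a_3=1$ as well.)

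The induction step just sketched is the crux and the main obstacle: extracting enough rigidity from the alternating-order inequalities defining $\mathscr{F}$ to (i) rule out the configuration $a_{2\ell}=1,\ a_{2\ell+1}=2$ (which would otherwise spoil the alternating sum) and (ii) verify that the truncation $a_{[1,2\ell-1]}$ still lies in $\mathscr{F}$ — the latter being subtle, since truncating both sides of an inequality $a_{[2j,2\ell+1]}<_{\mathrm{alt}}a_{[1,\,\cdot\,]}$ can turn a strict inequality into an equality. I would handle this by a careful case analysis on the position $2m$ of the first non-$1$ entry of $a_{[1,2\ell+1]}$ (which, by the above, comes with $a_2=a_3=1$), keeping track of which of the inequalities $(\mathrm{i})_j,(\mathrm{ii})_j$ remain active after truncation. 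Everything preceding the induction step is routine bookkeeping.
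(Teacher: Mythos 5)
Your reduction to the identity $\sum_{j=1}^{2\ell+1}(-1)^{j+1}a_j = 2$ is correct, as is the translation of $\zeta_v \ge g^2$ into the alternating-order constraint forcing $a_1 = 2$ and the first entry $\ne 1$ after $a_1$ (if any) to sit at an even position. The problem is the inductive step. The assertion that the $\mathscr{F}$-conditions together with the even-position condition force $a_{2\ell} = a_{2\ell+1}$ is false. Take
\[
a_{[1,9]} = 2\,1\,1\,1\,1\,2\,1\,1\,2 \qquad (\ell = 4).
\]
One checks directly that $a_{[2j,9]} <_{\mathrm{alt}} a_{[1,10-2j]}$ and $a_{[2j+1,9]} \le_{\mathrm{alt}} a_{[1,9-2j]}$ for $j=1,\dots,4$, so the corresponding word $v = (-1:2)(-1:3)(-1:3)(-1:4)(-1:3)(-1:2)$ lies in $\mathscr{F}$; moreover $(a_{[1,9]})^\omega <_{\mathrm{alt}} 2\,1^\omega$ (first difference at position $6$) and the characteristic sequence $(a_{[1,8]}\,1\,1)^\omega$ of $\eta_v-1$ begins with $2 > 1$, so $\Gamma_v \subset (g^2,g)$. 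Yet $a_{2\ell}=a_8 = 1 \ne 2 = a_9 = a_{2\ell+1}$, the last two entries contribute $+1$ (not $0$) to the alternating sum, and the truncation $a_{[1,7]}$ has alternating sum $1$ and is not even in $\mathscr{F}$ (already $a_{[6,7]} = a_{[1,2]}$ kills the strict inequality). Indeed $|v| = |\widehat{v}| = 6$ here, but the cancellation happens between the $2$ at position~$6$ (even) and the $2$ at position~$9$ (odd), with the intervening $1$s cancelling among themselves — not between the final two symbols.

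The paper avoids the induction altogether: from $a_1=2$, $v\in\mathscr{F}$, and the constraint $\zeta_v\ge g^2$ it derives the structural fact that $a_{[1,2\ell+1]}$ lies in the regular language $2\,(11)^*\,\big(2\,(11)^*\,2\,(11)^*\big)^*$ — i.e.\ every maximal run of $1$s has even length, and the $2$s after $a_1$ come in pairs, one at an even and one at an odd position. For any word of that shape the alternating sum is $2$ by inspection (each block $2\,(11)^*\,2\,(11)^*$ contributes $-2+0+2+0=0$, the initial $2\,(11)^*$ contributes $2$). If you want to salvage an inductive argument you would have to peel off an entire block $2\,(11)^*\,2\,(11)^*$ rather than a single pair $a_{2\ell}a_{2\ell+1}$, and verify that the $\mathscr{F}$-conditions and the bound $\zeta_v\ge g^2$ survive that truncation — which amounts to reproving the regular-language characterization.
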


\begin{proof}
Let $a_{[1,2\ell+1]}$ be the characteristic sequence of $v \in \mathscr{F}$ with $\Gamma_v \subset [g^2, g]$.
Then we have $a_1 = 2$ because $\underline{b}{}^\alpha_1 = (-1:2)$ and $\underline{b}{}^\alpha_2 \neq (-1:2)$ for each $\alpha \in [g^2,g)$.
This implies that $a_{[1,2\ell+1]} \in \{1,2\}^*$.
Since $\alpha-1 \ge -g$ and the characteristic sequence of $-g$ is $2\,1^\omega$, Corollary~\ref{c:altorder} yields that $a_{[1,2j+1]} \ne 2\,1^{2j-1}\,2$ for all $1 \le j \le \ell$.
Therefore, the number of~$1$s between any two $2$s in $a_{[1,2\ell+1]}$ is even.
Moreover, $a_{[2j,2\ell+1]} = 2\,1^{2\ell-2j+1}$ is impossible for $1 \le j \le \ell$.
Since $a_{[1,2\ell+1]}$ is of odd length, we obtain that $a_{[1,2\ell+1]} \in 2 (11)^* \,(2 (11)^* 2 (11)^*)^*$.
We have $|v| = \sum_{j=0}^\ell a_{2j+1} - 1$ and $|\widehat{v}| = \sum_{j=1}^\ell a_{2j} + 1$, thus $|v| = |\widehat{v}|$.
\end{proof}

Immediately to the right of $[g^2,g]$ lies the interval $\Gamma_v = (g,1]$ with the empty word~$v$, where $|v| = 0 < 1 = |\widehat{v}|$.
Example~\ref{ex:infFamInF} (with $m=1$) provides intervals~$\Gamma_v$ arbitrarily close to the left of $[g^2,g]$ with $|v| > |\widehat{v}|$.
The following example shows that the opposite inequality also occurs arbitrarily close to the left of $[g^2,g]$.

\begin{Ex}\label{ex:closetog}
Let  $m$ be a positive integer and set 
\[
v = (-1:2)\, (-1:3)^m\, (-1:2)\, (-1:4)\, (-1:3)^m\, (-1:4)\, (-1:3)^m\, (-1:4)\, (-1:2)\,.
\] 
Then the characteristic sequence is $a_{[1,6m+7]} = 21^{2m-1}221^{2m+1}21^{2m+1}22$, thus $v \in \mathscr{F}$, and 
\[
\widehat{v} = (-1:4)\, (-1:3)^{m-1}\, (-1:4)\, (-1:2)\, (-1:3)^{m+1}\, (-1:2)\, (-1:3)^{m+1}\, (-1:2)\, (-1:4)
\] 
shows that $|\widehat{v}| = |v| + 1$.
\end{Ex}

\section{Structure of the natural extension domains} \label{sec:struct-natur-extens}

For an explicit description of $\Omega_{\alpha}$,  we require  detailed knowledge of the effects of $\mathcal T_{\alpha}$ on the regions fibered above non-full cylinders determined by the $T_{\alpha}$-orbits of ${\alpha-1}$ and $\alpha$. 
To this end,  we use the languages $\mathscr{L}_\alpha$ and~$\mathscr{L}_\alpha'$ defined in Section~\ref{s:justTheFacts}.
Throughout the section, let 
\[
k = \left\{\begin{array}{cl}|v| + 1 & \mbox{if}\ \alpha \in \Gamma_v,\, v \in \mathscr{F}, \\ \infty & \mbox{if}\ \alpha \in (0,1] \setminus \Gamma,\end{array}\right. \quad k' = \left\{\begin{array}{cl}|\widehat{v}| + 1 & \mbox{if}\ \alpha \in \Gamma_v,\, v \in \mathscr{F}, \\ \infty & \mbox{if}\ \alpha \in (0,1] \setminus \Gamma.\end{array}\right.
\]
We make use of the extended languages $\mathscr{L}_\alpha^\times$ and~$\mathscr{L}_\alpha^{\prime\times}$, defined by
\[
\mathscr{L}^\times_\alpha := \big(\widetilde{U}_{\alpha,3} \cup U_{\alpha,1}\, U_{\alpha,2}^*\, \widetilde{U}_{\alpha,4}\big)^*\,, \quad \mathscr{L}^{\prime\times}_\alpha := \mathscr{L}^\times_\alpha\, U_{\alpha,1}\, U_{\alpha,2}^*\,,
\]
where $U_{\alpha,1} := \big\{\underline{b}{}^\alpha_{[1,j]} \mid 0 \le j < k\big\}$, $U_{\alpha,2} := \big\{\overline{b}{}^\alpha_{[1,j]} \mid 1 \le j < k'\big\}$ as in Section~\ref{s:justTheFacts}, and
\begin{gather*}
\begin{aligned}
\widetilde{U}_{\alpha,3} & := \big\{\underline{b}{}^\alpha_{[1,j)}\, a \mid j \ge 1,\, a \in \mathscr{A},\, \underline{b}{}^\alpha_j \prec a \prec \overline{b}{}^\alpha_1\big\} \\
\widetilde{U}_{\alpha,4} & := \big\{\overline{b}{}^\alpha_{[1,j)} \, a \mid j \ge 2,\, a \in \mathscr{A},\, \overline{b}{}^\alpha_j \prec a \prec \overline{b}{}^\alpha_1\big\}
\end{aligned}
\quad \mbox{if}\ \alpha \in (0,1] \setminus \Gamma, \\[1ex]
\begin{aligned}
\widetilde{U}_{\alpha,3} & := \big\{\underline{b}{}^\alpha_{[1,j)}\, a \mid 1 \le j < k,\, a \in \mathscr{A},\, \underline{b}{}^\alpha_j \prec a \prec \overline{b}{}^\alpha_1\big\} \cup \big\{\underline{b}{}^\alpha_{[1,k)}\, a \mid a \in \mathscr{A}_+,\, a \prec \overline{b}{}^\alpha_1\big\} \\
\widetilde{U}_{\alpha,4} & := \big\{\overline{b}{}^\alpha_{[1,j)} \, a \mid 2 \le j < k',\, a \in \mathscr{A},\, \overline{b}{}^\alpha_j \prec a \prec \overline{b}{}^\alpha_1\big\} \cup \big\{\overline{b}{}^\alpha_{[1,k')}\, a \mid a \in \mathscr{A}_+,\, a \prec \overline{b}{}^\alpha_1\big\}
\end{aligned}
\quad \mbox{if}\ \alpha \in \Gamma.
\end{gather*}
Let
\[
\Psi^\times_\alpha := \big\{N_w \cdot 0 \mid w \in \mathscr{L}^\times_\alpha\big\} \quad \mbox{and} \quad \Psi^{\prime\times}_\alpha := \big\{N_w \cdot 0 \mid w \in \mathscr{L}^{\prime\times}_\alpha\big\}\,.
\]

The languages introduced above  allow us to view the region $\Omega_{\alpha}$ as being the union of pieces,  each of which fibers over a subinterval whose left endpoint is in the $T_{\alpha}$-orbit of $\alpha$ or of ${\alpha-1}$.  
We will see in Lemma~\ref{l:language} that $\mathscr{L}^{\prime\times}_\alpha$ is the language of the $\alpha$-expansions avoiding $(+1:\infty)$ if either $\alpha \in (0,1] \setminus \Gamma$ or $T_\alpha^{k-1}({\alpha-1}) = T_\alpha^{k'-1}(\alpha) = 0$.
For other~$\alpha$, $\mathscr{L}^{\prime\times}_\alpha$~is slightly different from the language of the $\alpha$-expansions.   However,   any $\alpha \in \Gamma$ lies in some $\Gamma_v$ and hence shares various properties with $\chi_v$.  We thus can exploit the fact that   $T_\alpha^{k-1}({\chi_v}) = T_\alpha^{k'-1}(\chi_v) = 0$ to aid in the description of~$\Omega_\alpha$.

From their definitions, we clearly have   $\Psi^\times_\alpha \subset \Psi^{\prime\times}_\alpha$.  Using these languages,  we describe $\Omega_{\alpha}$ in terms of its fibering over $\mathbb I_{\alpha}$.   For example,   Corollary ~\ref{c:increasing} shows that the fiber in $\Omega_{\alpha}$ above any $x \in \mathbb I_{\alpha}$ is squeezed between  the closures of $\Psi^\times_\alpha$ and $\Psi^{\prime\times}_\alpha$.  Thus,   $\mathbb I_{\alpha} \times \overline{\Psi^\times_\alpha} \subseteq \Omega_{\alpha} \subseteq \mathbb I_{\alpha} \times \overline{\Psi^{\prime\times}_\alpha}$.   Note also that Lemma~\ref{l:Psialpha} shows that $\Psi_{\alpha} = \overline{\Psi^\times_\alpha}$  and $\Psi'_{\alpha} = \overline{\Psi^{\prime \times}_\alpha}$.

\begin{Prop} \label{p:Omega}
Let $\alpha \in (0,1]$.
Then we have
\begin{multline} \label{e:TnPsi}
\bigcup_{n \ge 0} \mathcal{T}_\alpha^n\big([{\alpha-1},\alpha) \times \{0\}\big) \\
= [{\alpha-1}, \alpha) \times \Psi^\times_\alpha \ \cup \bigcup_{1 \le j < k} \big[T_\alpha^j({\alpha-1}), \alpha\big) \times N_{\underline{b}{}^\alpha_{[1,j]}} \cdot \Psi^\times_\alpha \ \cup \bigcup_{1 \le j < k'} \big(T_\alpha^j(\alpha), \alpha\big) \times N_{\overline{b}{}^\alpha_{[1,j]}} \cdot \Psi^{\prime\times}_\alpha\,. 
\end{multline}
\end{Prop}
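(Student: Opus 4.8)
The starting point is the matrix form of the dynamics: if the $\alpha$-expansion of $x \in [{\alpha-1},\alpha)$ is $v_1 v_2 \cdots$ and none of $v_1,\dots,v_n$ equals $(+1:\infty)$, then iterating \eqref{e:matsForT} gives $\mathcal{T}_\alpha^n(x,0) = \big(M_{v_{[1,n]}} \cdot x,\, N_{v_{[1,n]}} \cdot 0\big) = \big(T_\alpha^n(x),\, N_{v_{[1,n]}} \cdot 0\big)$. If some $v_i = (+1:\infty)$, then $T_\alpha^{i-1}(x) = 0$ and the $\mathcal{T}_\alpha$-orbit of $(x,0)$ is eventually equal to $(0,0)$, which lies in $[{\alpha-1},\alpha) \times \Psi^\times_\alpha$ since the empty word belongs to $\mathscr{L}^\times_\alpha$. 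Hence the left-hand side of \eqref{e:TnPsi} is the union, over all words $w$ occurring as a prefix of the $\alpha$-expansion of some point of $[{\alpha-1},\alpha)$, of the horizontal segments $T_\alpha^{|w|}\big(\Delta_\alpha(w)\big) \times \big\{N_w \cdot 0\big\}$, and the whole task is to keep track of these segments.

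The plan is to prove, by induction on $n$, a description of $\bigcup_{m=0}^{n} \mathcal{T}_\alpha^m\big([{\alpha-1},\alpha) \times \{0\}\big)$ as a union of segments of exactly three shapes: $[{\alpha-1},\alpha) \times \{p\}$ with $p \in \Psi^\times_\alpha$; $\big[T_\alpha^j({\alpha-1}),\alpha\big) \times \{p\}$ with $1 \le j < k$ and $p \in N_{\underline{b}{}^\alpha_{[1,j]}} \cdot \Psi^\times_\alpha$; and $\big(T_\alpha^j(\alpha),\alpha\big) \times \{p\}$ with $1 \le j < k'$ and $p \in N_{\overline{b}{}^\alpha_{[1,j]}} \cdot \Psi^{\prime\times}_\alpha$. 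The base case is the single segment $[{\alpha-1},\alpha) \times \{0\}$. For the inductive step one applies $\mathcal{T}_\alpha$: a segment $I \times \{p\}$ is cut along the rank-one cylinders, and $\big(\Delta_\alpha(a) \cap I\big) \times \{p\}$ is sent to $\big(M_a \cdot (\Delta_\alpha(a) \cap I)\big) \times \big\{N_a \cdot p\big\}$. Using that $x \mapsto M_a \cdot x$ is monotone on each half-line, and that the digit at the left endpoint of the current interval is $\underline{b}{}^\alpha_{j+1}$ (resp.\ $\overline{b}{}^\alpha_{j+1}$) when that endpoint is $T_\alpha^j({\alpha-1})$ (resp.\ $T_\alpha^j(\alpha)$), one checks that the image interval is again of one of the listed forms and that the $y$-coordinate moves correspondingly: appending a ``full'' letter keeps $N_a \cdot p$ in $\Psi^\times_\alpha$ (resp.\ $\Psi^{\prime\times}_\alpha$), while appending the distinguished boundary letter moves it into $N_{\underline{b}{}^\alpha_{[1,j+1]}} \cdot \Psi^\times_\alpha$ (resp.\ $N_{\overline{b}{}^\alpha_{[1,j+1]}} \cdot \Psi^{\prime\times}_\alpha$) --- these are precisely the one-step transitions recorded by the generators $\widetilde{U}_{\alpha,3}, \widetilde{U}_{\alpha,4}, U_{\alpha,1}, U_{\alpha,2}$ defining $\mathscr{L}^\times_\alpha$ and $\mathscr{L}^{\prime\times}_\alpha$. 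That no fourth shape appears, and that a chain following ${\alpha-1}$ (resp.\ $\alpha$) never exceeds length $k-1$ (resp.\ $k'-1$), is where Theorem~\ref{t:endpoints} enters: for $\alpha \in \Gamma_v$ the relations $\underline{b}{}^\alpha_{[1,|v|]} = v$, $\overline{b}{}^\alpha_{[1,|\widehat{v}|]} = {}^{(W)}\widehat{v}{}^{(-1)}$, $\overline{b}{}^\alpha_{|\widehat{v}|+1} = {}^{(W)}\underline{b}{}^\alpha_{|v|+1}$ and $T_\alpha^{|v|+1}({\alpha-1}) = T_\alpha^{|\widehat{v}|+1}(\alpha)$ force the continuation of a maximal chain either into a full segment or into a hand-off between the two chains, while for $\alpha \in (0,1] \setminus \Gamma$ one has $k = k' = \infty$, so no truncation is imposed.

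Passing to $\bigcup_{n \ge 0}$, the union of all segments of the first shape is $[{\alpha-1},\alpha) \times \Psi^\times_\alpha$, because $\Psi^\times_\alpha = \{N_w \cdot 0 : w \in \mathscr{L}^\times_\alpha\}$ by definition and this shape is exactly what the words of $\mathscr{L}^\times_\alpha$ produce; the second and third families are treated the same way, the trailing factor $U_{\alpha,1} U_{\alpha,2}^*$ permitted before an $\overline{b}{}^\alpha$-tail being what replaces $\Psi^\times_\alpha$ by $\Psi^{\prime\times}_\alpha$ in the third family. The left endpoint $T_\alpha^j(\alpha)$ is genuinely excluded there because $\alpha \notin [{\alpha-1},\alpha)$, so it is never the first coordinate of an orbit point of the bottom edge. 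Collecting the three families yields \eqref{e:TnPsi}.

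I expect the main difficulty to lie in the inductive step, and specifically in reconciling the languages $\mathscr{L}^\times_\alpha, \mathscr{L}^{\prime\times}_\alpha$ --- which are built to mimic the combinatorics of the pseudocenter $\chi_v$ and, for $\alpha \ne \chi_v$, are only ``slightly different'' from the genuine language of $\alpha$-expansions --- with the actual dynamics. Both inclusions have to be checked with this in mind: a genuine orbit segment whose $x$-interval is a truncated version of what the idealized language predicts must still be shown to lie inside one of the three stated strips, while the idealized language must still realize every required $y$-value. The latter rests on continued-fraction identities for the $N_a$-action (already visible in the coincidence $N_{(-1:d)} \cdot 0 = N_{(+1:d)} \cdot 0$, and more generally in rewriting $N_{wa} \cdot 0$ for a ``forbidden'' word $wa$ as $N_{w'} \cdot 0$ for some $w' \in \mathscr{L}^\times_\alpha$), and the whole reconciliation again goes through the explicit relationship between the $\alpha$-expansions of ${\alpha-1}$ and $\alpha$ from Theorem~\ref{t:endpoints}; the cases $\alpha \in \Gamma$ and $\alpha \notin \Gamma$, and within $\Gamma$ the pseudocenters versus the remaining points of $\Gamma_v$, must be handled separately.
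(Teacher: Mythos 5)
Your outline follows the same basic strategy as the paper: track the $\mathcal{T}_\alpha$-images of the bottom edge as horizontal segments, cut along rank-one cylinders, and use the relations of Theorem~\ref{t:endpoints} to control the chains following ${\alpha-1}$ and~$\alpha$. However, as written, the proposal is incomplete exactly at the point you yourself flag as ``the main difficulty.'' The crux of the proposition for general $\alpha \in \Gamma_v$ with $\alpha \ne \chi_v$ is the \emph{hand-off step}: when the chain following $\alpha-1$ reaches length $k-1 = |v|$ and the chain following $\alpha$ reaches length $k'-1 = |\widehat{v}|$, one of $T_\alpha^{k-1}(\alpha-1)$, $T_\alpha^{k'-1}(\alpha)$ is negative and the corresponding chain continues with a digit in $\mathscr{A}_-$ that the languages $\mathscr{L}^\times_\alpha, \mathscr{L}^{\prime\times}_\alpha$ do not allow, while the other chain has crossed to the positive side and its cylinder structure is different. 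The result of one $\mathcal{T}_\alpha$-step applied to a single one of these two end-segments is \emph{not} of one of your three shapes; it is only the \emph{union} of the two images that is, because $T_\alpha^{k}(\alpha-1) = T_\alpha^{k'}(\alpha)$ glues a half-open interval $[T_\alpha^k(\alpha-1),\alpha)$ from one side to $[\alpha-1, T_\alpha^{k'}(\alpha))$ from the other into a full strip, and because the $y$-coordinates agree: $N_{wa}\cdot 0 = N_{w'\,{}^{(W)}a}\cdot 0$ for $w = \underline{b}{}^\alpha_{[1,k)}$, $w' = \overline{b}{}^\alpha_{[1,k')}$ (and more generally with a common prefix $u$, $u^{(-1)}$). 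That last identity is genuinely needed and is not obvious: it follows from the matrix relation $M_{w'\,{}^{(W)}a} = M_{wa}E$ (resp.\ $= M_{wa}$, with prefixes) together with the fact that ${}^t\hspace{-.1em}E^{-1}$ fixes~$0$. You point at this kind of identity (``rewriting $N_{wa}\cdot 0 \ldots$ as $N_{w'}\cdot 0$ for some $w'\in\mathscr{L}^\times_\alpha$'') but neither state nor derive it; the paper isolates and proves it as Lemma~\ref{l:wwprime}, and without it your inductive step does not close for $\alpha\in\Gamma_v\setminus\{\chi_v\}$.

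A smaller but related omission: even granting the hand-off identity, your induction on the iterate count does not directly establish equality for general $\alpha\in\Gamma$, because after the hand-off the number of language-words of a given length grows at different rates on the $\underline{b}$ and $\overline{b}$ sides when $k\ne k'$; the paper addresses this by sandwiching $\bigcup_{0\le j\le n}\mathcal{T}_\alpha^j(\cdot)$ between unions over $|w| \le nm/m'$ and $|w|\le nm'/m$ with $m=\min(k,k')$, $m'=\max(k,k')$. Finally, you correctly observe that $\mathscr{L}^{\prime\times}_\alpha$ coincides with the genuine $\alpha$-expansion language only for $\alpha\notin\Gamma$ or $\alpha = \chi_v$, but this observation must be turned into the separate statements the paper proves (Lemmas~\ref{l:Jw} and~\ref{l:language} for those $\alpha$; Lemma~\ref{l:wwprime} for the rest), rather than left as a remark that ``the cases \dots must be handled separately.''
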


Here, $(x,x')$ denotes the open interval between $x$ and~$x'$  (and not a point in~$\mathbb{R}^2$), and the map $\mathcal{T}_\alpha$ always acts on products of two sets in~$\mathbb{R}$.

The following lemmas are used in the proof of the proposition.

\begin{Lem} \label{l:Ldecomposition}
For any $\alpha \in (0,1]$, $\mathscr{L}^{\prime\times}_\alpha$~admits the partition
\[
\mathscr{L}^{\prime\times}_\alpha = \mathscr{L}^\times_\alpha\ \cup \bigcup_{1 \le j < k} \mathscr{L}^\times_\alpha\, \underline{b}{}^\alpha_{[1,j]}\ \cup \bigcup_{1 \le j < k'} \mathscr{L}^{\prime\times}_\alpha\, \overline{b}{}^\alpha_{[1,j]}\,.
\]
\end{Lem}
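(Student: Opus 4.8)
The plan is to treat the set equality and the disjointness separately; the equality is a routine unwinding of the grammar, while the disjointness is where the real work lies. For the equality, since $\mathscr{L}^{\prime\times}_\alpha = \mathscr{L}^\times_\alpha\,U_{\alpha,1}\,U_{\alpha,2}^*$ and the empty word lies in $U_{\alpha,1}$ (its member $\underline{b}{}^\alpha_{[1,0]}$) and in $U_{\alpha,2}^*$, I would write any $w \in \mathscr{L}^{\prime\times}_\alpha$ as $w = u\,c_0\,c_1 \cdots c_m$ with $u \in \mathscr{L}^\times_\alpha$, $c_0 = \underline{b}{}^\alpha_{[1,j_0]} \in U_{\alpha,1}$ and $c_i = \overline{b}{}^\alpha_{[1,j_i]} \in U_{\alpha,2}$ for $1 \le i \le m$: if $m = 0$ and $j_0 = 0$ then $w = u \in \mathscr{L}^\times_\alpha$; if $m = 0$ and $j_0 \ge 1$ then $w \in \mathscr{L}^\times_\alpha\,\underline{b}{}^\alpha_{[1,j_0]}$; and if $m \ge 1$ then $u\,c_0 \cdots c_{m-1} \in \mathscr{L}^\times_\alpha\,U_{\alpha,1}\,U_{\alpha,2}^* = \mathscr{L}^{\prime\times}_\alpha$ and $w \in \mathscr{L}^{\prime\times}_\alpha\,\overline{b}{}^\alpha_{[1,j_m]}$ with $1 \le j_m < k'$. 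The reverse inclusions only use $\underline{b}{}^\alpha_{[1,j]} \in U_{\alpha,1}$ for $1 \le j < k$, $\overline{b}{}^\alpha_{[1,j]} \in U_{\alpha,2}$ for $1 \le j < k'$, and $\mathscr{L}^{\prime\times}_\alpha\,U_{\alpha,2} \subseteq \mathscr{L}^{\prime\times}_\alpha$. This gives the union.

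For disjointness I would show that the left-to-right parse of any $w \in \mathscr{L}^{\prime\times}_\alpha$ into complete blocks from $\widetilde{U}_{\alpha,3} \cup U_{\alpha,1}\,U_{\alpha,2}^*\,\widetilde{U}_{\alpha,4}$ followed by an incomplete tail in $U_{\alpha,1}\,U_{\alpha,2}^*$ has a \emph{well-defined type} --- empty tail (giving $\mathscr{L}^\times_\alpha$), tail a single $U_{\alpha,1}$-factor $\underline{b}{}^\alpha_{[1,j]}$ (giving $\mathscr{L}^\times_\alpha\,\underline{b}{}^\alpha_{[1,j]}$), or tail ending in a $U_{\alpha,2}$-factor $\overline{b}{}^\alpha_{[1,j]}$ (giving $\mathscr{L}^{\prime\times}_\alpha\,\overline{b}{}^\alpha_{[1,j]}$). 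The two ingredients I would set up are: (a) the monotonicity $x \le x' \Rightarrow (\varepsilon(x):d_\alpha(x)) \preceq (\varepsilon(x'):d_\alpha(x'))$ together with $\alpha-1 = \min \mathbb{I}_\alpha$, which gives $\underline{b}{}^\alpha_1 \preceq \underline{b}{}^\alpha_n$ and $\underline{b}{}^\alpha_1 \preceq \overline{b}{}^\alpha_n$ for all $n$, whence every \emph{exit letter} $a$ of a $\widetilde{U}_{\alpha,3}$- or $\widetilde{U}_{\alpha,4}$-block satisfies $\underline{b}{}^\alpha_1 \prec a \prec \overline{b}{}^\alpha_1$, so $\Delta_\alpha(a)$ is a full cylinder and a completed block ``resets'' the parse; and (b) the sign information of Theorem~\ref{t:endpoints}, namely $\underline{b}{}^\alpha_n \in \mathscr{A}_-$ for $1 \le n < k$ and $\overline{b}{}^\alpha_1 \in \mathscr{A}_+$, $\overline{b}{}^\alpha_n \in \mathscr{A}_-$ for $2 \le n < k'$, so that a $U_{\alpha,1}$-factor (and the part of a $\widetilde{U}_{\alpha,3}$-block before its exit letter) is a prefix of the all-negative word $\underline{b}{}^\alpha$, while a $U_{\alpha,2}$-factor (and the part of a $\widetilde{U}_{\alpha,4}$-block before its exit letter) is a prefix of $\overline{b}{}^\alpha$ and hence opens with the positive letter $\overline{b}{}^\alpha_1$. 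Reading $w$ from the left, the appearance of $\overline{b}{}^\alpha_1$ flags the start of a $\overline{b}{}^\alpha$-piece, while an exit letter --- being strictly larger in $\preceq$ than the digit of $\underline{b}{}^\alpha$ resp.\ $\overline{b}{}^\alpha$ that a block still in progress would require --- is unambiguously recognised; an induction on $|w|$, peeling off the last complete block or the trailing tail, then pins down the type.

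The step I expect to be the main obstacle is excluding the \emph{spurious} readings caused by internal repetitions of $\underline{b}{}^\alpha$ or $\overline{b}{}^\alpha$: for instance a suffix $\underline{b}{}^\alpha_{[1,j]}$ could a priori coincide with the tail of a completed $\widetilde{U}_{\alpha,3}$- or $U_{\alpha,1}\,U_{\alpha,2}^*\,\widetilde{U}_{\alpha,4}$-block, which would place $w$ in two families at once. I would dispose of each such coincidence by transporting it back through the digit maps: since the block's exit letter is \emph{strictly} $\succ$ the digit it would otherwise continue, while the preceding $\mathscr{A}_-$-runs are shared, the coincidence forces some $T_\alpha^n({\alpha-1})$ or $T_\alpha^n(\alpha)$ to sit in a cylinder lying strictly to the left of the one containing $\alpha-1$, hence $T_\alpha^n({\alpha-1}) < \alpha-1$ or $T_\alpha^n(\alpha) < \alpha-1$, contradicting $\alpha-1 = \min \mathbb{I}_\alpha$. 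The bookkeeping --- the several shapes the coincidence can take, the extra clauses $j = k$ in $\widetilde{U}_{\alpha,3}$ and $j = k'$ in $\widetilde{U}_{\alpha,4}$, and the split between $\alpha \in \Gamma_v$ and $\alpha \in (0,1] \setminus \Gamma$ --- is where the care is needed. Once every type coincidence is ruled out the type is well defined, the three families are pairwise disjoint, and the displayed decomposition is a partition.
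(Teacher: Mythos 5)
Your set-equality argument is exactly the paper's: unwind the grammar $\mathscr{L}^{\prime\times}_\alpha = \mathscr{L}^\times_\alpha\, U_{\alpha,1}\, U_{\alpha,2}^*$ into the three families according to whether the $U_{\alpha,2}$-power is zero and, if so, whether the $U_{\alpha,1}$-factor is empty. For the disjointness the paper simply asserts, in one sentence, that the decomposition of any $w \in \mathscr{L}^{\prime\times}_\alpha$ into factors from $\mathscr{L}^\times_\alpha$, $U_{\alpha,1}$, $U_{\alpha,2}^*$ (in this order) is unique, and derives the lemma from that; you correctly flag this uniqueness as the real content and outline a sound route to it. Your two ingredients are the right ones: the sign pattern ($\underline{b}{}^\alpha_n \in \mathscr{A}_-$ for $n<k$, $\overline{b}{}^\alpha_1 \in \mathscr{A}_+$, $\overline{b}{}^\alpha_n \in \mathscr{A}_-$ for $2\le n<k'$) pins down where $\underline{b}{}^\alpha$- and $\overline{b}{}^\alpha$-pieces begin, and a spurious coincidence --- say a tail $\underline{b}{}^\alpha_{[1,j]}$ matching the end of a block $\underline{b}{}^\alpha_{[1,m)}\,a$ --- gives $\underline{b}{}^\alpha_{[m-j+1,m)}=\underline{b}{}^\alpha_{[1,j)}$ together with $\underline{b}{}^\alpha_j=a\succ\underline{b}{}^\alpha_m$, which (the shared run lying in $\mathscr{A}_-$, where the digit maps are order-preserving) transports to $T_\alpha^{m-j}(\alpha-1)<\alpha-1$, impossible since $\alpha-1=\min\mathbb{I}_\alpha$. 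So the two proofs coincide in structure; yours supplies the justification the paper treats as self-evident, and you are right that finishing it cleanly requires threading through the $j=k$, $j=k'$ clauses of $\widetilde{U}_{\alpha,3}$, $\widetilde{U}_{\alpha,4}$ and the $\alpha\in\Gamma$ versus $\alpha\notin\Gamma$ split.
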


\begin{proof}
In the factorization $\mathscr{L}^{\prime\times}_\alpha = \mathscr{L}^\times_\alpha\, U_{\alpha,1}\, U_{\alpha,2}^*$, there are two cases: the exponent of $U_{\alpha,2}$ being zero or not.
In the first case, the element of $U_{\alpha,1}$ can be the empty word $\underline{b}{}^\alpha_{[1,0]}$, which gives~$\mathscr{L}^\times_\alpha$, or a word $\underline{b}{}^\alpha_{[1,j]}$, $1 \le j < k$.
In the second case,  we can factor exactly one power of $U_{\alpha,2}$ to the right.
Since the decomposition of every $w \in \mathscr{L}^{\prime\times}_\alpha$ into factors in $\mathscr{L}^\times_\alpha, U_{\alpha,1}, U_{\alpha,2}^*$ (in this order) is unique, this proves the lemma.
\end{proof}

By Lemma~\ref{l:Ldecomposition}, we can write (\ref{e:TnPsi}) as
\begin{equation} \label{e:PrimeForm}
\bigcup_{n \ge 0} \mathcal{T}_\alpha^n\big([{\alpha-1},\alpha) \times \{0\}\big) = \bigcup_{w \in \mathscr{L}^{\prime\times}_\alpha} J^\alpha_w \times \{N_w \cdot 0\}\,,
\end{equation}
where
\[
J^\alpha_w := \left\{\begin{array}{cl}[{\alpha-1}, \alpha) & \mbox{if}\ w \in \mathscr{L}^\times_\alpha, \\[1ex] \big[T_\alpha^j({\alpha-1}), \alpha\big) & \mbox{if}\ w \in \mathscr{L}^\times_\alpha\, \underline{b}{}^\alpha_{[1,j]},\, 1 \le j < k\,, \\[1ex]  \big(T_\alpha^j(\alpha), \alpha\big) & \mbox{if}\ w \in \mathscr{L}^{\prime\times}_\alpha\, \overline{b}{}^\alpha_{[1,j]},\, 1 \le j < k'\,.\end{array}\right.
\]
From now on, denote by $\Delta_\alpha(w)$, $w \in \mathscr{A}^*$, the set of $x \in [{\alpha-1}, \alpha)$ with $\alpha$-expansion starting with~$w$.
This only differs from previous definitions in that $\Delta_\alpha(w)$ never contains the point~$\alpha$.   

\begin{Lem} \label{l:Jw}
Let $\alpha \in (0,1] \setminus \Gamma$ or $\alpha = \chi_v$, $v \in \mathscr{F}$.
Then 
\[
J^\alpha_w = T_\alpha^{|w|}\big(\Delta_\alpha(w)\big) = M_w \cdot \Delta_\alpha(w) \quad \mbox{for all}\ w \in \mathscr{L}^{\prime\times}_\alpha\,. 
\]
\end{Lem}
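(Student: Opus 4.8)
The second equality is immediate: for $w = w_1\cdots w_n \in \mathscr{A}^*$ and $x \in \Delta_\alpha(w)$, iterating~\eqref{e:matsForT} gives $T_\alpha^n(x) = M_w \cdot x$ (no letter of~$w$ is $(+1:\infty)$, so $T_\alpha^n$ is defined throughout~$\Delta_\alpha(w)$), hence $T_\alpha^{|w|}(\Delta_\alpha(w)) = M_w \cdot \Delta_\alpha(w)$. The substance of the lemma is the identity $T_\alpha^{|w|}(\Delta_\alpha(w)) = J^\alpha_w$, which I would prove by an induction following the structure of~$\mathscr{L}^{\prime\times}_\alpha$.

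First I would record the images of the orbit cylinders. Since $\underline{b}{}^\alpha_j \in \mathscr{A}_-$ for $1 \le j < k$, since $M_{(-1:d)}$ is increasing on $(-1,0)$, and since it maps the right endpoint $-1/(d+\alpha)$ of $\Delta_\alpha((-1:d))$ to~$\alpha$, a short induction on~$j$ --- started from the image formulas for $\Delta_\alpha(\underline{b}{}^\alpha_1)$ and $\Delta_\alpha(\overline{b}{}^\alpha_1)$ recorded in Section~\ref{sec:basic-noti-notat} (read with the present convention that $\Delta_\alpha(w)$ omits~$\alpha$) --- yields $T_\alpha^j(\Delta_\alpha(\underline{b}{}^\alpha_{[1,j]})) = [T_\alpha^j(\alpha-1),\alpha)$ for $0 \le j < k$; using $\overline{b}{}^\alpha_1 \in \mathscr{A}_+$ and $\overline{b}{}^\alpha_j \in \mathscr{A}_-$ for $2 \le j < k'$ one likewise gets $T_\alpha^j(\Delta_\alpha(\overline{b}{}^\alpha_{[1,j]})) = (T_\alpha^j(\alpha),\alpha)$ for $1 \le j < k'$. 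Then I would show that every generator of $\mathscr{L}^\times_\alpha$ is \emph{full}, i.e.\ $T_\alpha^{|z|}(\Delta_\alpha(z)) = [\alpha-1,\alpha)$ for $z \in \widetilde{U}_{\alpha,3} \cup U_{\alpha,1}\,U_{\alpha,2}^*\,\widetilde{U}_{\alpha,4}$: for $z = \underline{b}{}^\alpha_{[1,j)}\,a$ with $\underline{b}{}^\alpha_j \prec a \prec \overline{b}{}^\alpha_1$ the monotone ordering of cylinders gives $\Delta_\alpha(a) \subset (T_\alpha^{j-1}(\alpha-1),\alpha)$ and $\underline{b}{}^\alpha_1 \preceq \underline{b}{}^\alpha_j \prec a \prec \overline{b}{}^\alpha_1$, so $\Delta_\alpha(a)$ is full and $T_\alpha^{|z|}(\Delta_\alpha(z)) = T_\alpha(\Delta_\alpha(a)) = [\alpha-1,\alpha)$; a block of $U_{\alpha,1}\,U_{\alpha,2}^*\,\widetilde{U}_{\alpha,4}$ is handled by chaining this argument along $\underline{b}{}^\alpha_{[1,i]}$, then $\overline{b}{}^\alpha_{[1,i_1]},\dots,\overline{b}{}^\alpha_{[1,i_s]}$, then $\overline{b}{}^\alpha_{[1,j)}\,a$, using the two displays above alternately and checking at each stage that the relevant cylinder lies in the current half-open image interval, whose left endpoint is always $\le 0$.

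For $\alpha \in \Gamma$ the sets $\widetilde{U}_{\alpha,3}$, $\widetilde{U}_{\alpha,4}$ also contain the boundary blocks $\underline{b}{}^\alpha_{[1,k)}\,a$ and $\overline{b}{}^\alpha_{[1,k')}\,a$ with $a \in \mathscr{A}_+$, $a \prec \overline{b}{}^\alpha_1$, and this is where the hypothesis $\alpha = \chi_v$ is used: by Theorem~\ref{t:endpoints} and Lemma~\ref{l:Gammav} it gives $T_\alpha^{k-1}(\alpha-1) = M_v \cdot (\alpha-1) = 0$ and $T_\alpha^{k'-1}(\alpha) = W \cdot 0 = 0$, so the relevant image interval is $[0,\alpha)$ and every such $\Delta_\alpha(a) \subset (0,\alpha)$ is full. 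A trivial induction on the number of generators then gives $T_\alpha^{|w|}(\Delta_\alpha(w)) = [\alpha-1,\alpha) = J^\alpha_w$ for all $w \in \mathscr{L}^\times_\alpha$. Finally I would combine this with the partition of $\mathscr{L}^{\prime\times}_\alpha$ from Lemma~\ref{l:Ldecomposition}, inducting on the number of $U_{\alpha,2}$-factors in the canonical decomposition. If $w = u\,\underline{b}{}^\alpha_{[1,j]}$ with $u \in \mathscr{L}^\times_\alpha$, fullness of~$u$ gives $T_\alpha^{|u|}(\Delta_\alpha(w)) = [\alpha-1,\alpha) \cap \Delta_\alpha(\underline{b}{}^\alpha_{[1,j]}) = \Delta_\alpha(\underline{b}{}^\alpha_{[1,j]})$, so $T_\alpha^{|w|}(\Delta_\alpha(w)) = T_\alpha^j(\Delta_\alpha(\underline{b}{}^\alpha_{[1,j]})) = [T_\alpha^j(\alpha-1),\alpha) = J^\alpha_w$. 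If $w = u'\,\overline{b}{}^\alpha_{[1,j]}$ with $u' \in \mathscr{L}^{\prime\times}_\alpha$, the induction hypothesis identifies $T_\alpha^{|u'|}(\Delta_\alpha(u'))$ with $J^\alpha_{u'}$, an interval with right endpoint~$\alpha$ whose left endpoint is $\le 0$ (again by the hypothesis on~$\alpha$), and $\Delta_\alpha(\overline{b}{}^\alpha_{[1,j]}) \subset \Delta_\alpha(\overline{b}{}^\alpha_1) \subset (0,\alpha) \subseteq J^\alpha_{u'}$; hence $T_\alpha^{|u'|}(\Delta_\alpha(w)) = \Delta_\alpha(\overline{b}{}^\alpha_{[1,j]})$ and $T_\alpha^{|w|}(\Delta_\alpha(w)) = T_\alpha^j(\Delta_\alpha(\overline{b}{}^\alpha_{[1,j]})) = (T_\alpha^j(\alpha),\alpha) = J^\alpha_w$.

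The main obstacle I expect is the bookkeeping of the half-open endpoints together with the boundary blocks $\underline{b}{}^\alpha_{[1,k)}\,a$, $\overline{b}{}^\alpha_{[1,k')}\,a$ for $\alpha \in \Gamma$: making the various half-open intervals nest correctly requires $T_\alpha^i(\alpha-1) \le 0$ and $T_\alpha^i(\alpha) \le 0$ for all the relevant~$i$, with equality exactly at $i = k-1$ and $i = k'-1$ --- which fails for a generic $\alpha \in \Gamma_v$ and is precisely why the statement is restricted to $\alpha = \chi_v$ (or to $\alpha \notin \Gamma$, where the two orbits never become nonnegative). Everything else reduces to a routine interval chase using the monotonicity and the explicit actions of $M_{(\pm1:d)}$.
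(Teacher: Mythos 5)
Your proof is correct, and it takes a genuinely different route from the paper's. The paper does a single induction on $|w|$: it observes that every nonempty $w'\in\mathscr{L}^{\prime\times}_\alpha$ is $wa$ for some shorter $w\in\mathscr{L}^{\prime\times}_\alpha$ and $a\in\mathscr{A}$, partitions the interval $J^\alpha_w$ into $J^\alpha_w\cap\Delta_\alpha(\underline{b}{}^\alpha_j)$ (resp.\ $\overline{b}{}^\alpha_j$, resp.\ the boundary cylinders at $j=k$, $j=k'$ when $\alpha=\chi_v$), the full cylinders in between, and $J^\alpha_w\cap\Delta_\alpha(\overline{b}{}^\alpha_1)$, and then applies $T_\alpha$ once. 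Your version instead inducts on the grammatical structure of $\mathscr{L}^{\prime\times}_\alpha$: you isolate the notion that each generator of $\mathscr{L}^\times_\alpha$ is full, conclude $J^\alpha_w=[\alpha-1,\alpha)$ for all $w\in\mathscr{L}^\times_\alpha$ by concatenation, and then climb the decomposition of Lemma~\ref{l:Ldecomposition} by counting $U_{\alpha,2}$-factors. Both arguments consume exactly the same inputs (the orbit-cylinder image formulas $T_\alpha^j(\Delta_\alpha(\underline{b}{}^\alpha_{[1,j]}))=[T_\alpha^j(\alpha-1),\alpha)$ and $T_\alpha^j(\Delta_\alpha(\overline{b}{}^\alpha_{[1,j]}))=(T_\alpha^j(\alpha),\alpha)$, monotonicity of the $M_a$ on $(-1,0)$, the fact that the orbit points stay $\le 0$ precisely up to $k-1$ and $k'-1$, and the identity $T_\alpha^{k-1}(\alpha-1)=T_\alpha^{k'-1}(\alpha)=0$ when $\alpha=\chi_v$), so neither is more powerful; the paper's one-letter peel is a little slicker because it never has to re-chain inside a multi-letter generator, while your formulation makes the fullness phenomenon explicit and foregrounds why the boundary blocks and $\alpha=\chi_v$ are the crux, which you correctly identify at the end. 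The details you wave at --- chaining through a $U_{\alpha,1}\,U_{\alpha,2}^*\,\widetilde{U}_{\alpha,4}$ generator and the fact that $\underline{b}{}^\alpha_1\preceq\overline{b}{}^\alpha_j$ so the intermediate cylinders are all full --- do check out.
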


\begin{proof}
The second equality follows immediately from the definitions.

The first equality clearly holds if $w$ is the empty word.
We proceed by induction on~$|w|$.
The definition of $\mathscr{L}^{\prime\times}_\alpha$ implies that every $w' \in \mathscr{L}^{\prime\times}_\alpha$ with $|w'| \ge 1$ can be written as $w' = w a$ with $w \in \mathscr{L}^{\prime\times}_\alpha$, $a \in \mathscr{A}$.
Let first $w \in \mathscr{L}^\times_\alpha\, \underline{b}{}^\alpha_{[1,j)}$, $1 \le j < k$, which implies $\underline{b}{}^\alpha_j \preceq a \preceq \overline{b}{}^\alpha_1$.
Since $T_\alpha^{j-1}({\alpha-1}) < 0$, we have
\[
J^\alpha_w = \big[T_\alpha^{j-1}({\alpha-1}), \alpha\big) = \big[T_\alpha^{j-1}({\alpha-1}), \tfrac{-1}{d_{\alpha,j}({\alpha-1})+\alpha}\big)\ \cup \bigcup_{\substack{a \in \mathscr{A}:\\ \underline{b}{}^\alpha_j \prec a \preceq \overline{b}{}^\alpha_1}} \Delta_{\alpha}(a)\ \cup \{0\}\,.
\]
Then, $J^\alpha_w = T_\alpha^{|w|}(\Delta_\alpha(w))$ implies that 
\begin{align*}
T_\alpha^{|w|+1}\big(\Delta_\alpha(w \underline{b}{}^\alpha_j)\big) & = T_\alpha\big(J^\alpha_w \cap \Delta_\alpha(\underline{b}{}^\alpha_j)\big) = \big[T_\alpha^j({\alpha-1}), \alpha\big) = J_{w \underline{b}{}^\alpha_j}\,, \\
T_\alpha^{|w|+1}\big(\Delta_\alpha(w a)\big) & = [{\alpha-1}, \alpha) = J_{w a} \quad (\underline{b}{}^\alpha_j \prec a \prec \overline{b}{}^\alpha_1)\,, \\ T_\alpha^{|w|+1}\big(\Delta_\alpha(w \overline{b}{}^\alpha_1)\big) & = \big(T_\alpha(\alpha), \alpha\big) = J_{w \overline{b}{}^\alpha_1}\,.
\end{align*}
If $w \in \mathscr{L}^{\prime\times}_\alpha\, \overline{b}{}^\alpha_{[1,j)}$, $2 \le j < k'$, then similar arguments yield that $T_\alpha^{|w|+1}(\Delta_\alpha(w a)) = J_{w a}$ for $\overline{b}{}^\alpha_j \preceq a \preceq \overline{b}{}^\alpha_1$.
Finally, if $w \in \mathscr{L}^\times_\alpha\, \underline{b}{}^\alpha_{[1,k)}$ or $w \in \mathscr{L}^{\prime\times}_\alpha\, \overline{b}{}^\alpha_{[1,k')}$ (which is possible only for $\alpha \in \Gamma$), then $\alpha = \chi_v$ yields that $J^\alpha_w = [0, \alpha)$ and $J^\alpha_w = (0, \alpha)$ respectively.  Here, $w a \in \mathscr{L}^{\prime\times}_\alpha$ is equivalent to $a \in \mathscr{A}_+$, $a \preceq \overline{b}{}^\alpha_1$, and we obtain again that $T_\alpha^{|w|+1}(\Delta_\alpha(w a)) = J_{w a}$.
\end{proof}

\begin{Lem} \label{l:language}
Let $\alpha \in (0,1] \setminus \Gamma$ or $\alpha = \chi_v$, $v \in \mathscr{F}$.
Then 
\begin{equation} \label{e:cylinders}
[{\alpha-1}, \alpha) = \bigcup_{w \in \mathscr{L}^{\prime\times}_\alpha \cap \mathscr{A}^n} \Delta_\alpha(w)\ \cup\ \{x \in [{\alpha-1}, \alpha) \mid T_\alpha^{n-1}(x) = 0\}
\end{equation}
for all $n \ge 1$, i.e., $\mathscr{L}^{\prime\times}_\alpha$ is the language of the $\alpha$-expansions of $x \in [\alpha-1, \alpha)$ avoiding~$(1:\infty)$.
\end{Lem}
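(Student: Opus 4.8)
The plan is to establish the two inclusions of~\eqref{e:cylinders} and then read off the characterisation of~$\mathscr{L}^{\prime\times}_\alpha$. The inclusion $\supseteq$ is immediate, since every $\Delta_\alpha(w)$ and the set $\{x : T_\alpha^{n-1}(x)=0\}$ are contained in $[{\alpha-1},\alpha)$ by definition. For $\subseteq$ I would prove, by induction on $n\ge0$, the statement: for every $x\in[{\alpha-1},\alpha)$ such that the first $n$ digits of the $\alpha$-expansion of~$x$ all lie in~$\mathscr{A}$ (equivalently $T_\alpha^{n-1}(x)\neq0$), the word $v_{[1,n]}:=(\varepsilon_{\alpha,1}(x):d_{\alpha,1}(x))\cdots(\varepsilon_{\alpha,n}(x):d_{\alpha,n}(x))$ lies in $\mathscr{L}^{\prime\times}_\alpha$. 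Granting this, if $x\in[{\alpha-1},\alpha)$ satisfies $T_\alpha^{n-1}(x)=0$ it is in the last set of~\eqref{e:cylinders}, and otherwise $x\in\Delta_\alpha(v_{[1,n]})$ with $v_{[1,n]}\in\mathscr{L}^{\prime\times}_\alpha\cap\mathscr{A}^n$; this gives~$\subseteq$.

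For the induction, $n=0$ is the triviality that the empty word lies in $\mathscr{L}^{\prime\times}_\alpha$. For $n\ge1$, since $T_\alpha(0)=0$ the hypothesis forces $T_\alpha^{n-2}(x)\neq0$ when $n\ge2$, so the induction hypothesis gives $w:=v_{[1,n-1]}\in\mathscr{L}^{\prime\times}_\alpha$; set $x':=T_\alpha^{n-1}(x)$ and $a:=v_n\in\mathscr{A}$. Since $x\in\Delta_\alpha(w)$, Lemma~\ref{l:Jw} yields $x'\in T_\alpha^{n-1}\big(\Delta_\alpha(w)\big)=J^\alpha_w$. By Lemma~\ref{l:Ldecomposition}, $w$ lies in one of $\mathscr{L}^\times_\alpha$, $\mathscr{L}^\times_\alpha\,\underline{b}{}^\alpha_{[1,j]}$ $(1\le j<k)$, $\mathscr{L}^{\prime\times}_\alpha\,\overline{b}{}^\alpha_{[1,j]}$ $(1\le j<k')$, and correspondingly $J^\alpha_w$ equals $[{\alpha-1},\alpha)$, $\big[T_\alpha^j({\alpha-1}),\alpha\big)$, or $\big(T_\alpha^j(\alpha),\alpha\big)$. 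Using that $x\le x'$ implies $(\varepsilon(x):d_\alpha(x))\preceq(\varepsilon(x'):d_\alpha(x'))$, the digit $a$ of $x'\in J^\alpha_w\setminus\{0\}$ is squeezed between $\overline{b}{}^\alpha_1$ (the digit of~$\alpha$) from above and, from below, the digit of the left endpoint of $J^\alpha_w$ --- namely $\underline{b}{}^\alpha_1$, $\underline{b}{}^\alpha_{j+1}$, or $\overline{b}{}^\alpha_{j+1}$ respectively. In each of the finitely many resulting sub-cases one checks directly from the definitions that $wa\in\mathscr{L}^{\prime\times}_\alpha$: when $a=\overline{b}{}^\alpha_1=\overline{b}{}^\alpha_{[1,1]}$ this is $\mathscr{L}^{\prime\times}_\alpha\,U_{\alpha,2}\subseteq\mathscr{L}^{\prime\times}_\alpha$; when $a$ equals the left-endpoint digit one lengthens the relevant $\underline{b}{}^\alpha$- or $\overline{b}{}^\alpha$-block by one letter, staying inside $\mathscr{L}^\times_\alpha\,U_{\alpha,1}\subseteq\mathscr{L}^{\prime\times}_\alpha$ or $\mathscr{L}^{\prime\times}_\alpha\,U_{\alpha,2}\subseteq\mathscr{L}^{\prime\times}_\alpha$; and when $a$ lies strictly between those digits (so $a$ determines a full cylinder) one has $\underline{b}{}^\alpha_{[1,j)}a\in\widetilde{U}_{\alpha,3}$ in the first two cases and $\overline{b}{}^\alpha_{[1,j)}a\in\widetilde{U}_{\alpha,4}$ in the third, so that $wa$ is absorbed into a block of~$\mathscr{L}^\times_\alpha$.

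Finally, the ``i.e.''\ statement follows: the set of words $v_{[1,n]}$ arising as above is exactly $\{w\in\mathscr{A}^n : \Delta_\alpha(w)\neq\emptyset\}$, which by the inclusion just proved sits inside $\mathscr{L}^{\prime\times}_\alpha\cap\mathscr{A}^n$; conversely Lemma~\ref{l:Jw} gives $\Delta_\alpha(w)=M_w^{-1}\cdot J^\alpha_w$, and each $J^\alpha_w$ is a nonempty interval (because $T_\alpha^j(\alpha)<\alpha$ and $T_\alpha^j({\alpha-1})<\alpha$ for $j\ge1$), so $\mathscr{L}^{\prime\times}_\alpha\cap\mathscr{A}^n\subseteq\{w\in\mathscr{A}^n:\Delta_\alpha(w)\neq\emptyset\}$. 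Hence $\mathscr{L}^{\prime\times}_\alpha$ is precisely the language of length-$n$ prefixes of $\alpha$-expansions of points of $[{\alpha-1},\alpha)$ that avoid $(1:\infty)$.

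The main obstacle is the bookkeeping at the right ends of the languages when $\alpha=\chi_v$: at $j=k-1$ or $j=k'-1$ the left endpoint of $J^\alpha_w$ is~$0$, the digit $a$ is forced into $\mathscr{A}_+$, and one must invoke the $\mathscr{A}_+$-clauses in the definitions of $\widetilde{U}_{\alpha,3}$ and $\widetilde{U}_{\alpha,4}$; moreover, to absorb a newly created $\widetilde{U}_{\alpha,4}$-block one must first factor $w=w_1w_2$ with $w_1\in\mathscr{L}^\times_\alpha$, $w_2\in U_{\alpha,1}U_{\alpha,2}^*$ so that the pattern $U_{\alpha,1}U_{\alpha,2}^*\widetilde{U}_{\alpha,4}$ of $\mathscr{L}^\times_\alpha$ appears. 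The degenerate case of the empty word $v$ (that is $\alpha=1$, where $\underline{b}{}^\alpha_1=(+1:\infty)$ and $k=1$) is handled the same way, the lower bound on~$a$ simply being dropped.
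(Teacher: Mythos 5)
Your proof is correct and follows essentially the same inductive strategy as the paper's: the paper establishes the base case $n=1$ directly, then cites the subdivision fact $T_\alpha^{|w|}(\Delta_\alpha(w))\setminus\{0\}=\bigcup_{a:\,wa\in\mathscr{L}^{\prime\times}_\alpha}T_\alpha^{|w|}(\Delta_\alpha(wa))$ already verified in the proof of Lemma~\ref{l:Jw} and pulls it back through $M_w^{-1}$. You re-derive that case analysis pointwise rather than citing it, but the key ingredients --- Lemma~\ref{l:Jw}, Lemma~\ref{l:Ldecomposition}, and the monotonicity of the first digit --- and the overall inductive structure are the same.
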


\begin{proof}
We have $\mathscr{L}^{\prime\times}_\alpha \cap \mathscr{A} = \{a \in \mathscr{A} \mid \underline{b}{}^\alpha_1 \preceq a \preceq \overline{b}{}^\alpha_1\}$, thus (\ref{e:cylinders}) holds for $n = 1$.
In the proof of Lemma~\ref{l:Jw}, we have seen that
\[
T_\alpha^{|w|}\big(\Delta_\alpha(w)\big) \setminus \{0\} = \bigcup_{\substack{a \in \mathscr{A}:\\ w a \in \mathscr{L}^{\prime\times}_\alpha}} T_\alpha^{|w|}\big(\Delta_\alpha(w a)\big)
\]
for all $w \in \mathscr{L}^{\prime\times}_\alpha$.
By applying $M_w^{-1}$, we obtain the corresponding subdivision of $\Delta_\alpha(w)$, which yields inductively (\ref{e:cylinders}) for all $n \ge 1$.
\end{proof}

Lemmas~\ref{l:Jw} and~\ref{l:language} show that~\eqref{e:PrimeForm} and thus~\eqref{e:TnPsi} hold if $\alpha \in (0,1] \setminus \Gamma$ or $\alpha = \chi_v$, $v \in \mathscr{F}$.
For general $\alpha \in \Gamma$, note that
\begin{equation} \label{e:Tn1}
\mathcal{T}_\alpha\big(J^\alpha_w \times \{N_w \cdot 0\}\big) = \{0\} \times \{0\}\ \cup \bigcup_{\substack{a \in \mathscr{A}:\\ w a \in \mathscr{L}^{\prime\times}_\alpha}} J^\alpha_{w a} \times \{N_{w a} \cdot 0\}
\end{equation}
holds for all $w \in \mathscr{L}^{\prime\times}_\alpha \setminus \big(\mathscr{L}^\times_\alpha\, \underline{b}{}^\alpha_{[1,k)} \cup \mathscr{L}^{\prime\times}_\alpha\, \overline{b}{}^\alpha_{[1,k')}\big)$, by arguments similar to the proof of Lemma~\ref{l:Jw}.
For $w \in \mathscr{L}^\times_\alpha\, \underline{b}{}^\alpha_{[1,k)} \cup \mathscr{L}^{\prime\times}_\alpha\, \overline{b}{}^\alpha_{[1,k')}$, we use the following two lemmas.

\begin{Lem} \label{l:LLprime}
Let $\alpha \in (0,1]$, $w \in \mathscr{A}^*$ with $|w| \ge 1$.
Then the membership of $w$ in $\mathscr{L}^\times_\alpha$ is equivalent to that of $w^{(-1)}$ in $\mathscr{L}^{\prime\times}_\alpha$. 
Furthermore, we have $\Psi^{\prime\times}_{\alpha} = {}^t\hspace{-.1em}E \cdot \Psi^\times_{\alpha}$.
\end{Lem}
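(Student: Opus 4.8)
The plan is to reduce the assertion to a single combinatorial equivalence between membership in $\mathscr{L}^\times_\alpha$ and in $\mathscr{L}^{\prime\times}_\alpha$, and then to transport it to the sets $\Psi^\times_\alpha,\Psi^{\prime\times}_\alpha$ via the matrix identity coming from~\eqref{e:E}. First I would observe that, since $w\mapsto w^{(-1)}$ only decrements the second component of the last letter of $w$, the definitions $M_w=M_{w_{|w|}}\cdots M_{w_1}$ and $N_w={}^t\hspace{-.1em}M_w^{-1}$ together with~\eqref{e:E} give $M_{w^{(-1)}}=E^{-1}M_w$, hence $N_{w^{(-1)}}={}^t\hspace{-.1em}E\,N_w$, for every $w\in\mathscr{A}^*$ with $|w|\ge1$; in particular $N_{w^{(-1)}}\cdot0={}^t\hspace{-.1em}E\cdot(N_w\cdot0)$, while $N_w\cdot0=0={}^t\hspace{-.1em}E\cdot0$ for the empty word.

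The core step is the claim that, for $|w|\ge1$, one has $w\in\mathscr{L}^\times_\alpha$ if and only if $w^{(-1)}\in\mathscr{L}^{\prime\times}_\alpha$. The elementary fact behind this is that for any $b\in\mathscr{A}_-$ the map $a\mapsto a^{(-1)}$ is a bijection of $\{a\in\mathscr{A}\mid b\prec a\prec\overline{b}{}^\alpha_1\}$ onto $\{c\in\mathscr{A}\mid b\preceq c\preceq\overline{b}{}^\alpha_1\}$, and of $\{a\in\mathscr{A}_+\mid a\prec\overline{b}{}^\alpha_1\}$ onto $\{c\in\mathscr{A}_+\mid c\preceq\overline{b}{}^\alpha_1\}$; note that such an $a$ is never $(-1:2)$ nor $(+1:1)$, so $a^{(-1)}$ stays inside $\mathscr{A}$. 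Now factor $w=x_1\cdots x_m\in\mathscr{L}^\times_\alpha$ with each $x_i$ a generator from $\widetilde{U}_{\alpha,3}\cup U_{\alpha,1}U_{\alpha,2}^*\widetilde{U}_{\alpha,4}$, so that $w^{(-1)}=x_1\cdots x_{m-1}\,x_m^{(-1)}$ alters only $x_m$. Its last letter $a$ branches off after a prefix $\underline{b}{}^\alpha_{[1,j)}$ of $\underline{b}{}^\alpha$ (if $x_m\in\widetilde{U}_{\alpha,3}$), or after $\underline{b}{}^\alpha_{[1,i]}\overline{b}{}^\alpha_{[1,j_1]}\cdots\overline{b}{}^\alpha_{[1,j_p]}\overline{b}{}^\alpha_{[1,j)}$ (if $x_m\in U_{\alpha,1}U_{\alpha,2}^*\widetilde{U}_{\alpha,4}$); write $b$ for the continuation digit $\underline{b}{}^\alpha_j$, respectively $\overline{b}{}^\alpha_j$, which lies in $\mathscr{A}_-$ by Theorem~\ref{t:endpoints}, the special families $\underline{b}{}^\alpha_{[1,k)}a$ and $\overline{b}{}^\alpha_{[1,k')}a$ present when $\alpha\in\Gamma$ being covered by the second bijection. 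Then $a^{(-1)}\in[b,\overline{b}{}^\alpha_1]$, and three cases occur: if $a^{(-1)}=b$, then $x_m^{(-1)}$ prolongs the prefix by one digit of $\underline{b}{}^\alpha$ or adds one further $\overline{b}{}^\alpha$-block, so $x_m^{(-1)}\in U_{\alpha,1}U_{\alpha,2}^*$ and $w^{(-1)}\in\mathscr{L}^\times_\alpha U_{\alpha,1}U_{\alpha,2}^*=\mathscr{L}^{\prime\times}_\alpha$; if $b\prec a^{(-1)}\prec\overline{b}{}^\alpha_1$, then $x_m^{(-1)}$ is again a generator, so $w^{(-1)}\in\mathscr{L}^\times_\alpha\subseteq\mathscr{L}^{\prime\times}_\alpha$; and if $a^{(-1)}=\overline{b}{}^\alpha_1$, then $x_m^{(-1)}$ ends with $\overline{b}{}^\alpha_{[1,1]}\in U_{\alpha,2}$, again giving $w^{(-1)}\in\mathscr{L}^{\prime\times}_\alpha$; in each case the bounds $j<k$, $j<k'$, $1<k'$ are readily verified. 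The converse implication is the mirror image, replacing $a\mapsto a^{(-1)}$ by its inverse $c\mapsto c^{(+1)}$ and using the uniqueness of the factorization of a word of $\mathscr{L}^{\prime\times}_\alpha$ into $\mathscr{L}^\times_\alpha\,U_{\alpha,1}\,U_{\alpha,2}^*$ (as in Lemma~\ref{l:Ldecomposition}) and of a word of $\mathscr{L}^\times_\alpha$ into its generators.

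Finally I would conclude: both $\Psi^\times_\alpha$ and $\Psi^{\prime\times}_\alpha$ contain $0=N_\epsilon\cdot0$, since the empty word lies in $\mathscr{L}^\times_\alpha\cap\mathscr{L}^{\prime\times}_\alpha$ and ${}^t\hspace{-.1em}E\cdot0=0$; for $w\in\mathscr{L}^\times_\alpha$ with $|w|\ge1$, the first two steps give ${}^t\hspace{-.1em}E\cdot(N_w\cdot0)=N_{w^{(-1)}}\cdot0\in\Psi^{\prime\times}_\alpha$; and every nonempty $w'\in\mathscr{L}^{\prime\times}_\alpha$ equals $(w'{}^{(+1)})^{(-1)}$ with $w'{}^{(+1)}\in\mathscr{A}^*$, whence $w'{}^{(+1)}\in\mathscr{L}^\times_\alpha$ by the equivalence and $N_{w'}\cdot0={}^t\hspace{-.1em}E\cdot(N_{w'{}^{(+1)}}\cdot0)\in{}^t\hspace{-.1em}E\cdot\Psi^\times_\alpha$. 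Both inclusions follow, giving $\Psi^{\prime\times}_\alpha={}^t\hspace{-.1em}E\cdot\Psi^\times_\alpha$. The delicate part is the middle paragraph: one must check that decrementing the last digit always turns a legal factorization of a word in $\mathscr{L}^\times_\alpha$ into a legal factorization of a word in the differently built $\mathscr{L}^{\prime\times}_\alpha$, uniformly in $\alpha\in\Gamma$ and $\alpha\notin\Gamma$ and including the special families $\underline{b}{}^\alpha_{[1,k)}a$, $\overline{b}{}^\alpha_{[1,k')}a$ in $\widetilde{U}_{\alpha,3},\widetilde{U}_{\alpha,4}$; each individual verification is routine once the continuation digits are identified through Theorem~\ref{t:endpoints}, but the case bookkeeping is the real work.
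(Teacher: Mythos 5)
Your proof is correct and follows the same route as the paper: establish the combinatorial equivalence between $w\in\mathscr{L}^\times_\alpha$ and $w^{(-1)}\in\mathscr{L}^{\prime\times}_\alpha$, then transport it to the $\Psi$-sets via $N_{w^{(-1)}}={}^t\hspace{-.1em}E\,N_w$, a direct consequence of~\eqref{e:E}. The paper simply states that the equivalence ``follows directly from the definition'' of the two languages; you have supplied the case analysis (the bijection $a\mapsto a^{(-1)}$ on escape letters, the three resulting placements of $x_m^{(-1)}$, and the mirror argument with $c\mapsto c^{(+1)}$ using uniqueness of the $\mathscr{L}^\times_\alpha\,U_{\alpha,1}\,U_{\alpha,2}^*$ factorization) that the paper leaves implicit.
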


\begin{proof}
The equivalence between $w \in \mathscr{L}^\times_\alpha$ and $w^{(-1)} \in \mathscr{L}^{\prime\times}_\alpha$ follows directly from the definition of $\mathscr{L}^\times_\alpha$ and~$\mathscr{L}^{\prime\times}_\alpha$. 
Then we find that 
\[
\Psi^{\prime\times}_{\alpha} = \{N_w \cdot 0 \mid w \in \mathscr{L}^{\prime\times}_\alpha\} = \{N_{w^{(-1)}} \cdot 0 \mid w \in \mathscr{L}^\times_\alpha\} = \{ {}^t\hspace{-.1em}E\, N_w\cdot 0 \mid w \in \mathscr{L}^\times_\alpha\} = {}^t\hspace{-.1em}E  \cdot \Psi^\times_{\alpha}\,. \qedhere
\]
\end{proof}

\begin{Lem} \label{l:wwprime}
Let $\alpha \in \Gamma$ and $w = \underline{b}{}^\alpha_{[1,k)}$, $w' = \overline{b}{}^\alpha_{[1,k')}$, or $w = u\, \underline{b}{}^\alpha_{[1,k)}$, $w' = u{}^{(-1)}\, \overline{b}{}^\alpha_{[1,k')}$ with $u \in \mathscr{L}^\times_\alpha$, $|u| \ge 1$.
Then we have $w, w' \in \mathscr{L}^{\prime\times}_\alpha$ and
\begin{multline} \label{e:Tn2}
\mathcal{T}_\alpha\big(J^\alpha_w \times \{N_w \cdot 0\}\big)\ \cup\ \mathcal{T}_\alpha\big(J^\alpha_{w'} \times \{N_{w'} \cdot 0\}\big) \\
= \{0\} \times \{0\}\ \cup \bigcup_{\substack{a \in \mathscr{A}:\\ w a \in \mathscr{L}^{\prime\times}_\alpha}} J^\alpha_{w a} \times \{N_{w a} \cdot 0\}\ \cup \bigcup_{\substack{a \in \mathscr{A}:\\ w' a \in \mathscr{L}^{\prime\times}_\alpha}} J^\alpha_{w' a} \times \{N_{w' a} \cdot 0\}\,. 
\end{multline}
\end{Lem}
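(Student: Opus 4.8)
The plan is to compute the image under $\mathcal{T}_\alpha$ of each of the two horizontal segments $J^\alpha_w \times \{N_w\cdot0\}$ and $J^\alpha_{w'}\times\{N_{w'}\cdot0\}$ by cutting its base interval along the rank-one cylinders $\Delta_\alpha(a)$, $a\in\mathscr{A}_0$, and then to check that the union of the resulting pieces is precisely the right-hand side of~\eqref{e:Tn2}. In contrast to Lemma~\ref{l:Jw} and~\eqref{e:Tn1}, neither segment alone maps to a clean disjoint union of sub-segments indexed by $\mathscr{L}^{\prime\times}_\alpha$: the segment $J^\alpha_w$ covers the cylinder $\Delta_\alpha(\underline{b}{}^\alpha_k)$ only partially, and likewise $J^\alpha_{w'}$ covers $\Delta_\alpha(\overline{b}{}^\alpha_{k'})$ only partially; it is the $W$-symmetry relating the $T_\alpha$-orbits of $\alpha-1$ and~$\alpha$ that makes these two partial images fit together.

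First I would dispose of the membership $w, w'\in\mathscr{L}^{\prime\times}_\alpha$, which is read off the definitions: $\underline{b}{}^\alpha_{[1,k)}=\underline{b}{}^\alpha_{[1,|v|\,]}\in U_{\alpha,1}$ and $\overline{b}{}^\alpha_{[1,k')}=\overline{b}{}^\alpha_{[1,|\widehat{v}|\,]}\in U_{\alpha,2}$ (recall $|\widehat{v}|\ge1$), so for $u$ empty both words lie in $U_{\alpha,1}\cup U_{\alpha,2}\subseteq\mathscr{L}^{\prime\times}_\alpha$; for $|u|\ge1$ one has $w=u\,\underline{b}{}^\alpha_{[1,k)}\in\mathscr{L}^\times_\alpha\,U_{\alpha,1}\subseteq\mathscr{L}^{\prime\times}_\alpha$ and, using Lemma~\ref{l:LLprime} to get $u^{(-1)}\in\mathscr{L}^{\prime\times}_\alpha$, $w'=u^{(-1)}\,\overline{b}{}^\alpha_{[1,k')}\in\mathscr{L}^{\prime\times}_\alpha\,U_{\alpha,2}\subseteq\mathscr{L}^{\prime\times}_\alpha$.

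Next I would record the geometric and algebraic data. By Lemma~\ref{l:Gammav} (hence Theorem~\ref{t:endpoints}) the words $v=\underline{b}{}^\alpha_{[1,k)}$ and ${}^{(W)}\widehat{v}{}^{(-1)}=\overline{b}{}^\alpha_{[1,k')}$ are genuine prefixes of the $\alpha$-expansions of $\alpha-1$ and~$\alpha$, so the definition of $J^\alpha_\bullet$ gives $J^\alpha_w=[\xi,\alpha)$ with $\xi:=T_\alpha^{|v|}(\alpha-1)$ (and $J^\alpha_w=[\alpha-1,\alpha)$ when $v$ is empty) and $J^\alpha_{w'}=(\xi',\alpha)$ with $\xi':=T_\alpha^{|\widehat{v}|}(\alpha)$. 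Theorem~\ref{t:endpoints} yields $\xi'=W\cdot\xi$, $\underline{b}{}^\alpha_k=:b$, $\overline{b}{}^\alpha_{k'}={}^{(W)}b$, and Lemma~\ref{l:wMat} yields $T_\alpha(\xi)=T_\alpha(\xi')$; since ${}^{(W)}$ interchanges $\mathscr{A}_-$ and $\mathscr{A}_+$, exactly one of $\xi,\xi'$ is negative, say $\xi<0\le\xi'$ (the opposite case being its mirror image under $x\mapsto W\cdot x$). The decisive algebraic input, obtained from Lemma~\ref{l:conversion} together with $M_{{}^{(W)}a}=M_aW$ (see~\eqref{e:W}) and $M_{a^{(-1)}}=E^{-1}M_a$ (see~\eqref{e:E}), is $M_{\overline{b}{}^\alpha_{[1,k']}}=M_b\,M_v\,E=M_{\underline{b}{}^\alpha_{[1,k]}}\,E$; combining this with $N_{u^{(-1)}}={}^t\hspace{-.1em}E\,N_u$ (from the proof of Lemma~\ref{l:LLprime}) gives the two facts I need: $N_{w'}\cdot0={}^t\hspace{-.1em}W\cdot(N_w\cdot0)$, and hence, using also $N_a\,{}^t\hspace{-.1em}W=N_{{}^{(W)}a}$, the height identities $N_a\cdot(N_{w'}\cdot0)=N_{{}^{(W)}a}\cdot(N_w\cdot0)$ for $a\in\mathscr{A}$ and $N_{\overline{b}{}^\alpha_{k'}}\cdot(N_{w'}\cdot0)=N_{\underline{b}{}^\alpha_k}\cdot(N_w\cdot0)$.

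With this in hand I would carry out the decomposition and the matching. Cutting $[\xi,\alpha)$ and $(\xi',\alpha)$ along cylinders and applying~\eqref{e:matsForT}: each full cylinder maps onto $[\alpha-1,\alpha)$, the cylinder $\Delta_\alpha(\overline{b}{}^\alpha_1)$ onto $(T_\alpha(\alpha),\alpha)$, and the point $\Delta_\alpha((+1:\infty))=\{0\}\subset J^\alpha_w$ to $\{0\}\times\{0\}$; comparing with the definition of $J^\alpha_\bullet$, these produce the term $\{0\}\times\{0\}$ and all the terms $J^\alpha_{wa}$, $J^\alpha_{w'a}$ on the right whose last letter lies in $\widetilde{U}_{\alpha,3}$- or $U_{\alpha,2}$-position. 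The two \emph{boundary} pieces, $\Delta_\alpha(b)\cap J^\alpha_w$ and $\Delta_\alpha({}^{(W)}b)\cap J^\alpha_{w'}$, map (the first order-preservingly, the second order-reversingly) onto $[T_\alpha(\xi),\alpha)$ and $[\alpha-1,T_\alpha(\xi'))$; by the height identity they sit at the common height $N_{\underline{b}{}^\alpha_k}\cdot(N_w\cdot0)$, and as $T_\alpha(\xi)=T_\alpha(\xi')$ their union is $[\alpha-1,\alpha)$ there --- which is one of the right-hand terms (the $a=b$ term of the first union if $b\in\mathscr{A}_+$, the $a={}^{(W)}b$ term of the second if $b\in\mathscr{A}_-$). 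The step I expect to be the main obstacle is the remaining bookkeeping for the \emph{leftover} full-cylinder pieces --- for instance, when $b\in\mathscr{A}_-$, the pieces of $J^\alpha_w$ over $\Delta_\alpha(a)$ with $b\prec a\in\mathscr{A}_-$, for which $wa\notin\mathscr{L}^{\prime\times}_\alpha$. Here the identity $N_a\cdot(N_{w'}\cdot0)=N_{{}^{(W)}a}\cdot(N_w\cdot0)$ shows that such a leftover piece has the same height, and the same base interval $[\alpha-1,\alpha)$, as the valid right-hand term indexed by ${}^{(W)}a$ attached to the \emph{other} word; and comparing the positions of $\Delta_\alpha(a)$ and $\Delta_\alpha({}^{(W)}a)$ relative to the two bases, via $\xi'=W\cdot\xi$, shows that exactly one of these cylinders meets its base, so the piece is produced once on the left and listed once on the right. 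Finally, a few boundary situations ($u$ empty, so $N_u\cdot0=0$; $b=\overline{b}{}^\alpha_1$; the precise open/closed status of endpoints such as $T_\alpha(\alpha)$ and $T_\alpha(\xi)$) are checked directly and do not affect the set-theoretic identity~\eqref{e:Tn2}.
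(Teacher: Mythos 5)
Your proposal is correct and takes essentially the same route as the paper: both cut $J^\alpha_w$ and $J^\alpha_{w'}$ along the rank-one cylinders, reduce by the same WLOG on $\sgn\big(T_\alpha^{k-1}(\alpha-1)\big)$, and then use the relation $N_{wa}\cdot 0 = N_{w'{}^{(W)}a}\cdot 0$ (which you derive on the $N$-side from $N_{w'}\cdot 0 = {}^t\hspace{-.1em}W\cdot(N_w\cdot0)$ and $N_a\,{}^t\hspace{-.1em}W = N_{{}^{(W)}a}$, while the paper states it on the $M$-side as $M_{w'{}^{(W)}a}=M_{wa}E$ or $M_{wa}$, both coming from Lemma~\ref{l:conversion}) together with $T_\alpha^k(\alpha-1)=T_\alpha^{k'}(\alpha)$ and $\underline{b}{}^\alpha_k={}^{(W)}\overline{b}{}^\alpha_{k'}$ to fuse the two partial boundary images into $[\alpha-1,\alpha)$ and to reidentify the leftover full-cylinder pieces over $\Delta_\alpha(a)$, $a\in\mathscr{A}_-$, with the valid terms indexed by ${}^{(W)}a$ for the other word. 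The explicit membership check $w,w'\in\mathscr{L}^{\prime\times}_\alpha$ that you spell out is left implicit in the paper, but the argument is otherwise the same.
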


\begin{proof}
By Theorem~\ref{t:endpoints}, we have $\sgn(T_\alpha^{k-1}({\alpha-1})) = - \sgn(T_\alpha^{k'-1}(\alpha))$.
We can assume that $T_\alpha^{k-1}({\alpha-1}) < 0$, the case $T_\alpha^{k'-1}(\alpha) < 0$ being symmetric, and the case $T_\alpha^{k-1}({\alpha-1}) = 0$ being trivial since~\eqref{e:Tn1} holds for $w$ and $w'$ in this case (except for the point $\{0\} \times \{0\}$ not belonging to $\mathcal{T}_\alpha(J^\alpha_{w'} \times \{N_{w'} \cdot 0\})$).
Then
\begin{multline*}
\mathcal{T}_\alpha\big(J^\alpha_w \times \{N_w \cdot 0\}\big) = \big[T_\alpha^k({\alpha-1}), \alpha\big) \times \{N_{w \underline{b}{}^\alpha_k} \cdot 0\}\ \cup\ \{0\} \times \{0\}  \\
\cup\ [{\alpha-1}, \alpha) \times \big\{N_{w a} \cdot 0 \mid \underline{b}{}^\alpha_k \prec a \prec \overline{b}{}^\alpha_1\}\ \cup\ \big(T_\alpha(\alpha), \alpha\big) \times \{N_{w \overline{b}{}^\alpha_1} \cdot 0\}
\end{multline*}
and, if $\overline{b}{}^\alpha_{k'} \prec \overline{b}{}^\alpha_1$, 
\begin{multline*}
\mathcal{T}_\alpha\big(J^\alpha_{w'} \times \{N_{w'} \cdot 0\}\big) = \big[{\alpha-1}, T_\alpha^{k'}(\alpha)\big) \times \{N_{w' \overline{b}{}^\alpha_{k^{{}_\prime}}} \cdot 0\}  \\
\cup\ [{\alpha-1}, \alpha) \times \big\{N_{w' a} \cdot 0 \mid \overline{b}{}^\alpha_{k'} \prec a \prec \overline{b}{}^\alpha_1\}\ \cup\ \big(T_\alpha(\alpha), \alpha\big) \times \{N_{w' \overline{b}{}^\alpha_1} \cdot 0\}\,,
\end{multline*}
whereas $\mathcal{T}_\alpha\big(J^\alpha_{w'} \times \{N_{w'} \cdot 0\}\big) = \big(T_\alpha(\alpha), T_\alpha^{k'}(\alpha)\big) \times \{N_{w' \overline{b}{}^\alpha_1} \cdot 0\}$  if $\overline{b}{}^\alpha_{k'} = \overline{b}{}^\alpha_1$.

Theorem~\ref{t:endpoints} gives that $T_\alpha^k({\alpha-1}) = T_\alpha^{k'}(\alpha)$ and $\underline{b}{}^\alpha_k = {}^{(W)}\overline{b}{}^\alpha_{k'}$ by Lemma~\ref{l:wMat}.
If $w = \underline{b}{}^\alpha_{[1,k)}$, $w' = \overline{b}{}^\alpha_{[1,k')}$, then we have $M_{w' {}^{(W)}a} = M_{w a} E$ for any $a \in \mathscr{A}$, whereas
\[
M_{w' {}^{(W)}a} = M_{u{}^{(-1)}\, \overline{b}{}^\alpha_{[1,k^{{}_\prime})} {}^{(W)}a} = M_a W M_{\overline{b}{}^\alpha_{[1,k^{{}_\prime})}} E^{-1} M_u = M_a M_{\underline{b}{}^\alpha_{[1,k)}} M_u = M_{w a}
\]
otherwise. 
In all cases, this yields that $N_{w a} \cdot 0 = N_{w' {}^{(W)}a} \cdot 0$.
Applying this for $a \in \mathscr{A}_-$, we obtain that
\begin{multline*}
\hspace{-3mm}\mathcal{T}_\alpha\big(J^\alpha_w \times \{N_w \cdot 0\}\big) \cup \mathcal{T}_\alpha\big(J^\alpha_{w'} \times \{N_{w'} \cdot 0\}\big) = \big(T_\alpha(\alpha), \alpha\big) \times \{N_{w \overline{b}{}^\alpha_1} \cdot 0,\, N_{w' \overline{b}{}^\alpha_1} \cdot 0\} \cup \{0\} \times \{0\}  \\
\cup\ [{\alpha-1}, \alpha) \times \big\{N_{w a} \cdot 0 \mid a \in \mathscr{A}_+,\, a \prec \overline{b}{}^\alpha_1\big\}\ \cup\ [{\alpha-1}, \alpha) \times \big\{N_{w' a} \cdot 0 \mid a \in \mathscr{A}_+,\, a \prec \overline{b}{}^\alpha_1\big\}\,,
\end{multline*}
which is precisely~\eqref{e:Tn2}.
\end{proof}

\begin{proof}[\textbf{Proof of Proposition~\ref{p:Omega}}]
We have already noted that~\eqref{e:TnPsi} is equivalent to~\eqref{e:PrimeForm}, and that~\eqref{e:PrimeForm} follows from Lemmas~\ref{l:Jw} and~\ref{l:language} for $\alpha \in (0,1] \setminus \Gamma$ or $\alpha = \chi_v$, $v \in \mathscr{F}$.
For general $\alpha \in \Gamma$, we already know that~\eqref{e:Tn1} holds for $w \in \mathscr{L}^{\prime\times}_\alpha \setminus \big({\mathscr{L}^\times_\alpha\, \underline{b}{}^\alpha_{[1,k)} \cup \mathscr{L}^{\prime\times}_\alpha\, \overline{b}{}^\alpha_{[1,k')}}\big)$.
Together with Lemma~\ref{l:wwprime}, this gives inductively that 
\[
\bigcup_{\substack{w \in \mathscr{L}^{\prime\times}_\alpha:\\ |w| \le n m/m'}} J^\alpha_w \times \{N_w \cdot 0\} \subseteq \bigcup_{0 \le j \le n} \mathcal{T}_\alpha^j\big([{\alpha-1},\alpha) \times \{0\}\big) \subseteq \bigcup_{\substack{w \in \mathscr{L}^{\prime\times}_\alpha:\\ |w| \le n m'/m}} J^\alpha_w \times \{N_w \cdot 0\}
\]
for every $n \ge 0$, where $m = \min(k,k')$ and $m' = \max(k,k')$.
This shows again~\eqref{e:PrimeForm}, hence the proposition.
\end{proof}

For $\alpha \in (0,1]$, $x \in \mathbb{I}_\alpha$, the \emph{$x$-fiber} is 
\[
\Phi_\alpha(x) := \{y \mid (x,y) \in \Omega_\alpha\}\,.
\]
The description of $\Omega_{\alpha}$ as the union of pieces fibering above the various $J^\alpha_w$ shows both that fibers are constant between points in the union of the orbits of $\alpha$ and ${\alpha-1}$ and that a fiber contains every fiber to its left.  The maximal fiber is therefore~$\Phi_\alpha(\alpha)$, which  by~\eqref{e:TnPsi} and Lemma~\ref{l:Ldecomposition} equals $\overline{\Psi^{\prime\times}_\alpha}$.
To be precise, we state the following.

\begin{Cor} \label{c:increasing}
Let $\alpha \in (0,1]$.
If $x, x' \in \mathbb{I}_\alpha$, $x \le x'$, then 
\[
\overline{\Psi^\times_\alpha} \subseteq \Phi_\alpha(\alpha-1) \subseteq \Phi_\alpha(x) \subseteq \Phi_\alpha(x') \subseteq \Phi_\alpha(\alpha) = \overline{\Psi^{\prime\times}_\alpha}\,.
\]
If $(x,x'\,] \cap \big(\big\{T_{\alpha}^j({\alpha-1}) \mid 0 \le j < k\big\} \cup \big\{T_{\alpha}^j(\alpha) \mid 1 \le j < k'\big\}\big) = \emptyset$, then $\Phi_\alpha(x) = \Phi_\alpha(x')$.
\end{Cor}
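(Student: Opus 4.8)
The plan is to read everything off the explicit description of $\Omega_\alpha$ from Proposition~\ref{p:Omega}. I would write $S := \bigcup_{n\ge0}\mathcal{T}_\alpha^n\big([{\alpha-1},\alpha)\times\{0\}\big)$, so that $\Omega_\alpha=\overline S$ and, by~\eqref{e:PrimeForm}, $S=\bigcup_{w\in\mathscr{L}^{\prime\times}_\alpha}J^\alpha_w\times\{N_w\cdot0\}$ with each $J^\alpha_w$ an interval whose right endpoint is~$\alpha$. The decisive observation is that $S$ is \emph{saturated to the right up to~$\alpha$}: if $(x,y)\in S$ and $x\le x'<\alpha$, then $x'$ lies in the same $J^\alpha_w$ as~$x$, so $(x',y)\in S$. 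Passing to the closure then gives monotonicity of fibers: given $(x,y)\in\Omega_\alpha$ and $x\le x'$ in $\mathbb{I}_\alpha$, I would pick $(x_n,y_n)\in S$ with $(x_n,y_n)\to(x,y)$ (so $x_n<\alpha$) and set $x_n':=\max\big(x_n,\min(x',\alpha-\tfrac1n)\big)$; then $x_n'\in[x_n,\alpha)$, hence $(x_n',y_n)\in S$, and $x_n'\to x'$, so $(x',y)\in\Omega_\alpha$. This establishes $\Phi_\alpha(x)\subseteq\Phi_\alpha(x')$ for $x\le x'$, i.e.\ the middle inclusions of the displayed chain.

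Next I would dispatch the two endpoint identities. The first term of~\eqref{e:TnPsi} gives $\{{\alpha-1}\}\times\Psi^\times_\alpha\subseteq S\subseteq\Omega_\alpha$, so $\Psi^\times_\alpha\subseteq\Phi_\alpha({\alpha-1})$; since the fiber of the closed set $\Omega_\alpha$ over a point is closed, $\overline{\Psi^\times_\alpha}\subseteq\Phi_\alpha({\alpha-1})$. For the right endpoint, the decomposition of $\mathscr{L}^{\prime\times}_\alpha$ in Lemma~\ref{l:Ldecomposition} shows that every $y$-set appearing in~\eqref{e:PrimeForm} lies in $\Psi^{\prime\times}_\alpha$, hence $S\subseteq\mathbb{I}_\alpha\times\Psi^{\prime\times}_\alpha$, hence $\Omega_\alpha\subseteq\mathbb{I}_\alpha\times\overline{\Psi^{\prime\times}_\alpha}$ and $\Phi_\alpha(\alpha)\subseteq\overline{\Psi^{\prime\times}_\alpha}$; conversely every $J^\alpha_w$ has left endpoint strictly below~$\alpha$, so $\bigcup_{x<\alpha}\{y:(x,y)\in S\}=\Psi^{\prime\times}_\alpha$, and monotonicity together with closedness of $\Phi_\alpha(\alpha)$ gives $\overline{\Psi^{\prime\times}_\alpha}\subseteq\Phi_\alpha(\alpha)$. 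Thus $\Phi_\alpha(\alpha)=\overline{\Psi^{\prime\times}_\alpha}$, closing the chain.

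For the final assertion I would assume $x<x'$ (the case $x=x'$ is vacuous) and that $(x,x']$ avoids both orbit sets. By monotonicity it remains to prove $\Phi_\alpha(x')\subseteq\Phi_\alpha(x)$. Each piece $J^\alpha_w$ of $S$ has left endpoint $\ell_w\in\{{\alpha-1}\}\cup\{T_\alpha^j({\alpha-1}):1\le j<k\}\cup\{T_\alpha^j(\alpha):1\le j<k'\}$, and the gap hypothesis forces every such $\ell_w$ with $\ell_w\le x'$ to satisfy $\ell_w\le x$. From this --- treating separately the pieces $\big(T_\alpha^j(\alpha),\alpha\big)$ which are open at the left, whose lower edge is recovered only after closure --- I would deduce $\Phi_\alpha(s)=\overline{\widetilde F}$ for all $s\in(x,x')$, where $\widetilde F:=\{N_w\cdot0\mid w\in\mathscr{L}^{\prime\times}_\alpha,\ \ell_w\le x\}$; squeezing then gives $\Phi_\alpha(x)=\overline{\widetilde F}$ and $\Phi_\alpha(x')\supseteq\overline{\widetilde F}$. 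To close the gap, take $(x_n,y_n)\in S$, $(x_n,y_n)\to(x',y)$, with $\ell_n$ the left endpoint of the relevant piece. If $\ell_n\le x$ infinitely often then $y_n\in\widetilde F$ for those $n$, so $y\in\overline{\widetilde F}$. Otherwise $\ell_n>x'$ eventually and, since $\ell_n\le x_n\to x'$, the $\ell_n$ decrease strictly to~$x'$; if $\alpha\in\Gamma$ the two orbit sets are finite and this cannot happen. If $\alpha\notin\Gamma$ the $\ell_n$ run through $\{T_\alpha^{j}({\alpha-1})\}$ or $\{T_\alpha^{j}(\alpha)\}$ with $j\to\infty$ and $y_n$ lies in $N_{\underline{b}{}^\alpha_{[1,j]}}\cdot[0,1]$ or $N_{\overline{b}{}^\alpha_{[1,j]}}\cdot[0,1]$; I would use that these are nested intervals of diameter tending to~$0$ --- which follows from~\eqref{e:mu}, from the lengths of rank-$n$ cylinders tending to~$0$ (since $T_\alpha$ is expanding, as in the proof of Theorem~\ref{t:natext}), and from the density $(1+xy)^{-2}$ being bounded away from $0$ and~$\infty$ on $\mathbb{I}_\alpha\times[0,1]$ --- so that $y$ is their common limit point, which then has to be matched with an element of $\overline{\widetilde F}$. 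I expect this last step, i.e.\ the behaviour of the fibers at accumulation points of the $T_\alpha$-orbits of ${\alpha-1}$ and~$\alpha$ for $\alpha\notin\Gamma$, to be the main obstacle; for every $\alpha\in\Gamma$ --- hence Lebesgue-almost every~$\alpha$ --- the argument is already complete.
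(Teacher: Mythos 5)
Your derivation of the chain of inclusions and of the equality $\Phi_\alpha(\alpha)=\overline{\Psi^{\prime\times}_\alpha}$, via the right-saturation of $S$ in Proposition~\ref{p:Omega} and passage to the closure, is correct and is exactly the route the paper takes. The second assertion for $\alpha\in\Gamma$ is also fully handled, since then the two orbit sets are finite and your Case~B is vacuous.

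What remains is precisely what you flag: for $\alpha\notin\Gamma$, when the left endpoints $\ell_n$ are orbit points decreasing strictly to $x'$ with $j_n\to\infty$, you still owe an argument that the common point $y^*$ of the shrinking nested intervals lies in $\overline{\widetilde F}$; ``which then has to be matched with an element of $\overline{\widetilde F}$'' is a wish, not a proof. The obstacle is real, not cosmetic: $N_{\underline{b}{}^\alpha_{[1,j_n]}}\cdot 0$ is the $y$-coordinate of $\mathcal{T}_\alpha^{j_n}(\alpha-1,0)$, which belongs to $\Psi^{\prime\times}_\alpha$ but \emph{not} to $\Psi^\times_\alpha$ --- the word $\underline{b}{}^\alpha_{[1,j_n]}$ is in $U_{\alpha,1}\subset\mathscr{L}^{\prime\times}_\alpha$ but not in $\mathscr{L}^\times_\alpha$ --- so $y^*$ is not an evident limit of points of $\widetilde F$. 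This closure phenomenon is the one the paper itself gestures at in Remark~\ref{r:fiberRels}, which concedes that $\overline{\Psi^\times_\alpha}\subseteq\Phi_\alpha(\alpha-1)$ can be strict when $\alpha\notin\Gamma$ and $\alpha-1$ is an accumulation point of the orbits; the paper's discussion immediately before the corollary presents the constancy of fibers as immediate from the fibered decomposition, passing silently over exactly this step. So you have correctly identified a genuine subtlety, but your proposal does not close it: one would still need to prove, for instance, that $y^*\in\Psi_\alpha$, or to control the accumulating orbit by applying $\mathcal{T}_\alpha$ at $x'$ and exploiting that $T_\alpha(x')\notin(x,x']$.
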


\begin{Rmk} \label{r:fiberRels} 
The inclusion $\overline{\Psi^\times_\alpha} \subseteq \Phi_\alpha(\alpha-1)$ can be strict only when $\alpha-1 \in \overline{\big\{T_{\alpha}^j(\alpha) \mid j \ge 1\big\}}$ or $\alpha-1 \in \overline{\big\{T_{\alpha}^j({\alpha-1}) \mid j \ge 1\big\}}$, which implies that $\alpha \in (0,1] \setminus \Gamma$.
Furthermore,  Lemma~\ref{l:LLprime} implies that $\overline{\Psi^\times_{\alpha}}  = {}^t\hspace{-.1em}E^{-1}  \cdot \overline{\Psi^{\prime\times}_{\alpha}}\,$; since $\overline{\Psi^{\prime\times}_\alpha} \subseteq [0,1]$ and ${}^t\hspace{-.1em}E^{-1} \cdot y = y/(y+1)$ takes $[0, 1]$ to $[0,1/2]$, we find that  $\overline{\Psi^\times_\alpha} \subseteq [0,1/2]$.  
Compare this with Figures~\ref{f:empty},~\ref{f:2} and~\ref{f:23}.
\end{Rmk}

Now we give a description of $\Omega_\alpha$ which provides good approximations of $\Omega_\alpha$ and of~$\mu(\Omega_\alpha)$.
We show how the languages $\mathscr{L}^\times_\alpha$ and~$\mathscr{L}^{\prime\times}_\alpha$ can be replaced by the restricted languages $\mathscr{L}_\alpha$ and~$\mathscr{L}'_\alpha$.
To this matter, we define the alphabet
\[
\mathscr{A}_\alpha := \big\{a \in \mathscr{A}_- \mid \overline{b}{}^\alpha_1 \preceq a \preceq {}^{(W)}\overline{b}{}^\alpha_1\big\} \cup \big\{\,\overline{b}{}^\alpha_1\,\big\} \,.
\] 
This set can also be written as $\{\,(-1:d') \mid 2 \le d' \le d_\alpha(\alpha)+1\} \cup \{(+1:d_\alpha(\alpha))\}$. 

\begin{Lem} \label{l:Aalpha}
Let $\alpha \in (0,1]$.
Then $\underline{b}{}^\alpha_j \in \mathscr{A}_\alpha$ for all $1 \le j < k$, $\overline{b}{}^\alpha_j \in \mathscr{A}_\alpha$ for all $1 \le j < k'$.
We have $\mathscr{L}_\alpha = \mathscr{L}^\times_\alpha \cap \mathscr{A}_\alpha^*$ and $\mathscr{L}'_\alpha = \mathscr{L}^{\prime\times}_\alpha \cap \mathscr{A}_\alpha^*$.
\end{Lem}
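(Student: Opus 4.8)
The plan is to prove the three assertions in order, beginning with the claim that the orbit digits all lie in the restricted alphabet $\mathscr{A}_\alpha$, and then using this to identify the restricted languages as the intersections of the extended languages with $\mathscr{A}_\alpha^*$.

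First I would establish $\underline{b}{}^\alpha_j \in \mathscr{A}_\alpha$ for $1 \le j < k$ and $\overline{b}{}^\alpha_j \in \mathscr{A}_\alpha$ for $1 \le j < k'$. The point is that for these indices the orbit points $T_\alpha^{j-1}(\alpha-1)$ (resp.\ $T_\alpha^{j-1}(\alpha)$ for $j\ge 2$, and $\alpha$ itself for $j=1$) are all negative, except that $T_\alpha^0(\alpha) = \alpha \ge 0$. For the negative orbit points, the digit has $\varepsilon = -1$, so it automatically lies in $\mathscr{A}_-$ with $d \ge 2$; what must be shown is the upper bound $d \le d_\alpha(\alpha)+1$, i.e.\ that the digit is $\preceq {}^{(W)}\overline{b}{}^\alpha_1 = (-1:d_\alpha(\alpha)+1)$. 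Since the map $x \mapsto (\varepsilon(x):d_\alpha(x))$ is monotone for $\preceq$ (as recalled in Section~\ref{sec:basic-noti-notat}), it suffices to bound the orbit points away from $-1$; concretely, all orbit points in question lie in $(\eta_v - 1, \alpha)$ when $\alpha \in \Gamma_v$ by~\eqref{e:gtetav} of Lemma~\ref{l:Gammav} (and an analogous argument via Lemma~\ref{l:notinGamma} and Corollary~\ref{c:altorder} handles $\alpha \in (0,1]\setminus\Gamma$, using $a_{[n,\infty)} \le_{\mathrm{alt}} a_{[1,\infty)}$), and $\eta_v - 1 \ge g^2 - 1 > -1$ suffices to force $d_\alpha(\text{orbit point}) \le d_\alpha(\alpha)+1$. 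Actually the sharp comparison is cleaner: $M_{(-1:2)}^{d_\alpha(\alpha)}$ not being applicable (Lemma~\ref{l:2bounded}) together with $T_\alpha^{j-1}(\alpha-1) \ge \alpha-1$ and the order-preserving action of $M_{(-1:2)}$ on negatives pins the digit down. For $j=1$ on the $\alpha$-side, $\overline{b}{}^\alpha_1$ is in $\mathscr{A}_\alpha$ by the very definition of $\mathscr{A}_\alpha$. The alternative clean description $\mathscr{A}_\alpha = \{(-1:d') \mid 2 \le d' \le d_\alpha(\alpha)+1\} \cup \{(+1:d_\alpha(\alpha))\}$ is just unwinding $\overline{b}{}^\alpha_1 = (+1:d_\alpha(\alpha))$ and ${}^{(W)}\overline{b}{}^\alpha_1 = (-1:d_\alpha(\alpha)+1)$, using $\overline{b}{}^\alpha_1 \preceq (-1:d')$ iff $1/d_\alpha(\alpha) \ge -1/d'$, which always holds, so the lower letter in the $\mathscr{A}_-$-range is $(-1:2)$.

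Next, for the language identities, the inclusion $\mathscr{L}_\alpha \subseteq \mathscr{L}^\times_\alpha \cap \mathscr{A}_\alpha^*$ is immediate from comparing the defining expressions: $U_{\alpha,1}, U_{\alpha,2}$ are built from prefixes $\underline{b}{}^\alpha_{[1,j]}$, $\overline{b}{}^\alpha_{[1,j]}$ which by the first part lie in $\mathscr{A}_\alpha^*$, while $U_{\alpha,3}, U_{\alpha,4}$ differ from $\widetilde U_{\alpha,3}, \widetilde U_{\alpha,4}$ only in that the extra letter $a$ is restricted to $\mathscr{A}_-$ with $\underline{b}{}^\alpha_j \prec a \preceq {}^{(W)}\overline{b}{}^\alpha_1$ rather than to $\mathscr{A}$ with $\underline{b}{}^\alpha_j \prec a \prec \overline{b}{}^\alpha_1$; since the $a$'s allowed in $U_{\alpha,3}$ are precisely those in $\widetilde U_{\alpha,3}$ that happen to lie in $\mathscr{A}_\alpha$ (here one checks $a \preceq {}^{(W)}\overline{b}{}^\alpha_1$ is the condition $a \in \mathscr{A}_\alpha$ given $a \succ \underline{b}{}^\alpha_j$ and $a \in \mathscr{A}_-$, and that $a \preceq {}^{(W)}\overline{b}{}^\alpha_1 \prec \overline{b}{}^\alpha_1$ lands inside the $\widetilde U$ range), we get $U_{\alpha,3} = \widetilde U_{\alpha,3} \cap \mathscr{A}_\alpha^*$ modulo the last-block subtlety when $\alpha \in \Gamma$, and similarly for $U_{\alpha,4}$. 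For the reverse inclusion $\mathscr{L}^\times_\alpha \cap \mathscr{A}_\alpha^* \subseteq \mathscr{L}_\alpha$, take $w \in \mathscr{L}^\times_\alpha$ with all letters in $\mathscr{A}_\alpha$ and factor it according to the definition of $\mathscr{L}^\times_\alpha$ as a concatenation of blocks from $\widetilde U_{\alpha,3}$ and from $U_{\alpha,1} U_{\alpha,2}^* \widetilde U_{\alpha,4}$; each block's final extra letter is then forced to lie in $\mathscr{A}_\alpha$, hence in $\mathscr{A}_-$ with $a \preceq {}^{(W)}\overline{b}{}^\alpha_1$, so the block actually lies in $U_{\alpha,3}$ resp.\ $U_{\alpha,4}$, and thus $w \in \mathscr{L}_\alpha$. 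The identity $\mathscr{L}'_\alpha = \mathscr{L}^{\prime\times}_\alpha \cap \mathscr{A}_\alpha^*$ then follows by right-multiplying by $U_{\alpha,1} U_{\alpha,2}^* \subseteq \mathscr{A}_\alpha^*$.

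The main obstacle I anticipate is the bookkeeping around the ``last block'' when $\alpha \in \Gamma$: the definitions of $\widetilde U_{\alpha,3}, \widetilde U_{\alpha,4}$ include the special terms $\underline{b}{}^\alpha_{[1,k)}\,a$ and $\overline{b}{}^\alpha_{[1,k')}\,a$ with $a \in \mathscr{A}_+$, $a \prec \overline{b}{}^\alpha_1$, and one must check that such $a$ (being in $\mathscr{A}_+$ with $a \prec \overline{b}{}^\alpha_1 = (+1:d_\alpha(\alpha))$, forcing $d \ge d_\alpha(\alpha)+1$) never lies in $\mathscr{A}_\alpha$ — so these terms are simply absent after intersecting with $\mathscr{A}_\alpha^*$, which is exactly why $U_{\alpha,3}, U_{\alpha,4}$ omit them. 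Making the factorization argument for the reverse inclusion precise when such last blocks could in principle appear is the one spot requiring care; everything else is direct comparison of definitions using Lemma~\ref{l:Gammav}, Lemma~\ref{l:2bounded}, and the monotonicity of the digit map.
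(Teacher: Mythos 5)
Your proof has a genuine gap in the first part (the digit bound), although your treatment of the language identities is essentially correct and matches what the paper leaves as ``an immediate consequence of the definitions.''

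The problematic step is where you argue that the digit bound $\underline{b}{}^\alpha_j \preceq {}^{(W)}\overline{b}{}^\alpha_1 = (-1:d_\alpha(\alpha)+1)$ follows from the orbit points lying in $(\eta_v - 1, \alpha)$. You have the direction of the bound backwards. For a negative orbit point $x$, the digit map $x \mapsto d_\alpha(x) = \lfloor |1/x| + 1 - \alpha\rfloor$ \emph{increases} as $x$ approaches $0$ from below and \emph{decreases} as $x$ approaches $-1$. A lower bound $x > \eta_v - 1$ therefore bounds $d_\alpha(x)$ from \emph{below}, not from above; to force $d_\alpha(x) \le d_\alpha(\alpha) + 1$ one would need an upper bound on $x$, i.e.\ a bound $|x| \ge 1/(d_\alpha(\alpha)+1+\alpha)$ keeping the orbit away from~$0$, and~\eqref{e:gtetav} does not provide that. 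Your fallback appeal to Lemma~\ref{l:2bounded} does not repair this either: that lemma only forbids long runs of $(-1:2)$, which is independent of the magnitude of the non-$(-1:2)$ digits that occur.

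What the paper actually uses is the characteristic-sequence machinery. Writing $a_{[1,\infty)}$ (resp.\ $a_{[1,2\ell+1]}$ when $\alpha \in \Gamma_v$) for the characteristic sequence of $\alpha - 1$ (resp.\ of $v$), Theorem~\ref{t:endpoints} gives $\overline{b}{}^\alpha_1 = {}^{(W)}(-1:2+a_1) = (+1:1+a_1)$, hence $a_1 = d_\alpha(\alpha) - 1$. The alternating-order condition---$a_{[n,\infty)} \le_{\mathrm{alt}} a_{[1,\infty)}$ for $\alpha \notin \Gamma$, and the defining inequalities of $\mathscr{F}$ for $\alpha \in \Gamma_v$---then gives $a_n \le a_1$ for all~$n$ in the relevant range. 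Since, for orbits that stay negative, $T_\alpha$ and $T_0$ agree, the relevant $\alpha$-expansion blocks are products of $(-1:2)$'s and $(-1:2 + a_{2j})$'s with $a_{2j} \le a_1 = d_\alpha(\alpha) - 1$, so every digit lies in $\{(-1:d') : 2 \le d' \le d_\alpha(\alpha)+1\} \subset \mathscr{A}_\alpha$; the same bound applies to $\overline{b}{}^\alpha_{[2,k')}$ via the shifted characteristic sequence $a_{[2,\infty)}$ (respectively $a_{[2,2\ell+1]}$). You mention the alternating-order tools in passing for the $\alpha \notin \Gamma$ case, and that is indeed the argument that carries both cases; but your primary argument for $\alpha \in \Gamma_v$ does not establish the upper bound on the digits.

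The second half of your proposal---comparing $U_{\alpha,3},U_{\alpha,4}$ with $\widetilde U_{\alpha,3},\widetilde U_{\alpha,4}$, observing that the $\mathscr{A}_+$-letters $a \prec \overline{b}{}^\alpha_1$ in the ``last block'' terms never lie in $\mathscr{A}_\alpha$, and deducing the language identities---is a correct elaboration of what the paper declares immediate, and needs no changes.
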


\begin{proof} 
Let first $\alpha \in (0,1] \setminus \Gamma$, and $a_{[1,\infty)}$ be the characteristic sequence of~$\alpha-1$. 
We have $a_1 = d_\alpha(\alpha)-1$ since $(+1:d_\alpha(\alpha)) = \overline{b}{}^\alpha_1 = {}^{(W)}(-1:2+a_1) = (+1:1+a_1)$ by Theorem~\ref{t:endpoints}.
Moreover, Theorem~\ref{t:endpoints} implies that $a_n \le a_1$ for all $n \ge 1$ and that $a_{[2,\infty)}$ is the characteristic sequence of $\overline{b}{}^\alpha_{[2,\infty)}$, thus $\underline{b}{}^\alpha_{[1,\infty)} \in \mathscr{A}_\alpha^\omega$ and $\overline{b}{}^\alpha_{[1,\infty)} \in \mathscr{A}_\alpha^\omega$.

Let now $\alpha \in \Gamma_v$, and $a_{[1,2\ell+1]}$ be the characteristic sequence of $v \in \mathscr{F}$.
If $\ell = 0$, then we have $\underline{b}{}^\alpha_{[1,k)} = (-1:2)^{a_1-1}$, $\overline{b}{}^\alpha_{[1,k')} = \overline{b}{}^\alpha_1 = (+1:a_1)$, and these words are in~$\mathscr{A}_\alpha^*$.
If $\ell \ge 1$, then $a_1 = d_\alpha(\alpha)-1$ as in the case $\alpha \in (0,1] \setminus \Gamma$.
Again, we have $a_n \le a_1$ for all $1 \le n \le 2\ell+1$, thus $\underline{b}{}^\alpha_{[1,k)} \in \mathscr{A}_\alpha^*$ and $\overline{b}{}^\alpha_{[1,k')} \in \mathscr{A}_\alpha^*$.

The equations $\mathscr{L}_\alpha = \mathscr{L}_\alpha^\times \cap \mathscr{A}_\alpha^*$ and $\mathscr{L}'_\alpha = \mathscr{L}^{\prime\times}_\alpha \cap \mathscr{A}_\alpha^*$ are now immediate consequences of the definitions.
\end{proof}

\begin{Lem} \label{l:Psialpha}
For any $\alpha \in (0,1]$, we have $\Psi_\alpha = \overline{\Psi^\times_\alpha}$ and $\Psi'_\alpha = \overline{\Psi^{\prime\times}_\alpha}$.
\end{Lem}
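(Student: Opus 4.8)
The plan is to deduce both identities from two facts, exploiting that $\mathscr{L}^\times_\alpha$ is the free monoid on the set $X:=\widetilde{U}_{\alpha,3}\cup U_{\alpha,1}U_{\alpha,2}^*\widetilde{U}_{\alpha,4}$ of ``blocks'' (hence closed under concatenation), while by Lemma~\ref{l:Aalpha} the submonoid generated by those blocks that lie in $\mathscr{A}_\alpha^*$ is precisely $\mathscr{L}_\alpha$, i.e.\ $X\cap\mathscr{A}_\alpha^*=U_{\alpha,3}\cup U_{\alpha,1}U_{\alpha,2}^* U_{\alpha,4}$. The two facts are: (i)~$\bigl[0,\tfrac1{d_\alpha(\alpha)+1}\bigr]\subseteq\overline{\Psi^\times_\alpha}$; and (ii)~every $w\in\mathscr{L}^\times_\alpha$ admits a factorization $N_w\cdot 0=N_{w'}\cdot z$ with $w'\in\mathscr{L}_\alpha$ and $z\in\bigl[0,\tfrac1{d_\alpha(\alpha)+1}\bigr]$. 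Granting them: by (ii), $\Psi^\times_\alpha\subseteq\bigcup_{w'\in\mathscr{L}_\alpha}N_{w'}\cdot\bigl[0,\tfrac1{d_\alpha(\alpha)+1}\bigr]$, hence $\overline{\Psi^\times_\alpha}\subseteq\Psi_\alpha$; conversely, for $w\in\mathscr{L}_\alpha\subseteq\mathscr{L}^\times_\alpha$ and any $u\in\mathscr{L}^\times_\alpha$ one has $uw\in\mathscr{L}^\times_\alpha$, whence $N_w\cdot\overline{\Psi^\times_\alpha}=\overline{\{N_{uw}\cdot 0\mid u\in\mathscr{L}^\times_\alpha\}}\subseteq\overline{\Psi^\times_\alpha}$, so with (i) we get $N_w\cdot\bigl[0,\tfrac1{d_\alpha(\alpha)+1}\bigr]\subseteq\overline{\Psi^\times_\alpha}$ and therefore $\Psi_\alpha\subseteq\overline{\Psi^\times_\alpha}$; thus $\Psi_\alpha=\overline{\Psi^\times_\alpha}$. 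The primed identity follows in the same way once any $w\in\mathscr{L}^{\prime\times}_\alpha=\mathscr{L}^\times_\alpha\,U_{\alpha,1}U_{\alpha,2}^*$ is written $w=w_0u$ with $w_0\in\mathscr{L}^\times_\alpha$, $u\in U_{\alpha,1}U_{\alpha,2}^*\subseteq\mathscr{A}_\alpha^*$ (Lemma~\ref{l:Aalpha}): applying (ii) to $w_0$ gives $N_w\cdot 0=N_uN_{w_0}\cdot 0=N_{w'u}\cdot z$ with $w'u\in\mathscr{L}_\alpha\,U_{\alpha,1}U_{\alpha,2}^*=\mathscr{L}'_\alpha$, and for $w\in\mathscr{L}'_\alpha$, $u_0\in\mathscr{L}^\times_\alpha$ one has $u_0w\in\mathscr{L}^{\prime\times}_\alpha$; so $\Psi'_\alpha=\overline{\Psi^{\prime\times}_\alpha}$ as well.

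Fact (ii) is the routine part. Factor $w=\beta_1\cdots\beta_m$ into blocks $\beta_i\in X$. Inspecting the definitions of $\widetilde{U}_{\alpha,3},\widetilde{U}_{\alpha,4}$ and using that the prefixes $\underline{b}{}^\alpha_{[1,j)}$, $\overline{b}{}^\alpha_{[1,j)}$ and the words of $U_{\alpha,1}U_{\alpha,2}^*$ consist of letters in $\mathscr{A}_\alpha$ (Lemma~\ref{l:Aalpha}), one sees that every letter of every block lies in $\mathscr{A}_\alpha$ except possibly the last letter of a block, and that such a last letter, when not in $\mathscr{A}_\alpha$, is necessarily of the form $(+1:d)$ with $d\ge d_\alpha(\alpha)+1$ (the conditions $\underline{b}{}^\alpha_j\prec a\prec\overline{b}{}^\alpha_1$ and $\overline{b}{}^\alpha_j\prec a\prec\overline{b}{}^\alpha_1$ force any negative last letter back into $\mathscr{A}_\alpha$, and the extra blocks $\underline{b}{}^\alpha_{[1,k)}a$, $\overline{b}{}^\alpha_{[1,k')}a$ also end in such a positive letter). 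Let $i^*$ be the largest index with $\beta_{i^*}$ ending in a ``large'' positive letter $(+1:d)$ (put $i^*=0$ if there is none), and write $\beta_{i^*}=u_{i^*}(+1:d)$ with $u_{i^*}\in\mathscr{A}_\alpha^*$. Since every $N_a$, $a\in\mathscr{A}$, preserves $[0,1]$ (Lemma~\ref{l:positivemeasure}), the point $N_{u_{i^*}}N_{\beta_1\cdots\beta_{i^*-1}}\cdot 0$ lies in $[0,1]$, hence $z:=N_{\beta_1\cdots\beta_{i^*}}\cdot 0\in N_{(+1:d)}\cdot[0,1]=\bigl[\tfrac1{d+1},\tfrac1d\bigr]\subseteq\bigl[0,\tfrac1{d_\alpha(\alpha)+1}\bigr]$ (with $z=0$ if $i^*=0$). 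By maximality of $i^*$ the blocks $\beta_{i^*+1},\dots,\beta_m$ all lie in $X\cap\mathscr{A}_\alpha^*$, so $w':=\beta_{i^*+1}\cdots\beta_m\in\mathscr{L}_\alpha$ and $N_w\cdot 0=N_{w'}\cdot z$, as required.

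Fact (i) is the main obstacle: one must show that the whole interval $\bigl[0,\tfrac1{d_\alpha(\alpha)+1}\bigr]$ lies in the closure of the (in general Cantor-like) set $\Psi^\times_\alpha=\{N_w\cdot 0\mid w\in\mathscr{L}^\times_\alpha\}$. The plan is to realize every $y\in\bigl[0,\tfrac1{d_\alpha(\alpha)+1}\bigr]$ as a limit of second coordinates of genuine $\mathcal{T}_\alpha$-orbit points, rerunning the argument in the proof of Lemma~\ref{l:positivemeasure}: choose $x_n\in(0,\alpha]$ with $T_1^{n-1}(x_n)\le\tfrac1{d_\alpha(\alpha)+1}$, $T_1^n(x_n)\le\tfrac1{d_\alpha(\alpha)+1}$ and $\mathcal{T}_1^n(x_n,0)\to(\,\cdot\,,y)$; by Lemma~\ref{l:1alpha} there is $m_n\ge1$ with $\mathcal{T}_1^n(x_n,0)=\mathcal{T}_\alpha^{m_n}(x_n,0)$, whose second coordinate is $N_{w_n}\cdot 0$ for $w_n$ the length-$m_n$ prefix of the $\alpha$-expansion of~$x_n$. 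One then verifies $w_n\in\mathscr{L}^\times_\alpha$, which gives $y\in\overline{\Psi^\times_\alpha}$. For $\alpha\in(0,1]\setminus\Gamma$ or $\alpha=\chi_v$ this membership is exactly the content of Lemma~\ref{l:language} (there $\mathscr{L}^{\prime\times}_\alpha$ is \emph{the} language of $\alpha$-expansions); for a general $\alpha\in\Gamma_v$ one argues separately, using that $\mathscr{L}^\times_\alpha$, the points $N_w\cdot 0$ and the two ``critical'' cylinders $\Delta_\alpha(\underline{b}{}^\alpha_{[1,|v|]})$, $\Delta_\alpha(\overline{b}{}^\alpha_{[1,|\widehat{v}|]})$ depend only on $v$, so one may transport the computation to $\alpha=\chi_v$ (for which $T_\alpha^{|v|}(\chi_v-1)=T_\alpha^{|\widehat{v}|}(\chi_v)=0$ and Lemma~\ref{l:Jw} applies), or equivalently arrange the prefixes $w_n$ to avoid precisely those tails on which $\mathscr{L}^\times_\alpha$ departs from the true $\alpha$-language. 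This bookkeeping — matching admissibility of $\alpha$-expansion prefixes with membership in $\mathscr{L}^\times_\alpha$ uniformly in $\alpha$ — is where the real work lies; the rest of the proof is formal.
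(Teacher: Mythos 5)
Your overall skeleton — establish $\bigl[0,\tfrac1{d_\alpha(\alpha)+1}\bigr]\subseteq\overline{\Psi^\times_\alpha}$ (your fact (i)) and a factorization $N_w\cdot 0 = N_{w'}\cdot z$ with $w'\in\mathscr{L}_\alpha$, $z\in\bigl[0,\tfrac1{d_\alpha(\alpha)+1}\bigr]$ (your fact (ii)), then close up both inclusions via the monoid structure of $\mathscr{L}^\times_\alpha$ — is exactly the structure of the paper's proof, and your fact (ii) is proved correctly by essentially the same last-non-$\mathscr{A}_\alpha$-letter split the paper uses. But your treatment of fact (i) is genuinely incomplete, and you acknowledge as much: the plan to rerun Lemma~\ref{l:positivemeasure} and then check that each truncation $w_n$ of an $\alpha$-expansion lies in $\mathscr{L}^\times_\alpha$ (not merely in $\mathscr{L}^{\prime\times}_\alpha$, which is all Lemma~\ref{l:language} gives) is left open, with the decisive bookkeeping deferred to a vague ``transport to $\chi_v$'' or ``arrange the prefixes to avoid the bad tails.'' That is the whole content of the claim for the unprimed side, so as written the proof does not go through.

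The paper closes this gap with a short observation you missed, and it is worth internalizing because it avoids all the RCF-orbit bookkeeping. One already knows $\bigl[0,\tfrac1{d_\alpha(\alpha)+1}\bigr]\subseteq\overline{\Psi^{\prime\times}_\alpha}$ from Lemma~\ref{l:positivemeasure} together with Corollary~\ref{c:increasing} (no fresh limiting argument needed). Now take any $w\in\mathscr{L}^{\prime\times}_\alpha$ with $N_w\cdot 0$ in the \emph{open} interval $\bigl(0,\tfrac1{d_\alpha(\alpha)+1}\bigr)$: since $N_{w_{[1,|w|)}}\cdot 0\in[0,1]$, writing the last letter as $a$, we have $N_w\cdot 0=N_a\cdot y$ with $y\in[0,1]$, and $N_{(-1:d)}\cdot[0,1]=\bigl[\tfrac1d,\tfrac1{d-1}\bigr]$, $N_{(+1:d)}\cdot[0,1]=\bigl[\tfrac1{d+1},\tfrac1d\bigr]$, so $N_a\cdot y<\tfrac1{d_\alpha(\alpha)+1}$ forces $a\notin\mathscr{A}_\alpha$. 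By Lemma~\ref{l:Ldecomposition} and Lemma~\ref{l:Aalpha}, only the $\mathscr{L}^\times_\alpha$-piece of the partition ends in a letter outside $\mathscr{A}_\alpha$, so $w\in\mathscr{L}^\times_\alpha$. Taking closures, $\bigl[0,\tfrac1{d_\alpha(\alpha)+1}\bigr]\subseteq\overline{\Psi^\times_\alpha}$. Your fact (ii) and the concatenation argument then finish exactly as you wrote. In short: you identified the right structure and correctly proved the easy half, but you left the hard half as a sketch; the missing step is the endpoint-of-interval observation about the last letter, which is both the key idea and a two-line argument.
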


\begin{proof}
We know from Lemma~\ref{l:positivemeasure} and Corollary~\ref{c:increasing} that $\big[0,\frac{1}{d_\alpha(\alpha)+1}\big] \subset \overline{\Psi^{\prime\times}_\alpha}$. 
The last letter of any $w \in \mathscr{L}^{\prime\times}_\alpha$ with $N_w \cdot 0 \in \big(0,\frac{1}{d_\alpha(\alpha)+1}\big)$ is not in~$\mathscr{A}_\alpha$, thus $w \in \mathscr{L}^\times_\alpha$ by Lemmas~\ref{l:Ldecomposition} and~\ref{l:Aalpha}.
This implies that $\big[0,\frac{1}{d_\alpha(\alpha)+1}\big] \subset \overline{\Psi^\times_\alpha}$.
Since $\mathscr{L}^\times_\alpha\, \mathscr{L}^{\prime\times}_\alpha = \mathscr{L}^{\prime\times}_\alpha$ and $\mathscr{L}'_\alpha \subset \mathscr{L}^{\prime\times}_\alpha$, we obtain that 
$N_w \cdot \big[0, \tfrac{1}{d_\alpha(\alpha)+1}\big] \subset \overline{\Psi^{\prime\times}_\alpha}$ for all $w \in \mathscr{L}'_\alpha$, thus $\Psi'_\alpha \subseteq \overline{\Psi^{\prime\times}_\alpha}$.
For the other inclusion, write any $w' \in \mathscr{L}^{\prime\times}_\alpha$ as $w' = u w$, with $w \in \mathscr{A}_\alpha^*$ and $u$ empty or ending with a letter in $\mathscr{A} \setminus \mathscr{A}_\alpha$.
Then we have $u \in \mathscr{L}^\times_\alpha$ by Lemmas~\ref{l:Ldecomposition} and~\ref{l:Aalpha}, and $w \in \mathscr{L}^{\prime\times}_\alpha \cap \mathscr{A}_\alpha^* = \mathscr{L}'_\alpha$, thus $N_{w'} \cdot 0 = N_w N_u \cdot 0 \in N_w \cdot \big[0, \tfrac{1}{d_\alpha(\alpha)+1}\big] \subset \Psi'_\alpha$.
Since $\Psi'_\alpha$ is closed, this shows that $\Psi'_\alpha = \overline{\Psi^{\prime\times}_\alpha}$.
In the same way, $\mathscr{L}^\times_\alpha\, \mathscr{L}^\times_\alpha = \mathscr{L}^\times_\alpha$ and $\mathscr{L}^\times_\alpha \cap \mathscr{A}_\alpha^* = \mathscr{L}_\alpha$ imply that $\Psi_\alpha = \overline{\Psi^\times_\alpha}$.
\end{proof}

\begin{Lem} \label{l:Ualpha}
For any $\alpha \in (0,1]$, we have
\begin{align*}
\Omega_\alpha & = \mathbb{I}_\alpha \times \Psi_\alpha\ \cup \overline{\bigcup_{1 \le j < k} \big[T_\alpha^j(\alpha-1), \alpha\big] \times N_{\underline{b}{}^\alpha_{[1,j]}} \cdot \Psi_\alpha}\ \cup \overline{\bigcup_{1 \le j < k'} \big[T_\alpha^j(\alpha), \alpha\big] \times N_{\overline{b}{}^\alpha_{[1,j)}} \cdot \Psi'_\alpha} \\
& = \overline{\bigcup_{w \in \mathscr{L}'_\alpha} J^\alpha_{\vphantom{I}w} \times N_w \cdot \big[0, \tfrac{1}{d_\alpha(\alpha)+1}\big]}\,.
\end{align*}
For any $w \in \mathscr{L}'_\alpha$, we have $N_w \cdot \big(0, \tfrac{1}{d_\alpha(\alpha)+1}\big) \cap\, \overline{\bigcup_{w' \in\mathscr{L}'_\alpha \setminus \{w\}} N_{w'} \cdot \big[0, \tfrac{1}{d_\alpha(\alpha)+1}\big]} = \emptyset$. 
\end{Lem}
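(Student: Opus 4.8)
The plan is to prove the two assertions of Lemma~\ref{l:Ualpha} separately, deriving both from results already in hand. For the first (the set equation for~$\Omega_\alpha$), I would start from Proposition~\ref{p:Omega}, which expresses $\bigcup_{n\ge0}\mathcal{T}_\alpha^n([\alpha-1,\alpha)\times\{0\})$ in terms of $\Psi^\times_\alpha$ and $\Psi^{\prime\times}_\alpha$. Taking closures and using that $\Omega_\alpha$ is by definition the closure of that orbit union, together with Lemma~\ref{l:Psialpha} (which gives $\Psi_\alpha=\overline{\Psi^\times_\alpha}$, $\Psi'_\alpha=\overline{\Psi^{\prime\times}_\alpha}$), yields the first displayed form. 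For the passage between $N_{\underline b{}^\alpha_{[1,j]}}\cdot\Psi_\alpha$ and $N_{\overline b{}^\alpha_{[1,j)}}\cdot\Psi'_\alpha$ versus the third description via $\mathscr{L}'_\alpha$, I would invoke the decomposition of $\mathscr{L}'_\alpha=\mathscr{L}_\alpha U_{\alpha,1} U_{\alpha,2}^*$ exactly as in Lemma~\ref{l:Ldecomposition} (which was stated for $\mathscr{L}^{\prime\times}_\alpha$ but goes through verbatim for $\mathscr{L}'_\alpha$, since Lemma~\ref{l:Aalpha} gives $\mathscr{L}'_\alpha=\mathscr{L}^{\prime\times}_\alpha\cap\mathscr{A}_\alpha^*$ and these factorizations respect the alphabet restriction), and the fact $N_w\cdot[0,\frac1{d_\alpha(\alpha)+1}]$ has union over $w\in\mathscr{L}_\alpha$ equal to $\Psi_\alpha$ and over $w\in\mathscr{L}'_\alpha$ equal to $\Psi'_\alpha$ by Lemma~\ref{l:Psialpha} (since $\mathscr{L}^\times_\alpha\mathscr{L}^\times_\alpha=\mathscr{L}^\times_\alpha$, $\mathscr{L}^\times_\alpha\mathscr{L}^{\prime\times}_\alpha=\mathscr{L}^{\prime\times}_\alpha$). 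Here one must be careful that $J^\alpha_w$ is $[\alpha-1,\alpha)$, $[T_\alpha^j(\alpha-1),\alpha)$, or $(T_\alpha^j(\alpha),\alpha)$ according to the case, so after taking closures the half-open and open intervals become the closed intervals appearing in the first form, and the index on $\overline b$ shifts from $[1,j]$ to $[1,j)$ precisely because factoring one copy of $U_{\alpha,2}$ to the right in $\mathscr{L}'_\alpha=\mathscr{L}_\alpha U_{\alpha,1} U_{\alpha,2}^*$ leaves a word ending in $\mathscr{L}_\alpha U_{\alpha,1}$, and $N_{\overline b{}^\alpha_{[1,j)}}\cdot\Psi'_\alpha$ absorbs the remaining $U_{\alpha,2}^*$ tail.

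For the second assertion — the disjointness statement $N_w\cdot(0,\frac1{d_\alpha(\alpha)+1})\cap\overline{\bigcup_{w'\in\mathscr{L}'_\alpha\setminus\{w\}}N_{w'}\cdot[0,\frac1{d_\alpha(\alpha)+1}]}=\emptyset$ — the idea is to translate this into a statement about cylinders under the action on the second coordinate and use Lemma~\ref{l:language}. Concretely, $N_w\cdot[0,\frac1{d_\alpha(\alpha)+1}]$ is the image under $N_w$ of the $y$-interval whose preimages trace out the rank-$|w|$ cylinder structure; reading Lemma~\ref{l:language} with the roles of the two coordinates dualized (the $N$-matrices are transposes-inverses of the $M$-matrices, cf.\ \eqref{e:matsForT} and \eqref{e:mu}), the sets $N_w\cdot[0,\frac1{d_\alpha(\alpha)+1}]$, $w\in\mathscr{L}'_\alpha$ of a fixed length tile $[0,\frac12]$ (or the relevant $y$-range) with pairwise disjoint interiors, and interiors of cylinders of different lengths are either nested or disjoint. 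The key point is that if $w'\ne w$ and $N_{w'}\cdot[0,\frac1{d_\alpha(\alpha)+1}]$ met the open set $N_w\cdot(0,\frac1{d_\alpha(\alpha)+1})$ in its interior, then either $w$ is a proper prefix of $w'$ or vice versa or they first differ at some position; the first two are impossible because $\frac1{d_\alpha(\alpha)+1}$ is strictly less than the right endpoint that would be needed for $N_{w}\cdot[0,\frac1{d_\alpha(\alpha)+1}]$ to be subdivided (this is exactly where the ``$+1$'' in $d_\alpha(\alpha)+1$ matters, and it is governed by Lemma~\ref{l:language}'s description of which letters $a$ satisfy $wa\in\mathscr{L}^{\prime\times}_\alpha$), and the third is impossible because distinct first-difference digits give $N$-images with disjoint interiors. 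Taking the closure on the right only adds boundary points, which by openness of the left-hand set do not intersect it.

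The main obstacle I anticipate is the bookkeeping in the second assertion: making rigorous that the family $\{N_w\cdot[0,\frac1{d_\alpha(\alpha)+1}]\}_{w\in\mathscr{L}'_\alpha}$ behaves like a nested system of intervals with controlled overlaps, and in particular that no $w'$ can ``stick into'' the open core of $N_w\cdot[0,\frac1{d_\alpha(\alpha)+1}]$. This requires dualizing Lemma~\ref{l:language} carefully — noting that $N_a\cdot[0,1]\subset[0,1]$ for all $a\in\mathscr{A}$ and that the images $N_a\cdot[0,1]$, $a\in\mathscr{A}$, have pairwise disjoint interiors, so that applying $N_w$ and iterating produces a nested subdivision — and then checking that the specific endpoint $\frac1{d_\alpha(\alpha)+1}=N_{(+1:\infty)^{-1}}$-type value sits at the boundary between those $a$ with $wa\in\mathscr{L}^{\prime\times}_\alpha$ and those without, exactly as in \eqref{e:cylinders}. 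Everything else is formal manipulation of the regular languages $\mathscr{L}_\alpha,\mathscr{L}'_\alpha,\mathscr{L}^\times_\alpha,\mathscr{L}^{\prime\times}_\alpha$ and application of Lemmas~\ref{l:Ldecomposition}, \ref{l:Jw}, \ref{l:language}, \ref{l:Aalpha}, \ref{l:Psialpha} and Proposition~\ref{p:Omega}.
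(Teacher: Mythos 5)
Your treatment of the first claim is essentially the paper's own argument: Proposition~\ref{p:Omega}, closures, Lemma~\ref{l:Psialpha}, and the factorization $\mathscr{L}'_\alpha=\mathscr{L}_\alpha\cup\bigcup_{j}\mathscr{L}_\alpha\,\underline{b}{}^\alpha_{[1,j]}\cup\bigcup_{j}\mathscr{L}'_\alpha\,\overline{b}{}^\alpha_{[1,j]}$. That part is fine.

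The disjointness claim is where you have a genuine gap, and it is exactly at the step you flagged as the ``main obstacle.'' Your plan is to treat $\{N_w\cdot[0,\tfrac{1}{d_\alpha(\alpha)+1}]\}_{w\in\mathscr{L}'_\alpha}$ as a nested cylinder system on the $y$-axis (reading words from the right, since $N_w=N_{w_{|w|}}\cdots N_{w_1}$) and to argue that when $w,w'$ first differ at some position, the corresponding $N$-images already have disjoint interiors. This is false: writing out $N_{(\varepsilon:d)}\cdot y$, one gets $N_{(-1:d)}\cdot[0,1]=[\tfrac1d,\tfrac{1}{d-1}]$ and $N_{(+1:d)}\cdot[0,1]=[\tfrac{1}{d+1},\tfrac1d]$, so in particular
\[
N_{(-1:d_\alpha(\alpha)+1)}\cdot[0,1]\;=\;N_{(+1:d_\alpha(\alpha))}\cdot[0,1]\;=\;\big[\tfrac{1}{d_\alpha(\alpha)+1},\tfrac{1}{d_\alpha(\alpha)}\big].
\]
Both $(-1:d_\alpha(\alpha)+1)={}^{(W)}\overline{b}{}^\alpha_1$ and $(+1:d_\alpha(\alpha))=\overline{b}{}^\alpha_1$ belong to~$\mathscr{A}_\alpha$ and occur as last letters of words in $\mathscr{L}'_\alpha$ (the former ends words in $U_{\alpha,3},U_{\alpha,4}$, the latter is the first letter of every $U_{\alpha,2}$-word), so ``distinct first-difference digits'' does \emph{not} force disjoint $N$-images. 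Worse, even if one restricts to $[0,\tfrac{1}{d_\alpha(\alpha)+1}]$, the preceding part of the word maps that interval into $[\tfrac{1}{d_\alpha(\alpha)+1},1]$, and one can compute (e.g.\ for $d_\alpha(\alpha)=2$) that $N_{(-1:3)}\cdot[\tfrac13,1]=[\tfrac38,\tfrac12]$ and $N_{(+1:2)}\cdot[\tfrac13,1]=[\tfrac13,\tfrac37]$ overlap substantially. So the disjointness you need is not a purely one-dimensional/combinatorial fact about these intervals; it depends on which words actually lie in $\mathscr{L}'_\alpha$ in a way your argument does not capture.

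The paper sidesteps this by working two-dimensionally. It first reduces (for $\alpha\in\Gamma_v$) to $\alpha=\chi_v$ or $\alpha\notin\Gamma$, so that Lemma~\ref{l:Jw} gives $\mathcal{T}_\alpha^{|w|}\big(\Delta_\alpha(w)\times[0,\tfrac{1}{d_\alpha(\alpha)+1}]\big)=J^\alpha_w\times N_w\cdot[0,\tfrac{1}{d_\alpha(\alpha)+1}]$ for all $w\in\mathscr{L}'_\alpha$. Then the a.e.\ bijectivity of $\mathcal{T}_\alpha$ on $\Omega_\alpha$ (Lemma~\ref{l:bijective}), applied to the rectangles $\Delta_\alpha(w)\times[0,\tfrac{1}{d_\alpha(\alpha)+1}]$ (which are genuinely disjoint in $x$), forces the images $J^\alpha_w\times N_w\cdot[0,\tfrac{1}{d_\alpha(\alpha)+1}]$ to be $\mu$-disjoint — the one-dimensional-looking conclusion is then read off by projecting to the $y$-axis, using $(0,\alpha]\subseteq J^\alpha_w$. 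The only one-dimensional ingredient needed is the easy inclusion $N_a\cdot[0,1]\subset[\tfrac{1}{d_\alpha(\alpha)+1},1]$ for $a\in\mathscr{A}_\alpha$, which handles the cross-length (suffix) case. Your proposal is missing exactly this invocation of bijectivity on the two-dimensional domain; without it, the alphabet collision noted above breaks your cylinder argument.
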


\begin{proof}
The first equation follows from Proposition~\ref{p:Omega} and Lemma~\ref{l:Psialpha}.
The decomposition $\mathscr{L}'_\alpha = \mathscr{L}_\alpha \cup\, \bigcup_{1 \le j < k} \mathscr{L}_\alpha\, \underline{b}{}^\alpha_{[1,j]} \cup\, \bigcup_{1 \le j < k'} \mathscr{L}'_\alpha\, \overline{b}{}^\alpha_{[1,j]}$ gives the second equation.

To show the disjointness of $N_w \cdot \big(0, \tfrac{1}{d_\alpha(\alpha)+1}\big)$ and $\overline{\bigcup_{w' \in\mathscr{L}'_\alpha \setminus \{w\}} N_{w'} \cdot \big[0, \tfrac{1}{d_\alpha(\alpha)+1}\big]}$, note first that $\alpha \in \Gamma_v$, $v \in \mathscr{F}$, implies that $d_\alpha(\alpha) = d_{\chi_v}(\chi_v)$ and $\mathscr{L}'_\alpha = \mathscr{L}'_{\chi_v}$.
Therefore, we can assume that $\alpha = \chi_v$ or $\alpha \in (0,1] \setminus \Gamma$.
Then Lemma~\ref{l:Jw} yields that 
\begin{equation} \label{e:invw}
\mathcal{T}_\alpha^{|w|}\big(\Delta_\alpha(w) \times \big[0, \tfrac{1}{d_\alpha(\alpha)+1}\big]\big) = J^\alpha_w \times N_w \cdot \big[0, \tfrac{1}{d_\alpha(\alpha)+1}\big] \qquad \big(w \in \mathscr{L}'_\alpha\big).
\end{equation}
Since $\mathcal{T}_\alpha$ is bijective (up to a set of measure zero) by Lemma~\ref{l:bijective}, the disjointness of the cylinders $\Delta_\alpha(w)$ and $\Delta_\alpha(w')$ yields that $\mu\big(J^\alpha_w \times N_w \cdot \big[0, \tfrac{1}{d_\alpha(\alpha)+1}\big]\, \cap\, J^\alpha_{w'} \times N_{w'} \cdot \big[0, \tfrac{1}{d_\alpha(\alpha)+1}\big]\big) = 0$ for all $w, w' \in \mathscr{L}'_\alpha$ with $|w| = |w'|$, $w \ne w'$.
For all $w, w' \in \mathscr{L}'_\alpha$ with $|w| < |w'|$, we have 
\begin{equation} \label{e:invw2}
\mathcal{T}_\alpha^{|w|}\Big(T_\alpha^{|w'|-|w|}(\Delta_\alpha(w')) \times N_{w'_{[1,|w'|-|w|\,]}} \cdot \big[0, \tfrac{1}{d_\alpha(\alpha)+1}\big]\Big) = J^\alpha_{w'} \times N_{w'} \cdot \big[0, \tfrac{1}{d_\alpha(\alpha)+1}\big]\,.
\end{equation}
The inclusion $N_a \cdot \big[0, 1\big] \subset \big[\tfrac{1}{d_\alpha(\alpha)+1}, 1\big]$ for all $a \in \mathscr{A}_\alpha$ gives that
\[
\Delta_\alpha(w) \times \big[0, \tfrac{1}{d_\alpha(\alpha)+1}\big)\ \cap \overline{\bigcup_{\substack{w'\in\mathscr{L}'_\alpha:\\|w'|>|w|}} T_\alpha^{|w'|-|w|}(\Delta_\alpha(w')) \times N_{w'_{[1,|w'|-|w|\,]}} \cdot \big[0, \tfrac{1}{d_\alpha(\alpha)+1}\big]} = \emptyset\,.
\]
As $\mathcal{T}_\alpha$ is bijective and continuous $\mu$-almost everywhere, applying $\mathcal{T}_\alpha^{|w|}$ yields that $J^\alpha_w \times N_w \cdot \big[0, \tfrac{1}{d_\alpha(\alpha)+1}\big]$ and $\overline{\bigcup_{w'\in\mathscr{L}'_\alpha:\,|w'|>|w|} J^\alpha_{w'} \times N_{w'} \cdot \big[0, \tfrac{1}{d_\alpha(\alpha)+1}\big]}$ are $\mu$-disjoint.
We have shown that
\[
\mu\bigg(J^\alpha_w \times N_w \cdot \big[0, \tfrac{1}{d_\alpha(\alpha)+1}\big]\ \cap \overline{\bigcup_{\substack{w'\in\mathscr{L}'_\alpha:\\|w'|\ge|w|,\,w'\ne w}} J^\alpha_{w'} \times N_{w'} \cdot \big[0, \tfrac{1}{d_\alpha(\alpha)+1}\big]}\bigg) = 0\,.
\]
Inverting the roles of $w$ and~$w'$, we also obtain for all $w'\in\mathscr{L}'_\alpha$ with $|w'|<|w|$ that $J^\alpha_w \times N_w \cdot \big[0, \tfrac{1}{d_\alpha(\alpha)+1}\big]$ and $J^\alpha_{w'} \times N_{w'} \cdot \big[0, \tfrac{1}{d_\alpha(\alpha)+1}\big]$ are $\mu$-disjoint.
Since $(0, \alpha] \subseteq J^\alpha_w$ for all $w \in \mathscr{L}'_\alpha$, this yields that the intersection of $N_w \cdot \big[0, \tfrac{1}{d_\alpha(\alpha)+1}\big]$ and $\overline{\bigcup_{w'\in\mathscr{L}'_\alpha \setminus \{w\}} N_{w'} \cdot \big[0, \tfrac{1}{d_\alpha(\alpha)+1}\big]}$ has zero Lebesgue measure, thus $N_w \cdot \big(0, \tfrac{1}{d_\alpha(\alpha)+1}\big) \cap\, \overline{\bigcup_{w' \in\mathscr{L}'_\alpha \setminus \{w\}} N_{w'} \cdot \big[0, \tfrac{1}{d_\alpha(\alpha)+1}\big]} = \emptyset$.
\end{proof}

We study now the sets
\begin{equation} \label{e:defUpsilon}
\Xi_{\alpha,n} := \overline{\bigcup_{\substack{w\in\mathscr{L}'_\alpha:\\|w|\ge n}}  J^\alpha_{\vphantom{I}w} \times N_w \cdot \big[0, \tfrac{1}{d_\alpha(\alpha)+1}\big]} \quad (n \ge 0)\,,
\end{equation}
which are obtained from $\Omega_\alpha$ by removing finitely many rectangles. 
In the following, we can and usually do ignore  various sets of measure zero.

\begin{Lem} \label{l:exponentialbound}
Let $\alpha \in (0,1] \setminus \Gamma$ or $\alpha = \chi_v$, $v \in \mathscr{F}$. 
For any $n \ge 0$, we have
\[
\mu(\Xi_{\alpha,n}) \le \mu(\Omega_\alpha)\, \big(\tfrac{d_\alpha(\alpha)}{d_\alpha(\alpha)+\alpha}\big)^n.
\]
\end{Lem}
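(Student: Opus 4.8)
The plan is to reduce the estimate to a single geometric‑decay step, after first turning $\mu(\Xi_{\alpha,n})$ into a genuine (non‑overlapping) sum.

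First I would record an exact formula. Since $\alpha\in(0,1]\setminus\Gamma$ or $\alpha=\chi_v$, Lemmas~\ref{l:Jw} and~\ref{l:language} apply, so by Lemma~\ref{l:Ualpha} one has $\Xi_{\alpha,n}=\overline{\bigcup_{w\in\mathscr{L}'_\alpha,\ |w|\ge n}J^\alpha_w\times N_w\cdot[0,c]}$ with $c:=\tfrac1{d_\alpha(\alpha)+1}$. By the last assertion of Lemma~\ref{l:Ualpha} the open intervals $N_w\cdot(0,c)$, $w\in\mathscr{L}'_\alpha$, are pairwise disjoint, so for each fixed $x$ the vertical section of $\Xi_{\alpha,n}$ is a disjoint union of the intervals $N_w\cdot[0,c]$ over those $w$ with $x\in J^\alpha_w$; integrating first in $y$ gives $\mu(\Xi_{\alpha,n})=\sum_{w\in\mathscr{L}'_\alpha,\ |w|\ge n}\mu(J^\alpha_w\times N_w\cdot[0,c])$. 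By~\eqref{e:invw} (or directly by~\eqref{e:mu}, using $J^\alpha_w=M_w\cdot\Delta_\alpha(w)$ from Lemma~\ref{l:Jw}) each summand equals $\mu(\Delta_\alpha(w)\times[0,c])$, and cylinders of equal length are disjoint; so, with $E_m:=\bigcup_{w\in\mathscr{L}'_\alpha\cap\mathscr{A}^m}\Delta_\alpha(w)$, I obtain $\mu(\Xi_{\alpha,n})=\sum_{m\ge n}\mu(E_m\times[0,c])$, the case $n=0$ reading $\mu(\Omega_\alpha)=\sum_{m\ge0}\mu(E_m\times[0,c])$.

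It then suffices to prove the one‑step bound $\mu(E_{m+1}\times[0,c])\le\tfrac{d_\alpha(\alpha)}{d_\alpha(\alpha)+\alpha}\,\mu(E_m\times[0,c])$ for every $m\ge0$: iterating it $n$ times and summing over the starting index gives the lemma. Since $E_{m+1}\subseteq E_m$, this is equivalent to $\mu((E_m\setminus E_{m+1})\times[0,c])\ge\tfrac{\alpha}{d_\alpha(\alpha)+\alpha}\,\mu(E_m\times[0,c])$. Now $E_m\setminus E_{m+1}$ consists of the $x$ whose first $m$ $\alpha$‑digits lie in $\mathscr{A}_\alpha$ while the $(m{+}1)$‑st does not; partitioning by the first $m$ digits and using that $T_\alpha^m$ acts on $\Delta_\alpha(w)$ as the matrix $M_w$ with image $J^\alpha_w$, this set equals $\bigsqcup_{w\in\mathscr{L}'_\alpha\cap\mathscr{A}^m}M_w^{-1}\cdot(B\cap J^\alpha_w)$, where $B:=\mathbb{I}_\alpha\setminus\bigcup_{a\in\mathscr{A}_\alpha}\Delta_\alpha(a)$; from the explicit description of $\mathscr{A}_\alpha$ and of the cylinder endpoints one checks $B=\mathbb{I}_\alpha\cap[-\tfrac1{d_\alpha(\alpha)+1+\alpha},\,\tfrac1{d_\alpha(\alpha)+\alpha}]$. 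Applying~\eqref{e:mu} termwise, it remains to show, for every $w\in\mathscr{L}'_\alpha$,
\[
\mu\big((B\cap J^\alpha_w)\times N_w\cdot[0,c]\big)\ \ge\ \tfrac{\alpha}{d_\alpha(\alpha)+\alpha}\ \mu\big(J^\alpha_w\times N_w\cdot[0,c]\big).
\]

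This per‑$w$ estimate is the step I expect to be the main obstacle. In the cases at hand every $J^\alpha_w$ is an interval with right endpoint $\alpha$ that contains $(0,\alpha)$ and has left endpoint in $[\alpha-1,0]$; moreover $B\cap(0,\infty)=(0,\tfrac1{d_\alpha(\alpha)+\alpha}]$, and whenever the left endpoint of $J^\alpha_w$ is negative it is in fact $\le-\tfrac1{d_\alpha(\alpha)+1+\alpha}$ (that endpoint lies in a cylinder $\Delta_\alpha(a)$ with $a\in\mathscr{A}_\alpha$), unless $B$ already contains all of $[\alpha-1,0)$. Splitting $J^\alpha_w$ at $0$ and abbreviating $I:=N_w\cdot[0,c]$, the estimate reduces to the two inequalities $\mu\big((0,\tfrac1{d_\alpha(\alpha)+\alpha})\times I\big)\ge\tfrac{\alpha}{d_\alpha(\alpha)+\alpha}\,\mu\big((0,\alpha)\times I\big)$ and $\mu\big((-\tfrac1{d_\alpha(\alpha)+1+\alpha},0)\times I\big)\ge\tfrac{\alpha}{d_\alpha(\alpha)+\alpha}\,\mu\big((\alpha-1,0)\times I\big)$. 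By~\eqref{e:mu} each side here has the form $\log(1+A)$ with $A$ a simple rational function of the data; since $x\mapsto x^{-1}\log(1+x)$ is decreasing (so $\log(1+A_1)/\log(1+A_2)\ge A_1/A_2$ when $A_1\le A_2$, the opposite case being trivial), each inequality reduces to an elementary algebraic one in $A_1/A_2$ which, after inserting $\big(d_\alpha(\alpha)+\alpha\big)\big/\big(d_\alpha(\alpha)+1+\alpha\big)=1-\tfrac1{d_\alpha(\alpha)+1+\alpha}$ and using $\alpha\le1$ and $d_\alpha(\alpha)+\alpha\ge1/\alpha$, becomes linear in one endpoint of $I$ and is settled by the endpoint case, where its value is a nonnegative multiple of $\alpha^2$. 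Adding the two parts yields the per‑$w$ estimate, hence the one‑step bound, hence the lemma.
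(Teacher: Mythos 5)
Your reduction $\mu(\Xi_{\alpha,n}) = \sum_{m\ge n}\mu(E_m\times[0,c])$ is the gap. The set $\Xi_{\alpha,n}$ is defined as the \emph{closure} of $\bigcup_{|w|\ge n}J^\alpha_w\times N_w\cdot[0,c]$, and while the last assertion of Lemma~\ref{l:Ualpha} does say each open interval $N_w\cdot(0,c)$ avoids the closure of the remaining intervals, this does not rule out the closure adding a set of positive measure (the removed intervals in a fat Cantor set enjoy precisely this disjointness-from-closure property, yet the closure of their union is all of $[0,1]$). The asserted identity is equivalent to $\lim_{m\to\infty}\mu(\Xi_{\alpha,m})=0$, which is essentially what Lemma~\ref{l:exponentialbound} is being used to establish; the paper derives the series formula only \emph{afterwards} (see Corollary~\ref{c:approximation}, and ``taking the closure in unions of such intervals does not increase the measure, by Lemma~\ref{l:exponentialbound}'' in the proof of Theorem~\ref{t:shapeOmega}). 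Your one-step bound $\mu(E_{m+1}\times[0,c])\le r\,\mu(E_m\times[0,c])$ with $r=\tfrac{d_\alpha(\alpha)}{d_\alpha(\alpha)+\alpha}$ controls only the successive differences $\mu(\Xi_{\alpha,m})-\mu(\Xi_{\alpha,m+1})$, and summing it gives $\mu(\Xi_{\alpha,n})\le r^n\mu(\Omega_\alpha)+(1-r^n)\lim_m\mu(\Xi_{\alpha,m})$, which is strictly weaker than the claim unless that limit is already known to vanish.

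The paper avoids the issue by never summing the shells: it proves the one-step inequality $\mu(\Xi_{\alpha,n+1})\le r\,\mu(\Xi_{\alpha,n})$ for the closed sets themselves. It shows $\mathcal{T}_\alpha^{-n}(\Xi_{\alpha,n}\setminus\Xi_{\alpha,n+1})=X_{\alpha,n}\times\big[0,\tfrac1{d_\alpha(\alpha)+1}\big]$ --- an honest equality up to measure zero, since $X_{\alpha,n}$ is a \emph{finite} union of rank-$n$ cylinders, so the shell carries no closure residue --- together with the \emph{inclusion} $\mathcal{T}_\alpha^{-n}(\Xi_{\alpha,n+1})\subset X_{\alpha,n}\times\big[\tfrac1{d_\alpha(\alpha)+1},1\big]$, whence $\mu(\Xi_{\alpha,n})\le\mu(X_{\alpha,n}\times[0,1])$. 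A single fiberwise computation, $\min_{x\in\mathbb{I}_\alpha}\int_0^{1/(d_\alpha(\alpha)+1)}\tfrac{dy}{(1+xy)^2}\big/\int_0^1\tfrac{dy}{(1+xy)^2}=\tfrac{\alpha}{d_\alpha(\alpha)+\alpha}$, then finishes --- with no per-$w$ casework at all. Your decomposition of $E_m\setminus E_{m+1}$ via the set $B$ of bad first digits and the endpoint estimates look plausible as computations, but to salvage the shell-by-shell scheme you would need to add an independent argument that $\bigcap_m\Xi_{\alpha,m}$ is $\mu$-null (for instance via ergodicity of $\mathcal{T}_\alpha$), or else switch the key one-step estimate to the paper's ratio for the $\Xi$'s.
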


\begin{proof}
For any $n \ge 0$, we have
\begin{equation} \label{e:diffXi}
\mathcal{T}_\alpha^{-n}\big(\Xi_{\alpha,n} \setminus \Xi_{\alpha,n+1}\big) = \bigcup_{\substack{w\in\mathscr{L}'_\alpha:\\|w|=n}} \mathcal{T}_\alpha^{-n}\Big(J^\alpha_w \times N_w \cdot \big[0, \tfrac{1}{d_\alpha(\alpha)+1}\big]\Big) = X_{\alpha,n} \times \big[0, \tfrac{1}{d_\alpha(\alpha)+1}\big]
\end{equation}
by Lemma~\ref{l:Ualpha} and~\eqref{e:invw}, with $X_{\alpha,n} := \bigcup_{w \in \mathscr{L}'_\alpha:\, |w|=n} \Delta_\alpha(w)$.
For any $w' \in \mathscr{L}'_\alpha$ with $|w'|>n$, we have $T_\alpha^{|w'|-n}(\Delta_\alpha(w')) \subset \Delta_\alpha(w'_{[\,|w'|-n+1,|w'|\,]}) \subset X_{\alpha,n}$.
Therefore, \eqref{e:invw2} implies that 
\[
\mathcal{T}_\alpha^{-n}(\Xi_{\alpha,n+1}) \subset X_{\alpha,n} \times \big[\tfrac{1}{d_\alpha(\alpha)+1}, 1\big]\,.
\]
With Theorem~\ref{t:natext}, we obtain that 
\begin{multline*}
\frac{\mu(\Xi_{\alpha,n} \setminus \Xi_{\alpha,n+1})}{\mu(\Xi_{\alpha,n})} \ge \frac{\mu\big(X_{\alpha,n} \times \big[0, \frac{1}{d_\alpha(\alpha)+1}\big]\big)}{\mu\big(X_{\alpha,n} \times [0,1]\big)} \ge \min_{x\in\mathbb{I}_\alpha} \frac{\int_0^{1/(d_\alpha(\alpha)+1)} \frac{1}{(1+xy)^2}\, dy}{\int_0^1 \frac{1}{(1+xy)^2}\, dy} \\
= \min_{x\in\mathbb{I}_\alpha} \frac{\frac{y}{1+xy}\, \big|_{y=0}^{1/(d_\alpha(\alpha)+1)}}{\frac{y}{1+xy}\,\big|_{y=0}^1} = \min_{x\in\mathbb{I}_\alpha} \frac{1+x}{d_\alpha(\alpha)+1+x} = \frac{\alpha}{d_\alpha(\alpha)+\alpha}\,.
\end{multline*}
This implies that $\mu(\Xi_{\alpha,n+1}) \le \frac{d_\alpha(\alpha)}{d_\alpha(\alpha)+\alpha}\, \mu(\Xi_{\alpha,n})$.
Since $\Xi_{\alpha,0} = \Omega_\alpha$, this proves the lemma.
\end{proof}

\begin{Rmk}
Since $[0,\alpha] \times \Psi'_\alpha \subset \Omega_\alpha$ for $\alpha \in (0,1] \setminus \Gamma$ or $\alpha = \chi_v$, $v \in \mathscr{F}$, Lemma~\ref{l:exponentialbound} implies that the Lebesgue measure of $\overline{\bigcup_{w \in \mathscr{L}'_\alpha:\, |w|\ge n} N_w \cdot \big[0, \tfrac{1}{d_\alpha(\alpha)+1}\big]}$ is at most of the order $\big(\tfrac{d_\alpha(\alpha)}{d_\alpha(\alpha)+\alpha}\big)^n$ for these~$\alpha$.
For $\alpha \in \Gamma_v$, $v \in \mathscr{F}$, we obtain that $\mu(\Xi_{\alpha,n}) \le c_\alpha\, \big(\frac{d_\alpha(\alpha)}{d_\alpha(\alpha)+\chi_v}\big)^n$ for some $c_\alpha > 0$.
A~calculation similar to the proof of Lemma~\ref{l:exponentialbound} shows that we can choose $c_\alpha = \frac{1+\chi_v}{\chi_v\alpha (1+\alpha)}\, \mu(\Omega_{\chi_v})$.
\end{Rmk}

Lemmas~\ref{l:Ualpha} and~\ref{l:exponentialbound} and the estimate $\mu(\Omega_\alpha) \le \mu(\mathbb{I}_\alpha \times [0,1]) = \log\big({1+\frac{1}{\alpha}}\big)$ give the following bound for the error of an approximation of $\mu(\Omega_\alpha)$ by a sum of measures of rectangles which are contained in~$\Omega_\alpha$.

\begin{Cor} \label{c:approximation}
Let $\alpha \in (0,1] \setminus \Gamma$ or $\alpha = \chi_v$, $v \in \mathscr{F}$. 
Then we have, for any $n \ge 0$, 
\[
0\ \le\ \mu(\Omega_\alpha)\ - \sum_{\substack{w\in\mathscr{L}'_\alpha:\\|w|<n}} \mu\big(J^\alpha_w \times N_w \cdot \big[0, \tfrac{1}{d_\alpha(\alpha)+1}\big]\big) \le \big(\tfrac{d_\alpha(\alpha)}{d_\alpha(\alpha)+\alpha}\big)^n\,\log\big(1+\tfrac{1}{\alpha}\big)\,.
\]
\end{Cor}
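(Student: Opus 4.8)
The plan is to assemble Lemma~\ref{l:Ualpha}, Lemma~\ref{l:exponentialbound}, and the crude volume estimate $\mu(\Omega_\alpha)\le\mu(\mathbb{I}_\alpha\times[0,1])$. Abbreviate $R_w := J^\alpha_w \times N_w \cdot \big[0, \tfrac{1}{d_\alpha(\alpha)+1}\big]$ for $w\in\mathscr{L}'_\alpha$, and fix $n\ge0$. Since $\mathscr{L}'_\alpha\subseteq\mathscr{A}_\alpha^*$ with $\mathscr{A}_\alpha$ finite (Lemma~\ref{l:Aalpha}), there are only finitely many $w\in\mathscr{L}'_\alpha$ with $|w|<n$, so $\bigcup_{|w|<n}R_w$ is closed; together with the second equation of Lemma~\ref{l:Ualpha} and $\overline{A\cup B}=\overline A\cup\overline B$ this gives
\[
\Omega_\alpha \;=\; \bigcup_{\substack{w\in\mathscr{L}'_\alpha:\\ |w|<n}} R_w\ \cup\ \Xi_{\alpha,n}\,,
\]
with $\Xi_{\alpha,n}$ as in~\eqref{e:defUpsilon}.

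The first step is to check that the pieces on the right are pairwise $\mu$-disjoint. By the last assertion of Lemma~\ref{l:Ualpha}, $N_w\cdot\big(0,\tfrac{1}{d_\alpha(\alpha)+1}\big)$ is disjoint from the closure of the union of all other vertical factors $N_{w'}\cdot\big[0,\tfrac{1}{d_\alpha(\alpha)+1}\big]$; hence the intersection of $R_w$ with any of the other pieces is contained in $J^\alpha_w$ times the (at most two-point) boundary of $N_w\cdot\big[0,\tfrac{1}{d_\alpha(\alpha)+1}\big]$, and since $\Omega_\alpha$ stays away from the singular line $y=-1/x$ this set is $\mu$-null. Countable additivity therefore yields
\[
\mu(\Omega_\alpha) \;=\; \sum_{\substack{w\in\mathscr{L}'_\alpha:\\ |w|<n}} \mu(R_w)\ +\ \mu(\Xi_{\alpha,n})\,,
\]
and the left-hand inequality in the statement follows at once from $\mu(\Xi_{\alpha,n})\ge0$.

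For the upper bound I would feed this into Lemma~\ref{l:exponentialbound} --- whose hypothesis is precisely the one assumed here --- to get $\mu(\Xi_{\alpha,n})\le\mu(\Omega_\alpha)\,\big(\tfrac{d_\alpha(\alpha)}{d_\alpha(\alpha)+\alpha}\big)^n$, and then bound $\mu(\Omega_\alpha)$ via Lemma~\ref{l:positivemeasure}: from $\Omega_\alpha\subseteq\mathbb{I}_\alpha\times[0,1]$ and the area formula~\eqref{e:mu} evaluated at $x_1=\alpha-1$, $x_2=\alpha$, $y_1=0$, $y_2=1$ one reads off $\mu(\mathbb{I}_\alpha\times[0,1])=\log\frac{1+\alpha}{\alpha}=\log\big(1+\tfrac1\alpha\big)$. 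Chaining these gives $\mu(\Omega_\alpha)-\sum_{|w|<n}\mu(R_w)=\mu(\Xi_{\alpha,n})\le\big(\tfrac{d_\alpha(\alpha)}{d_\alpha(\alpha)+\alpha}\big)^n\log\big(1+\tfrac1\alpha\big)$, as required. I do not anticipate any real obstacle: all the work is already in Lemmas~\ref{l:Ualpha} and~\ref{l:exponentialbound}, and the only point deserving care is the $\mu$-additivity displayed above --- i.e.\ that neither the closure in $\Xi_{\alpha,n}$ nor the overlaps among the finitely many $R_w$ and $\Xi_{\alpha,n}$ carry positive measure --- which is exactly what the interior-disjointness statement closing Lemma~\ref{l:Ualpha}, together with compactness of $\Omega_\alpha$ off the line $y=-1/x$, provides.
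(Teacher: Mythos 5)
Your proof is correct and follows the same route the paper takes: the corollary is stated in the paper without a displayed proof, being noted as an immediate consequence of Lemma~\ref{l:Ualpha}, Lemma~\ref{l:exponentialbound}, and the bound $\mu(\Omega_\alpha)\le\mu(\mathbb{I}_\alpha\times[0,1])=\log(1+1/\alpha)$, and your decomposition $\mu(\Omega_\alpha)=\sum_{|w|<n}\mu(R_w)+\mu(\Xi_{\alpha,n})$ is exactly the missing bookkeeping. One small inaccuracy: $\bigcup_{|w|<n}R_w$ is \emph{not} actually closed, since each $J^\alpha_w$ is a half-open (or open) interval; you only need that it has the same $\mu$-measure as its closure (the boundary segments being $\mu$-null because $\Omega_\alpha$ is bounded away from $y=-1/x$), which is what your additivity computation really uses.
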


\begin{proof}[\textbf{Proof of Theorem~\ref{t:shapeOmega}}]
Equation~\eqref{e:shapeOmega} is proved in Lemma~\ref{l:Ualpha} and implies that the density of the invariant measure $\nu_\alpha$ is continous on any interval $(x,x')$ satisfying $T_{\alpha}^j({\alpha-1}) \not\in (x,x')$ for all $0 \le j < k$ and $T_{\alpha}^j(\alpha) \not\in (x,x')$ for all $0 \le j < k'$.
The equation $\Psi'_\alpha = \overline{\bigcup_{Y \in \mathscr{C}_\alpha} Y}$ follows from $\mathscr{L}'_\alpha = \mathscr{L}_\alpha \cup\, \bigcup_{1 \le j < k} \mathscr{L}_\alpha\, \underline{b}{}^\alpha_{[1,j]} \cup\, \bigcup_{1 \le j < k'} \mathscr{L}'_\alpha\, \overline{b}{}^\alpha_{[1,j]}$ and the compactness of~$\Psi'_\alpha$.
By Lemma~\ref{l:Ualpha}, $N_w \cdot \big(0, \tfrac{1}{d_\alpha(\alpha)+1}\big)$ is disjoint from the rest of the intervals constituting~$\Psi'_\alpha$.
Taking the closure in unions of such intervals does not increase the measure, by Lemma~\ref{l:exponentialbound}.
Therefore, the disjointness of the decomposition $\mathscr{L}'_\alpha = \mathscr{L}_\alpha \cup\, \bigcup_{1 \le j < k} \mathscr{L}_\alpha\, \underline{b}{}^\alpha_{[1,j]} \cup\, \bigcup_{1 \le j < k'} \mathscr{L}'_\alpha\, \overline{b}{}^\alpha_{[1,j]}$ implies that, for any $Y \in \mathscr{C}_\alpha$, the Lebesgue measure of $Y \cap\, \overline{\bigcup_{Y' \in \mathscr{C}_\alpha \setminus \{Y\}} Y'}$ is zero.
Finally, $\Psi'_\alpha = {}^t\hspace{-.1em}E \cdot \Psi_\alpha$ follows from Lemmas~\ref{l:LLprime} and~\ref{l:Psialpha}.
\end{proof}

\section{Evolution of the natural extension along a synchronizing interval} \label{sec:evol-omeg-along}

Given $v \in \mathscr{F}$,  both $\underline{b}{}^\alpha_{[1,|v|\,]}$ and $\overline{b}{}^\alpha_{[1,|\widehat{v}|\,]}$ are invariant within the interval~$\Gamma_v$.  The same is hence   true for $\Psi_\alpha$ and~$\Psi'_\alpha$, which we accordingly denote by $\Psi_v$ and $\Psi'_v$, respectively.
The evolution of the natural extension domain,  and of the entropy, is now straightforward to describe along such an interval.
The following lemma is mainly a rewording of~\eqref{e:shapeOmega}, but addresses the endpoints of~$\Gamma_v$.

\begin{Lem}\label{l:simplerDescr}
Let $v = v_{[1,|v|\,]} \in \mathscr{F}$, $v' = v'_{[1,|\widehat{v}|\,]} = {}^{(W)}\widehat{v}{}^{(-1)}$.
For any $\alpha \in [\zeta_v, \eta_v]$, we have
\[
\Omega_\alpha =  \bigcup_{0 \le j \le |v|} \overline{\big[M_{v_{[1,j]}} \cdot ({\alpha-1}), \alpha\big)} \times N_{v_{[1,j]}} \cdot \Psi_v\ \cup \bigcup_{1 \le j \le |\widehat{v}|} \overline{\big(M_{v'_{[1,j]}} \cdot \alpha, \alpha\big)} \times N_{v'_{[1,j]}} \cdot \Psi'_v\,.
\]
\end{Lem}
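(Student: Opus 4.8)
The plan is to derive Lemma~\ref{l:simplerDescr} from the description of $\Omega_\alpha$ in Theorem~\ref{t:shapeOmega} (equivalently, Lemma~\ref{l:Ualpha}), using Lemma~\ref{l:Gammav} to handle the interior of $\Gamma_v$ and then a separate limiting argument for the two endpoints. First I would treat the open interval: for $\alpha \in (\zeta_v,\eta_v) = \Gamma_v$ (when $|v|\ge1$), Lemma~\ref{l:Gammav} gives $\underline{b}{}^\alpha_{[1,|v|]} = v$ and $\overline{b}{}^\alpha_{[1,|\widehat v|]} = v' = {}^{(W)}\widehat v{}^{(-1)}$, so $k = |v|+1$, $k' = |\widehat v|+1$, $\Psi_\alpha = \Psi_v$, $\Psi'_\alpha = \Psi'_v$, and $T_\alpha^j(\alpha-1) = M_{v_{[1,j]}}\cdot(\alpha-1)$, $T_\alpha^j(\alpha) = M_{v'_{[1,j]}}\cdot\alpha$. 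Substituting these into \eqref{e:shapeOmega} and reindexing the first union to start at $j=0$ (the $j=0$ term being $\mathbb I_\alpha\times\Psi_v = \overline{[M_{v_{[1,0]}}\cdot(\alpha-1),\alpha)}\times N_{v_{[1,0]}}\cdot\Psi_v$, since $v_{[1,0]}$ is the empty word, $M$ of it is the identity, and $M_{\rm id}\cdot(\alpha-1) = \alpha-1$) yields exactly the claimed formula. The case $|v| = 0$ is immediate: there $\Gamma_v = (g,1]$, $k=1$, $k'=2$, $\widehat v = (-1:3)$, $v' = (+1:1)$, and the two unions reduce to $\mathbb I_\alpha\times\Psi_v \cup \overline{(M_{(+1:1)}\cdot\alpha,\alpha)}\times N_{(+1:1)}\cdot\Psi'_v$, matching \eqref{e:shapeOmega}.

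Next I would handle the endpoints $\alpha = \zeta_v$ and $\alpha = \eta_v$, which is the only place where extra care is needed, since there $\underline b$ or $\overline b$ may leave the pattern and $\alpha$ belongs to a different (or no) $\Gamma_w$. The key facts are recorded in Lemma~\ref{l:Gammav}: $M_v\cdot(\eta_v-1) = \eta_v$ and $M_{v'}\cdot\zeta_v = \zeta_v$, with the prefixes $M_{v_{[1,j]}}\cdot(\eta_v-1)$ and $M_{v'_{[1,j]}}\cdot\zeta_v$ strictly negative for $j$ below the relevant length. So for $\alpha = \eta_v$ the set $\bigcup_{0\le j\le|v|}\overline{[M_{v_{[1,j]}}\cdot(\alpha-1),\alpha)}\times N_{v_{[1,j]}}\cdot\Psi_v$ closes up a family of rectangles whose top endpoints $M_{v_{[1,j]}}\cdot(\eta_v-1)$ are exactly the limits from inside $\Gamma_v$ of $T_\alpha^j(\alpha-1)$, and similarly for $\alpha = \zeta_v$; and by Theorem~\ref{t:tauTransc}, $\zeta_v = \eta_{\Theta(v)}$, so the endpoint $\zeta_v$ is simultaneously the right endpoint of the neighboring synchronizing interval $\Gamma_{\Theta(v)}$. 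Thus I would argue that $\Omega_\alpha$, being defined as a closure, is continuous from the appropriate side along $\Gamma_v$ — more precisely, that $\bigcup_{n}\mathcal T_\alpha^n([\alpha-1,\alpha)\times\{0\})$ varies continuously in Hausdorff distance as $\alpha$ runs over the closed interval $[\zeta_v,\eta_v]$ because the finitely many ``moving'' rectangles have endpoints $M_{v_{[1,j]}}\cdot(\alpha-1)$, $M_{v'_{[1,j]}}\cdot\alpha$ depending continuously on $\alpha$, while the exponential tail bound of Lemma~\ref{l:exponentialbound} (and the subsequent Remark) controls the remainder uniformly. Taking closures of both sides of the interior identity then gives the stated formula at the endpoints as well.

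The main obstacle I anticipate is precisely this endpoint bookkeeping: at $\alpha=\zeta_v$ the word $\overline b{}^\alpha_{[1,|\widehat v|]}$ need no longer equal $v'$ (since $\alpha\le\zeta_v$ was shown to force $\overline b{}^\alpha_{[1,|\widehat v|]}\ne v'$ in Lemma~\ref{l:Gammav}), so one cannot directly invoke \eqref{e:shapeOmega} with $k,k'$ attached to $v$; instead one must verify that the closure operation absorbs exactly the missing boundary pieces, i.e.\ that $\overline{\Omega_{\zeta_v+}} = \overline{\bigcup \cdots}$ with the $v$-indexed union. This is where the identities $M_v\cdot(\eta_v-1)=\eta_v$, $M_{v'}\cdot\zeta_v=\zeta_v$ and the relation $\zeta_v=\eta_{\Theta(v)}$ are essential: they guarantee that the rectangle closing up at $j=|\widehat v|$ degenerates correctly (its $x$-interval $\overline{(M_{v'}\cdot\zeta_v,\zeta_v)} = \overline{(\zeta_v,\zeta_v)}$ is a single vertical segment, of measure zero, so it may be freely added). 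I would make this rigorous by noting that for $\alpha\in(\zeta_v,\eta_v)$ the identity holds exactly, both sides are closed, both sides depend continuously (in Hausdorff metric) on $\alpha\in[\zeta_v,\eta_v]$ — the left side by the definition of $\Omega_\alpha$ together with Lemma~\ref{l:exponentialbound}, the right side visibly — and hence the identity extends to the closed interval by continuity. A short remark should also confirm that the degenerate $x$-intervals appearing at the endpoints contribute nothing to $\mu(\Omega_\alpha)$, which is all that matters for the applications in the sequel (Theorem~\ref{t:muOmega}).
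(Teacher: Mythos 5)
Your interior argument is correct and is exactly the paper's: for $\alpha \in (\zeta_v,\eta_v)$, Lemma~\ref{l:Gammav} identifies $\underline{b}{}^\alpha_{[1,|v|]}=v$, $\overline{b}{}^\alpha_{[1,|\widehat v|]}=v'$, hence $k,k'$, the orbit points and the heights, and the formula is a reindexing of~\eqref{e:shapeOmega}.

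Where you diverge from the paper is the endpoint case, and there you have a genuine gap. You propose to extend the identity from $(\zeta_v,\eta_v)$ to the closed interval by Hausdorff continuity of both sides. The right side is indeed Hausdorff continuous, being a finite union of rectangles whose corners are continuous (algebraic) functions of~$\alpha$, with the two degenerating rectangles disappearing harmlessly in the Hausdorff metric. But for the left side you claim that $\alpha \mapsto \Omega_\alpha$ is Hausdorff continuous on $[\zeta_v,\eta_v]$ ``by the definition of $\Omega_\alpha$ together with Lemma~\ref{l:exponentialbound}.'' Lemma~\ref{l:exponentialbound} (and the remark after it) bounds $\mu(\Xi_{\alpha,n})$, the $\mu$-measure of the tail of the rectangle decomposition; it says nothing about how far from the finitely-many ``stable'' rectangles the tail reaches in sup-distance. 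A set of small $\mu$-measure can be far in Hausdorff distance from a given compact set, so a measure estimate does not yield a Hausdorff estimate. Without such an estimate, you cannot conclude $\Omega_{\eta_v} = \lim_{\alpha\to\eta_v^-}\Omega_\alpha$ (nor the analogous statement at~$\zeta_v$), which is precisely what the continuity argument needs. Since Lemma~\ref{l:simplerDescr} is a strict set identity (used as such in the proof of Theorem~\ref{t:muOmega}, where boundary rectangles are subtracted), an almost-everywhere equality would also not suffice as a substitute.

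The paper instead handles the endpoints directly rather than by a limit: it observes that $M_{v_{[1,j]}}\cdot(\alpha-1)$ and $M_{v'_{[1,j]}}\cdot\alpha$ still land in $[\alpha-1,\alpha]$ for the closed range of~$j$, appeals to the identity $M_v\cdot(\eta_v-1)=\eta_v$ (resp.\ $M_{v'}\cdot\zeta_v=\zeta_v$) so that the extremal rectangle degenerates (cf.\ Remark~\ref{r:empty}), and relies on the eventual periodicity of $\underline{b}{}^{\eta_v}$ and $\overline{b}{}^{\zeta_v}$ to collapse the a priori infinite union of Proposition~\ref{p:Omega}/Lemma~\ref{l:Ualpha} into the stated finite one. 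If you want to keep your continuity strategy, you would need a genuine Hausdorff version of Lemma~\ref{l:exponentialbound}, e.g.\ an explicit bound showing that $\Xi_{\alpha,n}$ lies within distance $O(\lambda^n)$ of $\Omega_\alpha\setminus\Xi_{\alpha,n}$ uniformly in $\alpha$, which is plausible (because the heights $N_w\cdot[0,\tfrac{1}{d_\alpha(\alpha)+1}]$ shrink geometrically in $|w|$ and the widths $J^\alpha_w$ are bounded) but is not what Lemma~\ref{l:exponentialbound} states and would require its own proof.
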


\begin{proof}
Since $M_{v_{[1,j]}} \cdot ({\alpha-1}) \in [{\alpha-1},\alpha]$ for all $0 \le j \le |v|$, and $M_{v'_{[1,j]}} \cdot \alpha \in [{\alpha-1},\alpha]$ for all $1 \le j \le |\widehat{v}|$, the equation follows from the proof of Theorem~\ref{t:shapeOmega}.
\end{proof}

\begin{Rmk} \label{r:empty}
Note that $(M_{v'} \cdot \zeta_v, \zeta_v)$ is the empty interval by Lemma~\ref{l:Gammav}, therefore the contribution from $N_{v'} \cdot \Psi'_v$ vanishes at $\alpha = \zeta_v$.
Similarly, if $v$ is not the empty word, then $[M_v \cdot (\eta_v-1), \eta_v)$ is the empty interval and there is no contribution from $N_v \cdot \Psi_v$ at $\alpha = \eta_v$.  
\end{Rmk}

\begin{Ex}
If $v$ is the empty word, then $\Omega_\alpha =  \mathbb{I}_\alpha \times \Psi_v\, \cup\, \overline{\big(M_{(+1:1)} \cdot \alpha, \alpha\big)} \times N_{(+1:1)} \cdot \Psi'_v$.
Here, we know from \cite{Nakada81} that $\Psi_v = [0,1/2]$, $\Psi'_v = [0,1]$, $\Omega_\alpha =  \mathbb{I}_\alpha \times [0,1/2]\, \cup\, [T_\alpha(\alpha), \alpha] \times [1/2,1]$ if $\alpha \in (g,1]$, and $\Omega_g =  \mathbb{I}_g \times [0,1/2]$, see Figure~\ref{f:empty}.   
\end{Ex}

\begin{figure}[ht]
\begin{tikzpicture}[scale=4,fill=black!20]
\small
\begin{scope}[shift={(-1.118,0)}]
\node[below=4ex] at (.118,.0) {$\alpha = g$};
\filldraw(-.382,.0)--(-.382,.5)--(.618,.5)--(.618,.0)--cycle;
\draw(0,0)node[below]{$0$}--(.0,1.0);
\node[below] at (-.382,.0) {$\vphantom{0}-\!g^2$};
\node[below] at (.618,.0) {$\vphantom{0}g$};
\node[right] at (.618,.0) {$0$};
\node[right] at (.618,.5) {$1/2$};
\end{scope}

\node[below=4ex] at (.3,.0) {$\alpha = 4/5$};
\filldraw(-.2,.0)--(-.2,.5)--(.25,.5)--(.25,1.0)--(.8,1.0)--(.8,.0)--cycle;
\draw(0,0)node[below]{$\vphantom{()}0$}--(.0,1.0);
\draw[thin,dotted](.25,.0)--(.25,.5);
\node[below] at (-.2,.0) {$\vphantom{(}\alpha\!-\!1$};
\node[below] at (.8,.0) {$\vphantom{(}\alpha$};
\node[below] at (.25,.0) {$T_\alpha(\alpha)$};
\draw[dashed](.5556,.0)--(.5556,1.0);
\node[below] at (.5556,0) {$\frac{1}{1+\alpha}$};
\node[right] at (.8,.0) {$0$};
\draw[thin,dotted](.25,.5)--(.8,.5)node[right]{$1/2$};
\node[right] at (.8,1) {$1$};

\begin{scope}[shift={(1.1,0)}]
\node[below=4ex] at (.5,.0) {$\alpha = 1$};
\filldraw(0,0)--(0,1)--(1,1)--(1,0)--cycle;
\node[below] at (0,0) {$0$};
\node[below] at (1,0) {$1$};
\node[right] at (1,0) {$0$};
\node[right] at (1,1) {$1$};
\end{scope}
\end{tikzpicture}
\caption{The natural extension domain $\Omega_\alpha$ for $\alpha \in [g,1]$.}
\label{f:empty}
\end{figure}

\begin{Ex}
For $\alpha \in \overline{\Gamma_{(-1:2)}} = [\sqrt{2}-1, g]$, the natural extension domain is
\[
\Omega_\alpha = \mathbb{I}_\alpha \times \Psi_{(-1:2)}\ \cup\ \overline{\big[M_{(-1:2)} \cdot (\alpha-1), \alpha\big)} \times N_{(-1:2)} \cdot \Psi_{(-1:2)}\ \cup\ \overline{\big(M_{(+1:2)} \cdot \alpha, \alpha\big)} \times N_{(+1:2)} \cdot \Psi'_{(-1:2)}\,.
\]
We know from \cite{Nakada81,MoussaCassaMarmi:99} that $\Psi_{(-1:2)} = [0,g^2]$ and $\Psi'_{(-1:2)} = [0,g]$, 
\[
\Omega_\alpha =  \mathbb{I}_\alpha \times \big[0,g\big]\ \cup\ \big[T_\alpha({\alpha-1}), \alpha\big] \times \big[1/2,g\big]\ \cup\ \big[T_\alpha(\alpha), \alpha\big] \times \big[g^2,1/2\big] \quad \big(\alpha \in \Gamma_{(-1:2)}\big).
\]
In Figure~\ref{f:2}, one can see how $[T_\alpha({\alpha-1}), \alpha] \times [1/2,g]$ shrinks and $[T_\alpha(\alpha), \alpha] \times [g^2,1/2]$ grows when $\alpha$ increases.
\end{Ex}

\begin{figure}[ht]
\begin{tikzpicture}[scale=4,fill=black!20]
\small
\begin{scope}[shift={(-1.2444,0)}]
\node[below=4ex] at (-.0556,.0) {$\alpha = 4/9$};
\filldraw(-.5556,.0)--(-.5556,.382)--(.25,.382)--(.25,.5)--(-.2,.5)--(-.2,.618)--(.4444,.618)--(.4444,.0)--cycle;
\draw(0,0)--(.0,.618)node[above=.5ex]{$0$};
\draw[thin,dotted](-.2,.0)--(-.2,.5);
\draw[thin,dotted](.25,.0)--(.25,.382);
\node[below] at (-.5556,.0) {$\vphantom{(}\alpha\!-\!1$};
\node[below] at (-.2,.0) {$T_\alpha(\alpha\!-\!1)$};
\node[below] at (.4444,.0) {$\vphantom{(}\alpha$};
\node[below] at (.25,.0) {$T_\alpha(\alpha)$};
\draw[dashed](.4091,.0)--(.4091,.618);
\node[above] at (.4091,.618) {$\frac{1}{2+\alpha}$};
\draw[dashed](-.4091,.0)--(-.4091,.618);
\node[above] at (-.4091,.618) {$\frac{-1}{2+\alpha}$};
\node[right] at (.4444,.0) {$0$};
\draw[thin,dotted](.25,.5)--(.4444,.5)node[right]{$1/2$};
\draw[thin,dotted](-.5556,.3333)--(.4444,.3333)node[right]{$1/3$};
\end{scope}

\node[below=4ex] at (.0,.0) {$\alpha = \chi_{(-1:2)} = 1/2$};
\filldraw(-.5,.0)--(-.5,.382)--(.0,.382)--(.0,.5)--(.0,.5)--(.0,.618)--(.5,.618)--(.5,.0)--cycle;
\draw(0,0)node[below]{$\vphantom{/}0$}--(.0,.618);
\node[below] at (-.5,.0) {$-1/2$};
\node[below] at (.5,.0) {$1/2$};
\draw[dashed](.4,.0)--(.4,.618);
\node[above] at (.4,.618) {$\frac{1}{2+\alpha}$};
\draw[dashed](-.4,.0)--(-.4,.618);
\node[above] at (-.4,.618) {$\frac{-1}{2+\alpha}$};
\node[right] at (.5,.01) {$0$};
\draw[thin,dotted](.0,.5)--(.5,.5)node[right]{$1/2$};
\draw[thin,dotted](-.5,.3333)--(.5,.3333)node[right]{$1/3$};

\begin{scope}[shift={(1.2556,0)}]
\node[below=4ex] at (.0556,.0) {$\alpha = 5/9$};
\filldraw(-.4444,.0)--(-.4444,.382)--(-.2,.382)--(-.2,.5)--(.25,.5)--(.25,.618)--(.5556,.618)--(.5556,.0)--cycle;
\draw(0,0)--(.0,.618)node[above=.5ex]{$0$};
\draw[thin,dotted](.25,.0)--(.25,.5);
\draw[thin,dotted](-.2,.0)--(-.2,.382);
\node[below] at (-.4444,.0) {$\vphantom{(}\alpha\!-\!1$};
\node[below] at (.25,.0) {$T_\alpha(\alpha\!-\!1)$};
\node[below] at (.5556,.0) {$\vphantom{(}\alpha$};
\node[below] at (-.2,.0) {$T_\alpha(\alpha)$};
\draw[dashed](.3913,.0)--(.3913,.618);
\node[above] at (.3913,.618) {$\frac{1}{2+\alpha}$};
\draw[dashed](-.3913,.0)--(-.3913,.618);
\node[above] at (-.3913,.618) {$\frac{-1}{2+\alpha}$};
\node[right] at (.5556,.0) {$0$};
\draw[thin,dotted](.25,.5)--(.5556,.5)node[right]{$1/2$};
\draw[thin,dotted](-.4444,.3333)--(.5556,.3333)node[right]{$1/3$};
\end{scope}
\end{tikzpicture}
\caption{The natural extension domain $\Omega_\alpha$ for $\alpha \in \Gamma_{(-1:2)} = (\sqrt{2}-1, g)$.}
\label{f:2}
\end{figure}

\begin{figure}[ht]
\begin{tikzpicture}[scale=6.5,fill=black!20]
\small
\begin{scope}[shift={(-1.1858,0)},ultra thin]
\node[below=4ex] at (-.1,.0) {$\alpha = 2/5$};
\filldraw(-.6,.0)--(-.6,.2763)--(-.5,.2763)--(-.5,.2857)--(-.6,.2857)--(-.6,.2923)--(-.5,.2923)--(-.5,.2927)--(-.6,.2927)--(-.6,.2929)--(-.5,.2929)--(-.5,.2941)--(-.6,.2941)--(-.6,.2957)--(-.5,.2957)--(-.5,.3333)--(-.6,.3333)--(-.6,.3819)--(.4,.3819)--(.4,.0)--cycle;
\filldraw(.0,.4)--(.0,.4132)--(.4,.4132)--(.4,.4)--cycle;
\filldraw(.0,.4138)--(.0,.4141)--(.4,.4141)--(.4,.4138)--cycle;
\filldraw(.0,.4167)--(.0,.4198)--(.4,.4198)--(.4,.4167)--cycle;
\filldraw(-.3333,.5)--(-.3333,.5802)--(.0,.5802)--(.0,.5833)--(-.3333,.5833)--(-.3333,.5856)--(.0,.5856)--(.0,.5862)--(-.3333,.5862)--(-.3333,.5867)--(.0,.5867)--(.0,.6)--(-.3333,.6)--(-.3333,.618)--(.4,.618)--(.4,.5)--cycle;
\draw(0,0) node[below]{$\vphantom{(}0$}--(.0,.618);
\draw[thin,dotted](-.3333,.0)--(-.3333,.5);
\draw[thin,dotted](-.5,.0)--(-.5,.2763);
\node[below] at (-.62,.0) {$\vphantom{(}\alpha\!-\!1$};
\node[below] at (-.275,.0) {$T_\alpha(\alpha\!-\!1)$};
\node[below] at (.4,.0) {$\vphantom{(}\alpha$};
\node[below] at (-.475,.0) {$T_\alpha(\alpha)$};
\draw[dashed](-.4167,.0)--(-.4167,.618);
\node[above] at (-.4167,.618) {$\frac{-1}{2+\alpha}$};
\draw[dashed](-.2941,.0)--(-.2941,.618);
\node[above] at (-.2941,.618) {$\frac{-1}{3+\alpha}$};
\draw[dashed](.2941,.0)--(.2941,.618);
\node[above] at (.2941,.618) {$\frac{1}{3+\alpha}$};
\node[right] at (.4,.0) {$0$};
\node[right] at (.4,.5) {$1/2$};
\draw[thin,dotted](-.5,.3333)--(.4,.3333)node[right]{$1/3$};
\draw[thin,dotted](-.6,.25)--(.4,.25)node[right]{$1/4$};
\end{scope}

\node[below=4ex] at (-.0858,.0) {$\alpha = \sqrt{2} - 1$};
\filldraw(-.5858,.0)--(-.5858,.3819)--(.4142,.3819)--(.4142,.0)--cycle;
\filldraw(-.2929,.5)--(-.2929,.618)--(.4142,.618)--(.4142,.5)--cycle;
\draw(0,0)node[below]{$\vphantom{/}0$}--(.0,.618);
\draw[thin,dotted](-.2929,.0)--(-.2929,.5);
\node[below] at (-.5858,.0) {$\sqrt{2}-2$};
\node[below] at (-.2929,.0) {$1-1/\sqrt{2}$};
\node[below] at (.4142,.0) {$\sqrt{2}-1$};
\draw[dashed](-.4142,.0)--(-.4142,.618);
\node[above] at (-.4142,.618) {$\frac{-1}{2+\alpha}$};
\node[above] at (-.2929,.618) {$\frac{-1}{3+\alpha}$};
\node[above] at (.4142,.618) {$\frac{1}{2+\alpha}$};
\draw[dashed](.2929,.0)--(.2929,.618);
\node[above] at (.2929,.618) {$\frac{1}{3+\alpha}$};
\node[right] at (.4142,.0) {$0$};
\node[right] at (.4142,.5) {$1/2$};
\draw[thin,dotted](-.5858,.3333)--(.4142,.3333)node[right]{$1/3$};
\draw[thin,dotted](-.5858,.25)--(.4142,.25)node[right]{$1/4$};
\end{tikzpicture}
\caption{The natural extension domain $\Omega_\alpha$ for $\alpha = 2/5$ and $\alpha = \sqrt{2}-1$.}
\label{f:23}
\end{figure}
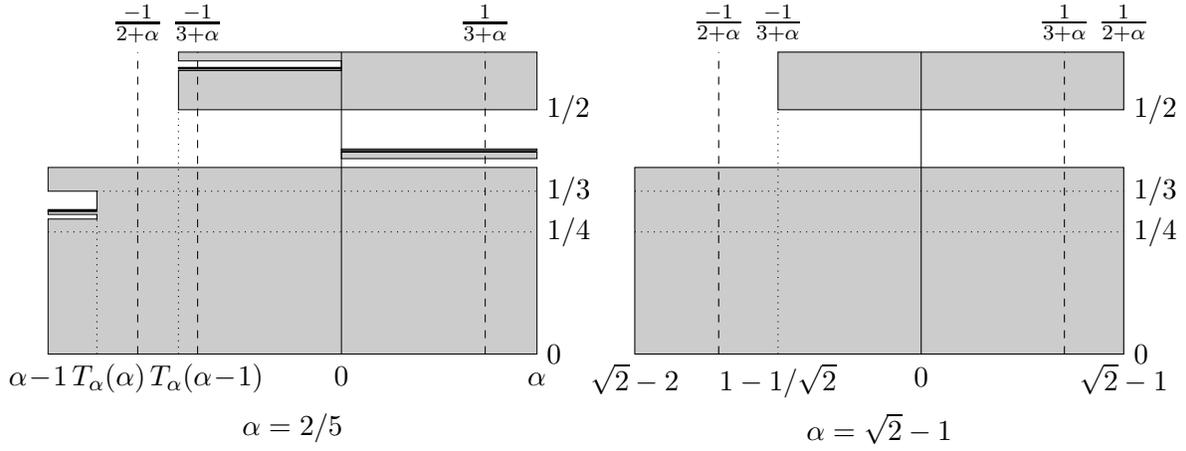

Figure~\ref{f:23} shows the fractal structure appearing in the interval $\Gamma_{(-1:2)(-1:3)}$, which is immediately to the left of $\sqrt{2}-1$.
An even more complicated example of a natural extension domain is shown in Figure~\ref{f:2342}, see also Figures~\ref{f:g2} and~\ref{f:4}.

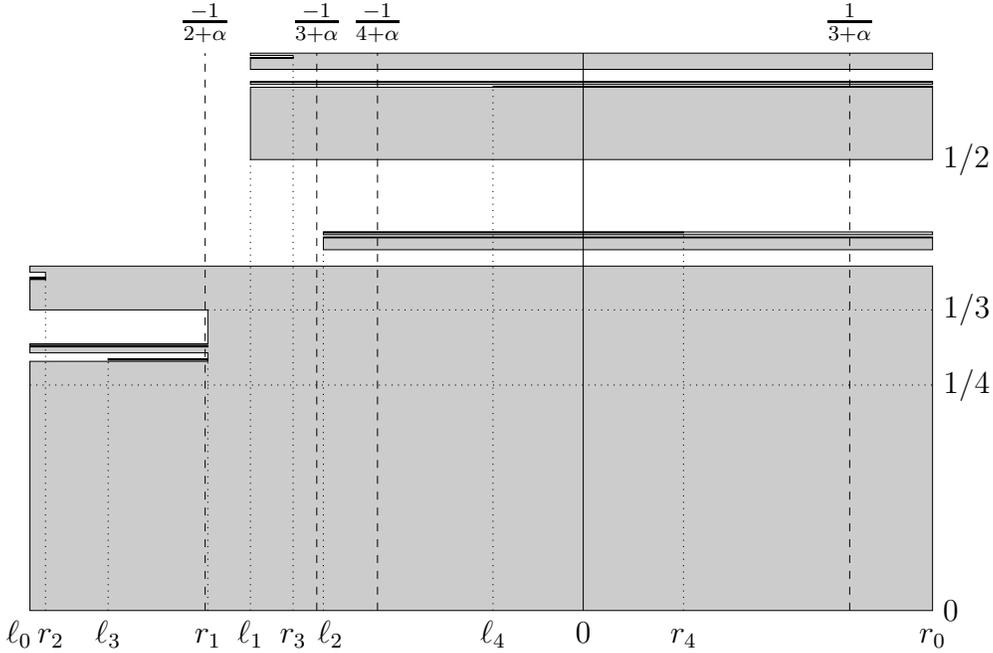
\begin{figure}[ht]
\begin{tikzpicture}[scale=12,fill=black!20,ultra thin]
\filldraw(-.613,.0)--(-.613,.2763)--(-.4159,.2763)--(-.4159,.2778)--(-.5263,.2778)--(-.5263,.2787)--(-.4159,.2787)--(-.4159,.2791)--(-.5263,.2791)--(-.5263,.2792)--(-.4159,.2792)--(-.4159,.2857)--(-.613,.2857)--(-.613,.2923)--(-.4159,.2923)--(-.4159,.2927)--(-.613,.2927)--(-.613,.2929)--(-.4159,.2929)--(-.4159,.2941)--(-.613,.2941)--(-.613,.2957)--(-.4159,.2957)--(-.4159,.3333)--(-.613,.3333)--(-.613,.3671)--(-.5957,.3671)--(-.5957,.3673)--(-.613,.3673)--(-.613,.3675)--(-.5957,.3675)--(-.5957,.3684)--(-.613,.3684)--(-.613,.3692)--(-.5957,.3692)--(-.5957,.3696)--(-.613,.3696)--(-.613,.3697)--(-.5957,.3697)--(-.5957,.375)--(-.613,.375)--(-.613,.3819)--(.387,.3819)--(.387,.0)--cycle;
\filldraw(-.2879,.4)--(-.2879,.4132)--(.387,.4132)--(.387,.4)--cycle;
\filldraw(-.2879,.4138)--(-.2879,.4141)--(.387,.4141)--(.387,.4138)--cycle;
\filldraw(-.2879,.4167)--(-.2879,.4188)--(.1111,.4188)--(.1111,.4194)--(-.2879,.4194)--(-.2879,.4198)--(.387,.4198)--(.387,.4167)--cycle;
\filldraw(-.3687,.5)--(-.3687,.5802)--(.387,.5802)--(.387,.5)--cycle;
\filldraw(-.1,.5806)--(-.1,.581)--(.387,.581)--(.387,.5806)--cycle;
\filldraw(-.3687,.5833)--(-.3687,.5856)--(.387,.5856)--(.387,.5833)--cycle;
\filldraw(-.3687,.5862)--(-.3687,.5867)--(.387,.5867)--(.387,.5862)--cycle;
\filldraw(-.3687,.6)--(-.3687,.6124)--(-.3214,.6124)--(-.3214,.6129)--(-.3687,.6129)--(-.3687,.6132)--(-.3214,.6132)--(-.3214,.6154)--(-.3687,.6154)--(-.3687,.618)--(.387,.618)--(.387,.6)--cycle;
\draw(0,0)node[below]{$\vphantom{\ell_0}0$}--(.0,.618);
\draw[thin,dotted](-.3687,.0)--(-.3687,.5);
\draw[thin,dotted](-.2879,.0)--(-.2879,.4);
\draw[thin,dotted](-.5263,.0)--(-.5263,.2778);
\draw[thin,dotted](-.1,.0)--(-.1,.5806);
\draw[thin,dotted](-.4159,.0)--(-.4159,.2763);
\draw[thin,dotted](-.5957,.0)--(-.5957,.3671);
\draw[thin,dotted](-.3214,.0)--(-.3214,.6124);
\draw[thin,dotted](.1111,.0)--(.1111,.4188);
\node[below] at (-.625,.0) {$\ell_{0}$};
\node[below] at (-.3687,.0) {$\ell_{1}$};
\node[below] at (-.28,.0) {$\ell_{2}$};
\node[below] at (-.5263,.0) {$\ell_{3}$};
\node[below] at (-.1,.0) {$\ell_{4}$};
\node[below] at (.387,.0) {$\vphantom{\ell_0}r_{0}$};
\node[below] at (-.4159,.0) {$\vphantom{\ell_0}r_{1}$};
\node[below] at (-.59,.0) {$\vphantom{\ell_0}r_{2}$};
\node[below] at (-.3214,.0) {$\vphantom{\ell_0}r_{3}$};
\node[below] at (.1111,.0) {$\vphantom{\ell_0}r_{4}$};
\draw[dashed](-.4189,.0)--(-.4189,.618);
\node[above] at (-.4189,.618) {$\frac{-1}{2+\alpha}$};
\draw[dashed](-.2952,.0)--(-.2952,.618);
\node[above] at (-.2952,.618) {$\frac{-1}{3+\alpha}$};
\draw[dashed](-.2279,.0)--(-.2279,.618);
\node[above] at (-.2279,.618) {$\frac{-1}{4+\alpha}$};
\draw[dashed](.2952,.0)--(.2952,.618);
\node[above] at (.2952,.618) {$\frac{1}{3+\alpha}$};
\node[right] at (.387,.0) {$0$};
\node[right] at (.387,.5) {$1/2$};
\draw[thin,dotted](-.4159,.3333)--(.387,.3333)node[right]{$1/3$};
\draw[thin,dotted](-.613,.25)--(.387,.25)node[right]{$1/4$};
\end{tikzpicture}
\caption{The domain $\Omega_\alpha$ for $\alpha = 113/292 \in \Gamma_{(-1:2)(-1:3)(-1:4)(-1:2)}$, with
$\ell_j = T_\alpha^j(\alpha-1)$ and $r_j = T_\alpha^j(\alpha)$.}
\label{f:2342}
\end{figure}

Now, we can evaluate the measure of $\Omega_\alpha$, $\alpha \in \Gamma_v$,  as a function of the measures of $\Omega_{\eta_v}$ and of $[\alpha, \eta_v] \times \overline{\Psi'_v}$.
When we compare with $\Omega_{\zeta_v}$, it is even sufficient to know the density~$\nu_{\zeta_v}$.   

Compare \cite{Kraaikamp-Schmidt-Smeets:10} for similar arguments.

\begin{proof}[\textbf{Proof of Theorem~\ref{t:muOmega}}]
Let $v = v_{[1,|v|\,]} \in \mathscr{F}$, $v' = v'_{[1,|\widehat{v}|\,]} = {}^{(W)}\widehat{v}{}^{(-1)}$, and compare $\Omega_\alpha$, $\alpha \in [\zeta_v, \eta_v]$, with~$\Omega_{\eta_v}$.
By Lemma~\ref{l:simplerDescr}, Remark~\ref{r:empty} and since $\Psi'_\alpha$ is the disjoint union of the elements of~$\mathscr{C}_\alpha$ (Theorem~\ref{t:shapeOmega}), we have
\begin{align*}
\Omega_\alpha \setminus \Omega_{\eta_v} & = \bigcup_{0 \le j \le |v|} M_{v_{[1,j]}} \cdot [\alpha-1, \eta_v-1] \times N_{v_{[1,j]}} \cdot \Psi_v\ \ \setminus\ \ [\alpha, \eta_v] \times N_{v_{[1,|v|\,]}} \cdot \Psi_v\,, \\
\Omega_{\eta_v} \setminus \Omega_\alpha & = \bigcup_{0 \le j \le |\widehat{v}|} M_{v'_{[1,j]}} \cdot [\alpha, \eta_v] \times N_{v'_{[1,j]}} \cdot \Psi'_v\ \ \setminus\ \ [\alpha, \eta_v] \times N_{v_{[1,|v|\,]}} \cdot \Psi_v\,,
\end{align*}
up to sets of measure zero, thus
\begin{multline*}
\mu(\Omega_\alpha) - \mu(\Omega_{\eta_v}) \\
= \sum_{0 \le j \le |v|} \mu\big(M_{v_{[1,j]}} \cdot [\alpha-1, \eta_v-1] \times N_{v_{[1,j]}} \cdot \Psi_v\big) - \sum_{0 \le j \le |\widehat{v}|} \mu\big(M_{v'_{[1,j]}} \cdot [\alpha, \eta_v] \times N_{v'_{[1,j]}} \cdot \Psi'_v\big)\,.
\end{multline*}
From Theorem~\ref{t:shapeOmega}, we have $\Psi_v = {}^t\hspace{-.1em}E^{-1} \cdot \Psi'_v$, thus
\begin{equation} \label{e:minus1}
\mu\big([{\alpha-1}, \eta_v-1] \times \Psi_v\big) = \mu\big(E \cdot [\alpha, \eta_v] \times {}^t\hspace{-.1em}E^{-1} \cdot \Psi'_v\big) = \mu\big([\alpha, \eta_v] \times \Psi'_v\big)
\end{equation}
by~\eqref{e:mu}.
Applying~\eqref{e:mu} with $M_{v_{[1,j]}}$, $1 \le j \le |v|$, and $M_{v'_{[1,j]}}$, $1 \le j \le |\widehat{v}|$, gives
\[
\mu(\Omega_\alpha) = \mu(\Omega_{\eta_v}) + \big(|v| - |\widehat{v}|\big)\, \mu\big([\alpha-1, \eta_v-1] \times \Psi_v\big)\,,
\]
in particular $\mu(\Omega_{\zeta_v}) = \mu(\Omega_{\eta_v}) + \big(|v| - |\widehat{v}|\big)\, \mu\big([\zeta_v-1, \eta_v-1] \times \Psi_v\big)$.
Therefore, we also have
\[
\mu(\Omega_\alpha) = \mu(\Omega_{\zeta_v}) + \big(|\widehat{v}| - |v|\big)\, \mu\big([\zeta_v-1, \alpha-1] \times \Psi_v\big)\,.
\]
Since $M_{v_{[1,j]}} \cdot (\zeta_v-1) \ge \eta_v-1$ for all $1 \le j \le |v|$ and $M_{v'_{[1,j]}} \cdot \zeta_v \ge \eta_v-1$ for all $1 \le j \le |\widehat{v}|$ by Lemma~\ref{l:Gammav}, Lemma~\ref{l:simplerDescr} yields that the fibers $\Phi_{\zeta_v}(x)$ are equal to $\Psi_v$ for all $x \in [\zeta_v-1, \eta_v-1)$.
This gives
\[
\mu\big([\alpha-1, \eta_v-1] \times \Psi_v\big) = \mu(\Omega_{\zeta_v})\, \nu_{\zeta_v}\big([\zeta_v-1, \alpha-1]\big)\,,
\]
which proves the formula for~$\mu(\Omega_\alpha)$. 
The monotonicity relations for $\alpha \mapsto \mu(\Omega_\alpha)$ are an obvious consequence, and the inverse relations for $\alpha \mapsto h(T_\alpha)$ follow from Theorem~\ref{t:hmu}.
\end{proof}

\section{Continuity of entropy and measure of the natural extension domain}\label{s:continuity}

By Theorem~\ref{t:muOmega}, the normalizing constant $\mu(\Omega_\alpha)$ is continuous on $[\zeta_v, \eta_v]$ for every $v \in \mathscr{F}$.
We now prove that there is a synchronizing interval immediately to the left of~$\Gamma_v$, which implies that $\mu(\Omega_\alpha)$ is continuous on the left of $\zeta_v$ as well.  
Recall that $\Theta(v) := v\,\widehat{v}^{(-1)}$.  

\begin{Lem} \label{l:folding}
For every $v \in \mathscr{F}$, we have $\Theta(v) \in \mathscr{F}$.
The left endpoint of the interval~$\Gamma_v$ is the right endpoint of the interval~$\Gamma_{\Theta(v)}$, i.e., $\zeta_v = \eta_{\Theta(v)}$.
Moreover, 
\[
|\Theta(v)| = |\widehat{\Theta(v)} | = |v| + |\widehat{v}|\,.
\]
\end{Lem}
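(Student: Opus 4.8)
The plan is to work entirely at the level of characteristic sequences, since both the membership test for $\mathscr{F}$, the endpoints $\zeta_v,\eta_v$, and the lengths $|v|,|\widehat v|$ are governed by them. First I would compute the characteristic sequence of $\Theta(v)=v\,\widehat v^{(-1)}$ in terms of the characteristic sequence $a_{[1,2\ell+1]}$ of $v$. Since $\widehat v=(-1:2+a_1)(-1:2)^{a_2-1}(-1:2+a_3)\cdots(-1:2+a_{2\ell+1})$, the word $\widehat v^{(-1)}$ ends in $(-1:2+a_{2\ell+1}-1)=(-1:1+a_{2\ell+1})$ — but if $a_{2\ell+1}=1$ this becomes $(-1:2)$, so a small case split is needed. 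Concatenating $v$ with $\widehat v^{(-1)}$ and reading off the characteristic sequence, one finds (for $|v|\ge1$) that the characteristic sequence of $\Theta(v)$ is $a_1\,a_2\cdots a_{2\ell}\,(a_{2\ell+1})\,1\,a_1\,a_2\cdots$ or a slight variant; the point is that it is, up to the junction, the concatenation $a_{[1,2\ell+1]}\,1\,a_{[1,2\ell+1]}$ read appropriately, which is of odd length $4\ell+3$. This is already enough to get the length identity: $|\Theta(v)|=\sum_{j\ \mathrm{odd}}(\text{new }a_j)-1$ and $|\widehat{\Theta(v)}|=\sum_{j\ \mathrm{even}}(\text{new }a_j)+1$, and a direct bookkeeping using $|v|=\sum_{j=0}^{\ell}a_{2j+1}-1$, $|\widehat v|=\sum_{j=1}^{\ell}a_{2j}+1$ gives $|\Theta(v)|=|\widehat{\Theta(v)}|=|v|+|\widehat v|$.

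Next I would verify $\Theta(v)\in\mathscr{F}$, i.e.\ that the characteristic sequence $a'_{[1,2\ell'+1]}$ of $\Theta(v)$ satisfies the two families of alternating inequalities $a'_{[2j,2\ell'+1]}<_{\mathrm{alt}}a'_{[1,2\ell'-2j+2]}$ and $a'_{[2j+1,2\ell'+1]}\le_{\mathrm{alt}}a'_{[1,2\ell'-2j+2]}$ for $1\le j\le\ell'$. Here one separates the shifts that land before the junction (these reduce to shifts of $a_{[1,2\ell+1]}$, controlled by $v\in\mathscr{F}$, with the extra $1$ at the junction only helping) from the shifts that land at or after the junction (these reduce to prefixes of $a'_{[1,2\ell'+1]}$ compared against themselves, again using $v\in\mathscr{F}$ together with the decisive fact that the junction letter $1$ is the alternating‑minimal possible value). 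I would model this argument on the inequality manipulations already carried out in the proofs of Lemma~\ref{l:Gammav} and Lemma~\ref{l:balpha}, where $_{\mathrm{alt}}$ comparisons of the same flavour are handled.

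Finally, for the endpoint identity $\zeta_v=\eta_{\Theta(v)}$: by definition $\eta_{\Theta(v)}=\llbracket(\Theta(v)^{(+1)})^\omega\rrbracket+1$ and $\Theta(v)^{(+1)}=v\,\widehat v$, while $\zeta_v=\llbracket(v\,\widehat v)^\omega\rrbracket+1$; so these two quantities are literally equal once one checks $\Theta(v)^{(+1)}=v\,\widehat v$. That identity is just the bookkeeping $\widehat v^{(-1)(+1)}=\widehat v$ applied to the last letter, valid because $\widehat v$'s last letter is $(-1:2+a_{2\ell+1})$ with $a_{2\ell+1}\ge1$, so decreasing then increasing returns it unchanged (and when $|v|=0$ one checks the empty‑word conventions directly, together with $\zeta_v=g$). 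The main obstacle I expect is the case analysis in the first step: correctly tracking what the characteristic sequence of the concatenation $v\,\widehat v^{(-1)}$ is across the junction — in particular whether the last block of $v$ merges with the first block of $\widehat v^{(-1)}$, and the degenerate subcase $a_{2\ell+1}=1$ — since a miscount there would throw off both the length identity and the $\mathscr{F}$‑membership verification. Everything after that is routine alternating‑order manipulation of the kind already present in Section~\ref{s:relation}.
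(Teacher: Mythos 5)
Your general strategy --- computing the characteristic sequence of $\Theta(v)=v\,\widehat{v}{}^{(-1)}$ and reading off the lemma's three claims from it --- is exactly the paper's, and your argument for the endpoint identity $\zeta_v=\eta_{\Theta(v)}$ via $\Theta(v)^{(+1)}=v\,\widehat{v}$ is clean and correct. However, you have the characteristic sequence of $\Theta(v)$ wrong, and this is the load-bearing step. You claim it is (up to boundary effects) $a_{[1,2\ell+1]}\,1\,a_{[1,2\ell+1]}$, with a ``junction letter~$1$'' between the two copies. That is not what happens: at the junction, $v$ ends with the $(-1:2)$-block of length $a_{2\ell+1}-1$ and $\widehat{v}{}^{(-1)}$ begins with the letter $(-1:2+a_1)$ (which is not $(-1:2)$ since $a_1\ge2$ for $|v|\ge1$), so these splice together with no intervening ``$1$''; the characteristic sequence just continues $\dots a_{2\ell+1}\,a_1\,a_2\dots$. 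The extra ``$1$'' actually appears at the very \emph{end}: for $a_{2\ell+1}\ge2$ the characteristic sequence of $\Theta(v)$ is $a_{[1,2\ell+1]}\,a_{[1,2\ell]}\,(a_{2\ell+1}-1)\,1$, and for $a_{2\ell+1}=1$ it is $a_{[1,2\ell+1]}\,a_{[1,2\ell)}\,(a_{2\ell}+1)$, of length $4\ell+1$ rather than $4\ell+3$. (Quick check with $v=(-1:2)$, so $\ell=0$, $a_1=2$: $\Theta(v)=(-1:2)(-1:3)$ has characteristic sequence $2\,1\,1$, not $2\,1\,2$ as your formula would give.)

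This error propagates to both of the remaining claims. The length bookkeeping applied to $a_{[1,2\ell+1]}\,1\,a_{[1,2\ell+1]}$ yields $|\Theta(v)|=2|v|+1$ and $|\widehat{\Theta(v)}|=2|\widehat{v}|$, which are generally \emph{not} equal to $|v|+|\widehat{v}|$; and your sketch of the $\mathscr{F}$-membership check leans explicitly on ``the junction letter~$1$'' being alternating-minimal, a structural feature that is not there. With the correct characteristic sequence both claims do follow (the paper shortcuts the length identity by observing $\widehat{\Theta(v)}=\widehat{v}\,v^{(+1)}$, from which $|\widehat{\Theta(v)}|=|\widehat{v}|+|v|$ is immediate, while $|\Theta(v)|=|v|+|\widehat{v}{}^{(-1)}|=|v|+|\widehat{v}|$ is trivial), so the outline of your method is the right one --- but the junction computation must be redone before the rest of the plan becomes sound.
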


\begin{proof}
Let $v \in \mathscr{F}$ with characteristic sequence $a_{[1,2\ell+1]}$.
If $|v| = 0$, then $\Theta(v) = (-1:2)$, and all statements are true.
If $v$ is non-empty, then the characteristic sequence of $\Theta(v)$ is 
\[
a_{[1,2\ell+1]}\, a_{[1,2\ell]}\, (a_{2\ell+1}-1)\, 1 \quad \mbox{if}\ a_{2\ell+1} \ge 2, \qquad a_{[1,2\ell+1]}\, a_{[1,2\ell)}\, (a_{2\ell}+1) \quad \mbox{if}\ a_{2\ell+1} = 1\,.
\]
In both cases, we have $\Theta(v) \in \mathscr{F}$ and $\widehat{\Theta(v)} = \widehat{v}\, v^{(+1)}$.
The equality of the lengths of $\Theta(v)$ and $\widehat{\Theta(v)}$ with the sum of those of $v$ and $\widehat{v}$ then follows directly.
The definitions of $\zeta_v$ and $\eta_v$ yield that $\zeta_v = \eta_{\Theta(v)}$.
\end{proof}

\begin{Rmk}\label{r:fold}  One can think of $\Theta(v)$ as giving a folding operation on the set of labels of intervals of synchronizing orbits.    In terms of Remark ~\ref{r:RvLv},  the fixed point of $R_v$ is also of course the fixed point of 
\[ (R_v)^2 = E^{-1} M_{\widehat{v}} W E^{-1} M_{\widehat{v}} W = E^{-1} M_{\widehat{v}} M_v E = M_{v\, \widehat{v} {}^{(-1)} } E = L_{\Theta(v)}\,. \]
Compare this with Figure~\ref{f:rich}.
\end{Rmk}

From Lemma~\ref{l:folding} and Theorem~\ref{t:muOmega}, we deduce the following result.

\begin{Cor}\label{c:foldThenConst}   
If $v \in \mathscr{F}$, then, irrespective of the behavior of the entropy function $\alpha \mapsto h(T_{\alpha})$ on~$\Gamma_v$, this function is constant immediately to the left, that is on $[\eta_{\Theta(v)}, \zeta_v]$.
\end{Cor}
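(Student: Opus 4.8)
The plan is to deduce this purely from the folding lemma (Lemma~\ref{l:folding}), the evolution formula of Theorem~\ref{t:muOmega}, and the product identity $h(T_\alpha)\,\mu(\Omega_\alpha)=\pi^2/6$ of Theorem~\ref{t:hmu}; no new geometric input about the domains $\Omega_\alpha$ is needed. First I would set $w:=\Theta(v)$ and unpack Lemma~\ref{l:folding}, which delivers three facts simultaneously: $w\in\mathscr{F}$; the interval $\Gamma_w$ shares the endpoint $\eta_w=\eta_{\Theta(v)}=\zeta_v$ with $\Gamma_v$, so that $\Gamma_w$ lies immediately to the left of $\Gamma_v$; and---the decisive point---$|w|=|\widehat{w}|\ (=|v|+|\widehat{v}|)$.

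Next I would feed $w$ into Theorem~\ref{t:muOmega}. Since $|w|=|\widehat{w}|$, that theorem asserts that $\alpha\mapsto\mu(\Omega_\alpha)$ is constant on the \emph{closed} interval $[\zeta_w,\eta_w]=[\zeta_{\Theta(v)},\zeta_v]=\overline{\Gamma_{\Theta(v)}}$; equivalently, the correction term $(|\widehat{w}|-|w|)\,\nu_{\zeta_w}\big([\zeta_w-1,\alpha-1]\big)$ in its displayed formula vanishes identically, leaving $\mu(\Omega_\alpha)=\mu(\Omega_{\zeta_w})$ throughout. Applying Theorem~\ref{t:hmu} then turns this into $h(T_\alpha)=\pi^2/\big(6\,\mu(\Omega_\alpha)\big)$ being constant on the same interval. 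Because this argument never refers to what $h$ does on $\Gamma_v$ itself, it applies whether $h$ is constant, increasing, or decreasing there, which is exactly the ``irrespective of the behavior on $\Gamma_v$'' clause. (The interval written ``$[\eta_{\Theta(v)},\zeta_v]$'' in the statement is, via $\eta_{\Theta(v)}=\zeta_v$, the closure $[\zeta_{\Theta(v)},\zeta_v]$ of $\Gamma_{\Theta(v)}$.)

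I do not expect a real obstacle: the substantive work has been packaged into Lemma~\ref{l:folding}---that $\Theta$ maps $\mathscr{F}$ into itself, that $\zeta_v=\eta_{\Theta(v)}$, and that $\Theta(v)$ has vanishing ``length defect'' $|\Theta(v)|-|\widehat{\Theta(v)}|=0$---and into Theorem~\ref{t:muOmega}. The only small point worth flagging in the write-up is that Theorem~\ref{t:muOmega} is already stated on the closed interval $[\zeta_w,\eta_w]$, so constancy includes the shared endpoint $\zeta_v=\eta_{\Theta(v)}$ and no separate one-sided-limit or gluing argument is needed there; if one wanted to chain further to the left, one would simply iterate, applying the same reasoning to $\Theta^n(v)$.
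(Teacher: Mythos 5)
Your proof is correct and takes essentially the same route as the paper, which derives the corollary in one line from Lemma~\ref{l:folding} (giving $\Theta(v)\in\mathscr{F}$, $\eta_{\Theta(v)}=\zeta_v$, and $|\Theta(v)|=|\widehat{\Theta(v)}|$) together with Theorem~\ref{t:muOmega} and Theorem~\ref{t:hmu}. You also correctly observe that the interval as printed, $[\eta_{\Theta(v)},\zeta_v]$, is degenerate because $\eta_{\Theta(v)}=\zeta_v$; the intended interval is $[\zeta_{\Theta(v)},\zeta_v]=\overline{\Gamma_{\Theta(v)}}$, exactly as you read it.
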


It remains to consider $\alpha \in (0,1] \setminus \Gamma$ that is not the left endpoint of an interval~$\Gamma_v$, $v \in \mathscr{F}$.   For this, let $Z = \{\zeta_v \mid v \in \mathscr{F}\}$.

\begin{Lem} \label{l:delta}
Let  $\alpha \in (0,1] \setminus (\Gamma \cup Z)$. 
For every $n \ge 1$, there exists some $\delta > 0$ such that
\[
\underline{b}{}^{\alpha'}_{[1,n)} = \underline{b}{}^\alpha_{[1,n)} \ \mbox{and} \ \overline{b}{}^{\alpha'}_{[1,n)} = \overline{b}{}^\alpha_{[1,n)} \ \mbox{for all}\ \alpha' \in \left\{\begin{array}{cl}[\alpha, \alpha + \delta) & \mbox{if $\alpha = \eta_v$ for some $v \in \mathscr{F}$,} \\ (\alpha-\delta, \alpha + \delta) & \mbox{else.}\end{array}\right.
\]
\end{Lem}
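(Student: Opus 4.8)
The plan is to prove the statement by induction on the digit index: I would show that for $\alpha \in (0,1] \setminus (\Gamma \cup Z)$ each of the maps $\alpha' \mapsto \underline{b}{}^{\alpha'}_j$ and $\alpha' \mapsto \overline{b}{}^{\alpha'}_j$ is constant on a neighbourhood of~$\alpha$ of the type prescribed, and then intersect the finitely many such neighbourhoods for $1 \le j < n$ to obtain~$\delta$. The inductive mechanism is as follows: once the first $j-1$ digits of $\alpha'-1$ are known to coincide with those of $\alpha-1$ on an interval $I$ around~$\alpha$, we have $T_{\alpha'}^{j-1}(\alpha'-1) = M_{\underline{b}{}^\alpha_{[1,j-1]}} \cdot (\alpha'-1)$ on~$I$, a fixed M\"obius transformation of~$\alpha'$, hence continuous; likewise $T_{\alpha'}^{j-1}(\alpha') = M_{\overline{b}{}^\alpha_{[1,j-1]}} \cdot \alpha'$. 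Consequently the $j$-th digit can fail to be locally constant at~$\alpha$ only if the orbit point $x_j := T_\alpha^{j-1}(\alpha-1)$ (respectively $T_\alpha^{j-1}(\alpha)$) lies on a discontinuity of~$T_\alpha$: either $x_j = 0$, or $x_j$ is an endpoint of a cylinder $\Delta_\alpha(a)$, $a\in\mathscr{A}$, or $x_j$ is the left endpoint~$\alpha-1$ of~$\mathbb{I}_\alpha$ (for $j=1$ this includes $|1/x_1| + 1 - \alpha \in \mathbb{Z}$).

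The first reduction uses $\alpha \notin \Gamma$: then $\alpha$ is irrational (rational $\alpha$ lie in $\Gamma$) and $T_\alpha^m(\alpha-1) < 0$, $T_\alpha^m(\alpha) < 0$ for all $m \ge 1$, so $x_j \ne 0$ for every~$j$. If $x_j$ is an interior cylinder endpoint $-1/(m-1+\alpha)$ of $\Delta_\alpha(-1:m)$ (or, symmetrically, $-1/(m+\alpha)$), then $T_\alpha(x_j) = \alpha-1$, hence $T_\alpha^j(\alpha-1) = \alpha-1$ and $\underline{b}{}^\alpha$ is purely periodic; the same holds if $x_j = \alpha-1$, and for the orbit of~$\alpha$ one gets $T_\alpha^j(\alpha)=\alpha-1$, which together with $\overline{b}{}^\alpha = {}^{(W)}\widehat{\underline{b}{}^\alpha}$ (Theorem~\ref{t:endpoints}) forces $\underline{b}{}^\alpha$ to be eventually, and then purely, periodic. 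The exceptional $j=1$ cases are dispatched by direct computation: $\tfrac{1}{1-\alpha}+1-\alpha \in \mathbb{Z}$ has in $(0,g]$ only the solution $\alpha = g$, and $\tfrac1\alpha+1-\alpha \in \mathbb{Z}$ gives $\alpha \in \{1, g\} \cup \{\zeta_{(-1:2)^{m-2}} : m \ge 3\}$; all of these lie in $\Gamma \cup Z$.

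It therefore remains to prove the crucial fact, which I expect to be the main obstacle: \emph{if $\alpha \notin \Gamma$ and $\underline{b}{}^\alpha$ is purely periodic, then $\alpha \in Z$ or $\alpha = \eta_v$ for some $v \in \mathscr{F}$}. Writing the characteristic sequence of $\alpha-1$ as $\bar c^\omega$, Lemma~\ref{l:notinGamma} says every tail of $\bar c^\omega$ is $\le_{\mathrm{alt}} \bar c^\omega$; I would then show that an admissible $\bar c$ of \emph{odd} period is exactly the characteristic sequence of a word $v \in \mathscr{F}$, whence $\alpha - 1 = \llbracket (v\,\widehat{v})^\omega \rrbracket$ and $\alpha = \zeta_v \in Z$, while an admissible $\bar c$ of \emph{even} period is exactly the characteristic sequence of $\eta_v - 1$ for a suitable $v \in \mathscr{F}$ (compare the explicit characteristic sequence of $\eta_v-1$ in the proof of Lemma~\ref{l:Gammav}), whence $\alpha = \eta_v$. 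Both steps are combinatorial manipulations of the alternating order, paralleling the definition of $\mathscr{F}$ and the computations in Lemmas~\ref{l:Gammav} and~\ref{l:balpha}.

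Finally, in the case $\alpha = \eta_v \notin Z$ the discontinuity does occur (by Lemma~\ref{l:Gammav}, e.g.\ at the $|v|$-th digit of $\alpha-1$, whose prefix jumps from~$v$ on~$\Gamma_v$ to a different word at~$\eta_v$), but it is one-sided: for $\alpha' > \eta_v$ the relevant orbit point stays on the correct side of the moving cylinder boundary and inside~$\mathbb{I}_{\alpha'}$, so the prefixes agree with those of~$\alpha$ on $[\eta_v, \eta_v + \delta)$; this last sign check reduces to the monotonicity of the maps $x \mapsto M_a \cdot x$ used in Lemma~\ref{l:Gammav}. In all other cases — $\alpha \notin \Gamma \cup Z$ and $\alpha \ne \eta_v$ for every $v$ — no such periodicity occurs, hence by the crucial fact the orbit points of $\alpha-1$ and of $\alpha$ stay strictly inside their cylinders and strictly inside~$\mathbb{I}_\alpha$ at every finite stage, and the induction produces a two-sided interval $(\alpha-\delta,\alpha+\delta)$.
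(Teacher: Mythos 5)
Your initial reduction is essentially the paper's: since $\alpha\notin\Gamma$ the orbits of $\alpha-1$ and $\alpha$ never reach $0$, and the first $n-1$ digits depend continuously on $\alpha'$ via fixed M\"obius maps $M_{\underline{b}{}^\alpha_{[1,j]}}$, $M_{\overline{b}{}^\alpha_{[1,j]}}$, so the only possible obstruction to local constancy of the prefixes is that some orbit point lands exactly on $\alpha-1$, i.e., $T_\alpha^m(\alpha-1)=\alpha-1$ or $T_\alpha^m(\alpha)=\alpha-1$ for some $m\ge1$. Both you and the paper identify this correctly.

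From there, however, the paper's argument is much shorter and more direct than yours, and your version has a genuine gap. The paper handles the obstruction by a one-line monotonicity check on the fixed M\"obius map: if $T_\alpha^m(\alpha)=\alpha-1$ with $m$ minimal, then $\alpha'\mapsto M_{\overline{b}{}^\alpha_{[1,m]}}\cdot\alpha'-(\alpha'-1)$ is strictly decreasing (one factor of the composite has negative derivative, the rest positive), so it is $<0$ for all $\alpha'>\alpha$; consequently the $m$-th digit of $\alpha'$ changes and $T_{\alpha'}^m(\alpha')\ge0$ for $\alpha'>\alpha$ close, i.e., every nearby $\alpha'>\alpha$ lies in $\Gamma$. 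Since $\alpha\notin\Gamma$, $\alpha$ must be the left endpoint $\zeta_v$ of some synchronizing interval, hence $\alpha\in Z$, a contradiction. The symmetric argument with $T_\alpha^m(\alpha-1)=\alpha-1$ forces $\alpha=\eta_v$, and the expanding property of $M_{\underline{b}{}^\alpha_{[1,m]}}$ gives the one-sided $[\alpha,\alpha+\delta)$ neighbourhood. No classification of periodic expansions is needed.

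Your route instead proposes to deduce that $\underline{b}{}^\alpha$ is ``purely periodic'' and then prove the ``crucial fact'' that every $\alpha\notin\Gamma$ with periodic expansion lies in $Z\cup\{\eta_v : v\in\mathscr{F}\}$, classified by odd versus even period. This fact is where all the work would be, and you do not prove it --- you only say it amounts to combinatorial manipulations of the alternating order. Moreover, as set up it conflates two different things: $\underline{b}{}^\alpha_{[1,\infty)}$ is the $\alpha$-expansion of $\alpha-1$, while the characteristic sequence of $\alpha-1$ in the paper is the characteristic sequence of its \emph{$0$-expansion}, and these digit strings differ in general. Even granting periodicity of $\underline{b}{}^\alpha$, pure periodicity of the characteristic sequence does not follow automatically (runs of $(-1:2)$ at the period boundary can destroy it), and in the case $T_\alpha^j(\alpha)=\alpha-1$ your step ``eventually, and then purely, periodic'' is not justified at all. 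Even if these issues could be patched, the patch would itself be a multi-page combinatorial argument, which the paper's derivative-sign computation makes entirely unnecessary.
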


\begin{proof}
Let $\alpha \in (0,1] \setminus \Gamma$, i.e., $T_\alpha^m({\alpha-1}) < 0$ and $T_\alpha^m(\alpha) < 0$ for all $m \ge 1$.

If $T_\alpha^m({\alpha-1}) > {\alpha-1}$ and $T_\alpha^m(\alpha) > {\alpha-1}$ for all $m \ge 1$, then due to the continuity of $x \mapsto M_w\cdot x$ for general $w$, we clearly have   for each $n \ge 1$, some $\delta > 0$ such that $\underline{b}{}^{\alpha'}_{[1,n)} = \underline{b}{}^\alpha_{[1,n)}$ and $\overline{b}{}^{\alpha'}_{[1,n)} = \overline{b}{}^\alpha_{[1,n)}$ for all $\alpha' \in (\alpha-\delta, \alpha + \delta)$.

If $T_\alpha^m(\alpha) = {\alpha-1}$ for some $m \ge 1$, then $M_{\overline{b}{}^\alpha_{[1,m]}} \cdot \alpha' < \alpha'-1$ for all $\alpha' > \alpha$.
Let $m$ be minimal with this property, then $T_{\alpha'}^m(\alpha') \ge 0$ for all $\alpha' > \alpha$ sufficiently close to $\alpha$, which implies that $\alpha \in Z$.

Finally, suppose that $T_\alpha^m(\alpha) > {\alpha-1}$ for all $m \ge 1$, and $T_\alpha^m({\alpha-1}) = {\alpha-1}$ for some $m \ge 1$.
Similarly to the preceding paragraph, this implies that $\alpha = \eta_v$ for some $v \in \mathscr{F}$.
Now we have, for each $n \ge 1$, some $\delta > 0$ such that $\underline{b}{}^{\alpha'}_{[1,n)} = \underline{b}{}^\alpha_{[1,n)}$ and $\overline{b}{}^{\alpha'}_{[1,n)} = \overline{b}{}^\alpha_{[1,n)}$ for all $\alpha' \in [\alpha, \alpha + \delta)$.
\end{proof}

\begin{proof}[\textbf{Proof of Theorem~\ref{t:continuous}}]
By the remarks of the beginning of the section, we only have to consider the continuity of $\mu(\Omega_\alpha)$ at $\alpha \in (0,1] \setminus (\Gamma \cup Z)$. 
Moreover, we only have to show right continuity if $\alpha = \eta_v$ for some $v \in \mathscr{F}$.
By the monotonicity on every interval~$\Gamma_v$, it suffices to compare $\mu(\Omega_\alpha)$ with~$\mu(\Omega_{\alpha'})$, $\alpha' \in (0,1] \setminus \Gamma$.

If $\{w \in \mathscr{L}'_\alpha:\, |w| < n\} = \{w \in \mathscr{L}'_{\alpha'}:\, |w| < n\}$, $n \ge 2$, then $d_{\alpha'}(\alpha') = d_\alpha(\alpha)$, and Corollary~\ref{c:approximation} yields that
\begin{multline*}
\big|\mu(\Omega_\alpha) - \mu(\Omega_{\alpha'})\big| \le \sum_{\substack{w\in\mathscr{L}'_\alpha:\\|w|<n}} \Big|\mu\big(J^\alpha_w \times N_w \cdot \big[0, \tfrac{1}{d_\alpha(\alpha)+1}\big]\big) - \mu\big(J^{\alpha'}_w \times N_w \cdot \big[0, \tfrac{1}{d_\alpha(\alpha)+1}\big]\big)\Big| \\[-2ex]
+ \big(\tfrac{d_\alpha(\alpha)}{d_\alpha(\alpha)+\alpha}\big)^n\, \log\big(1+\tfrac{1}{\alpha}\big) +  \big(\tfrac{d_\alpha(\alpha)}{d_\alpha(\alpha)+\alpha'}\big)^n\,\log\big(1+\tfrac{1}{\alpha'}\big)\,.
\end{multline*}
Fix $\epsilon > 0$, choose $n \ge 2$ and an interval around~$\alpha$ such that $\big(\frac{d_\alpha(\alpha)}{d_\alpha(\alpha)+\alpha'}\big)^n\, \log\big(1+\frac{1}{\alpha'}\big) < \epsilon/3$ for every $\alpha'$ in this interval.
Lemma~\ref{l:delta} gives some $\delta > 0$ such that ${\{w \in \mathscr{L}'_\alpha:\, |w| < n\}} = \{w \in \mathscr{L}'_{\alpha'}:\, |w| < n\}$ for all $\alpha' \in (\alpha-\delta, \alpha+\delta)$ and $\alpha' \in [\alpha, \alpha+\delta)$ respectively.
Since $\{w \in \mathscr{L}'_\alpha:\, |w| < n\}$ is a finite set, and $J^\alpha_w = M_w \cdot \Delta_\alpha(w)$, $J^{\alpha'}_w = M_w \cdot \Delta_{\alpha'}(w)$ by Lemma~\ref{l:Jw}, 
\[
\sum_{\substack{w\in\mathscr{L}'_\alpha:\\|w|<n}} \Big|\mu\big(J^\alpha_w \times N_w \cdot \big[0, \tfrac{1}{d_\alpha(\alpha)+1}\big]\big) - \mu\big(J^{\alpha'}_w \times N_w \cdot \big[0, \tfrac{1}{d_\alpha(\alpha)+1}\big]\big)\Big| < \frac{\epsilon}{3}
\]
for $\alpha'$ sufficiently close to~$\alpha$.
This shows the continuity of $\alpha \mapsto \mu(\Omega_\alpha)$.
\end{proof}

\section{Constancy of entropy on $[g^2,g]$} \label{sec:constancy-entropy-g2}

Lemma~\ref{l:longConstIntvl}, Theorems~\ref{t:continuous} and~\ref{t:endpoints} show that the entropy is constant on intervals covering almost all points in $[g^2,g]$.  To show that the entropy is constant on the whole interval $[g^2,g]$, we must exclude that the function $\alpha \mapsto h(T_\alpha)$  forms a ``devil's staircase''.
To this end, we improve some of the previous~results.

For simplicity, we assume in the following proposition that $\alpha \in (0,1] \setminus \Gamma$ although the statement can be proved for general $\alpha \in (0,1]$. 
Note that this description, together with Lemma~\ref{l:Ualpha}, is useful for drawing figures approximating the natural extension domains.

\begin{Prop} \label{p:dalpha}
For any $\alpha \in (0,1]\setminus \Gamma$, we have 
\[
\bigcup_{\substack{w\in\mathscr{L}'_\alpha:\\\overline{b}{}^\alpha_1w\in\mathscr{L}'_\alpha}} J^\alpha_{\overline{b}{}^\alpha_1w} \times N_w \cdot \big[0, \tfrac{1}{d_\alpha(\alpha)}\big]\ \subset\ \Omega_\alpha\,.
\]
\end{Prop}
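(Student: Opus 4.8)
The plan is to read the assertion as a ``thickening'' of the description of $\Omega_\alpha$ in Lemma~\ref{l:Ualpha}: above each interval $J^\alpha_{\overline{b}{}^\alpha_1 w}$ one may enlarge the box $N_w\cdot[0,\tfrac1{d_\alpha(\alpha)+1}]$ to the taller box $N_w\cdot[0,\tfrac1{d_\alpha(\alpha)}]$. Write $d:=d_\alpha(\alpha)$. First I would dispose of the case where $w$ is the empty word, and then reduce the general case to it by pushing forward under $\mathcal{T}_\alpha^{|w|}$.

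For $w$ empty, the term $j=1$ in the last union of Lemma~\ref{l:Ualpha} reads $[T_\alpha(\alpha),\alpha]\times\Psi'_\alpha\subseteq\Omega_\alpha$, since $\overline{b}{}^\alpha_{[1,0]}$ is empty and the matrix attached to the empty word is the identity. On the other hand, $[0,\tfrac1{d+1}]\subseteq\Psi_\alpha$ (Lemmas~\ref{l:positivemeasure} and~\ref{l:Psialpha}) together with $\Psi'_\alpha = {}^t\hspace{-.1em}E\cdot\Psi_\alpha$ (Theorem~\ref{t:shapeOmega}) and the computation ${}^t\hspace{-.1em}E\cdot[0,\tfrac1{d+1}] = [0,\tfrac1d]$ give $[0,\tfrac1d]\subseteq\Psi'_\alpha$. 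Hence $[T_\alpha(\alpha),\alpha)\times[0,\tfrac1d]\subseteq\Omega_\alpha$, which is precisely the claimed inclusion for empty $w$, since $J^\alpha_{\overline{b}{}^\alpha_1} = (T_\alpha(\alpha),\alpha)$.

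Now fix $w\in\mathscr{L}'_\alpha$ with $\overline{b}{}^\alpha_1 w\in\mathscr{L}'_\alpha$ and $|w|\ge 1$. Since $\overline{b}{}^\alpha_1 = (\varepsilon(\alpha):d_\alpha(\alpha))$ is the rank-one digit of~$\alpha$, the map $T_\alpha$ carries $\Delta_\alpha(\overline{b}{}^\alpha_1)$ bijectively onto $[T_\alpha(\alpha),\alpha)$, so that $T_\alpha\big(\Delta_\alpha(\overline{b}{}^\alpha_1 w)\big) = \Delta_\alpha(w)\cap[T_\alpha(\alpha),\alpha)$ (up to a single endpoint, which is irrelevant below). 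Applying $T_\alpha^{|w|}$, which equals $M_w\cdot(\,\cdot\,)$ on $\Delta_\alpha(w)$, together with Lemma~\ref{l:Jw} (applicable as $\alpha\notin\Gamma$), gives
\[
J^\alpha_{\overline{b}{}^\alpha_1 w} = T_\alpha^{|w|+1}\big(\Delta_\alpha(\overline{b}{}^\alpha_1 w)\big) = M_w\cdot\big(\Delta_\alpha(w)\cap[T_\alpha(\alpha),\alpha)\big).
\]
Since $\big(\Delta_\alpha(w)\cap[T_\alpha(\alpha),\alpha)\big)\times[0,\tfrac1d]\subseteq[T_\alpha(\alpha),\alpha)\times[0,\tfrac1d]\subseteq\Omega_\alpha$ by the case of empty~$w$, and $\mathcal{T}_\alpha^{|w|}$ acts on $\Delta_\alpha(w)\times[0,1]$ by $(x,y)\mapsto(M_w\cdot x,\,N_w\cdot y)$, the $\mathcal{T}_\alpha$-invariance of $\Omega_\alpha$ (Lemma~\ref{l:bijective}) yields, after discarding the vertical lines of discontinuity of $\mathcal{T}_\alpha$ and taking closures in the compact set $\Omega_\alpha$,
\[
J^\alpha_{\overline{b}{}^\alpha_1 w}\times N_w\cdot[0,\tfrac1d] \;=\; \mathcal{T}_\alpha^{|w|}\Big(\big(\Delta_\alpha(w)\cap[T_\alpha(\alpha),\alpha)\big)\times[0,\tfrac1d]\Big)\;\subseteq\;\Omega_\alpha .
\]
Taking the union over all admissible $w$ then gives the proposition.

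The genuinely delicate points, and hence where I expect the bookkeeping to sit, are: (i) identifying $J^\alpha_{\overline{b}{}^\alpha_1 w}$ with $M_w\cdot\big(\Delta_\alpha(w)\cap[T_\alpha(\alpha),\alpha)\big)$ straight from the definition of the sets $J^\alpha_u$ in Section~\ref{sec:struct-natur-extens}, using that $\overline{b}{}^\alpha_1$ is the digit of~$\alpha$ so that $[T_\alpha(\alpha),\alpha)$ is exactly the image of $\Delta_\alpha(\overline{b}{}^\alpha_1)$; and (ii) the handling of half-open intervals and $\mu$-null sets when transporting a subset of $\Omega_\alpha$ by $\mathcal{T}_\alpha^{|w|}$. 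For (ii) one may bypass $\mathcal{T}_\alpha$-invariance altogether by approximating points of $[T_\alpha(\alpha),\alpha)\times[0,\tfrac1d]$ by orbit points $\mathcal{T}_\alpha^n(x,0)$ and prolonging these orbits by the $|w|$ further steps with digits $w_1,\dots,w_{|w|}$, which remains inside the closed set $\Omega_\alpha$. Note that, reassuringly, none of this involves the characteristic-sequence machinery: the argument is purely geometric.
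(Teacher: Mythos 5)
Your base case rests on the inclusion $[T_\alpha(\alpha),\alpha]\times\Psi'_\alpha\subseteq\Omega_\alpha$, which you read off from the $j=1$ term in the last union of Lemma~\ref{l:Ualpha}. That inclusion is false, and here is why the reading is misleading. The index $N_{\overline{b}{}^\alpha_{[1,j)}}$ in Lemma~\ref{l:Ualpha} (and in~\eqref{e:shapeOmega}) is evidently a slip for $N_{\overline{b}{}^\alpha_{[1,j]}}$: Lemma~\ref{l:Ualpha} is derived from Proposition~\ref{p:Omega}, whose third union carries $N_{\overline{b}{}^\alpha_{[1,j]}}$, and that index is forced by the decomposition $\mathscr{L}'_\alpha = \mathscr{L}_\alpha\,\cup\,\bigcup_j\mathscr{L}_\alpha\underline{b}{}^\alpha_{[1,j]}\,\cup\,\bigcup_j\mathscr{L}'_\alpha\overline{b}{}^\alpha_{[1,j]}$ of Lemma~\ref{l:Ldecomposition}. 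With the corrected index the $j=1$ term is $[T_\alpha(\alpha),\alpha]\times N_{\overline{b}{}^\alpha_1}\cdot\Psi'_\alpha$, not $[T_\alpha(\alpha),\alpha]\times\Psi'_\alpha$, and your argument no longer closes.

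Independently of the typographical issue, the inclusion you use is simply not true. Take $\alpha=\sqrt{2}-1\in(0,1]\setminus\Gamma$. Here $T_\alpha(\alpha)=\alpha-1$, so $[T_\alpha(\alpha),\alpha]=\mathbb{I}_\alpha$, and from Figure~\ref{f:23} one has $\Omega_\alpha=\mathbb{I}_\alpha\times[0,g^2]\,\cup\,[1/\sqrt{2}-1,\alpha]\times[1/2,g]$, with $\Psi'_\alpha=[0,g^2]\cup[1/2,g]$. Since $T_\alpha^2(\alpha)=1/\sqrt{2}-1>\alpha-1=T_\alpha(\alpha)$, the fiber of $\Omega_\alpha$ over $x\in(\alpha-1,1/\sqrt{2}-1)$ is only $[0,g^2]\subsetneq\Psi'_\alpha$; the set $\mathbb{I}_\alpha\times\Psi'_\alpha$ is therefore not contained in $\Omega_\alpha$. (The proposition's actual assertion for this $\alpha$ is nevertheless true because $[0,1/3]\subset[0,g^2]$.) More generally, whenever the second entry $a_2$ of the characteristic sequence of $\alpha-1$ exceeds $1$, $T_\alpha^2(\alpha)>T_\alpha(\alpha)$ and the fiber over $x$ slightly above $T_\alpha(\alpha)$ is strictly smaller than $\Psi'_\alpha$. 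The paper's own remark following Theorem~\ref{t:shapeOmega} already signals this: it says that $[T_\alpha(\alpha),\alpha]\times[0,1/d_\alpha(\alpha)]\subseteq\Omega_\alpha$ is obtained \emph{by} Proposition~\ref{p:dalpha}, not from Theorem~\ref{t:shapeOmega} alone.

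The proof of the base case therefore cannot avoid the characteristic-sequence machinery, contrary to the remark closing your proposal. What the paper actually establishes is that $J^\alpha_{\overline{b}{}^\alpha_1}\subseteq J^\alpha_{w'}$ for every $w'\in\mathscr{L}'_\alpha$ whose image $N_{w'}\cdot[0,\tfrac{1}{d_\alpha(\alpha)+1}]$ meets the strip $(\tfrac{1}{d_\alpha(\alpha)+1},\tfrac{1}{d_\alpha(\alpha)})$, i.e.\ for $w'$ ending in $(-1:d_\alpha(\alpha)+1)$ or in $\overline{b}{}^\alpha_1$. This reduces to showing $T_\alpha^j(\alpha-1)\le T_\alpha(\alpha)$ or $T_\alpha^j(\alpha)\le T_\alpha(\alpha)$ for the relevant indices $j$, and that is proved by comparing characteristic sequences through Corollary~\ref{c:altorder}, using the defining property $a_{[n,\infty)}\le_{\mathrm{alt}}a_{[1,\infty)}$ of $\alpha\in(0,1]\setminus\Gamma$. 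Your inductive step — transporting the base rectangle forward by $\mathcal{T}_\alpha^{|w|}$ — does coincide with the paper's second step and is sound; only the base case needs to be redone.
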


\begin{proof} 
Let $\alpha \in (0,1]\setminus \Gamma$, and $a_{[1,\infty)}$ be the characteristic sequence of~$\alpha-1$.
We first prove that $J^\alpha_{\overline{b}{}^\alpha_1} \times \big[0, \tfrac{1}{d_\alpha(\alpha)}\big] \subset \Omega_\alpha$. 
We already know both that $\mathbb{I}_{\alpha} \times \big[0, \tfrac{1}{d_\alpha(\alpha)+1}\big] \subset \Omega_\alpha$ and $\big[0, \tfrac{1}{d_\alpha(\alpha)}\big] \subset \Psi'_{\alpha}$, with $\Psi'_{\alpha}$ being the closure of $\bigcup_{w' \in \mathscr{L}'_\alpha} N_{w'}\cdot \big[0, \tfrac{1}{d_\alpha(\alpha)+1}\big]$.
It thus suffices to show that $J^\alpha_{\overline{b}{}^\alpha_1} \subseteq J^\alpha_{w'}$ for all $w' \in \mathscr{L}'_\alpha$ with $N_{w'} \cdot \big[0, \tfrac{1}{d_\alpha(\alpha)+1}\big]\, \cap\,  \big(\tfrac{1}{d_\alpha(\alpha)+1}, \tfrac{1}{d_\alpha(\alpha)}\big) \ne \emptyset$, i.e., for all $w'$ ending with $(-1:d_\alpha(\alpha)+1)$ or~$\overline{b}{}^\alpha_1$.
If $w'$ ends with~$\overline{b}{}^\alpha_1$, then $J^\alpha_{w'} = J^\alpha_{\overline{b}{}^\alpha_1}$;  thus we need consider only $w'$ ending with $(-1:d_\alpha(\alpha)+1)$.
Furthermore,  we need only consider $w' \in \mathscr{L}'_\alpha \setminus \mathscr{L}_\alpha$, since $J^\alpha_{w'} = [\alpha-1,\alpha)$ otherwise. 
This means that $w' \in \mathscr{L}_\alpha\, \underline{b}{}^\alpha_{[1,j]}$ for some $j \ge 1$ or $w' \in \mathscr{L}'_\alpha\, \overline{b}{}^\alpha_{[1,j]}$ for some $j \ge 2$.
Let first $w' \in \mathscr{L}_\alpha\, \underline{b}{}^\alpha_{[1,j]}$.
Since $\alpha \in (0,1]\setminus \Gamma$, we have $d_\alpha(\alpha) \ge 2$, thus $w'$ does not end with~$(-1:2)$.
Therefore, the characteristic sequence of $\underline{b}{}^\alpha_{[j+1,\infty)}$ is $a_{[2n+1,\infty)}$ for some $n \ge 1$, and that of $\underline{b}{}^\alpha_{[j,\infty)}$ is $1\, a_{[2n,\infty)}$, with $a_{2n} = d_\alpha(\alpha)-1 = a_1$.
Since $a_{[2n,\infty)} \le_{\mathrm{alt}} a_{[1,\infty)}$, we obtain that $a_{[2n+1,\infty)} \ge_{\mathrm{alt}} a_{[2,\infty)}$, thus $T_\alpha^j(\alpha-1) \le T_\alpha(\alpha)$, i.e., $J^\alpha_{\overline{b}{}^\alpha_1} \subseteq J^\alpha_{w'}$.
If $w' \in \mathscr{L}_\alpha\, \overline{b}{}^\alpha_{[1,j]}$, then the characteristic sequences of $\overline{b}{}^\alpha_{[j,\infty)}$ and $\overline{b}{}^\alpha_{[j+1,\infty)}$ are $1\, a_{[2n-1,\infty)}$ and $a_{[2n,\infty)}$ respectively for some $n \ge 2$, with $a_{2n-1} = a_1$, thus we obtain that $T_\alpha^j(\alpha) \le T_\alpha(\alpha)$.
Therefore, $J^\alpha_{\overline{b}{}^\alpha_1} \subseteq J^\alpha_{w'}$ holds for all $w' \in \mathscr{L}'_\alpha$ ending with $(-1:d_\alpha(\alpha)+1)$ or~$\overline{b}{}^\alpha_1$, hence $J^\alpha_{\overline{b}{}^\alpha_1} \times \big[0, \tfrac{1}{d_\alpha(\alpha)}\big] \subset \Omega_\alpha$. 

From $J^\alpha_{\overline{b}{}^\alpha_1} \times \big[0, \tfrac{1}{d_\alpha(\alpha)}\big] \subset \Omega_\alpha$, we infer that $J^\alpha_{\overline{b}{}^\alpha_1 w} \times N_w \cdot \big[0, \tfrac{1}{d_\alpha(\alpha)}\big] \subset \mathcal{T}_\alpha^{|w|}\big(J^\alpha_{\overline{b}{}^\alpha_1} \times \big[0, \tfrac{1}{d_\alpha(\alpha)}\big]\big) \subset \Omega_\alpha$ for any $w \in \mathscr{L}'_\alpha$ with $\overline{b}{}^\alpha_1 w \in \mathscr{L}'_\alpha$.
\end{proof}

\begin{Lem} \label{l:positivemeasure2}
For any $\alpha \in [g^2, \sqrt{2}-1]$, we have 
\[
\Omega_\alpha\ \subset\ \mathbb{I}_\alpha \times \big[0, g^2\big]\ \cup\  \big[T_\alpha(\alpha-1), \alpha\big] \times \big(\big[0, \tfrac{1}{3-g}\big] \cup \big[\tfrac{1}{2}, g\big]\big)\,.
\]
\end{Lem}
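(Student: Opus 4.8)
The strategy is to exhibit the right-hand side as a closed, $\mathcal{T}_\alpha$-forward-invariant region that contains $\mathbb{I}_\alpha \times \{0\}$, and then to invoke the definition $\Omega_\alpha = \overline{\{\mathcal{T}_\alpha^n(x,0) \mid x \in [{\alpha-1},\alpha),\, n \ge 0\}}$. Write
\[
S_\alpha := \mathbb{I}_\alpha \times \big[0, g^2\big]\ \cup\ \big[T_\alpha({\alpha-1}), \alpha\big] \times \big(\big[0, \tfrac{1}{3-g}\big] \cup \big[\tfrac12, g\big]\big).
\]
This set is a finite union of products of closed intervals, hence closed, and it contains $\mathbb{I}_\alpha \times \{0\}$ because $0 \le g^2$. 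If we know $\mathcal{T}_\alpha(S_\alpha) \subseteq S_\alpha$ (pointwise, no continuity needed), then $\{\mathcal{T}_\alpha^n(x,0)\} \subseteq S_\alpha$ for all $n$, so $\Omega_\alpha \subseteq \overline{S_\alpha} = S_\alpha$, which is exactly the assertion. Note in particular that every point of $S_\alpha$ has second coordinate at most $g$, and that over $x < T_\alpha({\alpha-1})$ the fibre of $S_\alpha$ is the ``small'' interval $[0,g^2]$.

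First I would record the elementary structure of $T_\alpha$ on $[g^2,\sqrt2-1]$. Since $\alpha < g$ we have $\alpha < \tfrac{1}{1+\alpha}$; since $\alpha \le \sqrt2-1$ we have $\alpha \le \tfrac{1}{2+\alpha}$, whence $d_\alpha(x) \ge 3$ for every $x \in (0,\alpha]$, and in particular $\underline{b}{}^\alpha_1 = (-1{:}2)$, $\overline{b}{}^\alpha_1 = (+1{:}3)$, $d_\alpha(\alpha)=3$. Consequently $d_\alpha(x)=2$ occurs only on the leftmost cylinder $\Delta_\alpha(-1{:}2) = [{\alpha-1}, -\tfrac{1}{2+\alpha})$, with $T_\alpha(\Delta_\alpha(-1{:}2)) = [T_\alpha({\alpha-1}),\alpha)$; moreover $-\tfrac{1}{2+\alpha} \le T_\alpha({\alpha-1})$, equivalently $2\alpha^2+2\alpha-1 \ge 0$, which holds since $\alpha \ge g^2$, so $\Delta_\alpha(-1{:}2) \subseteq [{\alpha-1}, T_\alpha({\alpha-1}))$. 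Next, on the full cylinder $\Delta_\alpha(-1{:}3) = [-\tfrac{1}{2+\alpha}, -\tfrac{1}{3+\alpha})$ the map $T_\alpha$ is increasing onto $[{\alpha-1},\alpha)$ with $-\tfrac{1-\alpha}{2-\alpha} \mapsto T_\alpha({\alpha-1})$; the inequality $-\tfrac{1-\alpha}{2-\alpha} \le T_\alpha({\alpha-1})$ is equivalent to $\alpha^2-3\alpha+1 \le 0$, i.e.\ to $\alpha \ge g^2$, so the part of $\Delta_\alpha(-1{:}3)$ on which $T_\alpha(x) < T_\alpha({\alpha-1})$ lies in $[{\alpha-1}, T_\alpha({\alpha-1}))$.

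With these facts, checking $\mathcal{T}_\alpha(S_\alpha) \subseteq S_\alpha$ is a short case analysis on the cylinder containing $x$, using $\mathcal{T}_\alpha(x,y) = (T_\alpha(x), \tfrac{1}{d_\alpha(x)+\varepsilon(x)y})$ together with $\tfrac{1}{2-g^2} = g$, $\tfrac{1}{3-g^2} = g^2$, $\tfrac{1}{3-g} = \tfrac{1}{2+g^2}$, $\tfrac{1}{4-g} < g^2$ and $\tfrac13 < g^2$: for $x \in \Delta_\alpha(-1{:}2)$ one has $y \le g^2$ and $T_\alpha(x) \ge T_\alpha({\alpha-1})$, with new ordinate $\tfrac{1}{2-y} \in [\tfrac12,g]$; for $x \in \Delta_\alpha(-1{:}3)$ the new ordinate $\tfrac{1}{3-y}$ lies in $[\tfrac13,\tfrac{1}{3-g}]$, admissible when $T_\alpha(x) \ge T_\alpha({\alpha-1})$, while on the remaining ``left'' part of $\Delta_\alpha(-1{:}3)$ one has $y \le g^2$ by the previous paragraph, so $\tfrac1{3-y}\le g^2$; for $x \in \Delta_\alpha(-1{:}d)$ with $d \ge 4$, resp.\ $x \in \Delta_\alpha(+1{:}d)$ with $d \ge 3$, the new ordinate is $\le \tfrac{1}{4-g} < g^2$, resp.\ $\le \tfrac13 < g^2$; and $x=0$ is trivial. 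In each case the image lies in $S_\alpha$.

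The main obstacle is precisely the pair of sharp inequalities $-\tfrac{1}{2+\alpha} \le T_\alpha({\alpha-1})$ and $-\tfrac{1-\alpha}{2-\alpha} \le T_\alpha({\alpha-1})$: they guarantee that the only base points $x$ carrying an ordinate as large as $g$ are those with $T_\alpha(x) \ge T_\alpha({\alpha-1})$, so that the image lands in the ``large'' fibre $[0,\tfrac{1}{3-g}]\cup[\tfrac12,g]$ rather than in the ``small'' fibre $[0,g^2]$ sitting over $[{\alpha-1}, T_\alpha({\alpha-1}))$. Both degenerate to equalities exactly at $\alpha = g^2$, which is why this is the borderline value and explains why $g^2$ is the left endpoint in the statement.
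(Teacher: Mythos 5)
Your proof is correct, but it takes a genuinely different route from the paper's. You exhibit the right-hand side as a closed set containing $\mathbb{I}_\alpha \times \{0\}$ and verify directly that it is forward invariant under $\mathcal{T}_\alpha$, the two decisive facts being $-\tfrac{1}{2+\alpha} \le T_\alpha(\alpha-1)$ and $-\tfrac{1-\alpha}{2-\alpha} \le T_\alpha(\alpha-1)$ (the latter equivalent to $\alpha^2-3\alpha+1\le 0$, i.e.\ $\alpha \ge g^2$), together with the identities $\tfrac{1}{2-g^2}=g$ and $\tfrac{1}{3-g^2}=g^2$; the case analysis over the cylinders then closes. The paper instead argues symbolically: from $\underline{b}{}^{g^2}_{[1,\infty)}=(-1{:}2)(-1{:}3)^\omega$ it deduces that no factor in $(-1{:}2)(-1{:}3)^*(-1{:}2)$ occurs, so the maximal fibre height is $\lim_n N_{(-1:3)^n(-1:2)}\cdot 0 = g$, the gap $(\tfrac{1}{3-g},\tfrac12)$ in $\Psi'_\alpha$ comes from $(+1{:}2)\notin\mathscr{L}'_\alpha$, and the bound $g^2=\lim_n N_{(-1:3)^n}\cdot 0$ over $[\alpha-1,T_\alpha(\alpha-1))$ comes from a separate orbit estimate. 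Your dynamical invariance argument is more elementary and self-contained (it does not rely on the language machinery of Section 6 or on the structure of $\Psi'_\alpha$), whereas the paper's version is shorter given that machinery and makes transparent which forbidden words are responsible for each gap. One small inaccuracy in your closing remark: only the second inequality is sharp at $\alpha=g^2$; the first, $-\tfrac{1}{2+\alpha}\le T_\alpha(\alpha-1)$, is equivalent to $2\alpha^2+2\alpha-1\ge 0$, i.e.\ $\alpha \ge (\sqrt3-1)/2$, which is strictly weaker than $\alpha\ge g^2$ — this does not affect the validity of the proof, but the claim that ``both degenerate to equalities exactly at $\alpha=g^2$'' should be corrected.
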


\begin{proof}
Since $\underline{b}{}^{g^2}_{[1,\infty)} = (-1:2)\, (-1:3)^\omega$, no word in $(-1:2)\, (-1:3)^*\, (-1:2)$ occurs in $\underline{b}{}^\alpha_{[1,\infty)}$ for $\alpha \ge g^2$.
Hence the maximal height of a fiber in $\Omega_\alpha$ is $\lim_{n\to\infty} N_{(-1:3)^n\, (-1:2)} \cdot 0 = g$, i.e., $\Omega_\alpha \subseteq \mathbb{I}_\alpha \times [0,g]$.
Further, since $(+1:2) \notin \mathscr L'_{\alpha}$ and  $N_{(-1:3)} \cdot \Psi'_\alpha \subseteq N_{(-1:3)} \cdot [0,g]$, we have that  $\big(\frac{1}{3-g}, \frac{1}{2}\big) \cap \Psi'_\alpha = \emptyset$.
For $x < T_\alpha(\alpha-1) = M_{(-1:2)} \cdot (\alpha-1)$ and $n \ge 0$, we have $M_{(-1:2)\, (-1:3)^n} \cdot x < M_{(-1:2)\, (-1:3)^n\, (-1:2)} \cdot (\alpha-1) < \alpha-1$, thus $\max\{y \mid (x,y) \in \Omega_\alpha\} = \lim_{n\to\infty} N_{(-1:3)^n} \cdot 0 = g^2$ for $x \in [\alpha-1, T_\alpha(\alpha-1))$.
\end{proof}

With a little more effort, it can be shown that $\mathbb{I}_\alpha \times \big[0, \frac{1}{3+g}\big]\ \cup\  \big[T_\alpha(\alpha), \alpha\big] \times \big[0, g^2\big] \subset \Omega_\alpha$ for any $\alpha \in [g^2, \sqrt{2}-1]$.
However, the statement of Lemma~\ref{l:positivemeasure2} is sufficient for the following.

Instead of the sets $\Xi_{\alpha,n}$ defined in~\eqref{e:defUpsilon}, we study now
\[
\Xi'_{\alpha,n} := \Xi_{\alpha,n}\ \setminus \hspace{-1em} \bigcup_{\substack{w \in \mathscr{L}'_\alpha:\\ \overline{b}{}^\alpha_1 w \in \mathscr{L}'_\alpha,\, |w|<n}} \hspace{-1em} J^\alpha_{\overline{b}{}^\alpha_1 w} \times N_w \cdot \big[\tfrac{1}{4}, \tfrac{1}{3}\big] \qquad (n \ge 0).
\]

\begin{Lem}[cf.\ Lemma~\ref{l:exponentialbound}] \label{l:exponentialbound2}
Let $\alpha \in [g^2, \sqrt{2}-1] \setminus \Gamma$.
Then we have, for any $n \ge 0$,
\[
\mu(\Xi'_{\alpha,n}) \le \mu(\Omega_\alpha)\, \big(\tfrac{1}{\sqrt{5}}\big)^n.
\]
\end{Lem}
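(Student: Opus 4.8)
The plan is to rerun the argument of Lemma~\ref{l:exponentialbound}, replacing its crude fibre estimate by a sharp one that exploits both the extra rectangles removed in the definition of $\Xi'_{\alpha,n}$ and the fibre bounds of Lemma~\ref{l:positivemeasure2}. First I would record the elementary fact that $d_\alpha(\alpha) = 3$ for every $\alpha \in [g^2,\sqrt2-1]$: the function $\alpha \mapsto 1/\alpha + 1 - \alpha$ decreases strictly on $(0,1)$ from $1+\sqrt5\in(3,4)$ at $\alpha = g^2$ to $3$ at $\alpha = \sqrt2-1$. Hence $\tfrac{1}{d_\alpha(\alpha)+1} = \tfrac14$, $\tfrac{1}{d_\alpha(\alpha)} = \tfrac13$, and $\overline{b}{}^\alpha_1 = (+1:3)$, so the ``caps'' removed from $\Xi_{\alpha,n}$ to form $\Xi'_{\alpha,n}$ are the rectangles $J^\alpha_{\overline{b}{}^\alpha_1 w}\times N_w\cdot[\tfrac14,\tfrac13]$ with $|w|<n$, which lie in $\Omega_\alpha$ by Proposition~\ref{p:dalpha}.

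Writing $X_{\alpha,n} := \bigcup_{w\in\mathscr{L}'_\alpha,\,|w|=n}\Delta_\alpha(w)$ as in the proof of Lemma~\ref{l:exponentialbound} and using $\alpha\notin\Gamma$, Lemmas~\ref{l:Jw} and~\ref{l:bijective} together with~\eqref{e:invw} and~\eqref{e:invw2} give, modulo $\mu$-null sets,
\[
\mathcal{T}_\alpha^{-n}\big(\Xi_{\alpha,n}\setminus\Xi_{\alpha,n+1}\big) = X_{\alpha,n}\times\big[0,\tfrac14\big],\qquad \mathcal{T}_\alpha^{-n}\big(\Xi_{\alpha,n+1}\big)\subseteq X_{\alpha,n}\times\big[\tfrac14,1\big],
\]
while the same computation applied to the caps carries the level-$n$ caps of $\Xi'_{\alpha,n}\setminus\Xi'_{\alpha,n+1}$ into $X_{\alpha,n}\times[\tfrac14,\tfrac13]$; in particular $X_{\alpha,n}\times[0,\tfrac14]\subseteq\mathcal{T}_\alpha^{-n}(\Xi'_{\alpha,n}\setminus\Xi'_{\alpha,n+1})$ up to a null set. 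It therefore suffices to show that, over each $\Delta_\alpha(u)$ with $u\in\mathscr{L}'_\alpha$, $|u|=n$, the fibre $S_x$ of $\mathcal{T}_\alpha^{-n}(\Xi'_{\alpha,n})$ above a point $x\in\Delta_\alpha(u)$ satisfies $S_x\supseteq[0,\tfrac14]$ and
\[
\int_0^{1/4}\frac{dy}{(1+xy)^2}\ \ge\ 2g\int_{S_x\setminus[0,1/4]}\frac{dy}{(1+xy)^2}\,;
\]
integrating over $X_{\alpha,n}$ then gives $\mu(\Xi'_{\alpha,n+1})\le\big(1-\tfrac{1}{1+1/(2g)}\big)\mu(\Xi'_{\alpha,n}) = \tfrac1{\sqrt5}\,\mu(\Xi'_{\alpha,n})$, and since $\Xi'_{\alpha,0} = \Xi_{\alpha,0} = \Omega_\alpha$, iterating yields the claimed estimate.

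The heart of the proof is a sufficiently sharp description of the fibres $S_x$. Since $\mathcal{T}_\alpha^{-n}$ preserves $\Omega_\alpha$ and every fibre of $\Omega_\alpha$ lies in $\Psi'_\alpha$, Lemma~\ref{l:positivemeasure2} confines $S_x\subseteq[0,\tfrac1{3-g}]\cup[\tfrac12,g]$ in general, and more sharply confines the ``short'' fibres (those equal to $\Psi_\alpha$, occurring over the left part of $\mathbb{I}_\alpha$) to $[0,g^2]$; a short computation with the branches of $\mathcal{T}_\alpha^{-1}$, using that $g^2$ is a common fixed point of the Möbius maps $N_{(-1:2)}^{-1}$ and $N_{(-1:3)}^{-1}$ (the only inverse branches relevant near $\alpha-1$, as $\underline{b}{}^\alpha$ consists of blocks $(-1:2)(-1:3)^*$ for $\alpha\ge g^2$), shows that these short fibres remain inside $[0,g^2]$ after pulling back through $\mathcal{T}_\alpha^{-n}$ — this is what tames the otherwise problematic region $x\approx\alpha-1$. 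For the remaining $x$, where $S_x$ genuinely reaches up to $[\tfrac12,g]$, the removal of the caps excises, over $X_{\alpha,n}$, precisely the part of $S_x$ lying above height $\tfrac14$ whose $N$-coding begins with $\overline{b}{}^\alpha_1=(+1:3)$; here one also needs $(+1:2)\notin\mathscr{L}'_\alpha$ and the absence of the pattern $(-1:2)(-1:3)^*(-1:2)$, exactly the combinatorial facts used in Lemma~\ref{l:positivemeasure2}.

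I expect the main obstacle to be carrying this excision out with enough precision: a single crude inclusion does not suffice, since even a slightly too generous residual band (for instance one still retaining an interval such as $[\tfrac13,\tfrac1{3-g}]$ over part of $X_{\alpha,n}$) already destroys the constant $\sqrt5$. One must instead follow the self-similar structure of the fibres, tracking not only the first ``cap'' level $N_{\overline{b}{}^\alpha_1}\cdot[\tfrac14,\tfrac13]$ but also its images $N_w\cdot[\tfrac14,\tfrac13]$ and the pieces $N_{(-1:3)}\cdot[0,g^2]$ and their iterates, to see that, over every $x$, $S_x\setminus[0,\tfrac14]$ is contained in the union of $[\tfrac12,g]$ (absent over the short-fibre range) with a tail lying below $g^2$. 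Once $S_x$ is pinned down this way, the displayed integral inequality reduces to an elementary estimate in which the golden-mean identities $g^2 = 1-g$, $1+g = 1/g$, $1-g^2 = g$, $1-g^3 = 2g^2$, $\sqrt5 = 2g+1$ do all the work, the extremal case being $\alpha = g^2$ (where the two inequalities $\int_0^{1/4}\ge 2g\int_{1/4}^{g^2}$ and $\int_0^{1/4}\ge 2g\int_{1/2}^{g}$, at $x=\alpha-1=-g$ and at the left end of the tall-fibre range respectively, are tight up to a small explicit margin).
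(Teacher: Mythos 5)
Your skeleton (pull back by $\mathcal{T}_\alpha^{-n}$, compare removed mass at level $n$ to remaining mass, iterate) matches the paper's, and your preliminary observations are correct: on $[g^2,\sqrt2-1]$ one has $d_\alpha(\alpha)=3$, so the caps are $N_w\cdot[\tfrac14,\tfrac13]$ and the constant $\tfrac1{\sqrt5}=\tfrac{1}{2g+1}$ appears exactly as you compute. But the pointwise fibre inequality you propose to establish,
\[
\int_0^{1/4}\frac{dy}{(1+xy)^2}\ \ge\ 2g\int_{S_x\setminus[0,1/4]}\frac{dy}{(1+xy)^2}\,,
\]
is genuinely false over the tall-fibre range $x\in[T_\alpha(\alpha-1),\alpha]$, and the two ``tight'' model inequalities you check at the end do not patch it. The issue is that you only credit $[0,\tfrac14]$ to the removed mass, whereas over $J^\alpha_{\overline b{}^\alpha_1}$ the level-$n$ cap $[\tfrac14,\tfrac13]$ is \emph{also} part of $\Xi'_{\alpha,n}\setminus\Xi'_{\alpha,n+1}$; it must sit in the numerator of the removed-to-total ratio, not be discarded. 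Concretely, at $\alpha=g^2$, $x=T_{g^2}(g^2-1)=-g^2$, the paper's bound evaluates to
\[
\frac{\int_0^{1/3}}{\int_0^{1/(3-g)}+\int_{1/2}^{g}}
=\frac{1/(2+g)}{\tfrac12+\tfrac1{2g}-g}
=\frac{2g}{2g+1}
\]
\emph{with equality} (using $3-g+x=2$, $1+g+x=2g$, $2+x=1/g$). There is no slack at all, so replacing the numerator $\int_0^{1/3}$ by the strictly smaller $\int_0^{1/4}$ breaks the bound: even granting your assertion that the residual tail lies below $g^2$ and that $[\tfrac14,\tfrac13]$ is excised from $S_x$, at $x=-g^2$ one finds $\int_0^{1/4}=\tfrac1{3+g}\approx0.276$ while $2g\bigl(\int_{1/3}^{g^2}+\int_{1/2}^{g}\bigr)\approx0.316$, so the inequality fails. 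Your two separate checks $\int_0^{1/4}\ge 2g\int_{1/4}^{g^2}$ and $\int_0^{1/4}\ge 2g\int_{1/2}^{g}$ are each true, but they are each compared against the \emph{full} $\int_0^{1/4}$, so they do not add up to what you need.

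There is a second, lesser, gap: the assertion that ``$S_x\setminus[0,\tfrac14]$ is contained in the union of $[\tfrac12,g]$ with a tail lying below $g^2$'' is not supported by what is available. Lemma~\ref{l:positivemeasure2} only confines the fibre over $[T_\alpha(\alpha-1),\alpha]$ to $[0,\tfrac{1}{3-g}]\cup[\tfrac12,g]$, and $\tfrac{1}{3-g}>g^2$. The caps $N_w\cdot[\tfrac14,\tfrac13]$ do not cover the band $(g^2,\tfrac{1}{3-g}]$, and indeed the fibre of $\Omega_{g^2}$ visibly intrudes there (e.g.\ the pieces near height $0.41$--$0.42$ in Figure~\ref{f:g2}). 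The paper does not attempt to improve the fibre bound in this band; instead it keeps the full $[0,\tfrac{1}{3-g}]\cup[\tfrac12,g]$ as the denominator and compensates by moving the cap into the numerator, i.e.\ it estimates
\[
\frac{\mu(\Xi'_{\alpha,n}\setminus\Xi'_{\alpha,n+1})}{\mu(\Xi'_{\alpha,n})}
\;\ge\;
\min\Bigl\{\min_{x}\tfrac{\int_0^{1/4}}{\int_0^{g^2}},\ \min_{x\in[T_\alpha(\alpha-1),\alpha]}\tfrac{\int_0^{1/3}}{\int_0^{1/(3-g)}+\int_{1/2}^{g}}\Bigr\}
\;\ge\;\frac{2g}{2g+1}\,,
\]
splitting $X'_{\alpha,n}$ along $J^\alpha_{\underline b{}^\alpha_1}=[T_\alpha(\alpha-1),\alpha)$. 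So the fix you need is precisely to keep the excised cap on the ``removed'' side of the ledger and use $\int_0^{1/3}$ there, rather than to sharpen the description of $S_x$.
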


\begin{proof}
The proof runs along the same lines as that of Lemma~\ref{l:exponentialbound}.
For any $n \ge 0$,  we have
\[
\Xi'_{\alpha,n} \setminus \Xi'_{\alpha,n+1} = \Bigg(\big(\Xi_{\alpha,n} \setminus \Xi_{\alpha,n+1}\big)\, \cup\, \bigg(\Xi_{\alpha,n+1}\, \cap \hspace{-1.5em} \bigcup_{\substack{w \in \mathscr{L}'_\alpha:\\ \overline{b}{}^\alpha_1 w \in \mathscr{L}'_\alpha,\, |w|=n}} \hspace{-1.8em} J^\alpha_{\overline{b}{}^\alpha_1 w} \times N_w \cdot \big[\tfrac{1}{4}, \tfrac{1}{3}\big]\bigg)\Bigg)\, \setminus \hspace{-1.5em} \bigcup_{\substack{w \in \mathscr{L}'_\alpha:\\ \overline{b}{}^\alpha_1 w \in \mathscr{L}'_\alpha,\, |w|<n}} \hspace{-1.8em} J^\alpha_{\overline{b}{}^\alpha_1 w} \times N_w \cdot \big[\tfrac{1}{4}, \tfrac{1}{3}\big]\,.
\]
We show first that the intersection with $\Xi_{\alpha,n+1}$ can be omitted in this equation.
Let $w \in \mathscr{L}'_\alpha$ with $\overline{b}{}^\alpha_1 w \in \mathscr{L}'_\alpha$, $|w| = n$.
Proposition~\ref{p:dalpha} shows that $J^\alpha_{\overline{b}{}^\alpha_1 w} \times N_w \cdot \big[\tfrac{1}{4}, \tfrac{1}{3}\big] \subset \Omega_\alpha$.
The set $N_w \cdot \big(\tfrac{1}{4}, \tfrac{1}{3}\big]$ is disjoint from $N_{w'} \cdot \big[0, \tfrac{1}{4}\big]$ for every $w' \in \mathscr{L}'_\alpha$ with $|w| = n$.
If, for $w' \in \mathscr{L}'_\alpha$ with $|w'| < n$, $N_{w'} \cdot \big[0, \tfrac{1}{4}\big]$ overlaps with $N_w \cdot \big(\tfrac{1}{4}, \tfrac{1}{3}\big]$, then it also overlaps with $N_w \cdot \big(0, \tfrac{1}{4})$, contradicting Theorem~\ref{t:shapeOmega}.
Therefore, we have $J^\alpha_{\overline{b}{}^\alpha_1 w} \times N_w \cdot \big[\tfrac{1}{4}, \tfrac{1}{3}\big] \subset \Xi_{\alpha,n+1}$ (up to a set of measure zero) for every $w \in \mathscr{L}'_\alpha$ with $\overline{b}{}^\alpha_1 w \in \mathscr{L}'_\alpha$, $|w| = n$, thus
\[
\Xi'_{\alpha,n} \setminus \Xi'_{\alpha,n+1} = \Bigg(\big(\Xi_{\alpha,n} \setminus \Xi_{\alpha,n+1}\big)\, \cup\, \hspace{-1.5em} \bigcup_{\substack{w \in \mathscr{L}'_\alpha:\\ \overline{b}{}^\alpha_1 w \in \mathscr{L}'_\alpha,\, |w|=n}} \hspace{-1.8em} J^\alpha_{\overline{b}{}^\alpha_1 w} \times N_w \cdot \big[\tfrac{1}{4}, \tfrac{1}{3}\big]\Bigg)\, \setminus \hspace{-1.5em} \bigcup_{\substack{w \in \mathscr{L}'_\alpha:\\ \overline{b}{}^\alpha_1 w \in \mathscr{L}'_\alpha,\, |w|<n}} \hspace{-1.8em} J^\alpha_{\overline{b}{}^\alpha_1 w} \times N_w \cdot \big[\tfrac{1}{4}, \tfrac{1}{3}\big]\,.
\]
Let $X_{\alpha,n} := \bigcup_{w \in \mathscr{L}'_\alpha:\, |w|=n} \Delta_\alpha(w)$ as in the proof of Lemma~\ref{l:exponentialbound}, and set
\[
X'_{\alpha,n}\ :=\ X_{\alpha,n}\ \setminus \hspace{-1em} \bigcup_{\substack{w \in \mathscr{L}'_\alpha:\\ \overline{b}{}^\alpha_1 w \in \mathscr{L}'_\alpha,\, |w|<n}} \hspace{-1em} T_\alpha^{|w|+1-n}\big(\Delta\big(\overline{b}{}^\alpha_1 w\big)\big)\,.
\]
By arguments in the proofs of Lemmas~\ref{l:Ualpha} and~\ref{l:exponentialbound} and since $T_\alpha\big(\Delta\big(\overline{b}{}^\alpha_1 w\big)\big) = J^\alpha_{\overline{b}{}^\alpha_1} \cap \Delta_\alpha(w)$, we obtain that
\[
X'_{\alpha,n} \times \big[0, \tfrac{1}{4}\big]\ \cup\ \big(X'_{\alpha,n} \cap J^\alpha_{\overline{b}{}^\alpha_1}\big) \times \big[\tfrac{1}{4}, \tfrac{1}{3}\big]\ \subset\ \mathcal{T}_\alpha^{-n}\big(\Xi'_{\alpha,n} \setminus \Xi'_{\alpha,n+1}\big)\,.
\]
Since $\mathcal{T}_\alpha^{-n}(\Xi'_{\alpha,n}) \subset X_{\alpha,n} \times [0,1]$ and 
\[
\mathcal{T}_\alpha^n\Big(T_\alpha^{|w|+1-n}(\Delta(\overline{b}{}^\alpha_1 w)) \times [0,1]\, \cap\, \Omega_\alpha\Big) \subset J^\alpha_{\overline{b}{}^\alpha_1 w} \times N_{\overline{b}{}^\alpha_1 w} \cdot [0,1] = J^\alpha_{\overline{b}{}^\alpha_1 w} \times N_w \cdot \big[\tfrac{1}{4}, \tfrac{1}{3}\big]
\]
for any $w \in \mathscr{L}'_\alpha$ with $|w| < n$, we have
\[
\mathcal{T}_\alpha^{-n}(\Xi'_{\alpha,n}) \subset X'_{\alpha,n} \times [0,1]\ \cap\ \Omega_\alpha\,.
\]
With Lemma~\ref{l:positivemeasure2} and $T_\alpha(\alpha) \le -g^2 \le T_\alpha(\alpha-1)$, i.e., $J^\alpha_{\overline{b}{}^\alpha_1} \supseteq J^\alpha_{\underline{b}{}^\alpha_1}$, we obtain that
\[
\frac{\mu(\Xi'_{\alpha,n} \setminus \Xi'_{\alpha,n+1})}{\mu(\Xi'_{\alpha,n})} \ge \frac{\mu\big(\big(X'_{\alpha,n} \setminus J^\alpha_{\underline{b}{}^\alpha_1}\big) \times \big[0, \frac{1}{4}\big]\big) + \mu\big(\big(X'_{\alpha,n} \cap J^\alpha_{\underline{b}{}^\alpha_1}\big) \times \big[0, \frac{1}{3}\big]\big)}{\mu\big(\big(X'_{\alpha,n} \setminus J^\alpha_{\underline{b}{}^\alpha_1}\big) \times \big[0, g^2\big]\big) + \mu\big(\big(X'_{\alpha,n} \cap J^\alpha_{\underline{b}{}^\alpha_1}\big) \times \big(\big[0, \frac{1}{3-g}\big] \cup \big[\frac{1}{2}, g\big]\big)\big)}\,.
\]
Using that $\frac{p+p'}{q+q'} \ge \min\big\{\frac{p}{q}, \frac{p'}{q'}\big\}$ for all $p,p',q,q'>0$, the estimates
\begin{gather*}
\frac{\mu\big(\big(X'_{\alpha,n} \setminus J^\alpha_{\underline{b}{}^\alpha_1}\big) \times \big[0,\frac{1}{4}\big]\big)}{\mu\big(\big(X'_{\alpha,n} \setminus J^\alpha_{\underline{b}{}^\alpha_1}\big) \times \big[0,g^2\big]\big)} \ge \min_{x \in \mathbb{I}_\alpha} \frac{\int_0^{1/4} \frac{1}{(1+xy)^2}\, dy}{\int_0^{g^2} \frac{1}{(1+xy)^2}\, dy} = \min_{x \in \mathbb{I}_\alpha} \frac{x+2+g}{x+4} \ge \frac{2}{4-g}\,, \\
\frac{\mu\big(\big(X'_{\alpha,n} \cap J^\alpha_{\underline{b}{}^\alpha_1}\big) \times \big[0, \frac{1}{3}\big]\big)}{\mu\big(\big(X'_{\alpha,n} \cap J^\alpha_{\underline{b}{}^\alpha_1}\big) \times \big(\big[0, \frac{1}{3-g}\big] \cup \big[\frac{1}{2}, g\big]\big)\big)} \ge \min_{x \in [T_\alpha(\alpha-1), \alpha]} \frac{\frac{1}{x+3}}{\frac{1}{x+1+g}-\frac{1}{x+2}+\frac{1}{x+3-g}} \ge \frac{2g}{2g+1}\,,
\end{gather*}
yield that 
\[
\frac{\mu(\Xi'_{\alpha,n+1})}{\mu(\Xi'_{\alpha,n})} = 1 - \frac{\mu(\Xi'_{\alpha,n} \setminus \Xi'_{\alpha,n+1})}{\mu(\Xi'_{\alpha,n})} \le 1 - \min\Big\{\frac{2}{4-g},\,\frac{2g}{2g+1}\Big\} = \frac{1}{2g+1} = \frac{1}{\sqrt{5}}\,.
\]
Since $\Xi'_{\alpha,0} = \Omega_\alpha$, this proves the lemma.
\end{proof}

\begin{Lem} \label{l:Psidifference}
There exist constants $C_1, C_2 > 0$ such that
\[
\mu\big(\big(\mathbb{I}_\alpha \cup \mathbb{I}_{\alpha'}\big) \times \big(\Psi_\alpha \setminus \Psi_{\alpha'}\big)\big) \le C_1\, \big(\tfrac{1}{\sqrt{5}}\big)^n\,, \quad \mu\big(\big(\mathbb{I}_\alpha \cup \mathbb{I}_{\alpha'}\big) \times \big(\Psi'_\alpha \setminus \Psi'_{\alpha'}\big)\big) \le C_2\, \big(\tfrac{1}{\sqrt{5}}\big)^n\,, 
\]
for all $\alpha, \alpha' \in [g^2, \sqrt{2}-1] \setminus \Gamma$, $n \ge 1$, such that $\underline{b}{}^{\alpha'}_{[1,n)} = \underline{b}{}^\alpha_{[1,n)}$.
\end{Lem}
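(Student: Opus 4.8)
The plan is to pass to Lebesgue measure, to recognise $\Psi_\alpha\setminus\Psi_{\alpha'}$ and $\Psi'_\alpha\setminus\Psi'_{\alpha'}$ as ``tails'' of the unions defining $\Psi_\alpha,\Psi'_\alpha$, and to estimate those tails by the sharp exponential bound of Lemma~\ref{l:exponentialbound2}. First I would reduce to Lebesgue measure: for $\alpha\in[g^2,\sqrt2-1]$ every fibre of $\Omega_\alpha$, in particular $\Psi_\alpha$ and $\Psi'_\alpha$, lies in $[0,g]$ by Lemma~\ref{l:positivemeasure2}, while $\mathbb{I}_\alpha\cup\mathbb{I}_{\alpha'}\subseteq[g^2-1,\sqrt2-1]=[-g,\sqrt2-1]$. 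On $[-g,\sqrt2-1]\times[0,g]$ one has $1+xy\ge 1-g^2=g$, so $\tfrac14\le(1+xy)^{-2}\le g^{-2}$ there; since $|\mathbb{I}_\alpha\cup\mathbb{I}_{\alpha'}|<2$ it therefore suffices to prove $|\Psi_\alpha\setminus\Psi_{\alpha'}|$ and $|\Psi'_\alpha\setminus\Psi'_{\alpha'}|$ are $\le C(1/\sqrt5)^n$ (Lebesgue) for a universal constant~$C$.

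The combinatorial heart of the reduction is that on $[g^2,\sqrt2-1]$ the expansion $\underline{b}^\alpha_{[1,n)}$ determines $\overline{b}^\alpha_{[1,n)}$. Indeed, by Lemma~\ref{l:2bounded} the word $\underline{b}^\alpha_{[1,\infty)}$ contains no two consecutive $(-1:2)$, and by (the proof of) Lemma~\ref{l:Aalpha} with Theorem~\ref{t:endpoints} the characteristic sequence of $\alpha-1$ has all entries in $\{1,2\}$ with leading entry~$2$; one checks that this forces $\underline{b}^\alpha_{[1,\infty)}=(-1:2)\,(-1:3)^p\,(-1:4)^\omega$ for some $p\in\mathbb{N}\cup\{\infty\}$, whence $\overline{b}^\alpha_{[1,\infty)}={}^{(W)}\widehat{\underline{b}^\alpha_{[1,\infty)}}=(+1:3)\,(-1:4)^p\,\big((-1:2)(-1:4)\big)^\omega$. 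Hence, if $\alpha\ne\alpha'$ and $\underline{b}^{\alpha'}_{[1,n)}=\underline{b}^\alpha_{[1,n)}$, then $\min(p,p')\ge n-2$, so both $\underline{b}^\alpha$ and $\overline{b}^\alpha$ agree with $\underline{b}^{\alpha'}$ resp.\ $\overline{b}^{\alpha'}$ through position $n-1$. Since also $d_{\alpha'}(\alpha')=d_\alpha(\alpha)=3$, so that $\mathscr{A}_{\alpha'}=\mathscr{A}_\alpha$, $\overline{b}^{\alpha'}_1=\overline{b}^\alpha_1$ and ${}^{(W)}\overline{b}^{\alpha'}_1={}^{(W)}\overline{b}^\alpha_1$, the sets $U_{\alpha',i}$ and $U_{\alpha,i}$ ($1\le i\le 4$) have the same elements of length $<n$, and every word of length $<n$ in $\mathscr{L}_\alpha$ (resp.\ $\mathscr{L}'_\alpha$) is a concatenation of such blocks; therefore $\mathscr{L}_{\alpha'}$ and $\mathscr{L}_\alpha$ (resp.\ $\mathscr{L}'_{\alpha'}$ and $\mathscr{L}'_\alpha$) contain the same words of length $<n$. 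Writing $\Psi_\alpha=\overline{S}\cup Q_\alpha$ with $S=\bigcup_{w\in\mathscr{L}_\alpha,\,|w|<n}N_w\cdot[0,\tfrac14]$ and $Q_\alpha=\overline{\bigcup_{w\in\mathscr{L}_\alpha,\,|w|\ge n}N_w\cdot[0,\tfrac14]}$ (using Lemma~\ref{l:Psialpha} and $d_\alpha(\alpha)+1=4$), $S$ is common to $\alpha$ and $\alpha'$ and $\overline S\subseteq\Psi_{\alpha'}$, so $\Psi_\alpha\setminus\Psi_{\alpha'}\subseteq Q_\alpha$; likewise $\Psi'_\alpha\setminus\Psi'_{\alpha'}\subseteq Q'_\alpha:=\overline{\bigcup_{w\in\mathscr{L}'_\alpha,\,|w|\ge n}N_w\cdot[0,\tfrac14]}$, and $Q_\alpha\subseteq Q'_\alpha$, so it remains to bound $|Q'_\alpha|$.

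Finally, the tail estimate is carried out exactly as in the proof of Lemma~\ref{l:exponentialbound2}. Since $J^\alpha_w\supseteq(0,\alpha]$ for every $w\in\mathscr{L}'_\alpha$ and each $J^\alpha_w\times N_w\cdot[0,\tfrac14]$ lies in $\Omega_\alpha$, the strip $(0,\alpha]\times Q'_\alpha$ lies, up to a null set, in $\Xi_{\alpha,n}$; and by the disjointness statement of Theorem~\ref{t:shapeOmega} together with Proposition~\ref{p:dalpha} one checks (as for $\Xi'_{\alpha,n}$ in Lemma~\ref{l:exponentialbound2}) that the portions of $Q'_\alpha$ absorbed by the removed $\tfrac14$--$\tfrac13$ slabs either already lie in the common part $\overline S$ or may be discarded at the cost of a null set, so that what remains of $(0,\alpha]\times Q'_\alpha$ lies in $\Xi'_{\alpha,n}$ up to a null set. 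As $\alpha\in[g^2,\sqrt2-1]\setminus\Gamma$, Lemma~\ref{l:exponentialbound2} then gives $\mu(\Xi'_{\alpha,n})\le\mu(\Omega_\alpha)(1/\sqrt5)^n\le\log(1+g^{-2})(1/\sqrt5)^n$; combining with $\mu\ge\tfrac14\,\mathrm{Leb}$ on $(0,\alpha]\times[0,1]$ and $\alpha\ge g^2$ yields $|Q'_\alpha|\le(4/g^2)\log(1+g^{-2})(1/\sqrt5)^n$, and the constants $C_1,C_2$ then come from the first step. The main obstacle is precisely this last point: one must keep careful track of how the digit $\overline{b}^\alpha_1$ (equivalently ${}^{(W)}\overline{b}^\alpha_1$) occurs in words of $\mathscr{L}'_\alpha$, so as to ensure that passing from $\Xi_{\alpha,n}$ to $\Xi'_{\alpha,n}$ removes nothing from the strip $(0,\alpha]\times Q'_\alpha$ beyond a null set; it is the rigidity established above that lets one place the cut at level $n$ rather than a smaller level, which is what makes the exponent in the final bound match the exponent $n$ in the hypothesis.
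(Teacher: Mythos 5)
Your overall plan is the same as the paper's: reduce to the tail estimate of Lemma~\ref{l:exponentialbound2} and convert between $\mu$ and Lebesgue measure. But your ``combinatorial heart of the reduction'' is wrong, and it is precisely the step whose justification the paper leaves implicit, so the gap is real.

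Concretely, you claim that for $\alpha\in[g^2,\sqrt2-1]\setminus\Gamma$ the by-excess expansion must have the form $\underline{b}^\alpha_{[1,\infty)}=(-1:2)\,(-1:3)^p\,(-1:4)^\omega$. This is false. For $\alpha=\sqrt2-1$ (which does lie in $[g^2,\sqrt2-1]\setminus\Gamma$, since $T_\alpha(\alpha)=\alpha-1$ and the whole orbit stays negative) one computes $\underline{b}^{\sqrt2-1}_{[1,\infty)}=\big((-1:2)(-1:4)\big)^\omega$, which is not of your form for any $p$; equivalently, the characteristic sequence is $2,2,2,\dots$, the RCF of $\sqrt2-1$. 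More generally, the structure established in the proof of Lemma~\ref{l:2n} is that prefixes of $\underline{b}^\alpha_{[1,\infty)}$ for $\alpha\in[g^2,g)\setminus\Gamma$ lie in $\big((-1:2)(-1:3)^*(-1:4)(-1:3)^*\big)^*$, a much richer set than $(-1:2)(-1:3)^p(-1:4)^\omega$. The constraint of Lemma~\ref{l:2bounded} (no $(-1:2)^{d_\alpha(\alpha)}=(-1:2)^3$) together with $a_{[1,\infty)}\in\{1,2\}^\omega$ and $a_1=2$ does \emph{not} force your normal form: $a\in\{1,2\}^\omega$ with $a_1=2$ already prevents any pair of consecutive $(-1:2)$'s. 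Consequently your deduction of $\overline{b}^\alpha_{[1,\infty)}$, the bound $\min(p,p')\ge n-2$, and the ensuing conclusion that $U_{\alpha,i}$ and $U_{\alpha',i}$ share their short words all collapse, because they all rest on this false structural claim. The fact that $\underline{b}^{\alpha'}_{[1,n)}=\underline{b}^\alpha_{[1,n)}$ does imply that $\overline{b}^{\alpha'}$ and $\overline{b}^\alpha$ agree on a comparably long prefix (hence that $\mathscr{L}'_\alpha$ and $\mathscr{L}'_{\alpha'}$ share their words of length $<n$) is true, but it needs the genuine combinatorics of the $\{1,2\}$-valued characteristic sequence with the ``even number of $1$'s between $2$'s'' constraint from the proof of Lemma~\ref{l:longConstIntvl}, which is what keeps the lengths $|v|$ and $|\widehat v|$ in step for all prefixes $v$ of $\underline{b}^\alpha_{[1,\infty)}$.

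A second, lesser problem is in your tail estimate. You end up writing $|Q'_\alpha|\le(4/g^2)\log(1+g^{-2})(1/\sqrt5)^n$, but $(0,\alpha]\times Q'_\alpha$ is \emph{not} contained in $\Xi'_{\alpha,n}$ (the $[\tfrac14,\tfrac13]$ slabs over short $w$ are removed in $\Xi'_{\alpha,n}$ but are part of $Q'_\alpha$), and Lemma~\ref{l:exponentialbound2} gives nothing for $\Xi_{\alpha,n}$ beyond the weaker rate $\big(\tfrac{3}{3+\alpha}\big)^n$ of Lemma~\ref{l:exponentialbound}. What the argument actually bounds is the smaller set $Y'_{\alpha,n}:=\overline{\bigcup_{|w|\ge n}N_w\cdot[0,\tfrac14]\setminus\bigcup_{|w|<n,\ \overline{b}^\alpha_1w\in\mathscr{L}'_\alpha}N_w\cdot[\tfrac14,\tfrac13]}$; one must therefore first show $\Psi'_\alpha\setminus\Psi'_{\alpha'}\subset Y'_{\alpha,n}$, which is where Proposition~\ref{p:dalpha} applied to $\alpha'$ enters (the removed slabs lie in $\Psi'_{\alpha'}$, not in your common $\overline S$, which does not cover $[\tfrac14,\tfrac13]$ for finite $n$). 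Once this inclusion is in hand the rest of your estimate (the density bounds $\tfrac14\le(1+xy)^{-2}\le g^{-2}$ on the relevant rectangle, $|\mathbb I_\alpha\cup\mathbb I_{\alpha'}|<2$, $\alpha\ge g^2$) is sound and matches the paper's use of equation~\eqref{e:minus1} to pass from $\Psi'$ to $\Psi$.
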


\begin{proof}
For any $\alpha, \alpha' \in [g^2, \sqrt{2}-1] \setminus \Gamma$, $n \ge 1$, with $\underline{b}{}^{\alpha'}_{[1,n)} = \underline{b}{}^\alpha_{[1,n)}$, we also have $\overline{b}{}^{\alpha'}_{[1,n)} = \overline{b}{}^\alpha_{[1,n)}$. 
Therefore, Lemma~\ref{l:Ualpha} and Proposition~\ref{p:dalpha} yield that
\[
\Psi'_\alpha \setminus \Psi'_{\alpha'} \subset Y'_{\alpha,n} := \overline{\bigcup_{\substack{w \in \mathscr{L}'_\alpha:\\ |w|\ge n}} N_w \cdot \big[0, \tfrac{1}{4}\big]\ \setminus \hspace{-1em} \bigcup_{\substack{w \in \mathscr{L}'_\alpha:\\ \overline{b}{}^\alpha_1 w \in \mathscr{L}'_\alpha,\, |w|<n}} \hspace{-1em} N_w \cdot \big[\tfrac{1}{4}, \tfrac{1}{3}\big]}\,.
\]
Since $[0, \alpha] \times Y'_{\alpha,n} \subset \Xi'_{\alpha,n}$ and $\mu(\Xi'_{\alpha,n}) \le \mu(\Omega_\alpha)\, \big(\tfrac{1}{\sqrt{5}}\big)^n$ by Lemma~\ref{l:exponentialbound2}, there exists a constant $C_2 > 0$ such that $\mu\big(\big(\mathbb{I}_\alpha \cup \mathbb{I}_{\alpha'}\big) \times \big(\Psi'_\alpha \setminus \Psi'_{\alpha'}\big)\big) \le C_2\, \big(\tfrac{1}{\sqrt{5}}\big)^n$, cf.\ the proof of Lemma~\ref{l:exponentialbound}.
As in~\eqref{e:minus1}, we have $\mu\big(\big(\mathbb{I}_\alpha \cup \mathbb{I}_{\alpha'}\big) \times \big(\Psi_\alpha \setminus \Psi_{\alpha'}\big)\big) = \mu\big(\big(\big[\alpha, \alpha+1] \cup [\alpha', \alpha'+1]\big) \times \big(\Psi'_\alpha \setminus \Psi'_{\alpha'}\big)\big)$, which yields the constant~$C_1$.
\end{proof}

In view of the equation $\Omega_\alpha = \overline{\bigcup_{j\ge 0} J^\alpha_{\underline{b}{}^\alpha_{[1,j]}} \times N_{\underline{b}{}^\alpha_{[1,j]}} \cdot \Psi_\alpha}\, \cup\, \overline{\bigcup_{j\ge 1} J^\alpha_{\overline{b}{}^\alpha_{[1,j]}} \times N_{\overline{b}{}^\alpha_{[1,j]}} \cdot \Psi'_\alpha}$, which holds for $\alpha \in (0,1] \setminus \Gamma$ by Theorem~\ref{t:shapeOmega}, we consider, for any $n \ge 1$, 
\[
\Upsilon_{\alpha,n} := \overline{\bigcup_{j\ge n} J^\alpha_{\underline{b}{}^\alpha_{[1,j]}} \times N_{\underline{b}{}^\alpha_{[1,j]}} \cdot \Psi_\alpha\ \cup\ J^\alpha_{\overline{b}{}^\alpha_{[1,j]}} \times N_{\overline{b}{}^\alpha_{[1,j]}} \cdot \Psi'_\alpha}\,. 
\]

\begin{Lem} \label{l:Upsilon}
There exists a constant $C_3 > 0$ such that
\[
\mu(\Upsilon_{\alpha,n}) \le C_3\, (3g^5)^n
\]
for all $\alpha \in [g^2, \sqrt{2}-1] \setminus \Gamma$, $n \ge 1$.
\end{Lem}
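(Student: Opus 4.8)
\emph{Proof idea.} Fix $\alpha\in[g^2,\sqrt2-1]\setminus\Gamma$, so that $k=k'=\infty$ and Lemma~\ref{l:Jw} applies. By the level decomposition of $\Omega_\alpha$ recalled just above the statement, $\Upsilon_{\alpha,n}$ is the closure of the union, over $j\ge n$, of the rectangles $J^\alpha_{\underline{b}{}^\alpha_{[1,j]}}\times N_{\underline{b}{}^\alpha_{[1,j]}}\cdot\Psi_\alpha$ and $J^\alpha_{\overline{b}{}^\alpha_{[1,j]}}\times N_{\overline{b}{}^\alpha_{[1,j]}}\cdot\Psi'_\alpha$; closing the union and double-counting its overlaps can only enlarge the measure (the harmless over-estimation used for $\Xi_{\alpha,n}$ in Lemma~\ref{l:exponentialbound2}), so it suffices to bound the sum over $j\ge n$ of $\mu\big(J^\alpha_{\underline{b}{}^\alpha_{[1,j]}}\times N_{\underline{b}{}^\alpha_{[1,j]}}\cdot\Psi_\alpha\big)+\mu\big(J^\alpha_{\overline{b}{}^\alpha_{[1,j]}}\times N_{\overline{b}{}^\alpha_{[1,j]}}\cdot\Psi'_\alpha\big)$. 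Applying the $\mu$-preserving change of variables $(x,y)\mapsto(M\cdot x,{}^t\hspace{-.1em}M^{-1}\cdot y)$ of~\eqref{e:mu} with $M=M_{\underline{b}{}^\alpha_{[1,j]}}$ (resp.\ $M_{\overline{b}{}^\alpha_{[1,j]}}$), and using $M_w\cdot\Delta_\alpha(w)=J^\alpha_w$ from Lemma~\ref{l:Jw}, the $j$-th summands equal $\mu\big(\Delta_\alpha(\underline{b}{}^\alpha_{[1,j]})\times\Psi_\alpha\big)$ and $\mu\big(\Delta_\alpha(\overline{b}{}^\alpha_{[1,j]})\times\Psi'_\alpha\big)$. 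On $\mathbb I_\alpha\times[0,1]$ one has $1+xy\ge1+(\alpha-1)=\alpha\ge g^2$, so there $\mu$ is at most $g^{-4}$ times Lebesgue measure; hence each summand is at most $g^{-4}$ times the length of the cylinder $\Delta_\alpha(\cdot)$ involved. Everything therefore reduces to showing that $\Delta_\alpha(\underline{b}{}^\alpha_{[1,j]})$ and $\Delta_\alpha(\overline{b}{}^\alpha_{[1,j]})$ have length $O\big((3g^5)^j\big)$, uniformly in~$\alpha$.

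For this, the first ingredient is a combinatorial constraint: for $\alpha\in[g^2,\sqrt2-1]$ neither $\underline{b}{}^\alpha_{[1,\infty)}$ nor $\overline{b}{}^\alpha_{[1,\infty)}$ contains the factor $(-1:2)(-1:2)$. Indeed, whenever a letter equals $(-1:2)$ its argument lies in $\Delta_\alpha((-1:2))\cap\mathbb I_\alpha=\big[\alpha-1,-\tfrac1{2+\alpha}\big)$, on which $M_{(-1:2)}$ is increasing with left endpoint value $M_{(-1:2)}\cdot(\alpha-1)=-2+\tfrac1{1-\alpha}\ge-g^2>-\tfrac1{2+\alpha}$ (the first inequality since $\alpha\ge g^2$, the second since $g^2(2+\alpha)\le g^2(1+\sqrt2)<1$), so the following argument lies outside $\Delta_\alpha((-1:2))$; this also follows from the proof of Lemma~\ref{l:positivemeasure2}. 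Together with Lemma~\ref{l:Aalpha} and $\alpha\notin\Gamma$, this places every letter of $\underline{b}{}^\alpha_{[1,\infty)}$, and every letter after the first of $\overline{b}{}^\alpha_{[1,\infty)}$ (whose first letter is $(+1:d_\alpha(\alpha))=(+1:3)$), into $\{(-1:2),(-1:3),(-1:4)\}$ with no two consecutive $(-1:2)$. The second ingredient is the per-letter expansion bound: a short computation with the explicit Möbius map $M_a$ gives, on $\Delta_\alpha(a)\cap\mathbb I_\alpha$, that $|M_a'|\ge g^{-2}$ when $a=(-1:2)$ and $|M_a'|\ge(2+\alpha)^2\ge(3-g)^2$ when $a\in\{(-1:3),(-1:4),(+1:3)\}$. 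Writing $m$ for the number of letters $(-1:2)$ in $\underline{b}{}^\alpha_{[1,j]}$, the no-$(-1:2)(-1:2)$ rule forces $m\le\lceil j/2\rceil$, and the chain rule with the mean value theorem bounds the length of $\Delta_\alpha(\underline{b}{}^\alpha_{[1,j]})$ by $g^{2m}(3-g)^{-2(j-m)}$ (since $J^\alpha_{\underline{b}{}^\alpha_{[1,j]}}\subseteq\mathbb I_\alpha$ has length $\le1$). As $g^{-2}<(3-g)^2$, this is largest at $m=\lceil j/2\rceil$, yielding a bound of the form $\mathrm{const}\cdot\big((13+3g)^{-1/2}\big)^j$, where $13+3g=g^{-2}(3-g)^2$ (equivalently $g^2(3-g)^{-2}=(13+3g)^{-1}$); likewise for $\overline{b}{}^\alpha_{[1,j]}$. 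Finally one checks $(13+3g)^{-1/2}\le3g^5$; squaring, this reads $(3g^5)^2(13+3g)\ge1$, and with $g^5=5g-3$ it reduces to $4509-2016\sqrt5\ge1$, i.e.\ to the (barely true) integer inequality $4508^2\ge5\cdot2016^2$. Summing the resulting geometric series over $j\ge n$ produces $\mu(\Upsilon_{\alpha,n})\le C_3(3g^5)^n$ with $C_3$ an explicit constant depending only on~$g$.

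The main difficulty is the closeness of the true contraction rate $(13+3g)^{-1/2}\approx0.260$ to $3g^5\approx0.2705$: none of the per-letter constants may be relaxed, and in particular one must keep the sharp value $g^{-2}$ --- not merely $1$ --- for the parabolic letter $(-1:2)$. It is precisely the need for this sharp value, equivalently the need for the absence of the factor $(-1:2)(-1:2)$, that pins down the hypothesis $\alpha\le\sqrt2-1$. The remaining points --- that replacing the closures in $\Upsilon_{\alpha,n}$ by the open unions and ignoring the overlaps of the level steps costs nothing relevant --- are the same measure-zero and over-estimation matters handled throughout this section; alternatively the whole argument can be recast as an inductive step $\mu(\Upsilon_{\alpha,n+1})\le(3g^5)\,\mu(\Upsilon_{\alpha,n})+\cdots$ in the style of Lemma~\ref{l:exponentialbound2}.
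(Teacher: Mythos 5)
Your argument is correct but genuinely different from the paper's. Both begin by using \eqref{e:mu} to convert the $j$-th summand to $\mu\big(\Delta_\alpha(\underline{b}{}^\alpha_{[1,j]})\times\Psi_\alpha\big)$ resp.\ $\mu\big(\Delta_\alpha(\overline{b}{}^\alpha_{[1,j]})\times\Psi'_\alpha\big)$, and both handle the closure/disjointness subtleties as elsewhere in the section. The paper then bounds the \emph{ratio} of consecutive summands directly in the $\mu$-metric: it splits on the next digit in $\{(-1:2),(-1:3),(-1:4)\}$, uses the fiber bounds $[0,g^2]$ and $[0,g]$ from Lemma~\ref{l:positivemeasure2}, and computes the extremal ratio for each case explicitly. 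The $(-1:2)$ case gives exactly $3g^5$ (attained at $\alpha=g^2$ and $T_\alpha^n(\alpha-1)=\alpha-1$), and the others give smaller constants --- this is where $3g^5$ comes from, and no pairing of digits is needed. You instead pass to Lebesgue cylinder-length estimates via the chain rule $|\Delta_\alpha(v)|\le\prod_i\big(\inf_{\Delta_\alpha(v_i)}|M_{v_i}'|\big)^{-1}$, compare $\mu$ to Lebesgue by the crude factor $g^{-4}$, and are then forced to invoke the combinatorial ``no $(-1:2)(-1:2)$'' rule: your per-letter bound for $(-1:2)$ alone is $g^2\approx0.382$, which exceeds $3g^5\approx0.2705$, so you must pair each $(-1:2)$ with a neighbor of contraction $\le(3-g)^{-2}$ to obtain the per-step rate $(13+3g)^{-1/2}\approx0.260$, and then verify numerically (correctly, and as you acknowledge, barely) that this is $\le3g^5$. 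Your route is a classical distortion argument that avoids the explicit density integrals; the paper's is uniform with the technique of Lemmas~\ref{l:exponentialbound} and~\ref{l:exponentialbound2}, is sharp, and \emph{derives} $3g^5$ rather than merely checking it against a separately obtained rate.
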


\begin{proof}
For any $n \ge 0$, we have
\[
\frac{\mu\big(J^\alpha_{\underline{b}{}^\alpha_{[1,n+1]}} \times N_{\underline{b}{}^\alpha_{[1,n+1]}} \cdot \Psi_\alpha\big)}{\mu\big(J^\alpha_{\underline{b}{}^\alpha_{[1,n]}} \times N_{\underline{b}{}^\alpha_{[1,n]}} \cdot \Psi_\alpha\big)} = \frac{\mu\big(\big(J^\alpha_{\underline{b}{}^\alpha_{[1,n]}} \cap \Delta_\alpha\big(\underline{b}{}^\alpha_{n+1}\big)\big) \times N_{\underline{b}{}^\alpha_{[1,n]}} \cdot \Psi_\alpha\big)}{\mu\big(J^\alpha_{\underline{b}{}^\alpha_{[1,n]}} \times N_{\underline{b}{}^\alpha_{[1,n]}} \cdot \Psi_\alpha\big)}
\]
by~\eqref{e:mu}.
If $\underline{b}{}^\alpha_{n+1} = (-1:2)$, i.e., $T_\alpha^n(\alpha-1) \in \big[\alpha-1, \frac{-1}{2+\alpha}\big)$, then 
\begin{multline*}
\frac{\mu\big(\big(J^\alpha_{\underline{b}{}^\alpha_{[1,n]}} \cap \Delta_\alpha\big(\underline{b}{}^\alpha_{n+1}\big)\big) \times N_{\underline{b}{}^\alpha_{[1,n]}} \cdot \Psi_\alpha\big)}{\mu\big(J^\alpha_{\underline{b}{}^\alpha_{[1,n]}} \times N_{\underline{b}{}^\alpha_{[1,n]}} \cdot \Psi_\alpha\big)} \le \min_{y\in[0,g^2]} \frac{\int_{T_\alpha^n(\alpha-1)}^{-1/(2+\alpha)} \frac{1}{(1+xy)^2\,dx}}{\int_{T_\alpha^n(\alpha-1)}^\alpha \frac{1}{(1+xy)^2\,dx}} \\
= \min_{y\in[0,g^2]} \frac{1+(2+\alpha)\, T_\alpha^n(\alpha-1)}{T_\alpha^n(\alpha-1)-\alpha} \frac{1+\alpha y}{2+\alpha-y} = \frac{1+(2+\alpha){T_\alpha^n(\alpha-1)}}{T_\alpha^n(\alpha-1)-\alpha} \frac{1+\alpha g^2}{1+g+\alpha} \\
\le \frac{(1-\alpha-\alpha^2) (1+\alpha g^2)}{1+g+\alpha} \le 3g^5.
\end{multline*}
If $\underline{b}{}^\alpha_{n+1} = (-1:3)$, i.e., $T_\alpha^n(\alpha-1) \in \big[\frac{-1}{2+\alpha}, \frac{-1}{3+\alpha}\big)$, then 
\[
\frac{\mu\big(\big(J^\alpha_{\underline{b}{}^\alpha_{[1,n]}} \cap \Delta_\alpha\big(\underline{b}{}^\alpha_{n+1}\big)\big) \times N_{\underline{b}{}^\alpha_{[1,n]}} \cdot \Psi_\alpha\big)}{\mu\big(J^\alpha_{\underline{b}{}^\alpha_{[1,n]}} \times N_{\underline{b}{}^\alpha_{[1,n]}} \cdot \Psi_\alpha\big)} \le \min_{y\in[0,g]} \frac{1+\alpha y}{(\alpha+1)^2\, (3+\alpha-y)} \le \frac{g}{(1+g^2)^3}\,.
\]
If $\underline{b}{}^\alpha_{n+1} = (-1:4)$, i.e., $T_\alpha^n(\alpha-1) \in \big[\frac{-1}{3+\alpha}, \frac{-1}{4+\alpha}\big)$, then 
\[
\frac{\mu\big(\big(J^\alpha_{\underline{b}{}^\alpha_{[1,n]}} \cap \Delta_\alpha\big(\underline{b}{}^\alpha_{n+1}\big)\big) \times N_{\underline{b}{}^\alpha_{[1,n]}} \cdot \Psi_\alpha\big)}{\mu\big(J^\alpha_{\underline{b}{}^\alpha_{[1,n]}} \times N_{\underline{b}{}^\alpha_{[1,n]}} \cdot \Psi_\alpha\big)} \le \min_{y\in[0,g]} \frac{1+\alpha y}{(\alpha^2+3\alpha+1)\, (4+\alpha-y)} \le \frac{1}{3(7g-2)}\,.
\]
We obtain that
\[
\frac{\mu\big(J^\alpha_{\underline{b}{}^\alpha_{[1,n+1]}} \times N_{\underline{b}{}^\alpha_{[1,n+1]}} \cdot \Psi_\alpha\big)}{\mu\big(J^\alpha_{\underline{b}{}^\alpha_{[1,n]}} \times N_{\underline{b}{}^\alpha_{[1,n]}} \cdot \Psi_\alpha\big)} \le \max\Big\{3g^5, \frac{g}{(1+g^2)^3}, \frac{1}{3(7g-2)}\Big\} = 3g^5 \approx 0.2705\,.
\]
In the same way, we get that
\[
\mu\big(J^\alpha_{\overline{b}{}^\alpha_{[1,n+1]}} \times N_{\overline{b}{}^\alpha_{[1,n+1]}} \cdot \Psi'_\alpha\big) \le 3g^5\, \mu\big(J^\alpha_{\overline{b}{}^\alpha_{[1,n]}} \times N_{\overline{b}{}^\alpha_{[1,n]}} \cdot \Psi'_\alpha\big) 
\]
for all $n \ge 1$. 
Since the elements of $\mathscr{C}_\alpha$ are disjoint (Theorem~\ref{t:shapeOmega}) and the closure in the definition of $\Upsilon_{\alpha,n}$ does not increase the measure by Lemma~\ref{l:exponentialbound}, this implies that
\begin{align*}
\mu(\Upsilon_{\alpha,n}) & = \sum_{j=n}^\infty \Big(\mu\big(J^\alpha_{\underline{b}{}^\alpha_{[1,j]}} \times N_{\underline{b}{}^\alpha_{[1,j]}} \cdot \Psi_\alpha\big) + \mu\big(J^\alpha_{\overline{b}{}^\alpha_{[1,j]}} \times N_{\overline{b}{}^\alpha_{[1,j]}} \cdot \Psi'_\alpha\big)\Big) \\
& \le \sum_{j=n}^\infty (3g^5)^j \Big(\mu\big(\mathbb{I}_\alpha \times \Psi_\alpha\big) + \tfrac{1}{3g^5}\, \mu\big(J^\alpha_{\overline{b}{}^\alpha_1} \times N_{\overline{b}{}^\alpha_1} \cdot \Psi'_\alpha\big)\Big) \le C_3\, (3g^5)^n
\end{align*}
for some constant $C_3 > 0$.
\end{proof}

\begin{Lem} \label{l:mudifference}
There exists a constant $C_4 > 0$ such that
\[
\big|\mu(\Omega_{\alpha'}) - \mu(\Omega_\alpha)\big| \le C_4\, n\, \big(\tfrac{1}{\sqrt{5}}\big)^n\,.
\]
for all $\alpha, \alpha' \in [g^2, \sqrt{2}-1] \setminus \Gamma$, $n \ge 1$, such that $\underline{b}{}^{\alpha'}_{[1,n)} = \underline{b}{}^\alpha_{[1,n)}$. 
\end{Lem}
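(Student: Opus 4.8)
The plan is to compare $\mu(\Omega_\alpha)$ and $\mu(\Omega_{\alpha'})$ through the decomposition $\Omega_\alpha = \overline{\bigcup_{j\ge 0} J^\alpha_{\underline{b}{}^\alpha_{[1,j]}} \times N_{\underline{b}{}^\alpha_{[1,j]}}\cdot\Psi_\alpha}\ \cup\ \overline{\bigcup_{j\ge 1} J^\alpha_{\overline{b}{}^\alpha_{[1,j]}} \times N_{\overline{b}{}^\alpha_{[1,j]}}\cdot\Psi'_\alpha}$ from Theorem~\ref{t:shapeOmega} (valid here since $\alpha\in(0,1]\setminus\Gamma$). Writing $R^\alpha_j$ and $S^\alpha_j$ for the two families of rectangles, the measure-disjointness of the elements of $\mathscr{C}_\alpha$ (Theorem~\ref{t:shapeOmega}) and the fact that closures do not add measure (Lemma~\ref{l:exponentialbound}) give $\mu(\Omega_\alpha) = \sum_{0\le j<n}\mu(R^\alpha_j) + \sum_{1\le j<n}\mu(S^\alpha_j) + \mu(\Upsilon_{\alpha,n})$, and likewise for~$\alpha'$. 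Since $3g^5 < 1/\sqrt5$, Lemma~\ref{l:Upsilon} bounds both $\mu(\Upsilon_{\alpha,n})$ and $\mu(\Upsilon_{\alpha',n})$ by $C_3\,(1/\sqrt5)^n$, so the task reduces to estimating $|\mu(R^\alpha_j)-\mu(R^{\alpha'}_j)|$ and $|\mu(S^\alpha_j)-\mu(S^{\alpha'}_j)|$ for $j<n$ and summing.

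Fix $0\le j<n$. The hypothesis $\underline{b}{}^{\alpha'}_{[1,n)} = \underline{b}{}^\alpha_{[1,n)}$ also yields $\overline{b}{}^{\alpha'}_{[1,n)} = \overline{b}{}^\alpha_{[1,n)}$ (Theorem~\ref{t:endpoints}, as in the proof of Lemma~\ref{l:Psidifference}), so $N_{\underline{b}{}^\alpha_{[1,j]}}$ and $N_{\overline{b}{}^\alpha_{[1,j]}}$ agree for $\alpha$ and~$\alpha'$, while $J^\alpha_{\underline{b}{}^\alpha_{[1,j]}} = M_{\underline{b}{}^\alpha_{[1,j]}}\cdot\Delta_\alpha(\underline{b}{}^\alpha_{[1,j]})$ and $J^{\alpha'}_{\underline{b}{}^\alpha_{[1,j]}} = M_{\underline{b}{}^\alpha_{[1,j]}}\cdot\Delta_{\alpha'}(\underline{b}{}^\alpha_{[1,j]})$ by Lemma~\ref{l:Jw}. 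Inserting the intermediate rectangle $J^\alpha_{\underline{b}{}^\alpha_{[1,j]}} \times N_{\underline{b}{}^\alpha_{[1,j]}}\cdot\Psi_{\alpha'}$ and using the $\mu$-invariance~\eqref{e:mu} componentwise, the first difference equals $\mu\big(\Delta_\alpha(\underline{b}{}^\alpha_{[1,j]}) \times (\Psi_\alpha\triangle\Psi_{\alpha'})\big) \le \mu\big(\mathbb{I}_\alpha \times (\Psi_\alpha\triangle\Psi_{\alpha'})\big) \le 2C_1\,(1/\sqrt5)^n$ by Lemma~\ref{l:Psidifference}, and the second equals $\mu\big((\Delta_\alpha(\underline{b}{}^\alpha_{[1,j]})\triangle\Delta_{\alpha'}(\underline{b}{}^\alpha_{[1,j]}))\times\Psi_{\alpha'}\big)$, which is at most $g^{-4}$ (an upper bound for the density of $\mu$ on $[g^2-1,\sqrt2-1]\times[0,1]$) times the Lebesgue measure of that symmetric difference of cylinders. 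The same estimates hold for $S^\alpha_j$ with $\Psi'$ and $C_2$ in place of $\Psi$ and $C_1$, together with, for $j=0$, the extra boundary term $\mu((\mathbb{I}_\alpha\triangle\mathbb{I}_{\alpha'})\times[0,1]) \le C\,|\alpha-\alpha'|$. Hence $\sum_{j<n}(|\mu(R^\alpha_j)-\mu(R^{\alpha'}_j)| + |\mu(S^\alpha_j)-\mu(S^{\alpha'}_j)|) \le 2n(C_1+C_2)\,(1/\sqrt5)^n + g^{-4}\sum_{j<n}|\Delta_\alpha(\cdot)\triangle\Delta_{\alpha'}(\cdot)| + C\,|\alpha-\alpha'|$.

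It remains to control the cylinder differences, and everything hinges on the quantitative bound $|\alpha-\alpha'|\le 2\sqrt5\,(1/\sqrt5)^n$. To obtain it, observe that $\beta\mapsto T^{n-1}_\beta(\beta-1) = M_{\underline{b}{}^\alpha_{[1,n-1]}}\cdot(\beta-1)$ is realized by one and the same matrix at $\beta=\alpha$ and $\beta=\alpha'$, and that on the interval between $\alpha-1$ and $\alpha'-1$ (which contains no pole of this matrix, since its image is bounded) its derivative has absolute value at least $\lambda^{n-1}$ with $\lambda := g^{-2} > \sqrt5$; indeed every branch of $T_\beta$ on $[g^2,\sqrt2-1]$ expands by at least $g^{-2}$, the digit $(-1:2)$ branch having derivative $1/x^2 \ge 1/(1-\beta)^2 \ge g^{-2}$ and the remaining branches (digits in $\{(-1:3),(-1:4),(+1:3)\}$) even more. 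Hence $|\alpha-\alpha'| \le \lambda^{-(n-1)}\big((\sqrt2-1)-(g^2-1)\big) \le 2\sqrt5\,(1/\sqrt5)^n$. For the cylinder difference itself, Lemma~\ref{l:Jw} together with $J^\alpha_{\underline{b}{}^\alpha_{[1,j]}} = [T^j_\alpha(\alpha-1),\alpha)$ identifies $\Delta_\alpha(\underline{b}{}^\alpha_{[1,j]})$ as the interval with endpoints $\alpha-1$ and $M_{\underline{b}{}^\alpha_{[1,j]}}^{-1}\cdot\alpha$ (similarly $\Delta_\alpha(\overline{b}{}^\alpha_{[1,j]})$ has endpoints $\alpha$ and $M_{\overline{b}{}^\alpha_{[1,j]}}^{-1}\cdot\alpha$); as these endpoints are $C^1$ in $\alpha$ with derivative bounded by $1$ (the matrix inverses being contractions by $\le\lambda^{-j}$), we get $|\Delta_\alpha(\cdot)\triangle\Delta_{\alpha'}(\cdot)| \le 2|\alpha-\alpha'|$, uniformly in $j<n$. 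Combining, $\sum_{j<n}|\Delta_\alpha(\cdot)\triangle\Delta_{\alpha'}(\cdot)| \le 2n|\alpha-\alpha'| \le 4\sqrt5\,n\,(1/\sqrt5)^n$, and adding all contributions produces $|\mu(\Omega_{\alpha'})-\mu(\Omega_\alpha)| \le C_4\,n\,(1/\sqrt5)^n$. The one genuinely delicate point is this expansion estimate: the value $\sqrt5$ is exactly the rate furnished by Lemma~\ref{l:exponentialbound2}, and it must match both here and in the inequality $3g^5<1/\sqrt5$ for the final bound to close up; the remaining steps are routine book-keeping with the invariance~\eqref{e:mu}.
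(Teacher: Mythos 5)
Your overall scaffolding — decomposing $\mu(\Omega_\alpha)$ into $\sum_{j<n}\mu(R^\alpha_j)+\sum_{1\le j<n}\mu(S^\alpha_j)+\mu(\Upsilon_{\alpha,n})$, controlling the tails by Lemma~\ref{l:Upsilon}, and invoking Lemma~\ref{l:Psidifference} for the $\Psi_\alpha$ versus $\Psi_{\alpha'}$ part — matches the paper's. Where you depart genuinely is in how you handle the $J^{\alpha}_w$ versus $J^{\alpha'}_w$ part of each rectangle. The paper never needs to estimate $|\alpha-\alpha'|$: it introduces the hybrid set $\Omega_{\alpha,\alpha',n}$ and shows, by an exact telescoping via~\eqref{e:mu} and~\eqref{e:minus1}, that the resulting difference collapses to $\mu\big([\alpha,\alpha']\times Y_{\alpha,n}\big)$ with $Y_{\alpha,n}$ the $y$-projection of $\Upsilon_{\alpha,n}$, so the smallness comes from the tail again. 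You instead bound $|\Delta_\alpha(w)\triangle\Delta_{\alpha'}(w)|$ directly by $O(|\alpha-\alpha'|)$ and then need the quantitative closeness $|\alpha-\alpha'|\lesssim (1/\sqrt5)^n$, which you derive from a mean-value/expansion argument. That estimate is true (indeed $|\alpha-\alpha'|\lesssim g^{2n}$, which is stronger) and is a useful observation not present in the paper, so the overall route works — but it is more expensive and introduces the one spot where your write-up has a genuine gap.

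The gap is precisely in the sentence claiming that the Möbius map $\beta\mapsto M_{\underline{b}{}^\alpha_{[1,n-1]}}\cdot(\beta-1)$ ``contains no pole of this matrix, since its image is bounded'' and has derivative of modulus $\ge \lambda^{n-1}$ on all of $[\alpha-1,\alpha'-1]$, ``indeed every branch of $T_\beta$ expands by at least $g^{-2}$.'' Neither assertion follows as stated: the two endpoints mapping to finite values does not preclude a pole in between, and the branch-expansion of $T_\beta$ is a statement about the dynamics of a fixed $T_\beta$, not about the one fixed Möbius map $M$ evaluated on a $\beta$-interval across which the $\beta$-expansion of $\beta-1$ is not a priori constant. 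What actually closes this is an induction using order preservation: each $M_{(-1:d)}$, $d\ge 2$, is order preserving on $(-1,0)$, and since $T^j_\alpha(\alpha-1),T^j_{\alpha'}(\alpha'-1)\in[-g,0)$ for all $0\le j\le n-1$ (as $\alpha,\alpha'\notin\Gamma$ and $\alpha-1\ge -g$), one shows inductively that $M_{\underline{b}{}^\alpha_{[1,j]}}\cdot x\in[-g,0)$ for every $x$ between $\alpha-1$ and $\alpha'-1$; this simultaneously rules out a pole and gives the chain-rule factors $|M_{\underline{b}{}^\alpha_{[1,j]}}\cdot x|^{-2}\ge g^{-2}$. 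Also note the digits for the $\underline{b}$ orbit here lie in $\{(-1:2),(-1:3),(-1:4)\}$, not $(+1:3)$. Finally, a small imprecision: the ``first difference'' is bounded by, not equal to, $\mu\big(\Delta_\alpha(\cdot)\times(\Psi_\alpha\triangle\Psi_{\alpha'})\big)$. With the induction argument supplied, the rest is correct book-keeping and the claimed $C_4\,n\,(1/\sqrt5)^n$ bound follows.
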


\begin{proof}
For any $\alpha, \alpha' \in [g^2, \sqrt{2}-1] \setminus \Gamma$, $n \ge 1$, such that $\underline{b}{}^{\alpha'}_{[1,n)} = \underline{b}{}^\alpha_{[1,n)}$, let 
\begin{equation}
\Omega_{\alpha,\alpha',n} := \bigcup_{0 \le j < n} J^{\alpha'}_{\underline{b}{}^\alpha_{[1,j]}} \times N_{\underline{b}{}^\alpha_{[1,j]}} \cdot \Psi_\alpha\ \cup\ \bigcup_{1 \le j < n} J^{\alpha'}_{\overline{b}{}^\alpha_{[1,j]}} \times N_{\overline{b}{}^\alpha_{[1,j]}} \cdot \Psi'_\alpha\,.
\end{equation}
Since $\underline{b}{}^{\alpha'}_{[1,n)} = \underline{b}{}^\alpha_{[1,n)}$ implies $\overline{b}{}^{\alpha'}_{[1,n)} = \overline{b}{}^\alpha_{[1,n)}$, Lemma~\ref{l:Upsilon} yields that
\[
0 \le \mu(\Omega_\alpha) - \mu(\Omega_{\alpha,\alpha,n}) = \mu(\Upsilon_{\alpha,n}) \le C_3\, (3g^5)^n\,.
\]
For $\alpha < \alpha'$, we obtain similarly to the proof of Theorem~\ref{t:muOmega} that
\begin{multline*}
\mu(\Omega_{\alpha,\alpha',n}) - \mu(\Omega_{\alpha,\alpha,n}) \\
= \sum_{j=0}^{n-1} \Big(\mu\big(M_{\underline{b}{}^\alpha_{[1,j]}} \cdot [\alpha-1, \alpha'-1] \times N_{\underline{b}{}^\alpha_{[1,j]}} \cdot \Psi_\alpha\big) - \mu\big([\alpha, \alpha'] \times N_{\underline{b}{}^\alpha_{[1,j]}} \cdot \Psi_\alpha\big)\Big) \\
- \sum_{j=1}^{n-1} \Big(\mu\big(M_{\overline{b}{}^\alpha_{[1,j]}} \cdot [\alpha, \alpha'] \times N_{\overline{b}{}^\alpha_{[1,j]}} \cdot \Psi'_\alpha\big) + \mu\big([\alpha, \alpha'] \times N_{\overline{b}{}^\alpha_{[1,j]}} \cdot \Psi'_\alpha\big)\Big) \\
= \mu\big([\alpha, \alpha'] \times \Psi'_\alpha\big) - \sum_{j=0}^{n-1} \mu\big([\alpha, \alpha'] \times N_{\underline{b}{}^\alpha_{[1,j]}} \cdot \Psi_\alpha\big) - \sum_{j=1}^{n-1} \mu\big([\alpha, \alpha'] \times N_{\overline{b}{}^\alpha_{[1,j]}} \cdot \Psi'_\alpha\big)\,.
\end{multline*}
Since this quantity is equal to $\mu\big([\alpha, \alpha'] \times Y_{\alpha,n}\big)$, where $Y_{\alpha,n}$ is the projection of $\Upsilon_{\alpha,n}$ to the $y$-axis, there exists a constant $C_5 > 0$ such that
\[
0 \le \mu(\Omega_{\alpha,\alpha',n}) - \mu(\Omega_{\alpha,\alpha,n}) \le C_5\, (3g^5)^n\,.
\]
It remains to compare $\mu(\Omega_{\alpha,\alpha',n})$ with $\mu(\Omega_{\alpha',\alpha',n})$.
We have
\begin{align*}
\mu(\Omega_{\alpha',\alpha',n} \setminus \Omega_{\alpha,\alpha',n})  & \le  \sum_{j=0}^{n-1} \mu\big(J^{\alpha'}_{\underline{b}{}^\alpha_{[1,j]}} \times N_{\underline{b}{}^\alpha_{[1,j]}} \cdot \big(\Psi_{\alpha'} \setminus \Psi_\alpha\big)\big) + \!\sum_{j=1}^{n-1} \mu\big(J^{\alpha'}_{\overline{b}{}^\alpha_{[1,j]}} \times N_{\overline{b}{}^\alpha_{[1,j]}} \cdot \big(\Psi'_{\alpha'} \setminus \Psi'_\alpha\big)\big) \\
& \le \big(C_1\, n + C_2\, (n-1)\big)\, \big(\tfrac{1}{\sqrt{5}}\big)^n
\end{align*}
by Lemma~\ref{l:Psidifference}, thus $\big|\mu(\Omega_{\alpha',\alpha',n}) - \mu(\Omega_{\alpha,\alpha',n})\big| \le \big(C_1\, n + C_2\, (n-1)\big)\, \big(\tfrac{1}{\sqrt{5}}\big)^n$.
Putting all estimates together yields the lemma. 
\end{proof}

\begin{Lem} \label{l:2n}
For any $n \ge 1$, we have
\[
\#\big\{\underline{b}{}^\alpha_{[1,n]} \mid \alpha \in [g^2, g) \setminus \Gamma\big\} \le 2^n\,. 
\]
\end{Lem}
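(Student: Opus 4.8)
The plan is to count the possible length-$n$ prefixes $\underline{b}{}^\alpha_{[1,n]}$ of the $\alpha$-expansion of $\alpha-1$ as $\alpha$ ranges over $[g^2,g)\setminus\Gamma$, and to show this count is at most $2^n$ by induction on $n$. The starting point is Lemma~\ref{l:longConstIntvl} and its proof, which show that for $\alpha \in [g^2,g)$ the first digit satisfies $\underline{b}{}^\alpha_1 = (-1:2)$ and $\underline{b}{}^\alpha_2 \ne (-1:2)$; moreover, by Lemma~\ref{l:2bounded} with $d_\alpha(\alpha)=2$ (since $\alpha \le g < 1/\sqrt 2$ gives $d_\alpha(\alpha) = \lfloor 1/\alpha + 1 - \alpha\rfloor = 2$ for $\alpha \in (g^2,g]$), the $\alpha$-expansion of $\alpha-1$ contains no two consecutive digits $(-1:2)$. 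Thus every digit $\underline{b}{}^\alpha_j$ lies in $\mathscr{A}_\alpha = \{(-1:2),(-1:3),(+1:2)\}$, and in fact $(+1:2)$ cannot occur since $\alpha \notin \Gamma$ forces $\underline{b}{}^\alpha_{[1,\infty)} \in \mathscr{A}_-^\omega$ by Lemma~\ref{l:notinGamma}. So the word $\underline{b}{}^\alpha_{[1,n]}$ is a word in $\{(-1:2),(-1:3)\}$ with no two consecutive $(-1:2)$'s and starting with $(-1:2)$; the number of such words of length $n$ is a Fibonacci number, which is already less than $2^n$, but this crude bound is not quite the sharp statement we want to push through the later lemmas, so instead I will track which words are actually realized.

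The cleaner inductive approach: let $S_n := \{\underline{b}{}^\alpha_{[1,n]} \mid \alpha \in [g^2,g)\setminus\Gamma\}$. I would show $\#S_{n+1} \le 2\cdot\#S_n$ is too weak (it gives $2^n$ only from $\#S_1 = 1$ wait — $\#S_1 = 1$, so $\#S_n \le 2^{n-1} \le 2^n$, which actually suffices). Concretely: for a fixed word $w \in S_n$, the set of $\alpha \in [g^2,g)\setminus\Gamma$ with $\underline{b}{}^\alpha_{[1,n]} = w$ is (the non-$\Gamma$ part of) an interval, because the condition $\underline{b}{}^\alpha_{[1,n]} = w$ is equivalent to $T_\alpha^n(\alpha-1) = M_w\cdot(\alpha-1) \in [\alpha-1,\alpha)$ together with the intermediate membership conditions, and $\alpha \mapsto M_{w_{[1,j]}}\cdot(\alpha-1)$ is monotone (order-preserving on the negative reals for $w_{[1,j]} \in \mathscr{A}_-^*$, by the remark after Lemma~\ref{l:Gammav}). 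On such an $\alpha$-interval, the next digit $\underline{b}{}^\alpha_{n+1}$ is the digit of $M_w\cdot(\alpha-1)$, which takes at most two values $(-1:2)$ or $(-1:3)$ as the monotone function $\alpha \mapsto M_w\cdot(\alpha-1)$ sweeps through $[\alpha-1,\alpha)$ — and actually because of the no-consecutive-$(-1:2)$ constraint, if $w$ ends in $(-1:2)$ then only $(-1:3)$ is possible, so such $w$ contributes only one extension. Hence $\#S_{n+1} \le 2\#S_n$ always, with equality blocked whenever a word ends in $(-1:2)$; either way, starting from $\#S_1 = 1$ we get $\#S_n \le 2^{n-1} < 2^n$.

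The key steps in order are: (1) establish $d_\alpha(\alpha) = 2$ and hence $\mathscr{A}_\alpha \cap \mathscr{A}_- = \{(-1:2),(-1:3)\}$ for $\alpha \in [g^2,g)$, and that $\underline{b}{}^\alpha_{[1,\infty)}\in\mathscr{A}_-^\omega$ via Lemma~\ref{l:notinGamma}; (2) for fixed $w\in S_n$, identify $\{\alpha : \underline{b}{}^\alpha_{[1,n]}=w\}$ as (essentially) an interval using monotonicity of $\alpha\mapsto M_{w_{[1,j]}}\cdot(\alpha-1)$ from Lemma~\ref{l:Gammav}'s proof; (3) observe that on this interval $\underline{b}{}^\alpha_{n+1} = (\varepsilon(M_w\cdot(\alpha-1)):d_\alpha(M_w\cdot(\alpha-1)))$ lies in a two-element set $\{(-1:2),(-1:3)\}$, so each $w$ has at most two one-letter extensions in $S_{n+1}$; (4) conclude $\#S_{n+1}\le 2\#S_n$ and, since $\#S_1\le 1$ (the only first digit is $(-1:2)$), $\#S_n\le 2^{n-1}\le 2^n$.

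The main obstacle I anticipate is step (2): making precise that the set of $\alpha$ yielding a given prefix $w$ is an \emph{interval} (up to the measure-zero set $\Gamma$), so that the digit $\underline{b}{}^\alpha_{n+1}$ can be controlled as a monotone function of $\alpha$ taking finitely many values. One has to juggle the dependence on $\alpha$ both through the matrix action and through the moving interval $\mathbb{I}_\alpha = [\alpha-1,\alpha)$ whose endpoints also vary with $\alpha$; the cleanest route is probably to note that, since $w\in\mathscr{A}_-^*$, the point $M_w\cdot(\alpha-1)$ depends continuously and monotonically on $\alpha$ while $[\alpha-1,\alpha)$ translates rigidly, so the pair (digit of $M_w\cdot(\alpha-1)$) changes at most once from $(-1:2)$ to $(-1:3)$ (or vice versa) as $\alpha$ increases through the relevant range — and this is exactly the bound needed. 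Alternatively, one avoids step (2) entirely by the combinatorial observation that $\underline{b}{}^\alpha_{[1,n]}$ is forced to be a word over $\{(-1:2),(-1:3)\}$ with no two consecutive $(-1:2)$'s and first letter $(-1:2)$, of which there are at most $2^{n-1}$; I would present this combinatorial bound as the primary argument since it sidesteps the interval structure, and it already gives the claimed $2^n$.
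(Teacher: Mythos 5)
Your combinatorial shortcut rests on a false arithmetic claim. You assert that $d_\alpha(\alpha)=2$ for all $\alpha\in(g^2,g]$, and deduce from Lemma~\ref{l:2bounded} that the expansion of $\alpha-1$ has no two consecutive $(-1:2)$'s and uses only the letters $\{(-1:2),(-1:3)\}$. But $d_\alpha(\alpha)=\lfloor 1/\alpha+1-\alpha\rfloor$ equals $2$ only for $\alpha\in(\sqrt{2}-1,g]$; for $\alpha\in[g^2,\sqrt{2}-1]$ it equals $3$ (at $\alpha=g^2$ one has $1/\alpha+1-\alpha=1+\sqrt 5\approx 3.236$). Since $(\sqrt 2-1,g)\subset\Gamma_{(-1:2)}$, the set $[g^2,g)\setminus\Gamma$ lies entirely in $[g^2,\sqrt 2-1]$ except for the single point $\sqrt 2-1$, so for essentially every $\alpha$ under consideration $d_\alpha(\alpha)=3$, the alphabet $\mathscr{A}_\alpha\cap\mathscr{A}_-$ is $\{(-1:2),(-1:3),(-1:4)\}$ (three letters, not two), and Lemma~\ref{l:2bounded} only forbids \emph{three} consecutive $(-1:2)$'s.

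This breaks both of your routes. The ``crude combinatorial bound'' collapses: words over a three-letter alphabet starting with $(-1:2)$ and avoiding only two consecutive $(-1:2)$'s are far more numerous than $2^n$. And step~(3) of your interval argument --- that $\underline{b}{}^\alpha_{n+1}$ lies in a two-element set --- would, with the corrected alphabet, say that it lies in a \emph{three}-element set, giving $3^{n-1}$ rather than $2^n$, so the monotonicity-of-$M_w\cdot(\alpha-1)$ idea gives no savings by itself. The fact that two consecutive $(-1:2)$'s do not actually occur is true, but it comes not from Lemma~\ref{l:2bounded} but from the alternating-order bound $a_n\le a_1=2$ forced by $\alpha-1\ge -g$ (the argument in the proof of Lemma~\ref{l:longConstIntvl}); and that same argument pins down more structure, namely that the prefixes lie in the restricted language $((-1:2)(-1:3)^*(-1:4)(-1:3)^*)^*$. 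It is precisely this extra structure --- between any two $(-1:2)$'s there is exactly one $(-1:4)$ --- that makes the bound work: a prefix that already contains a $(-1:4)$ since its last $(-1:2)$ can only be extended by $(-1:3)$ or $(-1:2)$, while a prefix that does not can only be extended by $(-1:3)$ or $(-1:4)$, so each word has at most two legal one-letter extensions and the count doubles at most once per step. Your proposal never obtains this constraint, so it does not yield the $2^n$ bound; patching it would essentially require redoing the language analysis from Lemma~\ref{l:longConstIntvl}, which is the paper's actual proof.
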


\begin{proof}
It follows from the proof of Lemma~\ref{l:longConstIntvl} that
\[
\bigcup_{n\ge1} \big\{\underline{b}{}^\alpha_{[1,n]} \mid \alpha \in [g^2, g) \setminus \Gamma\big\} \subset \big((-1:2) (-1:3)^* (-1:4) (-1:3)^*\big)^*\,. 
\]
For every word $w$ in this set, it is not possible that both $w\, (-1:2)$ and $w\, (-1:4)$ are in the set, thus $\#\big\{\underline{b}{}^\alpha_{[1,n]} \mid \alpha \in [g^2, g) \setminus \Gamma\big\} \le 2\, \#\big\{\underline{b}{}^\alpha_{[1,n)} \mid \alpha \in [g^2, g) \setminus \Gamma\big\}$.
\end{proof}

Finally, combining Lemmas~\ref{l:mudifference} and~\ref{l:2n} gives the main result of this section.

\begin{proof}[\textbf{Proof of Theorem~\ref{t:g2gConst}}]
By Theorem~\ref{t:muOmega} and Lemma~\ref{l:longConstIntvl}, $\mu(\Omega_\alpha)$ is constant on every interval $\Gamma_v \subset [g^2, g]$, $v \in \mathscr{F}$. 
Therefore, we only have to consider the difference between $\mu(\Omega_\alpha)$ and $\mu(\Omega_\alpha')$ for $\alpha, \alpha' \in [g^2, \sqrt{2}-1] \setminus \Gamma$.

Let $\alpha, \alpha' \in [g^2, \sqrt{2}-1] \setminus \Gamma$ with $\alpha < \alpha'$, and fix some $n \ge 1$. For $J \ge 0$, define two sequences $(\alpha_j)_{0 \le j \le J}$, $(\alpha'_j)_{0 \le j \le J}$  in the following manner.
Set $\alpha_0 := \alpha$ and, recursively, $\alpha'_j := \max\big\{\alpha'' \in [\alpha_j, \alpha'] \setminus \Gamma \mid \underline{b}{}^{\alpha''}_{[1,n)} = \underline{b}{}^{\alpha_j}_{[1,n)}\big\}$, $\alpha_{j+1} := \min\big((\alpha'_j, \alpha'] \setminus \Gamma\big)$ if $\alpha'_j \ne \alpha'$.
The maximum exists since all sufficiently large $\alpha''$ with $\underline{b}{}^{\alpha''}_{[1,n)} = \underline{b}{}^{\alpha_j}_{[1,n)}$ lie in~$\Gamma$, thus $\alpha'_j = \zeta_v$ for some $v \in \mathscr{F}$ or $\alpha'_j = \alpha'$, and $\alpha_{j+1} = \eta_v$ if $\alpha'_j \ne \alpha'$.
Since the $\alpha_j$ are increasing, the $\underline{b}{}^{\alpha_j}_{[1,n)}$ are different for distinct~$j$, hence there exists, by Lemma~\ref{l:2n}, some $J < 2^n$ such that $\alpha'_J = \alpha'$.
By Theorem~\ref{t:muOmega} and Lemma~\ref{l:longConstIntvl}, $\mu(\Omega_{\alpha'_j})$ is equal to $\mu(\Omega_{\alpha_{j+1}})$ for $0 \le j < J$, thus 
\[
\big|\mu(\Omega_{\alpha'}) - \mu(\Omega_\alpha)\big| \le \sum_{j=0}^J \big|\mu(\Omega_{\alpha'_j}) - \mu(\Omega_{\alpha_j})\big| \le 2^n\, C_4\, n\, \big(\tfrac{1}{\sqrt{5}}\big)^n
\]
by Lemma~\ref{l:mudifference}.
Since this inequality holds for every $n \ge 1$, and $\sqrt{5} > 2$, we obtain that $\mu(\Omega_{\alpha'}) = \mu(\Omega_\alpha)$.
\end{proof}

By the discussion at the end of Section~\ref{s:relation}, the entropy decreases to the right of $[g^2, g]$ and behaves chaotically immediately to the left of $[g^2, g]$.
However, the intervals to the left of $[g^2, g]$ where the entropy decreases seem to be much smaller than those where the entropy increases.
Therefore, we conjecture that $h(T_\alpha) < h(T_{g^2})$ for all $\alpha \in (0, g^2)$.
See also the plots of the function $\alpha \mapsto h(T_\alpha)$ in~\cite{LuzziMarmi08}.

\section{Limit points} \label{sec:limit-points}

Recall that $\tau_v$ denotes the limit point of the monotonically decreasing sequence $(\zeta_{\Theta^j(v)})_{j\ge0}$.

\begin{proof}[\textbf{Proof of Theorem~\ref{t:tauTransc}}]
We argue using the transcendence results of Adamczewski and Bugeaud \cite{AdamczewskiBugeaud05}.
Let $v \in \mathscr{F}$, and $a^{(j)}_{[1,2\ell_j+1]}$ be the characteristic sequence of~$\Theta^j(v)$, $j \ge 0$.
By the proof of Lemma~\ref{l:folding}, $a^{(j+1)}_{[1,2\ell_{j+1}+1]}$ starts with $a^{(j)}_{[1,2\ell_j+1]}\, a^{(j)}_{[1,2\ell_j)}$ (if $\ell_j \ge 1$). 

This implies that $\lim_{j\to\infty} \ell_j = \infty$.
Let $a'_{[1,\infty)}$ be the infinite sequence having all sequences $a^{(j)}_{[1,2\ell_j+1]}$ as prefix (with the exception of $1$ if $v$ is the empty word).
Then $a'_{[1,\infty)}$ is the characteristic sequence of $\underline{b}{}^{\tau_v}_{[1,\infty)}$, thus $\tau_v = [0; a'_1, a'_2, \ldots\,]$ by Proposition~\ref{p:0toRCF}.

The sequence $a'_{[1,\infty)}$ is not eventually periodic because it contains, for every $j \ge 0$, $a^{(j)}_{[1,2\ell_j+1]}$ and $a^{(j)}_{[1,2\ell_j]}\, (a^{(j)}_{2\ell_j+1}-1)$ or $a^{(j)}_{[1,2\ell_j)}\, (a^{(j)}_{2\ell_j}+1)$ as factors.
If $a'_{[1,\infty)}$ were eventually periodic, then every sufficiently long factor would determine uniquely the following element of the sequence.
Therefore, $\tau_v$ is not quadratic. (This is also mentioned in \cite{Carminati-Marmi-Profeti-Tiozzo:10}.)
Since $a'_{[1,\infty)}$ starts with arbitary long ``almost squares'' and $a'_j \le a'_1$ for all $j \ge 2$, Theorem~1 of \cite{AdamczewskiBugeaud05} applies, hence $\tau_v$ is transcendental.
 \end{proof}

\begin{Rmk}\label{r:largestTau} 
From the above, the largest element of $(0,1] \setminus (\Gamma \cup Z)$ is 
\begin{align*}
\tau_v  & = [0;2, 1, 1, 2, 2, 2, 1, 1, 2, 1, 1, 2, 1, 1, 2, 2, 2, 1, 1, 2, 2, 2, 1, 1, 2, 2, 2,  \ldots\, ] \\
& = 0.3867499707143007\cdots\,,
\end{align*}
with $v$ the empty word.  
This partial quotients sequence is the fixed point of the morphism defined by $2 \mapsto 211$, $1 \mapsto 2$.
It is known to be the smallest aperiodic sequence in $\{1,2\}^\omega$ with the property that all its proper suffixes are smaller than itself with respect to the alternate order, and appears therefore in several other contexts;  see \cite{Dubickas07,LiaoSteiner}.
Note that all elements of $Z = \{\zeta_v \mid v \in \mathscr{F}\}$ have (purely) periodic RCF expansion.
\end{Rmk}

\begin{Prop}\label{p:gSqrdLimPt}    
The point $g^2$ is a two sided limit of the set $\{\tau_v\,|\, v \in \mathscr{F}\}$.
\end{Prop}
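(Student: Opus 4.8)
The plan is to exhibit two sequences of words in $\mathscr{F}$, one producing left-approximating limit points $\tau_v \nearrow g^2$ and one producing right-approximating limit points $\tau_v \searrow g^2$. Recall from Proposition~\ref{p:0toRCF} that $\tau_v = [0; a'_1, a'_2, \ldots\,]$ where $a'_{[1,\infty)}$ is the infinite sequence having all characteristic sequences of $\Theta^j(v)$ as prefixes, and that $g^2 = 1-g$ has RCF expansion $[0; 2, 1, 1, 1, \ldots] = [0;\overline{2,1}]$; more precisely the characteristic sequence of $g^2 - 1 = -g$ is $2\,1^\omega$. Thus a word $v \in \mathscr{F}$ gives $\tau_v$ close to $g^2$ precisely when the characteristic sequence of $v$ (which is a prefix of $a'_{[1,\infty)}$, hence of the RCF expansion of $\tau_v$) begins with a long prefix of $2\,1^\omega$, i.e., $2\,1^m$ for large $m$, and the alternating order then controls which side of $g^2$ the limit lies on.

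First I would record the elementary fact, following from Corollary~\ref{c:altorder} and the description of $\Theta$ in Lemma~\ref{l:folding}, that for $v, v' \in \mathscr{F}$ with characteristic sequences $a_{[1,\cdot]}$, $a'_{[1,\cdot]}$ agreeing on a long common prefix, $\tau_v$ and $\tau_{v'}$ are close, and that $\tau_v < g^2$ (resp.\ $>$) according to whether the first disagreement between the characteristic sequence of $\tau_v$ and $2\,1^\omega$ is $>_{\mathrm{alt}}$ (resp.\ $<_{\mathrm{alt}}$) at that position. Concretely, $\tau_v < g^2 \iff \underline{b}{}^{\tau_v}_{[1,\infty)} > -g \iff$ (by Corollary~\ref{c:altorder}) the characteristic sequence of $\tau_v - 1$ is $<_{\mathrm{alt}} 2\,1^\omega$; and the characteristic sequence of $\tau_v-1$ agrees with that of $v$ on the length of $v$.

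For the left side, I would use Example~\ref{ex:closetog}: for each positive integer $m$ the word
\[
v_m = (-1:2)\, (-1:3)^m\, (-1:2)\, (-1:4)\, (-1:3)^m\, (-1:4)\, (-1:3)^m\, (-1:4)\, (-1:2)
\]
lies in $\mathscr{F}$, with characteristic sequence $a_{[1,6m+7]} = 2\,1^{2m-1}\,2\,2\,1^{2m+1}\,2\,1^{2m+1}\,2\,2$. As $m \to \infty$ this sequence, hence the RCF expansion of $\tau_{v_m}$, agrees with $2\,1^\omega$ up to position $2m-1$ and then has a $2$; since the first discrepancy occurs at the even position $2m$, the alternating order gives $a_{[1,6m+7]} >_{\mathrm{alt}} 2\,1^\omega$ restricted there — wait, one must check the parity carefully: a $2$ in place of a $1$ at position $j$ gives $a_{[1,\cdot]} >_{\mathrm{alt}} 2\,1^\omega$ if $j$ is odd and $<_{\mathrm{alt}}$ if $j$ is even. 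For $v_m$ the first deviation is at position $2m$ (even), so the characteristic sequence of $v_m$ is $<_{\mathrm{alt}} 2\,1^\omega$, hence $\tau_{v_m} - 1 > -g$, i.e., $\tau_{v_m} > g^2$. So Example~\ref{ex:closetog} actually approximates from the right. For a genuine left approximation I would instead take a family whose characteristic sequence is $2\,1^{2m}\,2\cdots$ with the first deviation at an odd position $2m+1$: e.g.\ $w_m = \Theta^{?}$ of a suitable short word, or most simply the words of Example~\ref{ex:infFamInF} with $\ell=2m$, $v = (-1:2)\,(-1:3)^{2m}\,(-1:2)$, whose characteristic sequence is $(m'+1)\,1^{4m-1}\,2$ — here one adjusts so the leading entry is $2$, e.g.\ $v = (-1:2)\,(-1:3)^{2m}\,(-1:2)$ has characteristic sequence $2\,1^{4m-1}\,2$, deviation at the even position $4m$, again giving the right side. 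The correct left-approximating family has a $2$ appearing at an odd index: take the word with characteristic sequence $2\,1^{2m}\,3\,1^\omega$-prefix, equivalently $v$ with characteristic sequence $2\,1^{2m}\,2\,\ldots$ where the run of $1$'s before the first $2$ after the initial block has even length $2m$, putting the deviation at odd index $2m+2$. I would verify $v \in \mathscr{F}$ by the explicit inequality test (all proper "suffixes" of the characteristic sequence are $\le_{\mathrm{alt}}$ the whole one), and then $\tau_v < g^2$ with $\tau_v \to g^2$ as $m \to \infty$ since the common prefix with $[0;\overline{2,1}]$ grows.

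The main obstacle — and the only real work — is getting the parity bookkeeping exactly right: I must pin down, for each candidate family, whether the first place where its characteristic sequence departs from $2\,1^\omega$ sits at an odd or an even index, and confirm this survives the folding $\Theta$ (which by Lemma~\ref{l:folding} prepends further copies but, crucially, does not alter the leading segment, so the sign of $\tau_v - g^2$ is already determined by $v$ itself). Once two families are in hand — one with the deviation at an odd index (giving $\tau_v < g^2$) and one at an even index (e.g.\ Example~\ref{ex:closetog}, giving $\tau_v > g^2$) — with the length of the agreed prefix $\to \infty$, Proposition~\ref{p:0toRCF} and continuity of the continued fraction map yield $\tau_v \to g^2$ from each side, completing the proof that $g^2$ is a two-sided limit point of $\{\tau_v \mid v \in \mathscr{F}\}$. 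A secondary check is that these $v$ are genuinely distinct from the empty word and from one another so that the $\tau_v$ are honestly accumulating at $g^2$ rather than equalling it; this is immediate since each $\tau_v$ is transcendental by Theorem~\ref{t:tauTransc} while $g^2$ is algebraic.
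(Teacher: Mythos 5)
Your overall plan is the same as the paper's: choose families $v \in \mathscr{F}$ whose characteristic sequences agree with $2\,1^\omega$ (the characteristic sequence of $g^2-1=-g$) on a prefix of growing length, and use Corollary~\ref{c:altorder} so that the parity of the position of the first deviation determines which side of $g^2$ the limit $\tau_v$ falls on. The paper executes this with the two clean families $(-1:2)(-1:3)^\ell$ (characteristic sequence $2\,1^{2\ell}$) for the right and $(-1:2)(-1:3)^\ell(-1:2)$ (characteristic sequence $2\,1^{2\ell-1}\,2$) for the left.

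However, your parity bookkeeping --- which you yourself flag as the only real work --- is consistently off by one, because you neglect that the leading $2$ of the characteristic sequence already matches the leading $2$ of $2\,1^\omega$. For Example~\ref{ex:closetog}, the characteristic sequence $2\,1^{2m-1}\,2\,2\,1^{2m+1}\cdots$ agrees with $2\,1^\omega$ through position $2m$ (the initial $2$ at position $1$, then $2m-1$ ones at positions $2,\dots,2m$), so the first deviation is at the \emph{odd} position $2m+1$, giving $>_{\mathrm{alt}}$ and hence $\tau_v < g^2$ --- that family approximates from the \emph{left}, not the right as you conclude. The same off-by-one appears when you treat $(-1:2)(-1:3)^{2m}(-1:2)$ with characteristic sequence $2\,1^{4m-1}\,2$: the first deviation is at the odd position $4m+1$, again giving the left side. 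And your proposed ``left-approximating'' family with characteristic sequence $2\,1^{2m}\,2\cdots$ has first deviation at the \emph{even} position $2m+2$ (which you mislabel as odd), so it would lie to the right of $g^2$; worse, the literal string $2\,1^{2m}\,2$ has even length and hence is not a valid characteristic sequence of a word in $\mathscr{A}_-^*$, and you do not supply the required verification that whatever completion you intend lies in $\mathscr{F}$. So in the end you effectively exhibit two left-approximating families and no right-approximating family. The fix is exactly what the paper does: take $v=(-1:2)(-1:3)^\ell$ whose characteristic sequence $2\,1^{2\ell}$ has no deviation at all within $v$, with the first deviation appearing in $\Theta(v)$ at the \emph{even} position $2\ell+2$, giving $\tau_v > g^2$. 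Once the parity is straightened out, the remainder of your argument (growth of the common prefix forces $\tau_v \to g^2$, and transcendence of $\tau_v$ versus algebraicity of $g^2$ forces strict approximation) is fine.
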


\begin{proof}  
Let $v = (-1:2)\, (-1:3)^{\ell}$, then its characteristic sequence is $2\,1^{2\ell}$, thus $v$ belongs to~$\mathscr{F} \setminus \Theta(\mathscr{F})$. 
For increasing~$\ell$, $\zeta_v$~tends to $g^2$ from above, and the same clearly holds for~$\tau_v$.
Similarly, the $\tau_v$ corresponding to $v = (-1:2)\, (-1:3)^{\ell}\, (-1:2)$ (see Example~\ref{ex:infFamInF}) tend to $g^2$ from below.
\end{proof}

\section{Open questions}  
As usual,  we find that we now have more questions than when we began our project.   
We list a few, in the form of problems.  

\begin{itemize} 
\item  Prove that $h(T_\alpha)$ is maximal on $[g^2,g]$.

\item Determine explicit values for $h(T_\alpha)$ when $\alpha < g^2$ and for the invariant density $\nu_{\alpha}$ when $\alpha < \sqrt{2} - 1$.
 
\item Prove that  $\nu_{\alpha}$ is always of the form $A/(x+B)$ as \cite{Carminati-Marmi-Profeti-Tiozzo:10} conjecture.
 
\item (From H.~ Nakada) Determine all $\alpha$ such that $h(T_\alpha) = h(T_1)$.

\item In general,  determine  the sets of $\alpha$  with equal entropy.

\item Determine the sets of all $\alpha$  giving isomorphic  dynamical systems.     
 
\item Generalize our approach to use with other continued fractions, such as the $\alpha$-Rosen fractions considered in \cite{Dajani-Kraaikamp-Steiner:09}.  
\end{itemize}

We note that Arnoux and the second named author have work in progress that  responds to  a question
from \cite{LuzziMarmi08} that we had included in an earlier version of our open problems list: 
 Each $T_{\alpha}$ arises as a cross-section of the geodesic flow on the unit tangent bundle of the modular surface.    This result is shown to be equivalent to Theorem~\ref{t:natext}. 
  
\section*{Acknowledgments}
\noindent
We thank Pierre Arnoux and Hitoshi Nakada for comments on an initial version of this~work.

\bibliographystyle{amsalpha}
\bibliography{alphafrac}
\end{document}